\newcommand{\axitem}[1]{\phantomsection \label{ax:#1}}
\newcommand{\axref}[1]{(\hyperref[ax:#1]{#1})}
\newcommand{\newref}[4][]{
\ifthenelse{\equal{#1}{}}{\newtheorem{h#2}[hthm]{#4}}{\newtheorem{h#2}{#4}[#1]}
\expandafter\newcommand\csname r#2\endcsname[1]{#3~\ref{#2:##1}}
\expandafter\newcommand\csname R#2\endcsname[1]{#4~\ref{#2:##1}}
\expandafter\newcommand\csname n#2\endcsname[1]{\ref{#2:##1}}
\newenvironmentx{#2}[2][1=,2=]{
\ifthenelse{\equal{##2}{}}{\begin{h#2}}{\begin{h#2}[##2]}
\ifthenelse{\equal{##1}{}}{}{\label{#2:##1}}
}{\end{h#2}}
}
\theoremstyle{definition}
\theoremstyle{remark}
\numberwithin{figure}{section}
\newcommand{\overlap}[2]{#1 \between #2}
\newcommand{\rb}{\prec}
\newcommand{\cat}[1]{\mathbf{#1}}
\DeclarePairedDelimiter\abs{\lvert}{\rvert}
\let\oldabs\abs
\def\abs{\@ifstar{\oldabs}{\oldabs*}}
\begin{document}

\title{A Constructive Approach to Complete Spaces}

\author{Valery Isaev}

\begin{abstract}
In this paper, we present a constructive generalization of metric and uniform spaces by introducing a new class of spaces, called cover spaces.
These spaces form a topological concrete category with a full reflective subcategory of complete spaces.
This subcategory is closely related to a particular subcategory of locales, offering an alternative approach to localic completion.
Additionally, we demonstrate how this framework provides simple constructive definitions of compact spaces, uniform convergence, and limits of nets.
\end{abstract}

\maketitle

\section{Introduction}

This paper explores completeness, continuous mappings, and other topological concepts within the framework of constructive mathematics.
The traditional topological notion of continuity between real numbers often proves unsuitable in a constructive context.
For instance, Bishop defined continuous maps as locally uniformly continuous maps \cite[Chapter 2, Definition 9]{bishop},
a perspective that does not generalize well to spaces that are not locally compact.

Another prevalent constructive approach to general topology involves locale theory.
Palmgren demonstrated that locally uniform maps between real numbers correspond to localic endomorphisms on the locale of real numbers \cite{palmgren-cont}.
Additionally, locally compact metric spaces can be fully and faithfully embedded into the category of locales \cite{palmgren-metric-locales}.
In this paper, we propose an alternative framework that generalizes these results to spaces that are not locally compact.

We introduce a topological concrete category of \emph{cover spaces}.
Every metric space, and more generally, every uniform space, can naturally be regarded as a cover space.
We establish the existence of a full reflective subcategory of complete cover spaces (\rcor{complete-reflective}),
and we show that a particular full subcategory of locales is equivalent to a full subcategory of complete cover spaces.
This equivalence encompasses a broad class of complete metric spaces (\rthm{locale-equiv}),
offering a constructive alternative to localic completion \cite{localic-completion}.

The definition of cover spaces allows us to generalize the Cauchy condition for sequences and, more broadly, for nets (\rprop{conv-char}).
This generalization provides a motivation for the concept of cover spaces.
Classically, if $x : \mathbb{N} \to \mathbb{R}$ is a Cauchy sequence, then every open cover of $\mathbb{R}$ contains a set that eventually includes all $x_n$.
However, this property does not hold for sequences of rational numbers.
For example, if $x : \mathbb{N} \to \mathbb{Q}$ converges to $\sqrt{2}$ from both sides, then the cover $\{ (-\infty,\sqrt{2}), (\sqrt{2},\infty) \}$ does not satisfy the condition.
Informally, the problem lies in the fact that this cover does not cover all the points in the completion of $\mathbb{Q}$.
We can resolve this by augmenting the cover with additional sets.
For example, the cover ${ (-\infty, \sqrt{2}), (\sqrt{2}, \infty), (1,2) }$ satisfies the condition for all Cauchy sequences.

In general, we can choose a class of good covers of $\mathbb{Q}$, which we call \emph{Cauchy covers},
such that a sequence is Cauchy if and only if every Cauchy cover contains a set that the sequence eventually belongs to.
A \emph{cover space} is a set equipped with a choice of Cauchy covers satisfying certain closure conditions.
A \emph{complete cover space} is one in which every Cauchy sequence (or filter) converges to a point.
Constructively, there may be fewer Cauchy covers in a complete cover space than neighborhood covers, making the concept of a Cauchy cover significant even for complete spaces.

Cauchy covers share similarities with the cover relation in locale theory and can be utilized to constructively reformulate various classical definitions and theorems.
For instance, a complete cover space can be defined as \emph{compact} if every Cauchy cover admits a finite subcover.
Under this notion of compactness, we will prove constructive versions of the Heine-Borel theorem and the Tychonoff theorem.

Cover spaces provide a convenient framework for defining convergence for arbitrary nets.
In Section~\ref{sec:limits}, we demonstrate how to put a cover space structure on an arbitrary directed set $I$ in such a way that a map $I \to X$ converges if and only if it is a morphism of cover spaces.
We also show how Cauchy covers can be used to derive simple and elegant conditions for the convergence of a sequence of functions, thereby generalizing uniform convergence.

The paper is organized as follows:
\begin{itemize}
\item Section~\ref{sec:top} defines cover spaces and explores their topological properties.
\item Section~\ref{sec:cat} presents the category of cover spaces and its basic properties.
\item Section~\ref{sec:filters} introduces Cauchy filters, which are essential for defining complete cover spaces.
\item Section~\ref{sec:completion} defines the completion of cover spaces and proves its universal property.
\item Section~\ref{sec:strongly-regular}, establishes a strong regularity and completeness properties, which are instrumental in relating cover spaces to sober topological spaces and locales.
\item Section~\ref{sec:locales} constructs an equivalence between certain full subcategories of locales and strongly complete cover spaces.
\item Section~\ref{sec:reals} examines the cover space of real numbers in detail.
\item Section~\ref{sec:compact} defines compact cover spaces and proves that morphisms of locally compact cover spaces are precisely locally uniform maps.
\item Section~\ref{sec:limits} discusses limits in cover spaces and provides several criteria for their existence.
\end{itemize}

All results presented in this paper are constructive, meaning they hold within the internal language of an elementary topos.
Furthermore, these results have been formalized in the Arend proof assistant.

\section{Topological properties}
\label{sec:top}

In this section, we define cover spaces and study their topological properties.
We begin with a few basic definitions:
\begin{itemize}
\item Two subsets $U,V$ of a set $X$ are said to \emph{intersect}, written $\overlap{U}{V}$, if there exists an element of $X$ that belongs to both subsets.
\item A set $C$ of subsets of a set $X$ is called a \emph{cover} if $\bigcup C = X$. If $C$ is a cover of $X$, we also say that $C$ \emph{covers} $X$ or that $X$ \emph{is covered by} $C$.
\item A cover $C$ \emph{refines} a cover $D$ if every element of $C$ is a subset of some element of $D$.
\item If $U$ and $V$ are subsets of a topological space $X$, we say that $V$ is \emph{rather below} $U$, written $V \rb_X U$ or, for short, $V \rb U$, if $X$ is covered by open sets $W$ satisfying the implication $\overlap{W}{V} \implies W \subseteq U$.
\item A topological space is \emph{regular} if every open set $U$ is covered by open sets that are rather below $U$.
\end{itemize}

Now, we are ready to define cover spaces:
\begin{defn}
A \emph{cover space} is a set $X$ equipped with a collection $\mathcal{C}_X$ of covers of $X$ that satisfy the following conditions:
\begin{itemize}
\item[(CT)] \axitem{CT} The trivial cover $\{ X \}$ belongs to $\mathcal{C}_X$.
\item[(CE)] \axitem{CE} If $C \in \mathcal{C}_X$ and $C$ refines $D$, then $D \in \mathcal{C}_X$
\item[(CG)] \axitem{CG} If $C \in \mathcal{C}_X$ and $\{ D_U \}_{U \in C}$ is a collection of covers such that $D_U \in \mathcal{C}_X$ for all $U \in C$,
            then $\{ U \cap V \mid U \in C, V \in D_U \} \in \mathcal{C}_X$.
\item[(CR)] \axitem{CR} If $C \in \mathcal{C}_X$, then $\{ V \subseteq X \mid \exists U \in C, V \rb_X U \} \in \mathcal{C}_X$,
            where $V \rb_X U$ (or, for short, $V \rb U$) means that $\{ W \subseteq X \mid \overlap{W}{V} \implies W \subseteq U \} \in \mathcal{C}_X$.
\end{itemize}
Elements of $\mathcal{C}_X$ are called \emph{Cauchy covers} of $X$.
\end{defn}

\begin{remark}
Under the assumption of the law of excluded middle (LEM), the relation $V \rb U$ can equivalently be expressed as $\{ X \backslash V, U \} \in \mathcal{C}_X$.
Indeed, $\{ X \backslash V, U \}$ is always a subset of $\{ W \mid \overlap{W}{V} \implies W \subseteq U \}$.
Assuming LEM, the latter cover also refines the former.
We will discuss the condition $\{ X \backslash V, U \} \in \mathcal{C}_X$ in greater detail in Section~\ref{sec:completion}.
\end{remark}

The relation $\rb$ satisfies the usual properties of the ``rather below'' relation:

\begin{prop}[rb-props]
The following hold for subsets $U,V,U',V'$ of a cover space $X$:
\begin{enumerate}
\item \label{rb:sub} If $V \rb U$, then $V \subseteq U$.
\item If $V' \subseteq V \rb U \subseteq U'$, then $V' \rb U'$.
\item \label{rb:meet} If $V \rb U$ and $V' \rb U'$, then $V \cap V' \rb U \cap U'$.
\item $V \rb X$.
\item $\varnothing \rb U$.
\end{enumerate}
\end{prop}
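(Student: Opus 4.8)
The plan is to route every claim through the defining family: for subsets $A,B \subseteq X$ write
\[ W(A,B) = \{ W \subseteq X \mid \overlap{W}{A} \implies W \subseteq B \}, \]
so that $A \rb B$ is by definition the assertion $W(A,B) \in \mathcal{C}_X$. Two facts will do most of the work. First, every Cauchy cover is a cover of $X$, so I may always pick, for a given point, a member of the family containing it. Second, by the refinement axiom \axref{CE} it suffices, in order to place a family in $\mathcal{C}_X$, to exhibit some known Cauchy cover that refines it; in practice this known cover will either be an inclusion $W(A,B) \subseteq W(A',B')$ or the trivial cover $\{X\}$ of \axref{CT}.

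For the first claim I would argue pointwise. Given $x \in V$, since $W(V,U) \in \mathcal{C}_X$ covers $X$ there is a member $W \ni x$; the point $x$ then witnesses $\overlap{W}{V}$, so the defining implication of $W(V,U)$ forces $W \subseteq U$ and hence $x \in U$. As this runs for an arbitrary fixed $x$, no choice principle enters. The second claim is pure monotonicity of $W(-,-)$: if $V' \subseteq V$ and $U \subseteq U'$, then any $W \in W(V,U)$ already lies in $W(V',U')$, because $\overlap{W}{V'}$ entails $\overlap{W}{V}$ (a common point of $W$ and $V'$ lies in $V$), whence $W \subseteq U \subseteq U'$. Thus $W(V,U) \subseteq W(V',U')$, the former refines the latter, and \axref{CE} delivers $W(V',U') \in \mathcal{C}_X$.

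The third claim is the only one needing the glueing axiom, and is where I expect the genuine work to sit. Starting from the two hypotheses $W(V,U), W(V',U') \in \mathcal{C}_X$, I would apply \axref{CG} with $C = W(V,U)$ and $D_W = W(V',U')$ for every $W \in C$, obtaining the Cauchy cover $\{ W \cap W' \mid W \in W(V,U),\ W' \in W(V',U') \}$. It then remains to verify that each piece $W \cap W'$ belongs to $W(V \cap V', U \cap U')$: if $\overlap{(W \cap W')}{(V \cap V')}$, a common point lies in both $W \cap V$ and $W' \cap V'$, so $\overlap{W}{V}$ and $\overlap{W'}{V'}$ give $W \subseteq U$ and $W' \subseteq U'$, hence $W \cap W' \subseteq U \cap U'$. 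So the glued cover refines $W(V \cap V', U \cap U')$, and \axref{CE} closes the case.

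Finally, the last two claims are degenerate. For $V \rb X$ the conclusion $W \subseteq X$ of the defining implication always holds, so $W(V,X)$ is the full powerset of $X$; for $\varnothing \rb U$ the antecedent $\overlap{W}{\varnothing}$ is always false, so $W(\varnothing,U)$ is again the full powerset. In either case $\{X\}$ from \axref{CT} refines the powerset, so \axref{CE} gives membership in $\mathcal{C}_X$.
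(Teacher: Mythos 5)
Your proposal is correct and follows exactly the same route as the paper's proof: item (1) from the fact that the defining family is a cover, item (3) from \axref{CR}'s sibling axiom \axref{CG} plus refinement, and the remaining items by the straightforward monotonicity and degeneracy arguments the paper leaves implicit. You have simply written out in full the details the paper declares straightforward.
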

\begin{proof}
Item~\eqref{rb:sub} follows from the fact that $\{ W \mid \overlap{W}{V} \implies W \subseteq U \}$ is a cover.
Item~\eqref{rb:meet} follows from \axref{CG}.
The remaining statements are straightforward.
\end{proof}

\begin{example}
For a set $X$, the \emph{indiscrete} cover space structure consists of all covers that include the whole set: $\mathcal{C}_X = \{ C \mid X \in C \}$.
This is the minimal set of covers that makes $X$ into a cover space.
\end{example}

\begin{example}
For a set $X$, the \emph{discrete} cover space structure consists of all covers.
This is the maximal set of covers that makes $X$ into a cover space.
\end{example}

\begin{example}
More generally, if $X$ is a regular topological space, a cover is defined to be Cauchy if it contains a neighborhood of every point.
Under this definition, the rather below relations for $X$ as a topological space coincides with that for $X$ as a cover space.
Thus, the regularity condition on $X$ ensures \axref{CR}, while the other axioms of cover spaces are trivially satisfied.
We denote this cover space by $T(X)$.
\end{example}

We can define various basic topological concepts for cover spaces:

\begin{defn}[cover-topology]
Let $X$ be a cover space.
\begin{itemize}
\item A \emph{neighborhood} of a point $x \in X$ is a subset $U$ of $X$ such that $\{ x \} \rb U$.
\item A subset $U \subseteq X$ is \emph{open} if $U$ is a neighborhood of each point $x \in U$.
\item A \emph{limit point} of a subset $U \subseteq X$ is a point $x \in X$ such that every neighborhood of $x$ intersects $U$.
\item A subset of $X$ is \emph{closed} if it contains all of its limit points.
\item The \emph{closure} $\overline{U}$ of a subset $U \subseteq X$ is the set of its limit points.
\item A subset $D \subseteq X$ is \emph{dense} if every point of $X$ is a limit point of $D$.
\end{itemize}
\end{defn}

From \rprop{rb-props}, it follows that the collection of open subsets defined above forms a topology on every cover space $X$.
We denote this topological space by $S(X)$.

\begin{example}
Let $X$ be an indiscrete cover space.
A subset $U \subseteq X$ is a neighborhood of a point $x \in X$ if and only if $U = X$.
Thus, a subset $U \subseteq X$ is open if and only if $U = X$ whenever it is inhabited.
It follows that $S(X)$ is an indiscrete topological space.
\end{example}

\begin{example}
Let $X$ be a discrete cover space.
A subset $U \subseteq X$ is a neighborhood of a point $x \in X$ if and only if $x \in U$.
Hence, every subset of $X$ is open, making $S(X)$ a discrete topological space.
\end{example}

\begin{example}
Let $X$ be a regular topological space.
The regularity of $X$ implies that a subset $U \subseteq X$ is a neighborhood of a point $x \in X$ in the topological sense if and only if $\{ x \} \rb_{T(X)} U$.
Thus, the topologoy $S(T(X))$ induced by the cover space $T(X)$ coincides with the original topology of $X$.
\end{example}

The following lemma is useful for establishing the relationship between a cover space $X$ and the corresponding topological space $S(X)$:

\begin{lem}[int-char]
Let $U$ be a subset of a cover space $X$.
Then the interior of $U$ is equal to $\{ x \mid \{ x \} \rb_X U \}$.
In particular, if $V \rb_X U$, then $V \subseteq \mathrm{int}(U)$.
\end{lem}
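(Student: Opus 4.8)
The plan is to show that the set $A := \{\, x \mid \{x\} \rb_X U \,\}$ is precisely the interior of $U$, by checking three things: $A \subseteq U$, $A$ is open, and $\mathrm{int}(U) \subseteq A$. The first is immediate, since $\{x\} \rb_X U$ gives $\{x\} \subseteq U$ by \rprop{rb-props}\eqref{rb:sub}, hence $x \in U$.

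The heart of the argument is the openness of $A$, i.e.\ the assertion that $\{x\} \rb_X A$ for every $x \in A$. Fix such an $x$. By definition of $\rb$, the cover $C := \{\, W \mid x \in W \implies W \subseteq U \,\}$ lies in $\mathcal{C}_X$, and I would apply \axref{CR} to $C$ to obtain the Cauchy cover $C' := \{\, V \mid \exists W \in C,\ V \rb_X W \,\}$. The key claim is that $C'$ is contained in $D := \{\, W \mid x \in W \implies W \subseteq A \,\}$; granting this, $C'$ refines $D$, so \axref{CE} yields $D \in \mathcal{C}_X$, which is exactly $\{x\} \rb_X A$. To prove the claim, take $V \in C'$ with $V \rb_X W$ for some $W \in C$, and suppose $x \in V$. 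Then $V \subseteq W$ by \rprop{rb-props}\eqref{rb:sub}, so $x \in W$ and hence $W \subseteq U$ by the definition of $C$; now for every $y \in V$ we have $\{y\} \subseteq V \rb_X W \subseteq U$, so $\{y\} \rb_X U$ by \rprop{rb-props}, i.e.\ $y \in A$. Thus $V \subseteq A$, as required.

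Since $A$ is open and $A \subseteq U$, we get $A \subseteq \mathrm{int}(U)$. For the reverse inclusion, observe that $\mathrm{int}(U)$ is open and contained in $U$, so for $x \in \mathrm{int}(U)$ we have $\{x\} \rb_X \mathrm{int}(U)$, and $\mathrm{int}(U) \subseteq U$ then gives $\{x\} \rb_X U$ by \rprop{rb-props}, i.e.\ $x \in A$. Hence $\mathrm{int}(U) = A$. The final clause follows at once: if $V \rb_X U$, then each $x \in V$ satisfies $\{x\} \subseteq V \rb_X U$, so $\{x\} \rb_X U$ and $x \in \mathrm{int}(U)$, whence $V \subseteq \mathrm{int}(U)$.

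I expect the main obstacle to be the openness of $A$, and in particular the role of \axref{CR}: the cover $C$ only witnesses that the relevant sets land inside $U$, and one needs the interpolation step passing to the rather-below refinement $C'$ to strengthen this to landing inside $A$. It is precisely the extra slack of the relation $V \rb_X W$ that allows the monotonicity property in \rprop{rb-props} to carry every point of $V$ into $A$.
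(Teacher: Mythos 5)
Your proof is correct and follows essentially the same route as the paper's: both show that $A = \{x \mid \{x\} \rb_X U\}$ is open by applying \axref{CR} to the cover witnessing $\{x\} \rb_X U$ and checking that the resulting cover $\{V \mid \exists W,\ V \rb_X W,\ x \in W \implies W \subseteq U\}$ refines the cover witnessing $\{x\} \rb_X A$. You merely spell out the refinement step in more detail than the paper does.
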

\begin{proof}
If $x \in \mathrm{int}(U)$, then $\{ x \} \rb_X \mathrm{int}(U) \subseteq U$, so one direction is clear.
Conversely, let $U' = \{ x \mid \{ x \} \rb_X U \}$.
We will show that $U'$ is open and $U' \subseteq \mathrm{int}(U)$.

If $x \in U'$, then by definition $\{ x \} \rb_X U$.
Consider the collection $\{ W' \mid \exists W, W' \rb_X W, x \in W \implies W \subseteq U \}$, which forms a Cauchy cover refining $\{ W' \mid x \in W' \implies W' \subseteq U' \}$.
This implies that $\{ x \} \rb_X U'$, showing that $U'$ is open.

Since $\mathrm{int}(U)$ is the largest open subset of $U$, we have $U' \subseteq \mathrm{int}(U)$. Hence, $\mathrm{int}(U) = \{ x \mid \{ x \} \rb_X U \}$.
\end{proof}

The following proposition shows that every Cauchy cover is refined by an open Cauchy cover.
This means we can restrict our attention to open covers if preferred, as some may find them more convenient,
but since it makes no substantive difference, we will continue to work with arbitrary covers.

\begin{prop}[cover-int]
If $C$ is a Cauchy cover, then $\{ \mathrm{int}(U) \mid U \in C \}$ is also a Cauchy cover.
\end{prop}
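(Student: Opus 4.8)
The plan is to sandwich the desired cover between $C$ and an intermediate Cauchy cover produced by \axref{CR}, and then close the argument with the refinement axiom \axref{CE}. Write $D = \{ \mathrm{int}(U) \mid U \in C \}$ for the cover we wish to show is Cauchy.

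First I would apply \axref{CR} to the given Cauchy cover $C$ to obtain
\[ C' = \{ V \subseteq X \mid \exists U \in C,\ V \rb_X U \}, \]
which by \axref{CR} lies in $\mathcal{C}_X$. The key observation is then that $C'$ refines $D$. Indeed, for any $V \in C'$ there is by definition some $U \in C$ with $V \rb_X U$, and the second assertion of \rlem{int-char} gives $V \subseteq \mathrm{int}(U)$. Since $\mathrm{int}(U) \in D$, every member of $C'$ is contained in a member of $D$, which is exactly the statement that $C'$ refines $D$.

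Finally, because $C' \in \mathcal{C}_X$ and $C'$ refines $D$, axiom \axref{CE} yields $D \in \mathcal{C}_X$, as required. One should note in passing that $D$ is genuinely a cover, since refinement forces $X = \bigcup C' \subseteq \bigcup D \subseteq X$, so this hypothesis of \axref{CE} is automatic.

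I do not anticipate a serious obstacle: the proposition is essentially a direct chaining of \axref{CR}, \rlem{int-char}, and \axref{CE}. The only point demanding any care is the \emph{direction} of refinement expected by \axref{CE} — we need the finer cover $C'$ to be the Cauchy one refining the coarser $D$, and this is precisely what \rlem{int-char} secures, as each rather-below member $V$ shrinks into the interior $\mathrm{int}(U)$ rather than enlarging beyond it.
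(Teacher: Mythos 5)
Your proof is correct and follows exactly the paper's own argument: apply \axref{CR} to obtain the rather-below cover, use \rlem{int-char} to see it refines $\{\mathrm{int}(U) \mid U \in C\}$, and conclude with \axref{CE}. No issues.
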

\begin{proof}
If $C$ is a Cauchy cover, then $\{ V \mid \exists U \in C, V \rb U \}$ is also Cauchy.
By \rlem{int-char}, this cover refines $\{ \mathrm{int}(U) \mid U \in C \}$.
Thus, $\{ \mathrm{int}(U) \mid U \in C \}$ is a Cauchy cover.
\end{proof}

Now, we will show how various topological properties of a cover space $X$ relate to corresponding properties of $S(X)$:

\begin{prop}[top-neighborhood]
If $X$ is a cover space, then the notions of a neighborhood, closed subsets, closure, and dense subsets for $X$ coincide with the corresponding properties of $S(X)$.
\end{prop}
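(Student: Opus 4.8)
The plan is to reduce all four equivalences to a single one: that the cover-space notion of neighborhood agrees with the topological notion of neighborhood in $S(X)$. Indeed, limit points, closure, closed subsets, and dense subsets are, both in $X$ and in any topological space, defined entirely in terms of neighborhoods, so once the neighborhood notions coincide the rest follows by unwinding definitions.

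First I would prove the neighborhood equivalence using \rlem{int-char}. A subset $U \subseteq X$ is a neighborhood of $x$ in the topological space $S(X)$ precisely when $x \in \mathrm{int}(U)$. By \rlem{int-char}, $\mathrm{int}(U) = \{ y \mid \{ y \} \rb_X U \}$, so $x \in \mathrm{int}(U)$ if and only if $\{ x \} \rb_X U$, which is exactly the defining condition of \rdefn{cover-topology} for $U$ to be a cover-space neighborhood of $x$. Hence the two notions of neighborhood agree for every point $x$ and every subset $U$.

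Next I would transfer this to the remaining notions. A point $x$ is a limit point of $U$ in the cover-space sense iff every cover-space neighborhood of $x$ intersects $U$; since neighborhoods coincide, this is exactly the condition that every topological neighborhood of $x$ intersects $U$, i.e.\ that $x$ lies in the closure of $U$ in $S(X)$. Thus the sets of limit points agree, so the cover-space closure $\overline{U}$ equals the closure in $S(X)$. The statements for closed and dense subsets then follow formally: a subset is closed iff it contains all its limit points iff it equals its closure, and $D$ is dense iff every point is a limit point of $D$ iff $\overline{D} = X$; both characterizations are phrased purely through the now-identified closure, so they transfer between $X$ and $S(X)$.

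I do not expect a serious obstacle here, as the entire mathematical content is packaged in \rlem{int-char}; the remaining steps are definitional. The only point requiring care is to phrase each notion for $S(X)$ through neighborhoods and the adherent-point closure operator --- rather than, say, through complements of open sets --- so that the comparison is literal; with this convention fixed, every equivalence is immediate from the agreement of neighborhoods.
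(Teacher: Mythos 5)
Your proposal is correct and follows essentially the same route as the paper: both reduce the statement to the neighborhood case (since the other notions are defined through neighborhoods) and settle that case via \rlem{int-char}, identifying topological neighborhoods of $x$ in $S(X)$ with the condition $\{x\} \rb_X U$. No gaps.
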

\begin{proof}
It is enough to prove the result for neighborhoods since the other properties are defined in terms of neighborhoods.
In $S(X)$, a set $U$ is a neighborhood of a point $x$ if there exists an open set $V$ such that $x \in V \subseteq U$.
If $V$ is such a set, then $\{ x \} \rb_X V \subseteq U$.
Conversely, we can take $V = \mathrm{int}(U)$ since $\{ x \} \rb_X U$ implies that $x \in \mathrm{int}(U) \subseteq U$ by \rlem{int-char}.
\end{proof}

Next, we show that $S(X)$ is always a regular topological space.
To do this, we first prove a useful lemma:

\begin{lem}[rb-point]
Let $X$ be a cover space.
If $U$ is a neighborhood of a point $x$, then there exists a neighborhood $V$ of $x$ such that $V \rb U$.
\end{lem}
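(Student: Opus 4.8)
The plan is to reduce the statement to producing a single ``rather below'' chain $\{x\} \rb V \rb U$ localized at $x$, where the interpolant $V$ is read off as one carefully chosen member of a twice-refined Cauchy cover, rather than built as a star-neighbourhood of $x$.

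Unpacking the hypothesis, $U$ being a neighbourhood of $x$ means $\{x\} \rb U$, i.e. the cover $C_0 := \{ W \mid x \in W \implies W \subseteq U \}$ is Cauchy, since $\overlap{W}{\{x\}}$ is just $x \in W$. First I would apply \axref{CR} to $C_0$ to obtain the Cauchy cover $C_0^{\flat} := \{ V \mid \exists W \in C_0,\ V \rb W \}$, and then apply \axref{CR} once more to $C_0^{\flat}$ to obtain the Cauchy cover $C_0^{\flat\flat} := \{ V' \mid \exists V \in C_0^{\flat},\ V' \rb V \}$. Since a Cauchy cover is in particular a cover, $x$ lies in some member of $C_0^{\flat\flat}$; that is, there exist sets $V_2 \in C_0^{\flat\flat}$, $V_1 \in C_0^{\flat}$ and $W \in C_0$ with $x \in V_2$ and $V_2 \rb V_1 \rb W$.

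I would then set $V := V_1$ and verify the two required properties using \rprop{rb-props}. For $V \rb U$: from $V_2 \subseteq V_1 \subseteq W$ (by \rprop{rb-props}\eqref{rb:sub}) we get $x \in W$, hence $W \subseteq U$ because $W \in C_0$; since $V_1 \rb W \subseteq U$, monotonicity (again \rprop{rb-props}) gives $V_1 \rb U$. For $V$ being a neighbourhood of $x$: from $\{x\} \subseteq V_2$ and $V_2 \rb V_1$, the same monotonicity yields $\{x\} \rb V_1$, which is exactly the statement that $V_1$ is a neighbourhood of $x$.

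The only real obstacle is conceptual rather than technical: the natural first guesses---taking $V = \mathrm{int}(U)$, or taking $V$ to be the union of all members of $C_0^{\flat}$ containing $x$---both fail, since the interior need not be rather below $U$ and such a union can meet members of the refinement that stretch outside $U$. The key is to avoid forming any union and instead extract a single chain $\{x\} \rb V_1 \rb U$ at $x$; the two applications of \axref{CR} correspond precisely to the two links of this chain, one producing the relation $V_1 \rb U$ and the other the relation $\{x\} \rb V_1$.
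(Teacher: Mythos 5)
Your proposal is correct and follows essentially the same route as the paper: apply \axref{CR} twice to the Cauchy cover $\{ W \mid x \in W \implies W \subseteq U \}$, pick a member of the resulting cover containing $x$ to extract a chain $x \in V_2 \rb V_1 \rb W$ with $W \subseteq U$, and take $V = V_1$. The verification via \rprop{rb-props} (using $V_2 \subseteq V_1 \subseteq W$ to conclude $x \in W$ and hence $W \subseteq U$) matches the paper's argument exactly.
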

\begin{proof}
If $\{ x \} \rb U$, then $\{ V' \mid \exists V, \exists W, V' \rb V \rb W, x \in W \implies W \subseteq U \}$ covers $X$.
Thus, there exist subsets $V'$, $V$, and $W$ such that $x \in V' \rb V$ and $V \rb W \subseteq U$.
So, $V$ is the required neighborhood.
\end{proof}

\begin{prop}[top-regular]
For every cover space $X$, the topological space $S(X)$ is regular.
\end{prop}
\begin{proof}
First, note that if $V \rb_X U$, then $\{ \mathrm{int}(W) \mid x \in W \implies W \subseteq U \}$ covers $X$ by \rprop{cover-int},
so, for every $x \in X$, we have a set $W$ such that $x \in \mathrm{int}(W)$ and $\overlap{\mathrm{int}(W)}{V} \implies \overlap{W}{V} \implies \mathrm{int}(W) \subseteq W \subseteq U$.
It follows that $V \rb_{S(X)} U$.

To see that $S(X)$ is regular, consider an open set $U$.
By \rlem{rb-point}, there exists a neighborhood $V$ of $x$ such that $V \rb_X U$.
The previous observation implies that $V \rb_{S(X)} U$.
By \rprop{top-neighborhood}, there is an open neighborhood $V'$ of $x$ such that $V' \rb_{S(X)} U$.
Thus, $S(X)$ is regular.
\end{proof}

The following proposition will be useful later.
It shows that $V \rb U$ implies the usual definition of the rather below relation in terms of limit points:

\begin{prop}[rb-closure]
Let $U,V$ be subsets of a cover space $X$ such that $V \rb U$.
Then $U$ is a neighborhood of every limit point of $V$.
In particular, the closure of $V$ is a subset of $U$, and $\overline{V} \rb \mathrm{int}(U)$.
\end{prop}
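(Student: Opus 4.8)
The plan is to unfold the definition of $V \rb U$ and reduce everything to the case of \emph{open} witnessing sets, so that the limit-point hypothesis can be brought to bear. Recall that $V \rb U$ means the cover $C = \{ W \mid \overlap{W}{V} \implies W \subseteq U \}$ is Cauchy. The difficulty is that a witness $W \in C$ is an arbitrary subset, and knowing merely that a limit point $x$ of $V$ lies in $W$ does not give $\overlap{W}{V}$: the limit-point property only guarantees that \emph{neighborhoods} of $x$ meet $V$. I would therefore pass to the open refinement $\{ \mathrm{int}(W) \mid W \in C \}$, which is again Cauchy by \rprop{cover-int}, and work with it throughout.

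First I would prove that $U$ is a neighborhood of every limit point $x$ of $V$, that is, that $\{ x \} \rb U$. It suffices to show that the Cauchy cover $\{ \mathrm{int}(W) \mid W \in C \}$ refines $\{ W' \mid x \in W' \implies W' \subseteq U \}$, since then the latter is Cauchy by \axref{CE}. Fix $W \in C$ and suppose $x \in \mathrm{int}(W)$. Then $\mathrm{int}(W)$ is an open neighborhood of $x$, so it meets $V$ because $x$ is a limit point; hence $\overlap{W}{V}$ and thus $W \subseteq U$, giving $\mathrm{int}(W) \subseteq W \subseteq U$. This is exactly the required refinement, so $\{ x \} \rb U$.

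The inclusion $\overline{V} \subseteq U$ is then immediate: by the previous paragraph $U$ is a neighborhood of each $x \in \overline{V}$, and $\{ x \} \rb U$ yields $x \in U$ by item~\eqref{rb:sub} of \rprop{rb-props}.

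Finally, for $\overline{V} \rb \mathrm{int}(U)$ I would run the same refinement argument one level higher, showing that $\{ \mathrm{int}(W) \mid W \in C \}$ refines $\{ W' \mid \overlap{W'}{\overline{V}} \implies W' \subseteq \mathrm{int}(U) \}$. Given $W \in C$ with $\overlap{\mathrm{int}(W)}{\overline{V}}$, pick a point $y$ in that intersection; since $\mathrm{int}(W)$ is an open neighborhood of the limit point $y$, it meets $V$, so $\overlap{W}{V}$ and $W \subseteq U$, whence the open set $\mathrm{int}(W)$ lies in $\mathrm{int}(U)$. Applying \axref{CE} once more gives $\overline{V} \rb \mathrm{int}(U)$. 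I expect the passage to interiors via \rprop{cover-int} to be the only real subtlety; once the covers are open, each step is a direct application of the limit-point definition together with monotonicity of the interior.
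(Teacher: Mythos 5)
Your proof is correct and follows essentially the same route as the paper: both pass from the witnessing Cauchy cover $\{ W \mid \overlap{W}{V} \implies W \subseteq U \}$ to its open refinement via \rprop{cover-int}, and then use that an open set containing a limit point of $V$ must meet $V$ to show this refines $\{ W' \mid \overlap{W'}{\overline{V}} \implies W' \subseteq \mathrm{int}(U) \}$. The only difference is presentational: you prove the neighborhood claim directly and then derive the rest, whereas the paper proves only $\overline{V} \rb \mathrm{int}(U)$ and leaves the other claims as consequences.
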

\begin{proof}
We prove the last claim.
By \rprop{cover-int}, the cover $\{ \mathrm{int}(W') \mid \exists W, W' \rb W, \overlap{W}{V} \implies W \subseteq U \}$ is Cauchy.
This cover refines $\{ W \mid \overlap{W}{\overline{V}} \implies W \subseteq \mathrm{int}(U) \}$.
Thus, $\overline{V} \rb \mathrm{int}(U)$.
\end{proof}

Many examples of cover spaces arise naturally from uniform spaces.
Recall that a uniform space is a set $X$ equipped with a collection of covers $\mathcal{U}_X$ that satisfies the following axioms:

\begin{itemize}
\item[(UT)] The trivial cover $\{ X \}$ belongs to $\mathcal{U}_X$.
\item[(UE)] \axitem{UE} If $C \in \mathcal{U}_X$ and $C$ refines $D$, then $D \in \mathcal{U}_X$.
\item[(UI)] \axitem{UI} If $C,D \in \mathcal{U}_X$, then $\{ U \cap V \mid U \in C, V \in D \} \in \mathcal{U}_X$.
\item[(UU)] \axitem{UU} For every $C \in \mathcal{U}_X$, there exists $D \in \mathcal{U}_X$ such that, for every $V \in D$,
there exists $U \in C$ such that $D \subseteq \{ W \mid \overlap{W}{V} \implies W \subseteq U \}$.
\end{itemize}
The elements of $\mathcal{U}_X$ are called \emph{uniform covers} of $X$.

\begin{example}
Every metric space induces a uniform structure.
Specifically, a cover of a metric space $X$ is uniform if it refines the cover $\{ B_\varepsilon(x) \mid x \in X \}$ for some $\varepsilon > 0$.
\end{example}

We now demonstrate how every uniform space gives rise to a cover space.
To achieve this, we introduce the intermediate notion of a \emph{precover space}:

\begin{defn}
A \emph{precover space} is a set $X$ together with a collection $\mathcal{C}_X$ of covers of $X$ that satisfies the conditions \axref{CT}, \axref{CE}, and \axref{CG}.
\end{defn}

In this framework, a \emph{cover space} is precisely a precover space that additionally satisfies the regularity condition \axref{CR}.

We also define the notion of a \emph{subbase} for a precover structure:

\begin{defn}
A \emph{subbase} for a precover structure on a set $X$ is any collection of covers of $X$.
\end{defn}

Given a subbase $\mathcal{B}_X$, we can take its closure under \axref{CT}, \axref{CE}, and \axref{CG}.
We denote this closure by $\overline{\mathcal{B}_X}$.
The following lemma applied to the filter of sets containing a given point implies that all sets in this closure are covers:

\begin{lem}[closure-filter]
Let $\mathcal{B}_X$ be a subbase and $F$ a filter on $X$.
If $F$ intersects every set in $\mathcal{B}_X$, then it also intersects every set in $\overline{\mathcal{B}_X}$.
\end{lem}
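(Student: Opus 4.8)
The plan is to read ``$F$ intersects a cover $C$'' as the assertion that $F$ and $C$ share a common member, i.e.\ that there exists $U \in C$ with $U \in F$, and then to prove the lemma by exploiting the fact that $\overline{\mathcal{B}_X}$ is, by construction, the smallest collection of covers containing $\mathcal{B}_X$ and closed under \axref{CT}, \axref{CE}, and \axref{CG}. Accordingly, I would consider the collection
\[
\mathcal{S} = \{\, C \mid \exists\, U \in C,\ U \in F \,\}
\]
of all covers met by $F$, and show that $\mathcal{S}$ contains $\mathcal{B}_X$ and is closed under the three generating operations. Minimality of the closure then forces $\overline{\mathcal{B}_X} \subseteq \mathcal{S}$, which is precisely the claim that $F$ intersects every set in $\overline{\mathcal{B}_X}$.

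The containment $\mathcal{B}_X \subseteq \mathcal{S}$ is exactly the hypothesis. For \axref{CT}, the trivial cover $\{X\}$ lies in $\mathcal{S}$ because $X \in F$, as $X$ belongs to every filter. For \axref{CE}, suppose $C \in \mathcal{S}$ refines $D$: I pick $U \in C$ with $U \in F$, choose $W \in D$ with $U \subseteq W$ using the refinement, and conclude $W \in F$ by upward closure of $F$; hence $D \in \mathcal{S}$. Each of these steps uses only one defining property of a filter, so they are routine.

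The real work is the closure under \axref{CG}, and this is the step I expect to be the main (though still mild) obstacle, since it is where the finite-meet stability of $F$ enters and where the dependent family of covers must be handled with care. Given $C \in \mathcal{S}$ together with a family $\{D_U\}_{U \in C}$ with every $D_U \in \mathcal{S}$, I would first extract $U_0 \in C$ with $U_0 \in F$; then, applying membership in $\mathcal{S}$ to that specific index, extract $V_0 \in D_{U_0}$ with $V_0 \in F$; finally $U_0 \cap V_0 \in F$ by closure of $F$ under binary intersection, and $U_0 \cap V_0$ is a member of $\{\, U \cap V \mid U \in C,\ V \in D_U \,\}$, so this cover lies in $\mathcal{S}$. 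Care is needed constructively only to keep the chosen witness $U_0$ fixed when invoking $D_{U_0} \in \mathcal{S}$; no choice principle is required, since we need just one witness at each stage. With all three closure conditions verified, minimality yields the result, and applying it to the principal filter $\{\, A \subseteq X \mid x \in A \,\}$ at each point $x$ gives the promised corollary that every member of $\overline{\mathcal{B}_X}$ is a cover.
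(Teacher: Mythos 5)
Your proposal is correct and matches the paper's argument, which is exactly the induction on the generation of $\overline{\mathcal{B}_X}$ that you spell out (the paper states it in one line). The case analysis for \axref{CT}, \axref{CE}, and \axref{CG} using the filter axioms is the intended content, and your concluding remark about the principal filter at a point recovering the fact that all sets in the closure are covers is also the paper's stated application.
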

\begin{proof}
The proof follows by induction on the steps used to generate $\overline{\mathcal{B}_X}$.
\end{proof}

Thus, $\overline{\mathcal{B}_X}$ is the minimal precover space structure that contains $\mathcal{B}_X$.
However, the pair $(X, \overline{\mathcal{B}_X})$ is not necessarily a cover space.
The following lemma provides a useful tool for extending regularity conditions from a subbase to the entire space:

\begin{lem}[subbase-regular]
Let $\mathcal{B}_X$ be a subbase, and let $\rb$ be a binary relation on subsets of $X$ satisfying the following conditions:
\begin{itemize}
\item $X \rb X$.
\item If $W \rb V \subseteq U$, then $W \rb U$ for all subsets $W,V,U$.
\item If $V \rb U$ and $V' \rb U'$, then $V \cap V' \rb U \cap U'$ for all subsets $V,V',U,U'$.
\end{itemize}
Suppose that, for every cover $C \in \mathcal{B}_X$, the collection $\{ V \mid \exists U \in C, V \rb U \}$ belongs to $\overline{\mathcal{B}_X}$.
Then the same property holds for every cover $C \in \overline{\mathcal{B}_X}$.
\end{lem}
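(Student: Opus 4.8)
The plan is to induct on the generation of $\overline{\mathcal{B}_X}$ from $\mathcal{B}_X$ under \axref{CT}, \axref{CE}, and \axref{CG}. For a cover $C$, abbreviate $C^\star := \{ V \mid \exists U \in C, V \rb U \}$; the property to propagate is that $C^\star \in \overline{\mathcal{B}_X}$. The base case $C \in \mathcal{B}_X$ is exactly the hypothesis, so it remains to show that this property is preserved by each of the three closure operations, and the three listed conditions on $\rb$ supply precisely what is needed. For \axref{CT}: since $X \rb X$ by the first condition, $X \in \{X\}^\star$, so $\{X\}$ refines $\{X\}^\star$; as $\{X\} \in \overline{\mathcal{B}_X}$, \axref{CE} gives $\{X\}^\star \in \overline{\mathcal{B}_X}$. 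For \axref{CE}: suppose $C$ refines $D$ with $C^\star \in \overline{\mathcal{B}_X}$; given $V \in C^\star$ with $V \rb U$, $U \in C$, there is $U' \in D$ with $U \subseteq U'$, and the second condition yields $V \rb U'$, so $V \in D^\star$. Hence $C^\star$ refines $D^\star$, and \axref{CE} gives $D^\star \in \overline{\mathcal{B}_X}$.

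The main work is the \axref{CG} case, and it is where the third condition enters. Here $E = \{ U \cap V \mid U \in C, V \in D_U \}$ with $C \in \overline{\mathcal{B}_X}$ and each $D_U \in \overline{\mathcal{B}_X}$, and by the inductive hypothesis $C^\star \in \overline{\mathcal{B}_X}$ and each $D_U^\star \in \overline{\mathcal{B}_X}$. The obstacle is that an element $V$ of $C^\star$ satisfies $V \rb U$ for some $U \in C$ given only existentially, so one cannot select a matching $D_U$ for the second factor without invoking choice.

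I would sidestep this by bundling the witness into the inner cover: for each $V \in C^\star$ set
\[
B_V := \{ W \mid \exists U \in C,\ \exists V' \in D_U,\ V \rb U \wedge W \rb V' \}.
\]
For fixed $V \in C^\star$ there is some $U \in C$ with $V \rb U$, and for that $U$ one has $D_U^\star \subseteq B_V$; since $D_U^\star \in \overline{\mathcal{B}_X}$ refines $B_V$, \axref{CE} shows $B_V \in \overline{\mathcal{B}_X}$ (the choice of $U$ is harmless here, since membership in $\overline{\mathcal{B}_X}$ is a proposition, so the existential witness may simply be eliminated). Applying \axref{CG} to $C^\star$ with the family $\{ B_V \}_{V \in C^\star}$ then produces
\[
K := \{ V \cap W \mid V \in C^\star,\ W \in B_V \} \in \overline{\mathcal{B}_X}.
\]

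It remains to check that $K$ refines $E^\star$, after which \axref{CE} finishes the induction. An element $V \cap W$ of $K$ satisfies $V \in C^\star$ and $W \in B_V$; unpacking $W \in B_V$ yields a single $U \in C$ and $V' \in D_U$ with $V \rb U$ and $W \rb V'$, so the witnesses now match. The third condition gives $V \cap W \rb U \cap V'$, and since $U \cap V' \in E$ we conclude $V \cap W \in E^\star$. Thus every element of $K$ lies in $E^\star$, so $K \in \overline{\mathcal{B}_X}$ refines $E^\star$ (in particular $E^\star$ is a cover), and \axref{CE} gives $E^\star \in \overline{\mathcal{B}_X}$. The one subtlety to handle carefully is exactly this witness-matching in the \axref{CG} step: the definition of $B_V$ is arranged so that membership $W \in B_V$ already carries the common witness $U$, which is precisely what makes the meet $V \cap W$ land in $E^\star$ via the third condition, while keeping the argument choice-free.
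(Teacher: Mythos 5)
Your proof is correct and follows essentially the same route as the paper: the paper's (CG) case defines, for each $U'$ in the outer starred cover, the inner cover $E'_{U'} = \{ V' \mid \exists U \in D, V \in E_U, V' \rb V, U' \rb U \}$, which is exactly your $B_V$ with the witness $U$ bundled into the existential so that the meet condition applies. Your write-up just fills in more detail on the (CT)/(CE) cases and on why $B_V \in \overline{\mathcal{B}_X}$.
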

\begin{proof}
We prove this by induction on the generation of $\overline{\mathcal{B}_X}$.
Specifically, we show that for every $C \in \overline{\mathcal{B}_X}$, the cover $\{ W' \mid \exists W \in C, W' \rb W \}$ also belongs to $\overline{\mathcal{B}_X}$.
The only non-trivial case arises from the axiom \axref{CG}.

Assume $C = \{ U \cap V \mid U \in D, V \in E_U \} \in \overline{\mathcal{B}_X}$, where $D \in \overline{\mathcal{B}_X}$ and $\{ E_U \in \overline{\mathcal{B}_X} \}_{U \in D}$.
By the induction hypothesis, we know:
\begin{align*}
D' & = \{ U' \mid \exists U \in D, U' \rb U \} \in \overline{\mathcal{B}_X} \\
E'_{U'} & = \{ V' \mid \exists U \in D, V \in E_U, V' \rb V, U' \rb U \} \in \overline{\mathcal{B}_X} \text{ for every } U' \in D'
\end{align*}
Since $\{ W' \mid \exists W \in C, W' \rb W \}$ is refined by $\{ U' \cap V' \mid U' \in D', V' \in E'_{U'} \}$, the proof is complete.
\end{proof}

We will see later that a stronger condition on a subbase is often useful:

\begin{defn}
A \emph{base} for a precover space structure is a collection $\mathcal{B}$ of covers of $X$ satisfying the following axioms:
\begin{itemize}
\item[(BT)] \axitem{BT} The trivial cover $\{ X \}$ belongs to $\mathcal{B}$.
\item[(BE)] \axitem{BE} If $C \in \mathcal{B}$ and $C$ refines $D$, then $D \in \mathcal{B}$.
\item[(BI)] \axitem{BI} If $C,D \in \mathcal{B}$, then $\{ U \cap V \mid U \in C, V \in D \} \in \mathcal{B}$.
\end{itemize}
Let $U,V$ be subsets of $X$.
We say that $V$ is \emph{$\mathcal{B}$-rather below} $U$, written $V \rb_\mathcal{B} U$, if $\{ W \subseteq X \mid \overlap{W}{V} \implies W \subseteq U \} \in \mathcal{B}$.
A base $\mathcal{B}$ is called \emph{regular} if it satisfies the following condition:
\begin{itemize}
\item[(BR)] \axitem{BR} For every $C \in \mathcal{B}$, we have $\{ V \mid \exists U \in C, V \rb_\mathcal{B} U \} \in \overline{\mathcal{B}}$.
\end{itemize}
\end{defn}

\begin{remark}
The conditions of \rlem{subbase-regular} apply to the relation $\rb_\mathcal{B}$.
It follows that the precover space generated by a base $\mathcal{B}$ is not only a cover space but also satisfies a stronger regularity condition:
if $C$ is a Cauchy cover, then $\{ V \mid \exists U \in C, V \rb_\mathcal{B} U \}$ is also a Cauchy cover.
\end{remark}

\begin{example}
The collection of all Cauchy covers of a cover space forms a regular base.
\end{example}

\begin{example}
If $X$ is a uniform space, then the collection of uniform covers is a regular base.
The regularity axiom \axref{BR} follows from \axref{UU}.
\end{example}

The previous example shows that every uniform space $(X, \mathcal{U}_X)$ determines a cover space $(X, \overline{\mathcal{U}_X})$.
In particular, every metric space defines a cover space.
These are fundamental examples of cover spaces.

\section{Categorical properties}
\label{sec:cat}

In this section, we define the category of cover spaces and establish its various properties.
For convenience, we also define the category of precover spaces, a broader framework that will aid in understanding the structure of cover spaces.

We begin with the definition of morphisms between precover spaces:

\begin{defn}
A \emph{cover map} between precover spaces $X$ and $Y$ is a function $f : X \to Y$ such that, for every Cauchy cover $D$ on $Y$, the collection $\{ f^{-1}(V) \mid V \in D \}$ forms a Cauchy cover on $X$.
\end{defn}

The identity function is trivially a cover map, and cover maps are closed under composition.
This makes the collection of precover spaces and cover maps into a category, denoted by $\cat{Precov}$.
The subcategory of $\cat{Precov}$ consisting of cover spaces is called the category of cover spaces, denoted by $\cat{Cov}$.

We now show that several constructions previously defined are functorial.
To facilitate this, we first establish a simple lemma:

\begin{lem}[cover-map-rb]
Let $f : X \to Y$ be a cover map, and let $U$ and $V$ be subsets of $Y$ such that $V \rb_Y U$.
Then $f^{-1}(V) \rb_X f^{-1}(U)$.
\end{lem}
\begin{proof}
The result follows from the fact that the collection $\{ W \mid \overlap{W}{f^{-1}(V)} \implies W \subseteq f^{-1}(U) \}$ is refined by $\{ f^{-1}(W) \mid \overlap{W}{V} \implies W \subseteq U \}$.
\end{proof}

Using this lemma, we show that the constructions $S : \cat{Cov} \to \cat{Top}_\mathrm{reg}$ and $T : \cat{Top}_\mathrm{reg} \to \cat{Cov}$ are functors.
Here, $\cat{Top}_\mathrm{reg}$ denotes the category of regular topological spaces.

\paragraph{Functoriality of $S$.}
Let $f : X \to Y$ be a cover map between cover spaces.
For any open subset $U$ of $Y$, we claim that $f^{-1}(U)$ is open in $X$.
Indeed, given $x \in f^{-1}(U)$, note that $\{x\} \subseteq f^{-1}(\{f(x)\}) \rb f^{-1}(U)$ by \rlem{cover-map-rb},
which implies that $f^{-1}(U)$ satisfies the openness condition in $S(X)$.
Hence, $S(f)$ is a continuous map, demonstrating that $S$ is functorial.

\paragraph{Functoriality of $T$.}
For any regular topological space $X$, the associated functor $T$ assigns to $X$ a cover space structure.
It is clear that $T$ preserves morphisms, and therefore, it is also a functor.

\paragraph{Faithfulness and fullness.}
Both $S$ and $T$ are faithful functors as they preserve the underlying functions of morphisms.
Furthermore, $T$ is full.
Since $S(T(X)) = X$ for every regular topological space $X$, any cover map $f : T(X) \to T(Y)$ induces a continuous map $g = S(f) : X \to Y$ such that $T(g) = f$.

\paragraph{Adjunction.}
The functor $T$ is a left adjoint to $S$, with the unit and counit of the adjunction being identity maps.
The counit $T(S(X)) \to X$ is a cover map since any Cauchy cover contains a neighborhood of every point.
Consequently, $\cat{Top}_\mathrm{reg}$ is equivalent to a full coreflective subcategory of $\cat{Cov}$.

\paragraph{Topological category structure of $\cat{Precov}$.}
Finally, we show that $\cat{Precov}$ is a topological category in the sense of \cite[Definition~21.7]{joy-cats}.
This requires demonstrating that the set of precover space structures on a given set $X$ forms a complete lattice under inclusion.
Given a collection $\{\mathcal{C}_i\}_{i \in I}$ of precover space structures on $X$,
the join of these structures is $\overline{\bigcup_{i \in I} \mathcal{C}_i}$.

\paragraph{Transferred precover structures.}
Let $f : X \to Y$ be a function between sets, and let $Y$ have a precover space structure.
The smallest precover space structure on $X$ for which $f$ is a cover map is constructed as follows:
A cover $C$ of $X$ is Cauchy if and only if $\{ V \mid \exists U \in C, f^{-1}(V) \subseteq U \}$ is a Cauchy cover of $Y$.
These constructions collectively establish that $\cat{Precov}$ is a topological category.

Cover spaces are closed under joins in the lattice of precover space structures on a set $X$.
This property follows directly from \rlem{subbase-regular}.
Additionally, if $Y$ is a cover space and $f : X \to Y$ is a function, then the transferred precover structure on $X$ is, in fact, a cover structure.
Consequently, the category of cover spaces is reflective in $\cat{Precov}$:

\begin{prop}[regular-reflective]
The category of cover spaces is reflective in $\cat{Precov}$.
The reflection $X \to R(X)$ is given by the identity function on $X$.
\end{prop}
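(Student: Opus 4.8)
The plan is to realize the reflection as a purely lattice-theoretic construction in the fiber of $\cat{Precov}$ over a fixed underlying set, leveraging the two facts just established: that cover structures are closed under joins, and that the transferred structure along a function into a cover space is itself a cover structure. Given a precover space $(X,\mathcal{C}_X)$, I would define $R(X)$ to be $X$ equipped with the join $\mathcal{C}_{R(X)} = \overline{\bigcup_i \mathcal{C}_i}$ of all cover structures $\mathcal{C}_i \subseteq \mathcal{C}_X$. Since cover structures are closed under joins, $\mathcal{C}_{R(X)}$ is again a cover structure; and since $\bigcup_i \mathcal{C}_i \subseteq \mathcal{C}_X$ while $\mathcal{C}_X$ is already closed under \axref{CT}, \axref{CE}, and \axref{CG}, the closure $\overline{\bigcup_i \mathcal{C}_i}$ stays inside $\mathcal{C}_X$. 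Thus $\mathcal{C}_{R(X)}$ is the largest cover structure contained in $\mathcal{C}_X$, and in particular $\mathcal{C}_{R(X)} \subseteq \mathcal{C}_X$, so the identity map $\eta_X \colon X \to R(X)$ is a cover map.

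Next I would verify the universal property. Fix a cover space $Y$ and a cover map $f \colon X \to Y$; I must produce a unique cover map $R(X) \to Y$ through which $f$ factors along $\eta_X$. Because $\eta_X$ is the identity on underlying sets, any such factorization must equal $f$ as a function, so uniqueness is automatic and only existence, i.e.\ that $f \colon R(X) \to Y$ is again a cover map, needs checking. Here I would invoke the transferred structure $\mathcal{C}_f$ on $X$, the smallest precover structure for which $f$ is a cover map. Since $Y$ is a cover space, $\mathcal{C}_f$ is in fact a cover structure. As $\mathcal{C}_X$ makes $f$ a cover map by hypothesis, minimality gives $\mathcal{C}_f \subseteq \mathcal{C}_X$; hence $\mathcal{C}_f$ is a cover structure contained in $\mathcal{C}_X$ and therefore $\mathcal{C}_f \subseteq \mathcal{C}_{R(X)}$. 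But $\mathcal{C}_f \subseteq \mathcal{C}_{R(X)}$ is precisely the statement that $f \colon R(X) \to Y$ is a cover map, completing the factorization.

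Finally, the assignment $X \mapsto R(X)$ extends to a functor and the family $\{\eta_X\}$ to the unit of an adjunction by the standard reflection machinery, once the universal property above is in hand. I do not expect a genuine obstacle within this proposition itself: the substantive work is packaged into the two preceding facts (closure of cover structures under joins and the transfer result). The one point that requires care is the direction of the ordering---confirming that ``largest cover structure below $\mathcal{C}_X$'' is the correct object, which hinges on observing that the unit must shrink the collection of Cauchy covers, so that $\mathcal{C}_{R(X)} \subseteq \mathcal{C}_X$, and then pairing this with the minimality of the transferred structure $\mathcal{C}_f$.
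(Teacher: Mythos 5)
Your proposal is correct and follows essentially the same route as the paper: define $R(X)$ via the join of all cover space structures contained in $\mathcal{C}_X$, observe that the identity is then a cover map $X \to R(X)$, and use the minimality of the transferred structure $\mathcal{C}_f$ (which is a cover structure since $Y$ is a cover space) to deduce $\mathcal{C}_f \subseteq \mathcal{C}_{R(X)}$ and hence the universal property. The paper's proof is just a more compressed version of the same argument.
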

\begin{proof}
For a precover space $X$, define its reflection $R(X)$ as the join of all cover space structures on $X$ that are contained within $\mathcal{C}_X$.
The identity function $X \to R(X)$ is a cover map and satisfies the universal property of the reflection.
Specifically, if $Y$ is a cover space and $f : X \to Y$ is a cover map, then the transferred cover structure on $X$ is a subset of $\mathcal{C}_{R(X)}$.
Hence, $f$ is a cover map from $R(X)$ to $Y$.
\end{proof}

\begin{cor}
The category of cover spaces is a topological category.
\end{cor}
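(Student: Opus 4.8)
The plan is to verify the definition of a topological category from \cite[Definition~21.7]{joy-cats} directly: the faithful forgetful functor $\cat{Cov} \to \cat{Set}$ (faithful because cover maps are functions) must admit a unique initial lift for every structured source. I would obtain these lifts from the corresponding lifts in $\cat{Precov}$, which has already been shown to be topological, and then argue that they never leave the subcategory $\cat{Cov}$. Throughout I use two facts recorded just before the statement: cover structures are closed under joins in the lattice of precover structures on a fixed set, and the precover structure transferred along a function into a cover space is itself a cover structure.

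Concretely, fix a set $X$ together with a (possibly large) family of functions $f_i : X \to Y_i$ whose codomains $Y_i$ are cover spaces. Since $\cat{Precov}$ is topological, this source has a unique $\cat{Precov}$-initial lift, and this initial precover structure is the join $\overline{\bigcup_i f_i^{*}(\mathcal{C}_{Y_i})}$ of the transferred structures $f_i^{*}(\mathcal{C}_{Y_i})$, i.e.\ the smallest precover structure on $X$ making every $f_i$ a cover map. Each transferred structure $f_i^{*}(\mathcal{C}_{Y_i})$ is a cover structure because every $Y_i$ is a cover space, and the join of cover structures is again a cover structure; hence the $\cat{Precov}$-initial structure already lies in $\cat{Cov}$.

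It then remains to check that a $\cat{Precov}$-initial structure which happens to be a cover structure is also $\cat{Cov}$-initial, and here I would invoke that $\cat{Cov}$ is a full subcategory of $\cat{Precov}$ (a cover map between cover spaces is simply a cover map; cf.\ \rprop{regular-reflective}). The defining bi-implication for initiality — that $g : Z \to X$ is a cover map exactly when every $f_i \circ g$ is — quantifies over test objects $Z$, and since every cover space is in particular a precover space, the $\cat{Precov}$-initiality restricts verbatim to the required $\cat{Cov}$-initiality. Uniqueness is inherited as well, since initial lifts in $\cat{Precov}$ are unique and we have merely observed that the unique one lands inside $\cat{Cov}$.

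I expect the content here to be bookkeeping rather than a genuine obstacle: the two closure properties quoted above are precisely what forces the ambient initial structure to remain within $\cat{Cov}$, so the proof reduces to assembling them. The one point to state with care is the passage from $\cat{Precov}$-initiality to $\cat{Cov}$-initiality, which must use fullness; once that is in place the axioms hold, with the empty and singleton sources (yielding the indiscrete structure and the individual transferred structures) handled by the same join formula.
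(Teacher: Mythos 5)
Your argument is correct and rests on exactly the two facts the paper isolates for this purpose: closure of cover structures under joins in the lattice of precover structures, and the fact that structures transferred along a function into a cover space are cover structures. The paper packages these into \rprop{regular-reflective} and deduces the corollary from concrete reflectivity of $\cat{Cov}$ in the topological category $\cat{Precov}$, whereas you verify the initial lifts directly, but this is essentially the same proof.
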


We conclude this section with a discussion of embeddings.
A cover map $f : X \to Y$ is called an \emph{embedding} if, for every $C \in \mathcal{C}_X$,
the set $\{ V \mid \exists U \in C, f^{-1}(V) \subseteq U \}$ is a Cauchy cover of $Y$.
Note that a cover map $f : X \to Y$ is an embedding if and only if the precover space structure on $X$ is the transferred precover space structure.

In general, an embedding does not need to be injective.
However, we will show in \rprop{embedding-injective} that injectivity holds if the domain satisfies a separation condition.
For injective embeddings, we have the following characterization:

\begin{prop}
A cover map is a regular monomorphism in $\cat{Precov}$ if and only if it is an injective embedding.
Similarly, a cover map between cover spaces is a regular monomorphism in $\cat{Cov}$ if and only if it is an injective embedding.
\end{prop}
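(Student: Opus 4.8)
The plan is to treat both claims uniformly through the topological-category structure. Write $U\colon\cat{Precov}\to\mathbf{Set}$ for the forgetful functor; since $\cat{Precov}$ is a topological category, $U$ creates both limits and colimits, computing them on underlying sets and then equipping the result with the initial (for limits) or final (for colimits) structure. I will use three observations. First, that monomorphisms are injective: the one-point discrete cover space $\mathrm{pt}$ is a separator, because every function out of a discrete cover space is a cover map, so $\mathrm{Hom}(\mathrm{pt},X)\cong U(X)$ naturally; if $f$ is mono then $f_*$ is injective on these hom-sets, hence $U(f)$ is injective. In particular every regular mono is injective, so it remains only to identify the underlying-function-injective regular monos with the embeddings.

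For the forward direction (regular mono $\Rightarrow$ embedding), suppose $f=\mathrm{eq}(g,h)$ for $g,h\colon Y\to Z$. Let $X'$ denote $U(X)$ carried with the transferred structure along $f$; this is the embedding structure, and by the earlier remark it is again a cover structure when $Y$ is a cover space, so the argument stays inside $\cat{Cov}$. The identity is a cover map $X\to X'$ since the transferred structure is the smallest one making $f$ a cover map. Let $f'\colon X'\to Y$ be the embedding with underlying function $U(f)$. Then $g f'=h f'$, because on underlying functions these agree with $g f=h f$. The universal property of the equalizer produces a cover map $k\colon X'\to X$ with $f k=f'$; on underlying sets $U(f)\,U(k)=U(f)$, and $U(f)$ is injective, so $U(k)=\mathrm{id}$. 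Thus the identity is a cover map $X'\to X$ as well, forcing $\mathcal{C}_X=\mathcal{C}_{X'}$, i.e.\ $f$ is an injective embedding.

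For the backward direction (injective embedding $\Rightarrow$ regular mono), I realize $f$ as the equalizer of its cokernel pair. Form the pushout $P=Y\sqcup_X Y$ of $f$ along itself; as a colimit it carries the final structure making both coprojections $u,v\colon Y\to P$ cover maps, and $U(P)$ is the set-pushout. By injectivity the set-theoretic equalizer of $u,v$ is exactly the image $U(f)(U(X))$. Since $U$ creates limits and $u$ is already a cover map, the equalizer of $u,v$ in $\cat{Precov}$ is this set equipped with the initial structure along its inclusion into $Y$; transporting along the bijection $U(X)\cong U(f)(U(X))$ this is precisely the transferred structure, which equals $\mathcal{C}_X$ because $f$ is an embedding. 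Hence $f\cong\mathrm{eq}(u,v)$ is a regular mono. For $\cat{Cov}$, replace $P$ by its reflection $R(P)$, a cover space with the same underlying set by \rprop{regular-reflective}; the coprojections remain cover maps into $R(P)$ (compose with the cover map $P\to R(P)$), and since $\cat{Cov}$ is reflective in $\cat{Precov}$ it is closed under limits, so the equalizer in $\cat{Cov}$ is computed exactly as in $\cat{Precov}$ and the same computation applies.

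The main obstacle is the structural bookkeeping in the backward direction: verifying that passing to the \emph{final} structure on the cokernel pair $P$ and then back to the \emph{initial} structure on the equalizer returns precisely the transferred (embedding) structure on $X$, rather than something strictly larger. The cleanest way to control this is to note that $u$ is a cover map, so the initial structure with respect to the full equalizer source reduces to the initial structure along the inclusion $U(f)(U(X))\hookrightarrow Y$ alone. In the cover-space case, the additional subtlety is that inserting the reflection $R$ must not disturb this identification, which is exactly why reflectivity (closure under limits, together with $R$ acting as the identity on underlying sets) and the fact that transferred structures of cover spaces are again cover spaces are both essential.
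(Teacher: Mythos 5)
Your argument is correct, but the crucial direction (injective embedding $\Rightarrow$ regular mono) takes a genuinely different route from the paper. The paper avoids colimits entirely: it takes \emph{any} presentation of the injection $f$ as an equalizer of $g,h : Y \to Z$ in $\cat{Set}$ and then assigns $Z$ the \emph{indiscrete} structure, so that $g$ and $h$ are cover maps for free and the only thing left to check is that a cover map $W \to Y$ equalizing $g,h$ factors through the transferred structure on $X$ --- which is immediate from $f$ being an embedding. You instead realize $f$ as the equalizer of its actual cokernel pair in $\cat{Precov}$, which requires the cocompleteness of the topological category, the fact that in a topos every mono is the equalizer of its cokernel pair at the level of underlying sets, and the bookkeeping you describe (final structure on the pushout, then initial structure on the equalizer, with the reduction of the initial source to the single leg into $Y$ because $u$ is already a cover map). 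Your version is heavier but buys slightly more: it exhibits $f$ as an \emph{effective} (strict) monomorphism, and it is the canonical argument one would reach for in any topological category. The paper's trick is more elementary and sidesteps all colimit computations; in particular, in the $\cat{Cov}$ case it needs no reflection of the pushout, whereas your argument must insert $R(P)$ and check that the reflection (being the identity on underlying sets by \rprop{regular-reflective}) does not change the set-level equalizer. Your forward direction is a correct expansion of what the paper dismisses as ``straightforward to verify,'' and the separator argument via the one-point discrete space is a clean way to get injectivity of (regular) monos.
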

\begin{proof}
Suppose $f : X \to Y$ is a regular monomorphism in $\cat{Precov}$.
Then $f$ is injective, and it is straightforward to verify that the precover space structure on $X$ is the transferred structure.

Conversely, let $f : X \to Y$ be an injective embedding.
Then $f : X \to Y$ is an equalizer of some functions $g,h : Y \to Z$ in $\cat{Set}$.
Assign the indiscrete structure to $Z$, making $g$ and $h$ cover maps, and it is easy to see that $f$ is their equalizer.
The same reasoning applies in $\cat{Cov}$ for cover spaces.
\end{proof}

We will later need the fact that embeddings are closed under products.
To establish this, we first prove the following lemma:

\begin{lem}
Let $\mathcal{B}_X$ be a subbase for a precover space structure on $X$, and let $Y$ be a precover space.
Let $f : X \to Y$ be a cover map such that, for every $C \in \mathcal{B}_X$, the set $\{ V \mid \exists U \in C, f^{-1}(V) \subseteq U \}$ is a Cauchy cover of $Y$.
Then $f$ is an embedding.
\end{lem}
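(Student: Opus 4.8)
The plan is to reduce the statement to a closure argument. Since $f$ is assumed to be a cover map and the precover structure on $X$ is $\overline{\mathcal{B}_X}$, showing that $f$ is an embedding amounts to verifying the defining condition for \emph{every} Cauchy cover of $X$, not merely the members of the subbase. Writing $\widehat{C} := \{ V \subseteq Y \mid \exists U \in C,\ f^{-1}(V) \subseteq U \}$, I would prove that $\widehat{C} \in \mathcal{C}_Y$ for all $C \in \overline{\mathcal{B}_X}$ by induction on the generation of $\overline{\mathcal{B}_X}$ under \axref{CT}, \axref{CE}, and \axref{CG}, with the base case $C \in \mathcal{B}_X$ supplied directly by the hypothesis. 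This follows the same pattern as \rlem{closure-filter} and \rlem{subbase-regular}.

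The two closure operations that only reshuffle a single cover are routine. For \axref{CT} I would note that $Y \in \widehat{\{X\}}$, since $f^{-1}(Y) = X$, so the trivial cover $\{Y\}$ refines $\widehat{\{X\}}$ and \axref{CE} on $Y$ makes it Cauchy. For \axref{CE}, if $C$ refines $D$ then any $V \in \widehat{C}$ satisfies $f^{-1}(V) \subseteq U \subseteq U'$ for suitable $U \in C$ and $U' \in D$, so $\widehat{C} \subseteq \widehat{D}$; thus $\widehat{C}$ refines $\widehat{D}$ and \axref{CE} on $Y$ transports Cauchyness to $\widehat{D}$.

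The real work is the \axref{CG} step. Given $C = \{ U \cap V \mid U \in D,\ V \in E_U \}$ with $\widehat{D}$ and all $\widehat{E_U}$ already known to be Cauchy by the inductive hypothesis, I would, for each $U' \in \widehat{D}$, set
\[
F_{U'} = \{ V' \subseteq Y \mid \exists U \in D,\ \exists V \in E_U,\ f^{-1}(U') \subseteq U \ \text{and}\ f^{-1}(V') \subseteq V \}.
\]
For any fixed $U \in D$ witnessing $U' \in \widehat{D}$ one has $\widehat{E_U} \subseteq F_{U'}$, so \axref{CE} on $Y$ gives $F_{U'} \in \mathcal{C}_Y$; because being Cauchy is a proposition, the mere existence of such a $U$ suffices. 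Then \axref{CG} on $Y$ makes $\{ U' \cap V' \mid U' \in \widehat{D},\ V' \in F_{U'} \}$ Cauchy, and for each $U' \cap V'$ the witnessing $U,V$ give $f^{-1}(U' \cap V') \subseteq U \cap V \in C$, so $U' \cap V' \in \widehat{C}$. Hence this Cauchy cover refines $\widehat{C}$, and a final application of \axref{CE} on $Y$ yields $\widehat{C} \in \mathcal{C}_Y$, completing the induction.

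I expect the only genuine obstacle to be the constructive bookkeeping in the \axref{CG} case: the inner family $E_U$ is indexed by $U \in D$, whereas a set $U' \in \widehat{D}$ is only known to refine \emph{some}, non-canonically chosen, $U$. Folding both quantifiers into the definition of $F_{U'}$—exactly as $E'_{U'}$ is defined in \rlem{subbase-regular}—removes any need for a choice, since each conclusion I draw (that $F_{U'}$ is Cauchy, and that $U' \cap V' \in \widehat{C}$) is a mere proposition and therefore follows from the truncated existential witnesses. No instance of excluded middle or choice is required, so the argument stays valid in the internal language of a topos.
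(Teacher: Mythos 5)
Your proposal is correct and follows essentially the same route as the paper's proof: induction on the generation of $\overline{\mathcal{B}_X}$, with the \axref{CG} case handled by the same auxiliary covers (your $F_{U'}$ is exactly the paper's $D'_{U'}$, with both existential witnesses folded into the definition to avoid choice). The extra detail you supply for the \axref{CT} and \axref{CE} cases, and the remark on propositional truncation, are fine and consistent with what the paper leaves implicit.
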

\begin{proof}
We will show, by induction on the generation of a cover $E \in \overline{\mathcal{B}_X}$, that $\{ U' \mid \exists U \in E, f^{-1}(U') \subseteq U \}$ is a Cauchy cover of $Y$.

The base case holds by assumption.
The cases \axref{CT} and \axref{CE} are straightforward.
Now, consider \axref{CG}.
Let $C$ be a Cauchy cover of $X$, and let $\{ D_U \}_{U \in C}$ be a collection of Cauchy covers of $X$ such that $E = \{ U \cap V \mid U \in C, V \in D_U \}$.
By the induction hypothesis, $C' = \{ U' \mid \exists U \in C, f^{-1}(U') \subseteq U \}$ is a Cauchy cover of $Y$.

For each $U' \in C'$, define
\[ D'_{U'} = \{ V' \mid \exists U \in C, \exists V \in D_U, f^{-1}(U') \subseteq U, f^{-1}(V') \subseteq V \}. \]
Then $D'_{U'}$ is a Cauchy cover of $Y$.
Since $\{ U' \cap V' \mid U' \in C', V' \in D'_{U'} \}$ refines $\{ W' \mid \exists W \in E, f^{-1}(W') \subseteq W \}$, the proof is complete.
\end{proof}

\begin{lem}[prod-embedding]
If $f : X \to Y$ and $g : X' \to Y'$ are embeddings, then $f \times g : X \times X' \to Y \times Y'$ is also an embedding.
\end{lem}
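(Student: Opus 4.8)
The plan is to apply the preceding lemma, which reduces verification of the embedding condition from all Cauchy covers of the domain to those in a subbase. Since $\cat{Precov}$ is a topological category, the product $X \times X'$ carries the initial precover structure with respect to the projections $p : X \times X' \to X$ and $p' : X \times X' \to X'$, and a subbase $\mathcal{B}$ for this structure is furnished by the projection-preimages of Cauchy covers of the factors: the covers $p^{-1}(C) = \{ U \times X' \mid U \in C \}$ for $C \in \mathcal{C}_X$ together with the covers $(p')^{-1}(C') = \{ X \times V \mid V \in C' \}$ for $C' \in \mathcal{C}_{X'}$. (Each $p^{-1}(C)$ is Cauchy because $p$ is a cover map, and conversely every cover in either single-projection transferred structure is refined by such a preimage, so $\overline{\mathcal{B}}$ is the join of the two transferred structures, i.e. the product structure.) By the preceding lemma applied to $f \times g$ and this subbase, it therefore suffices to show, for each $D \in \mathcal{B}$, that $\{ W \mid \exists T \in D, (f \times g)^{-1}(W) \subseteq T \}$ is a Cauchy cover of $Y \times Y'$.

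By symmetry I treat only $D = p^{-1}(C) = \{ U \times X' \mid U \in C \}$ with $C \in \mathcal{C}_X$. Since $f$ is an embedding, the cover $C^{\ast} = \{ V \subseteq Y \mid \exists U \in C, f^{-1}(V) \subseteq U \}$ is a Cauchy cover of $Y$, and hence its preimage $q^{-1}(C^{\ast}) = \{ V \times Y' \mid V \in C^{\ast} \}$ along the projection $q : Y \times Y' \to Y$ is a Cauchy cover of $Y \times Y'$, because $q$ is a cover map. For $V \in C^{\ast}$, picking $U \in C$ with $f^{-1}(V) \subseteq U$ gives $(f \times g)^{-1}(V \times Y') = f^{-1}(V) \times X' \subseteq U \times X' \in D$, so each member of $q^{-1}(C^{\ast})$ already belongs to $\{ W \mid \exists T \in D, (f \times g)^{-1}(W) \subseteq T \}$. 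Thus the Cauchy cover $q^{-1}(C^{\ast})$ refines this collection, which is therefore Cauchy by \axref{CE}. The case $D = (p')^{-1}(C')$ is identical, using that $g$ is an embedding and that the second projection of $Y \times Y'$ is a cover map.

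Applying the preceding lemma then shows that $f \times g$ is an embedding of precover spaces. When $X, X', Y, Y'$ are cover spaces the argument is unchanged: transferred structures along maps into cover spaces are again cover structures and cover structures are closed under joins, so products in $\cat{Cov}$ agree with products in $\cat{Precov}$, and the same subbase and computation apply. I expect the only delicate point to be pinning down the subbase — confirming that the projection-preimages generate the product (initial) structure — after which the verification is a routine preimage computation, made clean by the splitting $(f \times g)^{-1}(V \times Y') = f^{-1}(V) \times X'$.
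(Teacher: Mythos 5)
Your proposal is correct and follows essentially the same route as the paper: both reduce to the subbase of projection-preimage covers via the preceding lemma, and both use that the projections of $Y \times Y'$ are cover maps to push the Cauchy covers $\{ V \mid \exists U \in C, f^{-1}(V) \subseteq U \}$ up to $Y \times Y'$ and check the refinement via $(f \times g)^{-1}(V \times Y') = f^{-1}(V) \times X'$. Your write-up is merely more explicit about why the subbase generates the product structure and about the refinement computation, which the paper leaves terse.
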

\begin{proof}
The precover space structure on $X \times X'$ is generated by covers of the form $\{ \pi_1^{-1}(U) \mid U \in C \}$ and $\{ \pi_2^{-1}(U') \mid U' \in C' \}$,
where $C$ is a Cauchy cover of $X$, and $C'$ is a Cauchy cover of $X'$.

Since $f$ and $g$ are embeddings, the sets $\{ \pi_1^{-1}(V) \mid \exists U \in C, f^{-1}(V) \subseteq U \}$
and $\{ \pi_2^{-1}(V') \mid \exists U' \in C', f^{-1}(V') \subseteq U' \}$ are Cauchy covers on $Y$ and $Y'$, respectively.
These covers refine $\{ V \mid \exists U \in C, (f \times g)^{-1}(V) \subseteq \pi_1^{-1}(U) \}$
and $\{ V' \mid \exists U' \in C', (f \times g)^{-1}(V') \subseteq \pi_2^{-1}(U') \}$.
By the previous lemma, $f \times g$ is an embedding.
\end{proof}

\section{Cauchy filters}
\label{sec:filters}

In the next section, we will define complete cover spaces.
To do this, we first introduce the notion of a Cauchy filter, which we define and study in this section.
Additionally, we will show that Cauchy filters on $\mathbb{Q}$ correspond to Dedekind real numbers.

\begin{defn}
A \emph{filter} on a set $X$ is an upward-closed set of subsets of $X$ that is closed under finite intersections.
A \emph{proper filter} is a filter consisting of inhabited sets.
A filter on a cover space satisfies the \emph{Cauchy condition} if it intersects with every Cauchy cover.
A \emph{Cauchy filter} on a cover space $X$ is a proper filter that satisfies the Cauchy condition.
\end{defn}

\begin{example}
For every point $x$ in a cover space $X$, the set of neighborhoods of $x$ forms a Cauchy filter on $X$.
This filter is denoted by $x^\wedge$.
\end{example}

If a cover space is equipped with a subbase, it becomes easier to determine when a proper filter is Cauchy:

\begin{prop}[cauchy-filter]
If $\mathcal{B}_X$ is a subbase for a cover space $X$, then a proper filter $F$ is Cauchy if and only if it intersects with every cover in $\mathcal{B}_X$.
\end{prop}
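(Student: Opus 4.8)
The plan is to prove both implications by comparing the filter $F$ against three collections: the subbase $\mathcal{B}_X$, its precover closure $\overline{\mathcal{B}_X}$, and the full collection of Cauchy covers $\mathcal{C}_X$. The forward implication is immediate. Every cover belonging to the subbase $\mathcal{B}_X$ is in particular a Cauchy cover, so if $F$ satisfies the Cauchy condition --- that is, if it intersects every member of $\mathcal{C}_X$ --- then it certainly intersects every member of $\mathcal{B}_X$.

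The substance lies in the converse. Assuming $F$ intersects every cover in $\mathcal{B}_X$, \rlem{closure-filter} applied to $F$ yields that $F$ intersects every cover in the precover closure $\overline{\mathcal{B}_X}$. It therefore suffices to show that every Cauchy cover already lies in $\overline{\mathcal{B}_X}$, i.e. that $\mathcal{C}_X \subseteq \overline{\mathcal{B}_X}$; once this is established, $F$ intersects every Cauchy cover and so is Cauchy.

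To obtain $\mathcal{C}_X = \overline{\mathcal{B}_X}$, I would argue that the precover structure generated by the subbase is already closed under the regularity axiom \axref{CR}. Since $\mathcal{B}_X \subseteq \mathcal{C}_X$, monotonicity of the closure gives $\overline{\mathcal{B}_X} \subseteq \mathcal{C}_X$, so only the reverse inclusion needs work. The relation $\rb_X$ satisfies the three hypotheses of \rlem{subbase-regular} by \rprop{rb-props}, and the defining property of a subbase for the cover space $X$ guarantees that for each $C \in \mathcal{B}_X$ the shrunken cover $\{ V \mid \exists U \in C, V \rb_X U \}$ lies in $\overline{\mathcal{B}_X}$. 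Lemma \rlem{subbase-regular} then propagates this from the subbase to every $C \in \overline{\mathcal{B}_X}$, so $\overline{\mathcal{B}_X}$ is itself a cover space structure containing $\mathcal{B}_X$. This forces $\mathcal{C}_X \subseteq \overline{\mathcal{B}_X}$, and hence equality.

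The main obstacle is precisely this identification $\mathcal{C}_X = \overline{\mathcal{B}_X}$: a priori the smallest cover space structure containing $\mathcal{B}_X$ could require iterating \axref{CR}, whose associated rather-below relation grows as new covers are added and could thus overshoot the bare precover closure under \axref{CT}, \axref{CE}, and \axref{CG}. The regularity-transfer lemma \rlem{subbase-regular} is exactly what prevents this, showing that the precover closure is automatically closed under \axref{CR} once the subbase handles the base case. With that inclusion in hand, the equivalence follows from \rlem{closure-filter} as described above.
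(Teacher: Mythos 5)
Your core argument is the paper's: the forward direction is trivial because $\mathcal{B}_X \subseteq \mathcal{C}_X$, and the converse is \rlem{closure-filter} applied to $F$. That is the entire content of the paper's proof, which cites only \rlem{closure-filter}.

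The extra step you interpose --- proving $\mathcal{C}_X = \overline{\mathcal{B}_X}$ --- is where the trouble is. Under the paper's conventions, ``$\mathcal{B}_X$ is a subbase for the cover space $X$'' already means that $\mathcal{C}_X = \overline{\mathcal{B}_X}$ (and that this collection happens to satisfy \axref{CR}); this is how the notion is used, e.g., for the uniform covers of a metric space. So there is nothing to prove there. If instead you read the hypothesis as ``$\mathcal{C}_X$ is the smallest cover space structure containing $\mathcal{B}_X$'', your argument has a genuine gap: to invoke \rlem{subbase-regular} with the relation $\rb_X$ you must verify its base case, namely that for each $C \in \mathcal{B}_X$ the cover $\{ V \mid \exists U \in C,\ V \rb_X U \}$ lies in $\overline{\mathcal{B}_X}$. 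You assert this is ``the defining property of a subbase'', but a subbase is by definition an arbitrary collection of covers, and \axref{CR} for $\mathcal{C}_X$ only places that shrunken cover in $\mathcal{C}_X$ --- which is exactly the containment you are trying to establish, so the argument is circular. The paper explicitly warns that $(X,\overline{\mathcal{B}_X})$ need not be a cover space for an arbitrary subbase; that is precisely why regular bases are introduced later. (There is also a more basic obstruction to your alternative reading: cover structures are closed under joins but not obviously under meets, since $\rb$ shrinks when the structure shrinks, so a ``smallest cover structure containing $\mathcal{B}_X$'' need not exist.) Dropping the detour and taking $\mathcal{C}_X = \overline{\mathcal{B}_X}$ as the meaning of the hypothesis leaves a correct proof identical to the paper's.
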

\begin{proof}
This follows directly from \rlem{closure-filter}.
\end{proof}

\begin{cor}[metric-cauchy-filter]
If $X$ is a metric space, then a proper filter is Cauchy if and only if it contains an open ball of radius $\varepsilon$ for every $\varepsilon > 0$.
\end{cor}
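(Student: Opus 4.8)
The plan is to apply \rprop{cauchy-filter} to a subbase chosen so that its intersection condition is \emph{literally} the statement about balls. For each $\varepsilon > 0$ set $C_\varepsilon := \{ B_\varepsilon(x) \mid x \in X \}$, and take $\mathcal{B}_X := \{ C_\varepsilon \mid \varepsilon > 0 \}$. First I would check that $\mathcal{B}_X$ is a subbase for the metric cover space, i.e.\ that $\overline{\mathcal{B}_X} = \mathcal{C}_X$. The inclusion $\overline{\mathcal{B}_X} \subseteq \mathcal{C}_X$ is clear, since each $C_\varepsilon$ is uniform, whence $\mathcal{B}_X \subseteq \mathcal{C}_X$ and the minimal precover structure $\overline{\mathcal{B}_X}$ containing $\mathcal{B}_X$ sits inside $\mathcal{C}_X$. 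For the reverse inclusion I would recall that every uniform cover of $X$ is a coarsening of some $C_\varepsilon$ (it admits a Lebesgue number $\varepsilon$, so that $C_\varepsilon$ refines it); by \axref{CE} every uniform cover therefore already lies in $\overline{\mathcal{B}_X}$, and closing under the remaining axioms gives $\mathcal{C}_X = \overline{\mathcal{U}_X} \subseteq \overline{\mathcal{B}_X}$.

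With $\mathcal{B}_X$ recognised as a subbase, \rprop{cauchy-filter} says that a proper filter $F$ is Cauchy if and only if it intersects every cover in $\mathcal{B}_X$, that is, if and only if $F$ meets $C_\varepsilon$ for every $\varepsilon > 0$. Unwinding the meaning of ``intersects'', $F$ meets $C_\varepsilon$ exactly when some member of $C_\varepsilon$ belongs to $F$; as the members of $C_\varepsilon$ are precisely the balls $B_\varepsilon(x)$, this is the assertion that $F$ contains an open ball of radius $\varepsilon$. Quantifying over all $\varepsilon > 0$ yields exactly the claimed characterization, so no further work is needed once the subbase claim is in place.

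I expect no real obstacle; the substantive content is carried entirely by \rprop{cauchy-filter} (and, underneath it, \rlem{closure-filter}), and what remains is bookkeeping. The single point demanding a little care is the identification $\overline{\mathcal{B}_X} = \mathcal{C}_X$, which reduces to two elementary facts about the uniform structure of a metric space: that each ball-cover $C_\varepsilon$ is uniform, and that every uniform cover is refined by some $C_\varepsilon$. Both are immediate from the description of uniform covers, and the passage from a single ball-cover to all of its coarsenings is exactly one application of \axref{CE}.
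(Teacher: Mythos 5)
Your proposal is correct and matches the paper's intended argument: the corollary is stated without proof precisely because it is a direct application of \rprop{cauchy-filter} to a subbase of ball covers. The only (immaterial) difference is that you take the subbase to be the ball covers $C_\varepsilon$ and absorb the refinement step into the verification that $\overline{\mathcal{B}_X} = \overline{\mathcal{U}_X}$, whereas one could equally take all uniform covers as the subbase and use upward closure of the filter at the end; both hinge on the same fact that every uniform cover is refined by some $C_\varepsilon$.
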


The notion of a Cauchy filter makes every cover space into a Cauchy space \cite[Definition~1.3.1]{cauchy-spaces}.
However, we will not use this abstraction; instead, we provide the necessary constructions and definitions directly for cover spaces.

Given a function $f : X \to Y$ and a filter $F$ on $X$, we write $f(F)$ for the filter $\{ V \mid f^{-1}(V) \in F \}$ on $Y$.
We say that $f$ is a \emph{Cauchy map} if it preserves Cauchy filters.
Clearly, every cover map is Cauchy.
It is also straightforward to verify that Cauchy maps are continuous:

\begin{prop}[cauchy-continuous]
Every Cauchy map $f : X \to Y$ is a continuous function.
\end{prop}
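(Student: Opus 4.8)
The plan is to unwind continuity of $f : S(X) \to S(Y)$ into a pointwise neighborhood statement and then exploit the neighborhood filters, which are Cauchy by the example following the definition of Cauchy filters. Recall that $f$ is continuous precisely when $f^{-1}(U)$ is open in $S(X)$ for every open $U \subseteq Y$, and by \rdefn{cover-topology} openness of $f^{-1}(U)$ amounts to showing $\{x\} \rb_X f^{-1}(U)$ for each $x \in f^{-1}(U)$. Since $\{x\} \rb_X f^{-1}(U)$ says exactly that $f^{-1}(U)$ is a neighborhood of $x$, i.e.\ $f^{-1}(U) \in x^\wedge$, and since $f^{-1}(U) \in x^\wedge$ is equivalent to $U \in f(x^\wedge)$ by the definition of the image filter $f(x^\wedge) = \{ V \mid f^{-1}(V) \in x^\wedge \}$, the whole problem reduces to proving $U \in f(x^\wedge)$.

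First I would record two facts about $f(x^\wedge)$. It is Cauchy, because $x^\wedge$ is a Cauchy filter and $f$, being a Cauchy map, preserves Cauchy filters. Moreover every member of $f(x^\wedge)$ contains the point $f(x)$: if $V \in f(x^\wedge)$ then $f^{-1}(V)$ is a neighborhood of $x$, so $\{x\} \rb_X f^{-1}(V)$ gives $x \in f^{-1}(V)$ by item~\eqref{rb:sub} of \rprop{rb-props}, whence $f(x) \in V$.

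Next I would use that $U$ is a neighborhood of $f(x)$. Since $U$ is open and $f(x) \in U$, we have $\{f(x)\} \rb_Y U$, which by definition means that $D = \{ W \subseteq Y \mid f(x) \in W \implies W \subseteq U \}$ is a Cauchy cover of $Y$. Because $f(x^\wedge)$ is Cauchy, it intersects $D$, so there is a set $W$ lying in both $f(x^\wedge)$ and $D$. The second fact above gives $f(x) \in W$, and membership in $D$ then forces $W \subseteq U$. As $f(x^\wedge)$ is upward closed, $U \in f(x^\wedge)$, which is exactly what was needed.

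The argument is short once the reduction is in place; the one step that carries the real content is the observation that every element of $f(x^\wedge)$ contains $f(x)$, since this is what lets the Cauchy cover $D$ select a set actually contained in $U$. I expect the main thing to watch to be the bookkeeping between the three equivalent formulations ``$f^{-1}(U)$ is a neighborhood of $x$'', ``$U \in f(x^\wedge)$'', and ``$\{x\} \rb_X f^{-1}(U)$''; beyond this, only the Cauchy condition and \rprop{rb-props} are used.
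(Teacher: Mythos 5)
Your argument is correct and is essentially the paper's own proof: both use that $\{f(x)\}\rb_Y U$ makes $\{W\mid f(x)\in W\implies W\subseteq U\}$ a Cauchy cover, intersect it with the Cauchy filter $f(x^\wedge)$, and observe that the selected set must contain $f(x)$ and hence lie inside $U$. The only difference is cosmetic bookkeeping (you conclude $U\in f(x^\wedge)$ by upward closure, the paper concludes $f^{-1}(V)\subseteq f^{-1}(U)$), so there is nothing to add.
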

\begin{proof}
Let $U$ be an open subset of $Y$, and let $x \in X$ be such that $f(x) \in U$.
Since $f(x^\wedge)$ is a Cauchy filter and $\{ f(x) \} \rb U$, there exists a set $V \in f(x^\wedge)$ such that $f(x) \in V \implies V \subseteq U$.
The first condition implies that $f^{-1}(V)$ is a neighborhood of $x$, and the second condition implies that $f^{-1}(V) \subseteq f^{-1}(U)$.
Thus, $f^{-1}(U)$ is also a neighborhood of $x$.
\end{proof}

There is an equivalence relation on Cauchy filters.
Two Cauchy fitlers are \emph{equivalent} if every Cauchy cover contains an element from both filters.
Equivalently, Cauchy filters are equivalent if and only if their intersection is a Cauchy filter.
In particular, if one Cauchy filter is a subset of another, they are equivalent.

Now, we prove a key lemma about Cauchy filters:

\begin{lem}[cauchy-filter-rb]
Let $X$ be a cover space, $F$ and $G$ be equivalent Cauchy filters on $X$, and $U$ and $V$ be subsets of $X$ such that $V \rb U$.
If $V \in F$, then $U \in G$.
\end{lem}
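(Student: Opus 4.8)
The plan is to apply the equivalence of $F$ and $G$ to the specific Cauchy cover that witnesses $V \rb U$, and then use ordinary filter algebra to transport membership. First I would unfold the hypothesis $V \rb U$: by definition this means that the collection
\[ C = \{ W \subseteq X \mid \overlap{W}{V} \implies W \subseteq U \} \]
is a Cauchy cover of $X$, i.e.\ $C \in \mathcal{C}_X$.

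Since $F$ and $G$ are equivalent Cauchy filters, every Cauchy cover contains a set lying in both filters; applying this to $C$ produces a set $W \in C$ with $W \in F$ and $W \in G$ simultaneously. This simultaneous membership is the crux of the argument and the step I would flag as the main (indeed the only) subtle point: it is essential that equivalence yields one common witness $W \in F \cap G$, rather than merely a set from each filter separately. This is exactly the content of the equivalence condition as stated (or, equivalently, of the fact that $F \cap G$ is itself a Cauchy filter).

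Next I would verify the antecedent of the implication defining $C$. Since $V \in F$ and $W \in F$, closure of $F$ under finite intersections gives $V \cap W \in F$, and since $F$ is proper this set is inhabited. Hence $\overlap{W}{V}$ holds. By the defining property of $W$ as an element of $C$, we conclude $W \subseteq U$.

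Finally, since $W \in G$ and $G$ is upward-closed, $W \subseteq U$ yields $U \in G$, as required. All steps after extracting the common witness $W$ are routine applications of the filter axioms (upward closure and closure under finite intersections) together with properness of $F$.
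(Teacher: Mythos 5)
Your proof is correct and follows exactly the same route as the paper's: extract a common witness $W \in F \cap G$ from the Cauchy cover witnessing $V \rb U$, use properness and closure under intersection in $F$ to establish $\overlap{W}{V}$ and hence $W \subseteq U$, then conclude by upward closure of $G$. The point you flag as subtle --- that equivalence yields a single set in both filters rather than one from each --- is indeed the crux, and it is precisely how the paper uses the equivalence.
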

\begin{proof}
Since $V \rb U$, there exists a set $W \in F \cap G$ such that $\overlap{W}{V} \implies W \subseteq U$.
As $W,V \in F$, their intersection also belongs to $F$, hence it is inhabited.
It follows that $W \subseteq U$.
Thus, $U \in G$
\end{proof}

\begin{prop}[cauchy-filter-equiv]
The equivalence of Cauchy filters is indeed an equivalence relation.
\end{prop}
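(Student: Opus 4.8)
The plan is to work directly from the stated definition: two Cauchy filters $F$ and $G$ are equivalent precisely when every Cauchy cover contains a set belonging to both $F$ and $G$ (equivalently, when $F \cap G$ is a Cauchy filter, noting that $F \cap G$ is automatically proper since it is contained in $F$). Reflexivity is then immediate, since $F \cap F = F$ is Cauchy by hypothesis, and symmetry is equally trivial because the condition is visibly symmetric in $F$ and $G$. The entire difficulty is concentrated in transitivity.

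For transitivity, suppose $F \sim G$ and $G \sim H$, and let $C$ be an arbitrary Cauchy cover; I must produce a single set in $C$ lying in both $F$ and $H$. The naive approach fails, because $F \sim G$ and $G \sim H$ only supply possibly distinct members of $C$ in $F \cap G$ and in $G \cap H$. The key idea is to first refine $C$ using regularity: by \axref{CR} the collection $C' = \{ V \mid \exists U \in C,\ V \rb U \}$ is again a Cauchy cover. Applying $F \sim G$ to $C'$ yields a set $V \in C'$ with $V \in F$ and $V \in G$, and by the definition of $C'$ there is some $U \in C$ with $V \rb U$.

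Now I combine two facts about this pair $V \rb U$. Since $V \rb U$ forces $V \subseteq U$ by \rprop{rb-props}, upward-closure of $F$ gives $U \in F$. On the other hand, $V \in G$ together with $V \rb U$ and the equivalence $G \sim H$ lets me invoke \rlem{cauchy-filter-rb} (with the roles of the two equivalent filters played by $G$ and $H$) to conclude $U \in H$. Hence the single set $U \in C$ lies in $F \cap H$, which is exactly the condition defining $F \sim H$.

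I expect the main obstacle to be recognizing that regularity (\axref{CR}) is needed at all: without passing from $C$ to the refined cover $C'$, one cannot guarantee that the set witnessing $F \sim G$ is rather below a member of $C$, yet it is precisely this rather-below relationship that \rlem{cauchy-filter-rb} requires in order to transport filter membership across an equivalence. Once the refinement step is in place, transitivity reduces to a direct application of the lemma, and the remaining verifications are routine.
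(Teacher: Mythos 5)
Your proof is correct and follows essentially the same route as the paper: refine the given Cauchy cover via \axref{CR}, use the equivalence of $F$ and $G$ on the refined cover to obtain $V \in F \cap G$ with $V \rb U$ for some $U \in C$, and then transport membership to $H$ via \rlem{cauchy-filter-rb}. The only cosmetic difference is that you obtain $U \in F$ from $V \subseteq U$ and upward closure, whereas the paper applies \rlem{cauchy-filter-rb} for both $F$ and $H$; either is fine.
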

\begin{proof}
The only non-trivial part is transitivity.
Let $F$, $G$, and $H$ be Cauchy filters such that $F$ is equivalent to $G$ and $G$ is equivalent to $H$.
Let $C$ be a Cauchy cover.
By \axref{CR}, there exist sets $U$ and $V$ such that $V \rb U$, $U \in C$, and $V \in F \cap G$.
By \rlem{cauchy-filter-rb}, $U$ belongs to both $F$ and $H$.
\end{proof}

A Cauchy filter $F$ is called \emph{regular} if, for every $U \in F$, there exists a set $V \in F$ such that $V \rb U$.
Regular Cauchy filters can be used as representatives of their equivalence classes.

\begin{example}
For every point $x \in X$, the neighborhood filter $x^\wedge$ is regular.
This follows from \rlem{rb-point}.
\end{example}

The following propositions show that every Cauchy filter is equivalent to a unique regular one:

\begin{prop}[regular-minimal]
Let $F$ and $G$ be equivalent Cauchy filters.
If $F$ is regular, then $F \subseteq G$.
\end{prop}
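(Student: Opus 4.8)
The plan is to prove $F \subseteq G$ by a direct membership argument, showing that every element of $F$ already lies in $G$. The key observation is that regularity of $F$ supplies exactly the ingredient needed to invoke \rlem{cauchy-filter-rb}, which converts the rather-below relation between members of one filter into membership in the equivalent filter.

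Concretely, I would fix an arbitrary $U \in F$ and aim to show $U \in G$. Since $F$ is regular, by definition there exists a subset $V \in F$ with $V \rb U$. Now $F$ and $G$ are equivalent Cauchy filters, $V \rb U$, and $V \in F$, so the hypotheses of \rlem{cauchy-filter-rb} are met; applying it yields $U \in G$. As $U \in F$ was arbitrary, this establishes $F \subseteq G$.

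I do not expect any genuine obstacle here: the statement is essentially an immediate corollary of \rlem{cauchy-filter-rb} together with the definition of a regular filter, and the only thing to be careful about is checking that regularity is being applied to $F$ (the filter we are trying to show is contained in $G$) and that \rlem{cauchy-filter-rb} is invoked with the roles of $F$ and $G$ matching its statement, i.e. with the member of the \emph{regular} filter playing the role of the set $V$ that is rather below $U$.
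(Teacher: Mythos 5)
Your proposal is correct and follows exactly the paper's own argument: fix $U \in F$, use regularity of $F$ to produce $V \in F$ with $V \rb U$, and apply \rlem{cauchy-filter-rb} to the equivalent pair $F$, $G$ to conclude $U \in G$. Nothing further is needed.
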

\begin{proof}
Let $U \in F$.
Since $F$ is regular, there exists a set $V \in F$ such that $V \rb U$.
By \rlem{cauchy-filter-rb}, $U$ belongs to $G$.
\end{proof}

\begin{prop}[regular-filter-repr]
Let $F$ be a Cauchy filter.
The intersection of all Cauchy subfilters of $F$ is a regular Cauchy filter.
It is the unique regular filter equivalent to $F$.
\end{prop}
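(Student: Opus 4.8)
The plan is to show that the intersection in question coincides with the explicit filter
\[ F^\circ = \{\, U \subseteq X \mid \exists V \in F,\ V \rb U \,\}, \]
and then to exploit the minimality of $F^\circ$ among Cauchy subfilters to obtain regularity almost for free, rather than by a direct interpolation argument for $\rb$.

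First I would record a general fact: for every Cauchy filter $K$, the collection $K^\circ = \{\, U \mid \exists V \in K,\ V \rb U \,\}$ is again a Cauchy subfilter of $K$. That $K^\circ$ is a filter follows from \rprop{rb-props} (upward closure from monotonicity of $\rb$, closure under finite intersection from the meet property \eqref{rb:meet}, and $X \in K^\circ$ since $V \rb X$); it is proper because $V \rb U$ gives $V \subseteq U$ with $V$ inhabited; it is contained in $K$ for the same reason together with upward closure of $K$; and it is Cauchy because \axref{CR} turns any Cauchy cover $C$ into the Cauchy cover $\{\, V \mid \exists U \in C,\ V \rb U \,\}$, which $K$ must meet, producing some $U \in C \cap K^\circ$.

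Next I would identify $F^\circ$ with the intersection $G$ of all Cauchy subfilters of $F$. One inclusion is immediate: $F^\circ$ is itself a Cauchy subfilter of $F$ by the previous paragraph, so $G \subseteq F^\circ$. For the reverse inclusion, note that every Cauchy subfilter $H \subseteq F$ is equivalent to $F$, since a Cauchy filter is equivalent to any Cauchy filter containing it. Hence for $U \in F^\circ$, choosing $V \in F$ with $V \rb U$, \rlem{cauchy-filter-rb} applied to the equivalent pair $F, H$ yields $U \in H$. As $H$ is arbitrary, $F^\circ \subseteq G$, so $G = F^\circ$ is the least Cauchy subfilter of $F$.

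The main obstacle is regularity of $G$, and the key idea is to derive it from this minimality. Applying the general fact to $K = F^\circ$ shows that $(F^\circ)^\circ$ is a Cauchy subfilter of $F^\circ$, hence of $F$; therefore $G \subseteq (F^\circ)^\circ$. Since $G = F^\circ$, this reads $F^\circ \subseteq (F^\circ)^\circ$, which says exactly that every $U \in F^\circ$ admits some $V \in F^\circ$ with $V \rb U$, so $G$ is regular. Finally, $G$ is equivalent to $F$ (being a Cauchy subfilter), and uniqueness follows from \rprop{regular-minimal}: if $G'$ is any regular filter equivalent to $F$, then $G$ and $G'$ are equivalent by transitivity (\rprop{cauchy-filter-equiv}), and regularity of each gives $G \subseteq G'$ and $G' \subseteq G$, whence $G = G'$.
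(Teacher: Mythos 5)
Your proof is correct. It uses the same ingredients as the paper's own argument --- the filter $F^\circ = \{\, U \mid \exists V \in F,\ V \rb U \,\}$, axiom \axref{CR}, \rlem{cauchy-filter-rb}, and \rprop{regular-minimal} --- but it closes the regularity step by a genuinely different mechanism. The paper introduces the ``doubled'' filter $\{\, U \mid \exists W \in F, \exists V,\ W \rb V,\ V \rb U \,\}$ (which is exactly your $(F^\circ)^\circ$), shows the intersection $M$ is contained in it, and then applies \rlem{cauchy-filter-rb} once more to transport the intermediate set $V$ into $M$, exhibiting the required witness by hand. You instead first prove the identification $M = F^\circ$, so that $M$ is the least Cauchy subfilter of $F$, and then observe that minimality alone forces $M = F^\circ \subseteq (F^\circ)^\circ$, which is verbatim the statement that $M$ is regular; no further unpacking or second use of \rlem{cauchy-filter-rb} is needed at that point. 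What your route buys is an explicit and reusable characterization of the regularization as $\{\, U \mid \exists V \in F,\ V \rb U \,\}$ (compare \rlem{filter-point-char}, where this description reappears), together with a slicker, more structural derivation of regularity; what the paper's route buys is directness, since it never needs the two-sided identification $M = F^\circ$. Both arguments are complete and rigorous.
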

\begin{proof}
We denote the intersection of all Cauchy subfilters of $F$ by $M$.

First, note that the intersection of an inhabited set of proper filters is a proper filter.
To see that $M$ is a Cauchy filter, let $C$ be a Cauchy cover.
By \axref{CR}, there exist sets $V \in F$ and $U \in C$ such that $V \rb U$.
By \rlem{cauchy-filter-rb}, every Cauchy filter equivalent to $F$ contains $U$.
Thus, $U$ belongs to $M$, showing that $M$ is Cauchy.

Next, we prove that $M$ is regular.
Define $G = \{ U \mid \exists W \in F, \exists V, W \rb V, V \rb U \}$.
By \rprop{rb-props}, $G$ is a proper filter, and by \axref{CR}, it is Cauchy.
Since $G \subseteq F$, we have $M \subseteq G$.
Now, let $U \in M$.
Then there exist sets $W \in F$ and $V$ such that $W \rb V$ and $V \rb U$.
Since $F$ and $M$ are equivalent, \rlem{cauchy-filter-rb} implies $V \in M$.
This shows that $M$ is regular.

Finally, since $M \subseteq F$, it is equivalent to $F$.
Uniqueness follows from \rprop{regular-minimal}.
\end{proof}

Now, we will construct a bijection between regular Cauchy filters and Dedekind reals.
Recall that Dedekind reals are usually defined as Dedekind cuts.
A \emph{Dedekind cut} is a pair $(L,U)$ of subsets of $\mathbb{Q}$ satisfying the following conditions:
\begin{itemize}
\item[(DI)] \axitem{DI} Both $L$ and $U$ are inhabited.
\item[(DL)] \axitem{DL} $a \in L$ if and only if there exists $b > a$ such that $b \in L$.
\item[(DU)] \axitem{DU} $a \in U$ if and only if there exists $b < a$ such that $b \in U$.
\item[(DD)] \axitem{DD} $L$ and $U$ do not intersect.
\item[(DS)] \axitem{DS} If $a < b$, then either $a \in L$ or $b \in U$.
\end{itemize}

Dedekind reals can alternatively be represented by filters.
A filter $F$ on $\mathbb{Q}$ will be called a \emph{Dedekind filter} if it satisfies the following conditions:
\begin{itemize}
\item[(DE)] \axitem{DE} For every rational $\varepsilon > 0$, there exists $a \in \mathbb{Q}$ such that the open interval $(a, a + \varepsilon)$ belongs to $F$.
\item[(DR)] \axitem{DR} If $F$ contains an open interval $(a,d)$, then $F$ also contains an open interval $(b,c)$ such that $a < b < c < d$.
\item[(DT)] \axitem{DT} For every $U \in F$, there exists an open interval $(a,b) \in F$ such that $(a,b) \subseteq U$.
\end{itemize}

\begin{prop}[dedekind-cuts-filters]
There is a bijection between the set of Dedekind cuts and the set of Dedekind filters.
\end{prop}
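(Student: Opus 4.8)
The plan is to build the bijection from the informal dictionary in which a cut and a filter both encode a real number $x$, with $L = \{a : a < x\}$, $U = \{b : b > x\}$, and $F$ the filter of neighborhoods of $x$ generated by the intervals $(a,b)$ with $a \in L$ and $b \in U$. Concretely, to a Dedekind cut $(L,U)$ I associate the filter
\[ F_{(L,U)} = \{ V \subseteq \mathbb{Q} \mid \exists a \in L,\ \exists b \in U,\ (a,b) \subseteq V \}, \]
and to a Dedekind filter $F$ I associate the pair $(L_F, U_F)$ with $L_F = \{ a \mid \exists b,\ (a,b) \in F \}$ and $U_F = \{ c \mid \exists a,\ (a,c) \in F \}$. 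First I would record the elementary order facts coming from the cut axioms, since they are used throughout: \axref{DL} and \axref{DU} show that $L$ is downward closed and $U$ is upward closed, and combining these with \axref{DD} gives $a < b$ whenever $a \in L$ and $b \in U$.

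Next I would check that $F_{(L,U)}$ is a genuine Dedekind filter. Upward closure is immediate, and closure under binary intersection follows by taking the $\max$ of the left endpoints and the $\min$ of the right endpoints, where the order facts above guarantee the resulting interval is nonempty; properness holds because each $(a,b)$ with $a \in L$, $b \in U$ is inhabited. Axiom \axref{DR} follows from the roundedness conditions \axref{DL} and \axref{DU} by shrinking both endpoints, and \axref{DT} is immediate from the definition of $F_{(L,U)}$. The one substantive point is \axref{DE}: for each $\varepsilon > 0$ I must produce $a$ with $a \in L$ and $a + \varepsilon \in U$. Starting from witnesses $a_0 \in L$, $b_0 \in U$ and walking along the grid $a_0, a_0 + \varepsilon/2, a_0 + \varepsilon, \dots$, each step is decided by the locatedness axiom \axref{DS}, and a terminating finite search locates a grid point $a \in L$ whose successor at distance $\varepsilon$ lies in $U$. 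I expect this \emph{locatedness argument} to be the main obstacle, as it is the only place where the constructive content is delicate.

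Conversely, I would check that $(L_F, U_F)$ is a Dedekind cut. Inhabitedness \axref{DI} comes from \axref{DE}; the roundedness conditions \axref{DL} and \axref{DU} translate into \axref{DR} together with upward closure of $F$; and disjointness \axref{DD} follows because $a \in L_F \cap U_F$ would place the disjoint intervals $(a,b)$ and $(a',a)$ in $F$, whose empty intersection is excluded since $\varnothing \notin F$ (a consequence of \axref{DT}). For \axref{DS}, given $a < b$ I set $\varepsilon = b - a$, take an interval $(a_0, a_0+\varepsilon) \in F$ supplied by \axref{DE}, and branch on the decidable rational comparison of $a_0$ with $a$: if $a_0 \le a$ then $(a_0,b) \in F$ gives $b \in U_F$, and if $a_0 \ge a$ then $(a, a_0+\varepsilon) \in F$ gives $a \in L_F$.

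Finally I would verify that the two assignments are mutually inverse. For a cut, the identities $L_{F_{(L,U)}} = L$ and $U_{F_{(L,U)}} = U$ follow from downward/upward closure in one inclusion and from \axref{DI} in the other. For a filter, $F_{(L_F,U_F)} = F$: the inclusion $\subseteq$ uses closure of $F$ under intersection, while the reverse inclusion is exactly \axref{DT}, which is precisely why that axiom appears in the definition. The bulk of the argument is thus routine bookkeeping, with the locatedness step for \axref{DE} being the only genuinely nontrivial ingredient.
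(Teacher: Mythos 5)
Your proposal is correct and follows essentially the same route as the paper: the same two maps, the same correspondence of axioms (\axref{DE} from \axref{DI} and \axref{DS}, \axref{DR} from \axref{DL} and \axref{DU}, \axref{DD} from \axref{DT}), and the same identification of the locatedness step for \axref{DE} as the only delicate point. The paper realizes that step by repeatedly shrinking an interval $(a,b)$ by a factor of $3/2$ via \axref{DS}, whereas you walk a grid of spacing $\varepsilon/2$; both are finite searches driven by \axref{DS} and are interchangeable.
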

\begin{proof}
If $(L,U)$ is a Dedekind real, define $F(L,U)$ as the set of all $V$ such that there exist $a \in L$ and $b \in U$ such that $(a,b) \subseteq V$.
Clearly, this is a filter that satisfies \axref{DT}.
Axioms \axref{DL} and \axref{DU} imply \axref{DR}.

Let us prove \axref{DE}.
By \axref{DI}, \axref{DD}, and \axref{DL}, there exist $a \in L$ and $b \in U$ such that $a < b$.
Let $a' = a + \frac{b - a}{3}$ and $b' = b - \frac{b - a}{3}$.
By \axref{DS}, either $a' \in L$ or $b' \in U$.
Thus, the open interval $(a,b)$ can be reduced by a factor of $1.5$.
Repeating this process, we can obtain an arbitrarily small open interval in a finite number of steps, proving \axref{DE}.

Now, let $F$ be a Dedekind filter.
Define:
\[ L_F = \{ a \in \mathbb{Q} \mid \exists b \in \mathbb{Q}, (a,b) \in F \}, \quad U_F = \{ b \in \mathbb{Q} \mid \exists a \in \mathbb{Q}, (a,b) \in F \}. \]
Axiom \axref{DE} implies \axref{DI} and \axref{DS}.
Axiom \axref{DR} implies \axref{DL} and \axref{DU}.
Finally, \axref{DT} implies \axref{DD}.

It is straightforward to verify that $L_{F(L,U)} = L$ and $U_{F(L,U)} = U$.
The fact that $F(L_F,U_F) = F$ follows directly from \axref{DT}.
\end{proof}

The cover space of rational numbers $\mathbb{Q}$ is defined as the one induced by the usual Euclidean metric space structure on $\mathbb{Q}$.
Now, we can show that regular Cauchy filters on this cover space correspond to Dedekind filters:

\begin{prop}[dedekind-cauchy]
A filter on $\mathbb{Q}$ is a Dedekind filter if and only if it is a regular Cauchy filter.
\end{prop}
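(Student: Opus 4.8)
The plan is to check the three Dedekind-filter axioms \axref{DE}, \axref{DR}, \axref{DT} against the definitions of Cauchy and regular filter, translating between open intervals and the cover-space data of $F$ by means of two facts. The first is the ball characterization of Cauchy filters on a metric space (\rcor{metric-cauchy-filter}), which on $\mathbb{Q}$ identifies the Cauchy condition with the presence in $F$ of arbitrarily small open intervals, since a ball of radius $r$ is exactly an interval of length $2r$. The second is a computation of the rather-below relation on intervals: $(b,c) \rb_{\mathbb{Q}} (a,d)$ whenever $a < b < c < d$. I would prove the latter directly from the metric base by taking $\delta = \min((b-a)/2,(d-c)/2)$ and observing that every ball $B_\delta(x)$ meeting $(b,c)$ is contained in $(a,d)$; hence $\{B_\delta(x) \mid x \in \mathbb{Q}\}$ refines $\{ W \mid \overlap{W}{(b,c)} \implies W \subseteq (a,d) \}$, so the latter is a uniform cover and in particular Cauchy.

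For the forward direction, suppose $F$ is a Dedekind filter. Axioms \axref{DR} and \axref{DT} force every member to contain an inhabited interval, so $F$ is proper, and \axref{DE} supplies for each $\varepsilon > 0$ an interval $(a, a+2\varepsilon) = B_\varepsilon(a+\varepsilon) \in F$, so $F$ is Cauchy by \rcor{metric-cauchy-filter}. For regularity, given $U \in F$ I would use \axref{DT} to find $(a,d) \in F$ with $(a,d) \subseteq U$, then \axref{DR} to find $(b,c) \in F$ with $a < b < c < d$; the interval computation gives $(b,c) \rb (a,d)$, and monotonicity of $\rb$ (\rprop{rb-props}) upgrades this to $(b,c) \rb U$, as required.

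For the converse, suppose $F$ is a regular Cauchy filter. Axiom \axref{DE} is immediate from \rcor{metric-cauchy-filter}. The heart of the matter is to produce honest intervals inside arbitrary members, and for this I would first upgrade the regularity of $F$ to a uniform statement: for every $U \in F$ there exist $V \in F$ and $\delta > 0$ such that every ball $B_\delta(x)$ meeting $V$ lies in $U$, i.e.\ $V \rb_{\mathcal{U}} U$ for the uniform rather-below relation. To obtain this, apply regularity to get $W_1 \in F$ with $W_1 \rb U$, so that $S = \{ W \mid \overlap{W}{W_1} \implies W \subseteq U \}$ is Cauchy; since the uniform covers of $\mathbb{Q}$ form a regular base, the strengthened regularity of a regular base (the consequence of \rlem{subbase-regular}) shows that $\{ V \mid \exists W \in S,\ V \rb_{\mathcal{U}} W \}$ is again Cauchy, and as $F$ is Cauchy it meets this cover at some $V \in F$ with $V \rb_{\mathcal{U}} W$ for some $W \in S$. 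Because $V, W_1 \in F$ the set $V \cap W_1$ is inhabited, so $W \supseteq V$ meets $W_1$, hence $W \subseteq U$ and therefore $V \rb_{\mathcal{U}} U$. With uniform regularity available, \axref{DT} follows by taking a radius-$\delta$ ball $B \in F$ (Cauchy) and noting that $B \cap V$ inhabited forces $B \subseteq U$; \axref{DR} follows the same way using a radius-$\delta/2$ ball, whose endpoints then lie strictly inside $(a,d)$ thanks to the $\delta/2$ margin.

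The main obstacle is precisely this uniform-regularity upgrade. On $\mathbb{Q}$ the Cauchy rather-below relation is strictly weaker than the uniform one, since not every Cauchy cover is refined by a ball cover; thus $V \rb U$ alone does not yield a uniform gap, and one genuinely needs the strengthened regularity of the metric base to convert the abstract relation into the metric statement that an entire scale of balls meeting $V$ lands inside $U$. Once this is in place, \axref{DR} and \axref{DT}, together with the properness and interval structure that the notion of a Dedekind filter packages, all drop out of the ball characterization of \rcor{metric-cauchy-filter} and the inhabitedness of finite intersections in a proper filter.
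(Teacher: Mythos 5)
Your proof is correct, and your forward direction (Dedekind implies regular Cauchy) coincides with the paper's: both extract $(b,c) \subseteq (a,d) \subseteq U$ from \axref{DT} and \axref{DR} and verify $(b,c) \rb (a,d)$ directly from the cover of small balls. The converse is where you genuinely diverge. The paper obtains \axref{DT} abstractly: it forms the subfilter $F' = \{ U \in F \mid \exists (a,b) \in F, (a,b) \subseteq U \}$, notes that $F'$ is Cauchy by \rcor{metric-cauchy-filter} (it contains the small balls that $F$ contains), and concludes $F = F'$ from the minimality of regular filters (\rprop{regular-minimal}); it then gets \axref{DR} from \rprop{rb-closure}, which places the closure of any $U \rb (a,b)$ inside $(a,b)$ and hence forces the strict inequalities. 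You instead upgrade the abstract relation $V \rb U$ to the uniform relation $V \rb_\mathcal{B} U$ (for $\mathcal{B}$ the base of uniform covers) via the strengthened regularity supplied by \rlem{subbase-regular}, and then read off both \axref{DT} and \axref{DR} from the resulting quantitative statement about $\delta$-balls meeting $V$. Both routes are sound: your upgrade argument (intersect $F$ with the cover $\{ V \mid \exists W \in S, V \rb_\mathcal{B} W \}$, use properness of $F$ to force $W \subseteq U$) is valid, and your identification of this upgrade as the crux is accurate, since $\rb$ alone gives no uniform gap. The trade-off is that your version is more explicitly metric and self-contained at the cost of an extra lemma, while the paper's is shorter because \rprop{regular-minimal} manufactures the intervals for free and \rprop{rb-closure} already packages the strict-inclusion fact you re-derive with the $\delta/2$ margin. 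You are also right to note, as the paper does not make explicit, that properness of a Dedekind filter requires a word: \axref{DR} guarantees that the intervals supplied by \axref{DT} are inhabited.
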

\begin{proof}
By \rcor{metric-cauchy-filter}, a proper filter on $\mathbb{Q}$ satisfies \axref{DE} if and only if it is a Cauchy filter.
Thus, we need to show that a Cauchy filter is regular if and only if it satisfies \axref{DR} and \axref{DT}.

If a Cauchy filter $F$ satisfies \axref{DR} and \axref{DT}, then, for every $U \in F$, there exist $a < b < c < d$ such that $(b,c) \subseteq (a,d) \subseteq U$ and $(b,c) \in F$.
Since $(b,c) \rb (a,d)$, this shows that $F$ is regular.

Conversely, suppose that $F$ is regular.
Define
\[ F' = \{ U \in F \mid \exists (a,b) \in F, (a,b) \subseteq U \}. \]
By \rcor{metric-cauchy-filter}, $F'$ is a Cauchy filter.
Since $F' \subseteq F$ and $F$ is regular, \rprop{regular-minimal} implies that $F = F'$.
Thus, $F$ satisfies \axref{DT}.

Finally, let us prove that $F$ satisfies \axref{DR}.
Since $F$ is regular, if we have $(a,b) \in F$, then there exists a set $U \in F$ such that $U \rb (a,b)$.
By \axref{DT}, we also have an open interval $(c,d) \in F$ such that $(c,d) \subseteq U$.
\rprop{rb-closure} implies that $a < c$ and $d < b$.
This concludes the proof.

To show \axref{DR}, suppose $(a,b) \in F$.
Because $F$ is regular, there exists $U \in F$ such that $U \rb (a,b)$.
By \axref{DT}, we can find an open interval $(c,d) \in F$ such that $(c,d) \subseteq U$.
Using \rprop{rb-closure}, we conclude that $a < c < d < b$, verifying \axref{DR}.
This completes the proof.
\end{proof}

\section{Completion}
\label{sec:completion}

In this section, we define separated and complete cover spaces and prove that complete cover spaces form a reflective subcategory of $\cat{Cov}$.
Separated cover spaces are analogous to $T_0$-spaces or Hausdorff spaces.
These two notions are equivalent for cover spaces, as we assume regularity.
We will also prove that a certain subcategory of complete cover spaces and Cauchy maps is equivalent to a subcategory of regular topological spaces.

To define separated spaces, we first introduce an equivalence relation on points of a cover space.
Two points $x$ and $y$ are said to be \emph{equivalent} if every Cauchy cover contains a set that includes both $x$ and $y$.
The following lemma provides a useful characterization of this relation, confirming that it is indeed an equivalence relation:

\begin{lem}[separated-char]
Let $x,y$ be a pair of points in a cover space $X$.
Then the following conditions are equivalent:
\begin{enumerate}
\item \label{sc:sub} $x^\wedge \subseteq y^\wedge$.
\item $x^\wedge$ and $y^\wedge$ are equivalent as Cauchy filters.
\item $x^\wedge = y^\wedge$.
\item \label{sc:con} Every neighborhood of $x$ contains $y$.
\item \label{sc:int} Every neighborhood of $x$ intersects every neighborhood of $y$.
\item \label{sc:neighbor} Every Cauchy cover contains a neighborhood of both $x$ and $y$.
\item \label{sc:cov} $x$ and $y$ are equivalent.
\end{enumerate}
\end{lem}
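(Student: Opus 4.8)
The plan is to establish that all seven conditions are equivalent by proving a single cycle of implications
$(1)\Rightarrow(2)\Rightarrow(3)\Rightarrow(6)\Rightarrow(7)\Rightarrow(5)\Rightarrow(4)\Rightarrow(1)$.
This ordering is chosen so that the cycle splits into two arcs, each using one coherent toolbox: the theory of regular Cauchy filters handles the arc through the filter-theoretic conditions, whereas the ``rather below'' machinery of \rprop{rb-props}, \rlem{rb-point}, and \rprop{rb-closure} handles the arc through the pointwise conditions. Throughout I use that $x^\wedge$ and $y^\wedge$ are regular Cauchy filters, as recorded in the examples preceding this lemma.

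For the first arc, $(1)\Rightarrow(2)$ is immediate, since a Cauchy filter contained in another is equivalent to it. For $(2)\Rightarrow(3)$, I apply \rprop{regular-minimal} twice: as $x^\wedge$ is regular and equivalent to $y^\wedge$ we get $x^\wedge\subseteq y^\wedge$, and symmetrically $y^\wedge\subseteq x^\wedge$, so $x^\wedge=y^\wedge$. For $(3)\Rightarrow(6)$, given a Cauchy cover $C$, the Cauchy condition on $x^\wedge$ produces some $U\in C$ that is a neighborhood of $x$; since $x^\wedge=y^\wedge$, the same $U$ is a neighborhood of $y$. Finally $(6)\Rightarrow(7)$ follows from part~\eqref{rb:sub} of \rprop{rb-props}, because a set that is a neighborhood of a point contains that point.

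The substantive part is the second arc. For $(7)\Rightarrow(5)$, let $A$ and $B$ be neighborhoods of $x$ and $y$; unfolding the definition of neighborhood, $\{W\mid x\in W\implies W\subseteq A\}$ and $\{W\mid y\in W\implies W\subseteq B\}$ are Cauchy covers, so by \axref{CG} their pairwise-intersection cover is Cauchy. Condition (7) then yields an inhabited member $W_1\cap W_2$ containing both $x$ and $y$, which forces $W_1\subseteq A$ and $W_2\subseteq B$ and hence $\overlap{A}{B}$. For $(5)\Rightarrow(4)$, given a neighborhood $U$ of $x$, use \rlem{rb-point} to pick a neighborhood $V$ of $x$ with $V\rb U$; condition (5) says every neighborhood of $y$ meets $V$, so $y$ is a limit point of $V$, and \rprop{rb-closure} then makes $U$ a neighborhood of $y$, whence $y\in U$. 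The closing step $(4)\Rightarrow(1)$ is similar: for a neighborhood $U$ of $x$, choose $V\rb U$ with $V$ a neighborhood of $x$ via \rlem{rb-point}; by (4) we have $y\in V$, so $\{y\}\subseteq V\rb U$ gives $\{y\}\rb U$ by the monotonicity clause of \rprop{rb-props}, i.e.\ $U\in y^\wedge$. I expect the main obstacle to lie precisely in $(7)\Rightarrow(5)$ and $(5)\Rightarrow(4)$, where one must convert a statement about a single good set in \emph{every} Cauchy cover into a genuine intersection of neighborhoods; the crucial points are that \axref{CG} supplies the required meet cover and that \rprop{rb-closure} upgrades ``limit point of $V$'' back to ``$U$ is a neighborhood''.
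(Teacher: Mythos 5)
Your proof is correct, but it traverses the equivalences along a genuinely different path than the paper. The paper first collapses conditions (1)--(3) via \rprop{regular-minimal}, then proves (1)$\Rightarrow$(4)$\Rightarrow$(5), the two-way link (6)$\Leftrightarrow$(7), and closes the loop with the two substantive steps (5)$\Rightarrow$(6) and (6)$\Rightarrow$(1); the key move in (5)$\Rightarrow$(6) is to pick, for a Cauchy cover $C$, sets $U \rb W$ with $W \in C$ and $U$ a neighborhood of $x$, and then feed the defining cover of $U \rb W$ to the Cauchy filter $y^\wedge$ to extract a neighborhood $V$ of $y$ with $\overlap{V}{U} \implies V \subseteq W$. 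You instead run a single cycle in the opposite direction along the pointwise arc, and your two substantive steps are different: (7)$\Rightarrow$(5) is handled by applying \axref{CG} to the two covers $\{W \mid x \in W \implies W \subseteq A\}$ and $\{W \mid y \in W \implies W \subseteq B\}$ and extracting an inhabited intersection, and (5)$\Rightarrow$(4) is handled by observing that $y$ is a limit point of a neighborhood $V \rb U$ of $x$ and invoking \rprop{rb-closure}. Both routes are sound and of comparable length; yours avoids ever feeding a cover to the filter $y^\wedge$ in the hard direction (working purely with covers and points), at the cost of importing \rprop{rb-closure}, which the paper's proof of this lemma does not need. One small aesthetic benefit of the paper's ordering is that (5)$\Rightarrow$(6) directly exhibits the common neighborhood demanded by (6), whereas your cycle reaches (6) only through the filter-theoretic conditions.
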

\begin{proof}
The equivalence of the first three conditions follows from \rprop{regular-minimal}, since neighborhood filters are regular.
It is also clear that \eqref{sc:sub} implies \eqref{sc:con}, \eqref{sc:con} implies \eqref{sc:int}, and \eqref{sc:neighbor} implies \eqref{sc:cov}.
Additionally, \eqref{sc:cov} implies \eqref{sc:neighbor} by \axref{CR}.

Now, we prove that \eqref{sc:int} implies \eqref{sc:neighbor}.
Let $C$ be a Cauchy cover.
Then, there exist sets $U,W$ such that $U$ is a neighborhood of $x$, $U \rb W$, and $W \in C$.
Since $U \rb W$, there exists a neighborhood $V$ of $y$ such that $\overlap{V}{U} \implies V \subseteq W$.
By \eqref{sc:int}, $U$ and $V$ intersect, which implies that $V \subseteq W$.
Thus, $W$ is a neighborhood of both $x$ and $y$.

Finally, we prove that \eqref{sc:neighbor} implies \eqref{sc:sub}.
Let $U$ be a neighborhood of $x$.
By \eqref{sc:cov}, there exists a neighborhood $V$ of both $x$ and $y$ such that $x \in V \implies V \subseteq U$
Thus, $U$ is also a neighborhood of $y$.
\end{proof}

\begin{defn}
A cover space $X$ is called \emph{separated} if any two points of $X$ that satisfy the equivalent conditions of \rlem{separated-char} are equal.
\end{defn}

\begin{example}
A discrete cover space is separated.
Indeed, since the cover consisting of singleton sets is Cauchy, any two equivalent points must be equal.
\end{example}

\begin{example}
The cover space induced by a metric space is separated.
Indeed, if two points $x$ and $y$ are equivalent, there exists an open ball of any given radius that contains both $x$ and $y$.
This implies that the distance between $x$ and $y$ is zero, and hence, $x = y$.
\end{example}

A topological space $X$ is \emph{Hausdorff} if, for every pair of points $x,y \in X$, whenever every neighborhood of $x$ intersects every neighborhood of $y$, it follows that $x = y$.
\rlem{separated-char} implies that a cover space is separated if and only if its underlying topological space is Hausdorff.

A subset of a topological space is \emph{dense} if it intersects every inhabited open set.
A function is \emph{dense} if its image is dense.
We include a proof of the following standard fact to demonstrate that it is constructive:

\begin{prop}[dense-unique]
Let $Y$ be a Hausdorff topological space, $S$ a dense subset of a topological space $X$, and $f,g : X \to Y$ continuous functions.
If $f$ and $g$ are equal on every point in $S$, then $f = g$.
\end{prop}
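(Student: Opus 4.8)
The plan is to fix an arbitrary point $x \in X$ and prove $f(x) = g(x)$ by invoking the (constructive) Hausdorff condition on $Y$ applied to the pair $f(x), g(x)$. That condition asserts that two points coincide whenever every neighborhood of one intersects every neighborhood of the other, so it suffices to show: for every neighborhood $U$ of $f(x)$ and every neighborhood $V$ of $g(x)$, we have $\overlap{U}{V}$. The crucial feature — and the reason this proof is worth spelling out — is that we never suppose $f(x)$ and $g(x)$ are distinct and derive a contradiction; instead we verify the positive hypothesis of the Hausdorff condition directly, which keeps the argument constructive.

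So fix neighborhoods $U$ of $f(x)$ and $V$ of $g(x)$. By continuity of $f$ and $g$, the preimages $f^{-1}(U)$ and $g^{-1}(V)$ are neighborhoods of $x$, hence so is their intersection. Let $W$ be the interior of $f^{-1}(U) \cap g^{-1}(V)$; this is an open set containing $x$, and in particular $W$ is inhabited.

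Now I use the density of $S$: since $S$ intersects every inhabited open set, there is a point $s \in S \cap W$. From $s \in W \subseteq f^{-1}(U)$ we obtain $f(s) \in U$, and from $s \in W \subseteq g^{-1}(V)$ we obtain $g(s) \in V$. Because $s \in S$, the hypothesis gives $f(s) = g(s)$, so this common value lies in both $U$ and $V$, witnessing $\overlap{U}{V}$. As $U$ and $V$ were arbitrary, the Hausdorff condition yields $f(x) = g(x)$, and since $x$ was arbitrary, $f = g$.

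The only delicate point is the constructive reading of \emph{Hausdorff}: the witness $s$ must be produced explicitly through density rather than extracted from a contradiction between $f(x)$ and $g(x)$. The remaining ingredients — that a finite intersection of neighborhoods is a neighborhood, and that continuity sends a neighborhood of $f(x)$ back to a neighborhood of $x$ — are entirely routine and require no choice principles.
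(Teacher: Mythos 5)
Your proof is correct and follows essentially the same route as the paper's: fix $x$, verify the Hausdorff hypothesis by producing a point of $S$ in the neighborhood $f^{-1}(U) \cap g^{-1}(V)$ of $x$, and conclude $f(x) = g(x)$. Your explicit passage to the interior before applying density is a minor tidying of a step the paper leaves implicit; otherwise the arguments coincide.
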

\begin{proof}
Let $x$ be a point in $X$.
Since $Y$ is Hausdorff, to prove that $f(x) = g(x)$, we need to show that every neighborhood of $f(x)$ intersects every neighborhood of $g(x)$.
Let $U$ be a neighborhood of $f(x)$ and $V$ a neighborhood of $g(x)$.
Then $f^{-1}(U) \cap g^{-1}(V)$ is a neighborhood of $x$.
Since $S$ is dense, this neighborhood contains a point $x' \in S$.
Thus, $U$ and $V$ both contain $f(x') = g(x')$.
\end{proof}

Now, we are ready to define complete cover spaces:

\begin{defn}
A cover space is said to be \emph{complete} if it is separated and every Cauchy filter is equivalent to the neighborhood filter of some point.
\end{defn}

\begin{remark}
A cover space is separated if and only if the mapping $(-)^\wedge$, which sends points to regular Cauchy filters, is injective.
Similarly, a cover space is complete if and only if this mapping is bijective.
\end{remark}

In a complete cover space, every Cauchy filter $F$ determines a point, denoted by $F^\vee$, which is the unique point such that $F^{\vee \wedge} \subseteq F$.
The following lemma provides a useful characterization of the neighborhood filter $F^{\vee \wedge}$:

\begin{lem}[filter-point-char]
Let $F$ be a Cauchy filter in a complete cover space.
Then
\[ F^{\vee \wedge} = \{ U \mid \exists V \in F, V \rb U \}. \]
\end{lem}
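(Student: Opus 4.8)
The plan is to recognize the right-hand side as the unique regular Cauchy filter equivalent to $F$, and then to identify that representative with the neighborhood filter $F^{\vee \wedge}$. Write $G = \{ U \mid \exists V \in F, V \rb U \}$ for the right-hand side, and let $M$ denote the regular Cauchy filter equivalent to $F$ provided by \rprop{regular-filter-repr}.

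First I would observe that $F^{\vee \wedge} = M$. Indeed, the neighborhood filter of any point is regular, and since $F^\vee$ is by definition the point with $F^{\vee \wedge} \subseteq F$, the filter $F^{\vee \wedge}$ is a Cauchy subfilter of $F$ and is therefore equivalent to $F$. As $M$ is the \emph{unique} regular Cauchy filter equivalent to $F$, this forces $F^{\vee \wedge} = M$, reducing the lemma to proving $M = G$.

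For the inclusion $M \subseteq G$, I would take $U \in M$; since $M$ is regular there is $V \in M$ with $V \rb U$, and because $M \subseteq F$ we have $V \in F$, so $U \in G$. For the reverse inclusion $G \subseteq M$, take $U \in G$, so that $V \rb U$ for some $V \in F$. Since $F$ and $M$ are equivalent Cauchy filters and $V \in F$, \rlem{cauchy-filter-rb} yields $U \in M$. Combining the two inclusions gives $G = M = F^{\vee \wedge}$.

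The individual steps are all short, so there is no single hard computation; the main thing to get right is the bookkeeping around equivalences—in particular, justifying that $F^{\vee \wedge}$ genuinely is the regular representative $M$ (so that the uniqueness clause of \rprop{regular-filter-repr} applies) and invoking \rlem{cauchy-filter-rb} with the correct pair of equivalent filters. An alternative route would be to check directly that $G$ is a proper Cauchy filter—properness and meet-closure via \rprop{rb-props}, and the Cauchy condition via \axref{CR}—and that it is regular, concluding again by uniqueness; but routing everything through $M$ as above avoids re-deriving these facts.
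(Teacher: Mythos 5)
Your proof is correct and follows essentially the same route as the paper: one inclusion via \rlem{cauchy-filter-rb} applied to the equivalent filters $F$ and $F^{\vee\wedge}$, and the other via the regularity of the neighborhood filter (which is where \rlem{rb-point} enters). The detour through $M$ and the uniqueness clause of \rprop{regular-filter-repr} is valid but not needed; the paper just invokes \rlem{rb-point} directly on $F^\vee$.
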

\begin{proof}
Since $F^{\vee \wedge}$ and $F$ are equivalent, if $V \in F$ and $V \rb U$, then $U \in F^{\vee \wedge}$ by \rlem{cauchy-filter-rb}.
Conversely, suppose $U \in F^{\vee \wedge}$.
By \rlem{rb-point}, there exists a neighborhood $V$ of $F^\vee$ such that $V \rb U$.
Since $F^{\vee \wedge} \subseteq F$, we have $V \in F$, completing the proof.
\end{proof}

Now, we will show that cover maps to complete cover spaces extend uniquely along dense embeddings.
First, we prove a lemma that shows how to lift Cauchy filters along dense embeddings:

\begin{lem}[filter-lift]
Let $f : X \to Y$ be a dense embedding between cover spaces, and let $F$ be a Cauchy filter on $Y$.
Then the set
\[ G = \{ U \mid \exists V,V' \subseteq Y, f^{-1}(V) \subseteq U, V' \rb V, V' \in F \} \]
is a Cauchy filter on $X$ such that $f(G)$ is equivalent to $F$.
\end{lem}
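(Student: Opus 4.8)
The plan is to establish the three required properties of $G$ separately: that $G$ is a proper filter, that $G$ satisfies the Cauchy condition, and that $f(G)$ is equivalent to $F$. The definition of $G$ is engineered so that the ``rather below'' buffer $V' \rb V$ with $V' \in F$ gives enough room to transport the Cauchy property of $F$ on $Y$ back to $X$ along $f^{-1}$, while density of $f$ ensures that pulling back nonempty sets keeps them inhabited.

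\emph{Filter and properness.} First I would check that $G$ is upward-closed, which is immediate since $U \subseteq U''$ together with $f^{-1}(V) \subseteq U$ gives $f^{-1}(V) \subseteq U''$. For closure under finite intersection, given $U_1, U_2 \in G$ with witnesses $(V_1, V_1')$ and $(V_2, V_2')$, I would take $V = V_1 \cap V_2$ and $V' = V_1' \cap V_2'$; then $V' \rb V$ by \rprop{rb-props}\eqref{rb:meet}, $V' \in F$ because $F$ is a filter, and $f^{-1}(V) = f^{-1}(V_1) \cap f^{-1}(V_2) \subseteq U_1 \cap U_2$. For properness, suppose $U \in G$ with witnesses $V' \rb V$, $V' \in F$. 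Since $F$ is proper, $V'$ is inhabited, and by \rprop{rb-props}\eqref{rb:sub} we have $V' \subseteq V$, so $V$ is inhabited; density of $f$ then forces $f^{-1}(V)$ to be inhabited, hence $U \supseteq f^{-1}(V)$ is inhabited.

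\emph{Cauchy condition.} Let $C$ be a Cauchy cover of $X$. Since $f$ is an embedding, $C^Y = \{ V \mid \exists U \in C, f^{-1}(V) \subseteq U \}$ is a Cauchy cover of $Y$. Applying \axref{CR} to $C^Y$ on $Y$ and using that $F$ is Cauchy, I would extract sets $V' \rb V$ with $V \in C^Y$ and $V' \in F$; unpacking $V \in C^Y$ yields $U \in C$ with $f^{-1}(V) \subseteq U$, which is exactly the data witnessing $U \in G$. Thus $G$ meets $C$.

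\emph{Equivalence of $f(G)$ with $F$.} By definition $f(G) = \{ W \subseteq Y \mid f^{-1}(W) \in G \}$. The cleanest route is to check $F \subseteq f(G)$ together with the Cauchy condition, since a Cauchy filter contained in another equivalent one suffices by the remarks preceding \rlem{cauchy-filter-rb}; however, $f(G)$ need not sit inside $F$, so I would instead verify equivalence directly: given a Cauchy cover $D$ of $Y$, I must produce a common element. Pulling $D$ back gives a Cauchy cover of $X$ whose intersection with $G$ I already control, and the buffer set $V'$ supplied by membership in $G$ lies in $F$ while its $f$-image relates to the corresponding element of $D$ via \rlem{cauchy-filter-rb}. \textbf{The main obstacle} is precisely this last equivalence: one must track the pullback of a Cauchy cover through both the embedding condition and \axref{CR} simultaneously, and use \rlem{cauchy-filter-rb} on $Y$ to convert the $V' \rb V$ buffer into membership of the intended $D$-element in $F$, all while confirming the image sets genuinely lie in $f(G)$. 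Establishing this carefully, rather than the routine filter axioms, is where the real work lies.
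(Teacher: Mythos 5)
Your handling of the filter axioms and of the Cauchy condition is correct and follows the same route as the paper: push $C$ through the embedding condition to get a Cauchy cover of $Y$, apply \axref{CR}, and let $F$ select $V' \rb V$ with $V \in C^Y$. Two steps, however, do not go through as written. In the properness argument you derive only that $V$ is \emph{inhabited} (from $V' \subseteq V$) and then invoke density of $f$ to conclude that $f^{-1}(V)$ is inhabited. Density means that every point of $Y$ is a limit point of the image, i.e.\ every \emph{neighborhood} of every point meets the image; a merely inhabited set need not meet a dense image (a singleton $\{\sqrt 2\}$ does not meet $\mathbb{Q}$). The buffer $V' \rb V$ is exactly what rescues this and you must use it: a point $y \in V'$ satisfies $\{y\} \subseteq V' \rb V$, so $V$ is a neighborhood of $y$ (equivalently $y \in \mathrm{int}(V)$ by \rlem{int-char}), and density then produces $x$ with $f(x) \in V$, hence $x \in f^{-1}(V) \subseteq U$.

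The equivalence of $f(G)$ with $F$ is left as a sketch, and the route you sketch runs into a genuine obstruction. Pulling a Cauchy cover $D$ of $Y$ back to $X$ and intersecting with $G$ yields $W \in D$ with $f^{-1}(W) \in G$, witnessed by $V' \rb V$, $V' \in F$, and $f^{-1}(V) \subseteq f^{-1}(W)$; but $f^{-1}(V) \subseteq f^{-1}(W)$ does not imply $V \subseteq W$, so you cannot conclude $W \in F$, and \rlem{cauchy-filter-rb} does not repair this because $V'$ bears no $\rb$ relation to $W$. The paper's argument stays entirely in $Y$ and is much shorter: given a Cauchy cover $C$ of $Y$, \axref{CR} and the Cauchyness of $F$ give $V' \in F$ and $V \in C$ with $V' \rb V$; then $V \in F$ by upward closure, and $f^{-1}(V) \in G$ directly from the definition of $G$ (with witnesses $V$ and $V'$), so $V \in C \cap F \cap f(G)$. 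No pullback to $X$ and no appeal to \rlem{cauchy-filter-rb} is needed; what you flagged as ``where the real work lies'' is in fact the easiest part once the witnesses are read off the definition of $G$.
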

\begin{proof}
Clearly, $G$ is a filter.
To see that it is proper, let $U \in G$.
By definition of $G$, there exist sets $V, V' \subseteq Y$ such that $f^{-1}(V) \subseteq U$, $V' \rb V$, and $V' \in F$.
Since $F$ is a proper filter, there exists a point $y \in V'$.
Since $f$ is dense, there is a point $x \in X$ such that $f(x) \in V$, implying $x \in U$.
Therefore, $G$ is proper.

We now demonstrate that $G$ is Cauchy.
Let $C$ be a Cauchy cover of $X$.
Since $f$ is an embedding, the set
\[ \{ V' \mid \exists V, V' \rb V, \exists U \in C, f^{-1}(V) \subseteq U \} \]
forms a Cauchy cover of $Y$.
Since $F$ is Cauchy, there exist sets $V',V,U$ such that $V' \in F$, $V' \rb V$, $U \in C$, and $f^{-1}(V) \subseteq U$.
This implies $U \in C \cap G$, proving that $G$ is Cauchy.

Finally, we prove that $f(G)$ is equivalent to $F$.
Let $C$ be a Cauchy cover of $Y$.
Since $F$ is Cauchy, there exist sets $V' \in F$ and $V \in C$ such that $V' \rb V$.
Since $f^{-1}(V)$ belongs to $G$ by construction, we have $V \in C \cap F \cap f(G)$.
Therefore, $f(G)$ is indeed equivalent to $F$.
\end{proof}

This lemma implies a useful criterion for completeness.
Given a dense embedding $f : X \to Y$, to prove that $Y$ is complete, it suffices to consider only Cauchy filters that come from $X$:

\begin{lem}[complete-part]
Let $X$ be a cover space, $Y$ a separated cover space, and $f : X \to Y$ a dense embedding.
Then $Y$ is complete if and only if, for every regular Cauchy filter $F$ on $X$, there exists a point $y \in Y$ such that $f(F)$ is equivalent to the neighborhood filter of $y$.
\end{lem}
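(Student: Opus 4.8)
The plan is to prove the two implications separately; the forward direction is essentially immediate, while the backward direction is where the density and embedding hypotheses do real work through \rlem{filter-lift}. For the forward implication, suppose $Y$ is complete and let $F$ be a regular Cauchy filter on $X$. Since $f$ is a cover map it is a Cauchy map, so the pushforward $f(F)$ is again a Cauchy filter on $Y$ (it is proper because $F$ is). Completeness of $Y$ then gives a point $y$ with $f(F)$ equivalent to $y^\wedge$, which is exactly the asserted condition.

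For the backward implication, assume the stated lifting property and let $G$ be an arbitrary Cauchy filter on $Y$. As $Y$ is separated by hypothesis, to conclude completeness it suffices to find a point whose neighborhood filter is equivalent to $G$. I would first apply \rlem{filter-lift} to obtain a Cauchy filter $H$ on $X$ with $f(H)$ equivalent to $G$, then pass via \rprop{regular-filter-repr} to the unique regular Cauchy filter $F$ equivalent to $H$. The hypothesis applied to $F$ yields a point $y \in Y$ with $f(F)$ equivalent to $y^\wedge$. It then remains to chain the equivalences $G \sim f(H)$, $f(H) \sim f(F)$, and $f(F) \sim y^\wedge$, so that transitivity (\rprop{cauchy-filter-equiv}) delivers $G \sim y^\wedge$.

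The step needing the most care, and the one I expect to be the main obstacle, is the middle link $f(H) \sim f(F)$: that pushforward along a cover map preserves equivalence of Cauchy filters. I would prove this directly. Given any Cauchy cover $C$ of $Y$, the family $\{ f^{-1}(V) \mid V \in C \}$ is a Cauchy cover of $X$ because $f$ is a cover map; since $F$ and $H$ are equivalent, some $V \in C$ satisfies $f^{-1}(V) \in F \cap H$, and then $V$ belongs simultaneously to $C$, to $f(F)$, and to $f(H)$. This witnesses the equivalence of $f(F)$ and $f(H)$ and completes the chain.
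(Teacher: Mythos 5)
Your proof is correct and follows essentially the same route as the paper, which disposes of the forward direction as immediate and derives the converse from \rlem{filter-lift}; you have simply made explicit the details the paper leaves implicit, namely passing to the regular representative via \rprop{regular-filter-repr} and verifying that pushforward along a cover map preserves equivalence of Cauchy filters before chaining the equivalences by \rprop{cauchy-filter-equiv}. No gaps.
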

\begin{proof}
The ``only if'' direction is obvious.
The coverse follows from \rlem{filter-lift}.
\end{proof}

Now, we are ready to prove the extension property:

\begin{thm}[dense-lift]
For every complete cover space $Z$, every dense embedding $f : X \to Y$ between cover spaces, and every Cauchy map $g : X \to Z$,
there exists a unique Cauchy map $\widetilde{g} : Y \to Z$ such that $\widetilde{g} \circ f = g$.
If $g$ is a cover map, then so is $\widetilde{g}$.
\end{thm}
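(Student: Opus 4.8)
The plan is to define $\widetilde{g}$ pointwise using the filter-lifting machinery of \rlem{filter-lift}. For each $y \in Y$ the neighborhood filter $y^\wedge$ is a Cauchy filter on $Y$, so \rlem{filter-lift} produces a Cauchy filter $G_y$ on $X$ with $f(G_y)$ equivalent to $y^\wedge$. Since $g$ is Cauchy, $g(G_y)$ is a Cauchy filter on $Z$, and because $Z$ is complete, hence separated, this filter determines a unique point; I set $\widetilde{g}(y) := (g(G_y))^\vee$. The whole argument rests on one technical claim, which I will isolate and reuse: if $y \in V'$, $V' \rb_Y V$, $f^{-1}(V) \subseteq g^{-1}(W_0)$, and $W_0 \rb_Z W$, then $\widetilde{g}(y) \in W$. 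To prove it, note $y \in V' \subseteq \mathrm{int}(V)$ by \rlem{int-char}, so $V$ is a neighborhood of $y$; by \rlem{rb-point} there is a neighborhood $V''$ of $y$ with $V'' \rb V$, which places $f^{-1}(V)$ in $G_y$ by the defining formula. Then $g^{-1}(W_0) \supseteq f^{-1}(V)$ lies in $G_y$, so $W_0 \in g(G_y)$; since $g(G_y)$ is equivalent to $\widetilde{g}(y)^\wedge$ and $W_0 \rb W$, \rlem{cauchy-filter-rb} gives $W \in \widetilde{g}(y)^\wedge$ and hence $\widetilde{g}(y) \in W$.

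Next I verify the extension property $\widetilde{g} \circ f = g$. Taking $y = f(x)$, I will show $G_{f(x)} \subseteq x^\wedge$: if $U \in G_{f(x)}$ with witnesses $f^{-1}(V) \subseteq U$, $V' \rb V$, and $V' \in f(x)^\wedge$, then $\{x\} \subseteq f^{-1}(V') \rb_X f^{-1}(V) \subseteq U$, where $\{x\} \rb_X f^{-1}(V')$ by continuity of $f$ and $f^{-1}(V') \rb_X f^{-1}(V)$ by \rlem{cover-map-rb}, so $\{x\} \rb U$ by \rprop{rb-props} and thus $U \in x^\wedge$. Being a subfilter of the Cauchy filter $x^\wedge$, $G_{f(x)}$ is equivalent to it, so $g(G_{f(x)})$ is equivalent to $g(x^\wedge)$; and $g(x)^\wedge \subseteq g(x^\wedge)$ by continuity, so $g(x^\wedge)$ is equivalent to $g(x)^\wedge$. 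Therefore $(g(G_{f(x)}))^\vee = g(x)$, i.e.\ $\widetilde{g}(f(x)) = g(x)$.

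To see that $\widetilde{g}$ is Cauchy, let $H$ be a Cauchy filter on $Y$, lift it to $G_H$ on $X$ via \rlem{filter-lift}, and put $z = (g(G_H))^\vee$. For any $W \in z^\wedge$, \rlem{filter-point-char} yields $W_0 \in g(G_H)$ with $W_0 \rb W$, and unwinding the definition of $G_H$ gives $V, V' \subseteq Y$ with $f^{-1}(V) \subseteq g^{-1}(W_0)$, $V' \rb V$, and $V' \in H$. The technical claim then shows $V' \subseteq \widetilde{g}^{-1}(W)$, so $\widetilde{g}^{-1}(W) \in H$ and $W \in \widetilde{g}(H)$. Hence $z^\wedge \subseteq \widetilde{g}(H)$, and since $\widetilde{g}(H)$ is a proper filter containing the Cauchy filter $z^\wedge$, it is itself Cauchy. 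Uniqueness is immediate from \rprop{dense-unique}: any Cauchy extension is continuous by \rprop{cauchy-continuous}, the image of $f$ is dense in $S(Y)$ by \rprop{top-neighborhood}, and $S(Z)$ is Hausdorff since $Z$ is separated, so two Cauchy maps agreeing on $f(X)$ coincide.

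Finally, suppose $g$ is a cover map and let $D$ be a Cauchy cover of $Z$. Applying \axref{CR} gives the Cauchy cover $D_1 = \{ W' \mid \exists W \in D, W' \rb W \}$; since $g$ is a cover map, $\{ g^{-1}(W') \mid W' \in D_1 \}$ is Cauchy on $X$, and since $f$ is an embedding, $E = \{ V \mid \exists W' \in D_1, f^{-1}(V) \subseteq g^{-1}(W') \}$ is Cauchy on $Y$. A further application of \axref{CR} makes $E_1 = \{ V_0 \mid \exists V \in E, V_0 \rb V \}$ Cauchy, and the technical claim (with $V_0$ in the role of $V'$) shows that each $V_0 \in E_1$ is contained in some $\widetilde{g}^{-1}(W)$ with $W \in D$. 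Thus $E_1$ refines $\{ \widetilde{g}^{-1}(W) \mid W \in D \}$, which is therefore Cauchy by \axref{CE}, proving $\widetilde{g}$ is a cover map. The main obstacle is the technical claim together with the verification that $\widetilde{g}$ is Cauchy; once the filter lift is combined correctly with \rlem{filter-point-char} and \rlem{cauchy-filter-rb}, the extension property and the cover-map case follow by the same computation applied in slightly different configurations.
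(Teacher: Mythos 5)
Your proof is correct and follows essentially the same route as the paper: define $\widetilde{g}(y) = (g(G_y))^\vee$ via \rlem{filter-lift}, then verify the Cauchy, extension, and cover-map properties using \rlem{filter-point-char}, \rlem{cauchy-filter-rb}, and \rlem{rb-point}. Your isolation of the reusable technical claim (and your use of \rlem{rb-point} to supply the extra $\rb$-level on the fly, where the paper bakes an additional $V'' \rb V'$ into its cover in the cover-map step) is a tidy reorganization of the same argument rather than a different approach.
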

\begin{proof}
The uniqueness follows from \rprop{dense-unique} and \rprop{cauchy-continuous}.
Let us prove the existence.
For each point $y \in Y$, by \rlem{filter-lift}, there exists a Cauchy filter $G_y$ on $X$ such that $f(G_y)$ is equivalent to $y^\wedge$.
We define $\widetilde{g}(y)$ as $g(G_y)^\vee$.

Next, we show that $\widetilde{g}$ is a Cauchy map.
Let $F$ be a Cauchy filter on $Y$ and $C$ a Cauchy cover on $Z$.
We need to show that there exists a set $U \in C$ such that $\widetilde{g}^{-1}(U) \in F$.
Let $G_F$ be the filter constructed for $F$ via \rlem{filter-lift}.
Since $g$ is Cauchy, the filter $g(G_F)$ is Cauchy, so there exist sets $U' \in g(G_F)$ and $U \in C$ such that $U' \rb U$.

From the definition of $G_F$, we get sets $V' \in F$ and $V$ such that $f^{-1}(V) \subseteq g^{-1}(U')$ and $V' \rb V$.
Since $F$ is a filter, it suffices to show that $V' \subseteq \widetilde{g}^{-1}(U)$.
Let $y \in V'$.
We need to show that $U$ is a neighborhood of $\widetilde{g}(y)$.
By \rlem{filter-point-char}, it suffices to show that $U' \in g(G_y)$.
This follows from the definition of $G_y$ and \rlem{rb-point}, as $f^{-1}(V) \subseteq g^{-1}(U')$ and $V$ is a neighborhood of $y$.

Now, we prove that $\widetilde{g} \circ f = g$.
Let $x \in X$.
By \rlem{separated-char}, it suffices to show that every neighborhood of $\widetilde{g}(f(x))$ contains $g(x)$.
Let $W$ be a neighborhood of $\widetilde{g}(f(x))$.
By \rlem{filter-point-char}, there exists a set $W' \in g(G_{f(x)})$ such that $W' \rb W$.
From the definition of $G_{f(x)}$, we get sets $V$ and $V'$ such that $f^{-1}(V) \subseteq g^{-1}(W')$, $V' \rb V$, and $\{ f(x) \} \rb V'$.
This implies that $W$ is a neighborhood of $g(x)$.

Finally, suppose that $g$ is a cover map.
We need to show that $\widetilde{g}$ is also a cover map.
Let $D$ be a Cauchy cover on $Z$.
By \axref{CR}, the set $\{ g^{-1}(W') \mid \exists W \in D, W' \rb W \}$ is a Cauchy cover on $X$.
Since $f$ is an embedding, the set
\[ \{ V'' \mid \exists V,V',W',W, V'' \rb V', V' \rb V, W \in D, W' \rb W, f^{-1}(V) \subseteq g^{-1}(W') \} \]
is a Cauchy cover on $Y$.
We will show that this cover refines $\{ \widetilde{g}^{-1}(W) \mid W \in D \}$.

Let $V,V',V'',W,W'$ be sets such that $V'' \rb V'$, $V' \rb V$, $W \in D$, $W' \rb W$, and $f^{-1}(V) \subseteq g^{-1}(W')$.
Then $V'' \subseteq \widetilde{g}^{-1}(W)$.
To see this, let $y \in V''$.
We need to show that $W$ is a neighborhood of $\widetilde{g}(y) = g(G_y)^\vee$, but this follows from \rlem{filter-point-char} and the definition of $G_y$.
\end{proof}

A \emph{completion} of a cover space $X$ is a complete cover space $Y$ together with a dense embedding $X \to Y$.
By \rthm{dense-lift}, completion is unique up to isomorphism.
We now show that every cover space has a completion:

\begin{thm}[completion]
For every cover space $X$, there exists a complete cover space $C(X)$ and a dense embedding $\eta_X : X \to C(X)$.
\end{thm}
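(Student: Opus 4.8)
The plan is to realise the completion concretely as the space of regular Cauchy filters on $X$. Define the underlying set of $C(X)$ to be the set of regular Cauchy filters on $X$ (equivalently, by \rprop{regular-filter-repr}, the set of equivalence classes of Cauchy filters), and for each subset $U \subseteq X$ write $\widehat{U} = \{ F \in C(X) \mid U \in F \}$. Note that $\widehat{X} = C(X)$, that $\widehat{U} \cap \widehat{U'} = \widehat{U \cap U'}$, and that, since every regular Cauchy filter meets every Cauchy cover, for each Cauchy cover $C$ of $X$ the family $\overline{C} := \{ \widehat{U} \mid U \in C \}$ is a cover of $C(X)$. Let $\mathcal{B}$ be the base generated by these $\overline{C}$; closure under \axref{BI} is available because $\overline{C} \sqcap \overline{C'} = \overline{C \sqcap C'}$ with $C \sqcap C' = \{ U \cap U' \mid U \in C, U' \in C' \}$ Cauchy by \axref{CG}. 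Define $\eta_X : X \to C(X)$ by $\eta_X(x) = x^\wedge$, and observe that $\eta_X^{-1}(\widehat{U}) = \{ x \mid \{ x \} \rb U \} = \mathrm{int}(U)$ by \rlem{int-char}.

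First I would show that $\mathcal{B}$ is a regular base, so that by the remark following the definition of a base the pair $(C(X), \overline{\mathcal{B}})$ is a cover space. The only substantive axiom is \axref{BR}, and it reduces to the key claim that $V \rb_X U$ implies $\widehat{V} \rb_\mathcal{B} \widehat{U}$. Indeed, the Cauchy cover $C_0 = \{ W \mid \overlap{W}{V} \implies W \subseteq U \}$ witnessing $V \rb_X U$ yields $\overline{C_0} \in \mathcal{B}$, and $\overline{C_0}$ refines $\{ \mathcal{W} \mid \overlap{\mathcal{W}}{\widehat{V}} \implies \mathcal{W} \subseteq \widehat{U} \}$: if $F \in \widehat{W} \cap \widehat{V}$ then $W \cap V \in F$ is inhabited, so $\overlap{W}{V}$, hence $W \subseteq U$ and $\widehat{W} \subseteq \widehat{U}$. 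Applying this to $C^{\rb} = \{ V \mid \exists U \in C, V \rb U \}$, which is Cauchy by \axref{CR} in $X$, gives \axref{BR}.

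Next I would verify that $\eta_X$ is a dense embedding. It is a cover map since $\eta_X^{-1}(\overline{C}) = \{ \mathrm{int}(U) \mid U \in C \}$ is Cauchy by \rprop{cover-int} and $\eta_X^{-1}$ commutes with refinement and intersection; it is an embedding because $\overline{C^{\rb}}$ is a Cauchy cover of $C(X)$ whose members $\widehat{V}$ (with $V \rb U \in C$) satisfy $\eta_X^{-1}(\widehat{V}) = \mathrm{int}(V) \subseteq U$, so $\overline{C^{\rb}}$ refines $\{ \mathcal{V} \mid \exists U \in C, \eta_X^{-1}(\mathcal{V}) \subseteq U \}$. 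The decisive computation, on which both density and completeness rest, is that the neighborhood filter of a point $G \in C(X)$ is exactly $G^\wedge = \{ \mathcal{N} \mid \exists U \in G, \widehat{U} \subseteq \mathcal{N} \}$. One inclusion is the key claim (for $U \in G$, regularity gives $V \in G$ with $V \rb U$, whence $\widehat{V} \rb \widehat{U}$ and $G \in \widehat{V}$, so $\widehat{U}$ is a neighborhood of $G$). For the reverse I would introduce the proper filter $\tilde{G}$ on $C(X)$ generated by $\{ \widehat{U} \mid U \in G \}$ — each $\widehat{U}$ is inhabited because $\mathrm{int}(U)$ contains a point by regularity — check it is Cauchy via the subbase using \rprop{cauchy-filter}, and intersect it with the Cauchy cover witnessing $\{ G \} \rb \mathcal{N}$; the selected $\mathcal{W}$ then contains some $\widehat{U}$ with $U \in G$, which forces $G \in \mathcal{W}$ and hence $\widehat{U} \subseteq \mathcal{W} \subseteq \mathcal{N}$. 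Density follows because each such $\widehat{U}$ contains $x^\wedge$ for any $x \in \mathrm{int}(U)$, and separation follows because $F^\wedge = G^\wedge$ forces each $\widehat{U}$ ($U \in F$) to contain some $\widehat{U'}$ ($U' \in G$), giving $U \in G$ and, symmetrically, $F = G$.

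Finally, completeness I would obtain from \rlem{complete-part}: $C(X)$ is separated and $\eta_X$ is a dense embedding, so it suffices to produce, for each regular Cauchy filter $F$ on $X$, a point $y \in C(X)$ with $\eta_X(F)$ equivalent to $y^\wedge$. Taking $y = F$, the inclusion $F^\wedge \subseteq \eta_X(F)$ holds because $\widehat{U} \subseteq \mathcal{N}$ with $U \in F$ gives $\mathrm{int}(U) \subseteq \eta_X^{-1}(\mathcal{N})$ and $\mathrm{int}(U) \in F$; since $\eta_X(F)$ is Cauchy ($\eta_X$ is a cover map, hence Cauchy) and one Cauchy filter contained in another is equivalent to it, we get $\eta_X(F) \sim F^\wedge$. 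I expect the main obstacle to be precisely the neighborhood-filter computation $G^\wedge = \tilde{G}$: everything else is bookkeeping with the subbase, but identifying the neighborhoods in $C(X)$ with the basic sets $\widehat{U}$ requires the auxiliary Cauchy filter $\tilde{G}$ together with careful use of the implication defining $\{ G \} \rb \mathcal{N}$.
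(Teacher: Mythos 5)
Your proposal is correct and follows essentially the same route as the paper: the completion is realized as the set of regular Cauchy filters with Cauchy covers generated by the sets $\widehat{U}$, the central step in both arguments is the implication $V \rb_X U \implies \widehat{V} \rb \widehat{U}$ (proved by the same properness computation), and completeness is deduced from \rlem{complete-part} by exhibiting $F^\wedge \subseteq \eta_X(F)$. The only differences are organizational: you discharge the cover-space axioms through the base machinery of \rlem{subbase-regular} rather than verifying \axref{CG} and \axref{CR} by hand, and you isolate the neighborhood-filter computation via the auxiliary Cauchy filter $\tilde{G}$ on $C(X)$ where the paper inlines the equivalent argument, pulled back to $X$, into its density and completeness steps.
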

\begin{proof}
Define $C(X)$ as the set of regular Cauchy filters on $X$.
For every subset $U \subseteq X$, let $\widetilde{U} \subseteq C(X)$ denote the set of regular Cauchy filters containing $U$.
The Cauchy covers of $C(X)$ are defined as those covers $C$ refined by $\{ \widetilde{U'} \mid U' \in C' \}$ for some Cauchy cover $C'$ in $X$.
To see that $C$ is a cover, let $F$ be a regular Cauchy filter.
Then there exists a set $U \in F \cap C'$, which implies that we have $V \in C$ such that $F \in \widetilde{U} \subseteq V$.

We first verify that this definition satisfies the axioms of a cover space:
\begin{itemize}
\item Axioms \axref{CT} and \axref{CE}: These follow directly from the definition of $C(X)$.
\item Axiom \axref{CG}: Let $C$ and $\{ D_U \}_{U \in C}$ be Cauchy covers of $C(X)$.
        We need to show that $E = \{ U \cap V \mid U \in C, V \in D_U \}$ is a Cauchy cover.
        For each $U' \in C'$, define a Cauchy cover of $X$:
            \[ D'_{U'} = \{ V' \mid \exists U \in C, V \in D_U, \widetilde{U'} \subseteq U, \widetilde{V'} \subseteq V \}. \]
        By \axref{CG}, the set
            \[ E' = \{ U' \cap V' \mid U' \in C', \exists U \in C, V \in D_U, \widetilde{U'} \subseteq U, \widetilde{V'} \subseteq V \} \]
        is also a Cauchy cover of $X$.
        Since $E$ is refined by $\{ \widetilde{W'} \mid W' \in E' \}$, it is a Cauchy cover of $C(X)$.
\item Axiom \axref{CR}: First, we show that $\widetilde{V} \rb_{C(X)} \widetilde{U}$ whenever $V \rb_X U$.
        If $V \rb_X U$, then the following set is a Cauchy cover of $C(X)$:
            \[ R_{V,U} = \{ \widetilde{W} \mid \overlap{W}{V} \implies W \subseteq U \}. \]
        Let $\widetilde{W} \in R_{V,U}$.
        We will show that $\widetilde{W}$ belongs to the following set:
            \[ \widetilde{R}_{V,U} = \{ W \mid \overlap{W}{\widetilde{V}} \implies W \subseteq \widetilde{U} \}. \]
        If $\widetilde{W}$ intersects $\widetilde{V}$, then there exists a Cauchy filter $F$ containing $W \cap V$.
        Since $F$ is proper, $W$ intersects $V$, which implies $W \subseteq U$.
        It follows that $\widetilde{W} \subseteq \widetilde{U}$.
        Thus, $\widetilde{W} \in \widetilde{R}_{V,U}$.
        This shows that $\widetilde{R}_{V,U}$ is a Cauchy cover of $C(X)$.
        Therefore, $\widetilde{V} \rb_{C(X)} \widetilde{U}$.

        Now, we proceed to verify \axref{CR}.
        Let $C$ be a Cauchy cover of $C(X)$.
        From the previous argument, we know that $\{ \widetilde{V'} \mid \exists U' \in C', V' \rb_X U' \}$ refines $\{ V \mid \exists U \in C, V \rb_{C(X)} U \}$.
        Since the former is a Cauchy cover, the latter is also a Cauchy cover of $C(X)$.
        This completes the proof of \axref{CR}.
\end{itemize}

Define $\eta_X(x) = x^\wedge$ for all $x \in X$.
We now check that $\eta_X : X \to C(X)$ is a dense embedding:
\begin{itemize}
\item $\eta_X$ is a cover map: Let $C'$ be a Cauchy cover of $X$.
The cover $\{ \eta_X^{-1}(\widetilde{U'}) \mid U' \in C' \}$ is refined by $\{ V' \mid \exists U' \in C', V' \rb_X U' \}$, which is Cauchy by \axref{CR}.
This shows that the former cover is also Cauchy.
\item $\eta_X$ is an embedding: This follows directly from the definition of $C(X)$.
\item $\eta_X$ is dense:
        Let $F$ be a regular Cauchy filter on $X$, and let $V$ be a neighborhood of $F$ in $C(X)$.
        Since $F$ is proper, it suffices to show that $\eta_X^{-1}(V) \in F$.

        Note that $\{ F \} \rb_{C(X)} V$ implies the existence of a Cauchy cover $C'$ of $X$ such that $\{ \widetilde{U'} \mid U' \in C' \}$ refines $\{ W \mid F \in W \implies W \subseteq V \}$.
        Since $F$ is a Cauchy filter and $\{ V' \mid \exists U' \in C', V' \rb_X U' \}$ is a Cauchy cover, there exist $V' \in F$ and $U' \in C'$ such that $V' \rb_X U'$.
        Then $U' \in F$, and $F \in \widetilde{U'} \subseteq V$.

        Now, it suffices to show that $V' \subseteq \{ x \in X \mid \eta_X(x) \in V \}$.
        For $x \in V'$, the neighborhood $U'$ of $x$ satisfies $\eta_X(x) = x^\wedge \in \widetilde{U'}$.
        It follows that $\eta_X(x) \in V$, which completes the proof that $\eta_X$ is dense.
\end{itemize}

To finish the proof, we verify the following properties of $C(X)$:
\begin{itemize}
\item $C(X)$ is separated:
        Let $F$ and $G$ be regular Cauchy filters on $X$ such that every Cauchy cover in $C(X)$ contains a set which contains both $F$ and $G$.
        This implies that, for every Cauchy cover $C'$ in $X$, there exists a set $U' \in C'$ such that $F,G \in \widetilde{U'}$.
        This means that $F$ and $G$ are equivalent Cauchy filters and.
        Since $F$ and $G$ are regular, this proves that $F = G$.
\item $C(X)$ is complete:
        By \rlem{complete-part}, it suffices to show that for any regular Cauchy filter $F$ on $X$, the Cauchy filters $\eta_X(F)$ and $F^\wedge$ are equivalent.
        We just proved that $\eta_X^{-1}(V) \in F$ for every neighborhood $V$ of $F$.
        This means that $F^\wedge$ is a subset of $\eta_X(F)$, so they are equivalent.
\end{itemize}
\end{proof}

\begin{cor}[complete-reflective]
The category of complete cover spaces forms a reflective subcategory of $\cat{Cov}$.
\end{cor}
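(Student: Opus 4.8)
The plan is to take the functor $C$ together with the unit maps $\eta_X$ from \rthm{completion} as the reflector, and to derive the universal property of a reflection directly from \rthm{dense-lift}. Recall that a full subcategory is reflective precisely when, for every object $X$ of the ambient category, there is an object of the subcategory together with a morphism out of $X$ that is universal among morphisms from $X$ into objects of the subcategory. The subcategory of complete cover spaces is full in $\cat{Cov}$, since a morphism between two complete cover spaces is by definition just a cover map. Thus it suffices to show that for every cover space $X$ the dense embedding $\eta_X : X \to C(X)$ is universal: for every complete cover space $Z$ and every cover map $g : X \to Z$, there is a unique cover map $\bar{g} : C(X) \to Z$ with $\bar{g} \circ \eta_X = g$.

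Existence is immediate from \rthm{dense-lift}. The map $\eta_X$ is a dense embedding between cover spaces, and $g$, being a cover map, is in particular Cauchy; hence \rthm{dense-lift} supplies a Cauchy map $\widetilde{g} : C(X) \to Z$ with $\widetilde{g} \circ \eta_X = g$. Moreover, since $g$ is a cover map, the last clause of \rthm{dense-lift} guarantees that $\widetilde{g}$ is a cover map as well, so it is a morphism of $\cat{Cov}$; we set $\bar{g} = \widetilde{g}$. For uniqueness, suppose $\bar{g}_1, \bar{g}_2 : C(X) \to Z$ are cover maps with $\bar{g}_i \circ \eta_X = g$. Every cover map is Cauchy, so both are Cauchy maps extending $g$ along the dense embedding $\eta_X$, and the uniqueness clause of \rthm{dense-lift}, which already holds within the larger class of Cauchy maps, forces $\bar{g}_1 = \bar{g}_2$. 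This establishes the universal property, so $C$ is a reflector and the complete cover spaces form a reflective subcategory of $\cat{Cov}$.

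The argument presents no real obstacle, as all the substantive work has been carried out in \rthm{completion} and \rthm{dense-lift}. The only point requiring care is to confirm that the extension produced by \rthm{dense-lift} is genuinely a morphism of $\cat{Cov}$, that is, a cover map rather than merely a Cauchy map; this is exactly what the final sentence of \rthm{dense-lift} provides, so the two theorems fit together to give the reflection directly.
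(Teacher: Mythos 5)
Your proposal is correct and is exactly the argument the paper leaves implicit: the reflector is $C$ with unit $\eta_X$ from \rthm{completion}, and the universal property follows from \rthm{dense-lift}, using its final clause to see that the extension is a cover map and its uniqueness clause (valid already for Cauchy maps) for uniqueness among cover maps. Nothing to add.
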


The embedding $\eta_X$ might not be injective in general, but it is for separated cover spaces.
In fact, this property characterizes separated spaces:

\begin{prop}[embedding-injective]
Given a cover space $X$, the following conditions are equivalent:
\begin{enumerate}
\item \label{ei:sep} $X$ is a separated cover space.
\item \label{ei:any} Every embedding $X \to Y$ is injective.
\item \label{ei:eta} The embedding $\eta_X : X \to C(X)$ is injective.
\item \label{ei:ex} There exists an injective Cauchy map $X \to Y$ into a separated cover space $Y$.
\end{enumerate}
\end{prop}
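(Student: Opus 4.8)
The plan is to prove the cycle of implications \eqref{ei:sep} $\Rightarrow$ \eqref{ei:any} $\Rightarrow$ \eqref{ei:eta} $\Rightarrow$ \eqref{ei:ex} $\Rightarrow$ \eqref{ei:sep}. Two of the four arrows are essentially formal, and the remaining two carry the real content.

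For \eqref{ei:sep} $\Rightarrow$ \eqref{ei:any}, I would take an embedding $f : X \to Y$ with $f(x) = f(y)$ and show that $x$ and $y$ are equivalent in the sense of \rlem{separated-char}\eqref{sc:cov}; separatedness then forces $x = y$. Given a Cauchy cover $C$ of $X$, the embedding condition says that $D = \{ V \mid \exists U \in C,\ f^{-1}(V) \subseteq U \}$ is a Cauchy cover of $Y$. The neighborhood filter $f(x)^\wedge$ is Cauchy, so it meets $D$ in some neighborhood $V$ of $f(x)$; choosing $U \in C$ with $f^{-1}(V) \subseteq U$ and using $f(x) = f(y) \in V$ gives $x, y \in f^{-1}(V) \subseteq U$, so the single set $U \in C$ contains both points. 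Hence $x$ and $y$ are equivalent.

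The implication \eqref{ei:any} $\Rightarrow$ \eqref{ei:eta} is immediate, since $\eta_X$ is an embedding by \rthm{completion}. For \eqref{ei:eta} $\Rightarrow$ \eqref{ei:ex}, I would simply take $Y = C(X)$ and $f = \eta_X$: this map is injective by hypothesis, it is a cover map and therefore Cauchy by \rthm{completion}, and its codomain $C(X)$ is complete, hence separated.

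The last arrow \eqref{ei:ex} $\Rightarrow$ \eqref{ei:sep} is where I expect the only mild subtlety, since it must pull separatedness back along a Cauchy map that need not be a cover map. Given an injective Cauchy map $f : X \to Y$ with $Y$ separated and two equivalent points $x, y \in X$, I would invoke \rprop{cauchy-continuous} to obtain continuity of $f$, and then verify \rlem{separated-char}\eqref{sc:int} for the images: for a neighborhood $U$ of $f(x)$ and a neighborhood $V$ of $f(y)$, continuity (together with \rprop{top-neighborhood}) makes $f^{-1}(U)$ and $f^{-1}(V)$ neighborhoods of $x$ and $y$, which intersect because $x$ and $y$ are equivalent, and any common point maps into $U \cap V$. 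Thus $f(x)$ and $f(y)$ are equivalent, separatedness of $Y$ yields $f(x) = f(y)$, and injectivity gives $x = y$. The only point needing care is that continuity is available solely through the Cauchy hypothesis, which is exactly what \rprop{cauchy-continuous} supplies.
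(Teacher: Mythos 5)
Your proof is correct, but it closes the loop differently from the paper. The implication (1) $\Rightarrow$ (2) is essentially the paper's argument in both cases: the paper applies the embedding condition to the specific Cauchy cover $\{ W \mid x \in W \implies W \subseteq U \}$ witnessing $\{x\} \rb U$ and concludes that every neighborhood of $x$ contains $x'$, while you apply it to an arbitrary Cauchy cover $C$ and verify the equivalence of $x$ and $y$ directly via condition \eqref{sc:cov} of \rlem{separated-char}; these are interchangeable. The real divergence is in getting back from (4). The paper proves (4) $\Rightarrow$ (3) by invoking naturality of $\eta$, i.e.\ $\eta_Y \circ f = C(f) \circ \eta_X$, which implicitly relies on the functoriality of the completion on Cauchy maps (via \rthm{dense-lift}), and then uses the separate equivalence (1) $\iff$ (3) coming from \rlem{separated-char}. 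You instead prove (4) $\Rightarrow$ (1) directly: \rprop{cauchy-continuous} makes $f$ continuous, \rprop{top-neighborhood} turns preimages of neighborhoods into neighborhoods, and condition \eqref{sc:int} of \rlem{separated-char} transports equivalence of points forward along $f$, after which separatedness of $Y$ and injectivity finish the job. Your route is more elementary in that it never needs the completion functor's action on morphisms, at the modest cost of invoking the continuity of Cauchy maps; both arguments are sound.
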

\begin{proof}
We prove the equivalences step by step:

\paragraph{\eqref{ei:sep} $\iff$ \eqref{ei:eta}:} This follows from \rlem{separated-char}.

\paragraph{\eqref{ei:any} $\implies$ \eqref{ei:eta} and \eqref{ei:eta} $\implies$ \eqref{ei:ex}:} These are obvious.

\paragraph{\eqref{ei:ex} $\implies$ \eqref{ei:eta}:}
Suppose there exists an injective Cauchy map $f : X \to Y$ into a separated cover space $Y$.
Since $Y$ is separated, the embedding $\eta_Y : Y \to C(Y)$ is also injective.
By the naturality of $\eta$, we have that $\eta_Y \circ f = C(f) \circ \eta_X$.
Since both $f$ and $\eta_Y$ are injective, it follows that $\eta_X$ is injective.

\paragraph{\eqref{ei:sep} $\implies$ \eqref{ei:any}:}
Assume $X$ is a separated cover space.
Let $f : X \to Y$ be an embedding, and suppose $x,x' \in X$ satisfy $f(x) = f(x')$.
We must show $x = x'$.

Since $X$ is separated, it suffices to verify that every neighborhood $U$ of $x$ contains $x'$.
From $\{x\} \rb_X U$ and the fact that $f$ is an embedding, the set 
\[
\{ V \mid \exists W \subseteq X, (x \in W \implies W \subseteq U) \text{ and } f^{-1}(V) \subseteq W \}
\]
is a Cauchy cover of $Y$.
In particular, this is a cover, so there exist sets $V$ and $W$ such that $f(x) = f(x') \in V$, $f^{-1}(V) \subseteq W$, and $x \in W \implies W \subseteq U$.

The first two conditions imply $x, x' \in W$, while the last condition implies $W \subseteq U$.
Thus, $x' \in U$, completing the proof that $x = x'$.
\end{proof}

\section{Strongly regular spaces}
\label{sec:strongly-regular}

In this section, we introduce an alternative regularity condition for cover spaces.
Classically, this condition is equivalent to \axref{CR}, but constructively, it is stronger.
Additionally, there exists a stronger notion of completeness for strongly regular cover spaces.
We will establish a close relationship between strongly complete strongly regular cover spaces and strongly regular sober topological spaces.

Let $U$ and $V$ be open subsets of a topological space $X$. We say that $V$ is \emph{strongly rather below} $U$, denoted $V \rb^s U$, if $X = \mathrm{int}(X \backslash V) \cup U$.
A topological space is said to be \emph{strongly regular} if every open set $U$ can be expressed as the union of open sets that are strongly rather below $U$.

The notion of strong regularity extends naturally to precover spaces.
For subsets $V$ and $U$ of a precover space $X$, we say that $V$ is \emph{strongly rather below} $U$, written $V \rb^s U$, if the collection $\{ X \backslash V, U \}$ forms a Cauchy cover.
This relation satisfies properties analogous to those in \rprop{rb-props}:

\begin{prop}[rbs-props]
Let $U,V,U',V'$ be subsets of a precover space $X$. The following properties hold:
\begin{enumerate}
\item If $V \rb^s U$, then $V \rb U$.
\item If $V' \subseteq V \rb^s U \subseteq U'$, then $V' \rb^s U'$.
\item If $V \rb^s U$ and $V' \rb^s U'$, then $V \cap V' \rb^s U \cap U'$.
\item $V \rb^s X$.
\item $\varnothing \rb^s U$.
\end{enumerate}
\end{prop}

A precover space is called \emph{strongly regular} if, for every Cauchy cover $C$, the set $\{ V \mid \exists U \in C, V \rb^s U \}$ is a Cauchy cover.
Since $V \rb^s U$ implies $V \rb U$, every strongly regular precover space is a cover space.

\begin{example}
Recall that if $X$ is a topological space, then $T(X)$ is a precover space where a cover is Cauchy if it contains a neighborhood of every point.
Clearly, if $X$ is a strongly regular topological space, then $T(X)$ is a strongly regular cover space.
\end{example}

\begin{example}
Every metric space is strongly regular as a cover space.
\end{example}

\begin{example}
More generally, a uniform space $X$ is said to be \emph{strongly regular} if it satisfies the following strengthened version of \axref{UU}:
For every $C \in \mathcal{U}_X$, there exists $D \in \mathcal{U}_X$ such that, for every $V \in D$,
there exists $U \in C$ satisfying the following: for every $W \in D$, either $W$ does not intersect $V$, or $W$ is a subset of $U$.

With this definition, every metric space is a strongly regular uniform space, and every strongly regular uniform space is a strongly regular cover space.
\end{example}

\begin{example}
Strongly regular cover spaces are reflective in the category of cover spaces.
The proof is identical to the proof of \rprop{regular-reflective}.
Thus, every cover space can be transformed into a strongly regular cover space.
\end{example}

The following lemma establishes the equivalence of the conditions $\{ x \} \rb U$ and $\{ x \} \rb^s U$ in a strongly regular cover space:

\begin{lem}[rbs-point]
If $U$ is a neighborhood of a point $x$ in a strongly regular cover space, then there exists a neighborhood $V$ of $x$ such that $V \rb^s U$.
\end{lem}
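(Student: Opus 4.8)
The plan is to adapt the proof of \rlem{rb-point}, but to interpose a single \emph{strongly}-rather-below step, produced by the strong regularity condition, so that the neighborhood we extract sits strongly rather below $U$ rather than merely rather below it.

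First I would unfold the hypothesis. Saying that $U$ is a neighborhood of $x$ means $\{x\} \rb U$, i.e.\ the cover $C_0 = \{ W \mid x \in W \implies W \subseteq U \}$ is Cauchy. Applying the strong regularity condition to $C_0$ shows that $C_1 = \{ V \mid \exists W \in C_0,\ V \rb^s W \}$ is Cauchy, and then applying \axref{CR} to $C_1$ shows that
\[ C_2 = \{ V' \mid \exists V,\ \exists W \in C_0,\ V' \rb V \text{ and } V \rb^s W \} \]
is again a Cauchy cover, hence in particular a genuine cover of $X$. Thus $x$ lies in some member $V'$ of $C_2$, which supplies subsets $V',V,W$ with $x \in V'$, $V' \rb V$, $V \rb^s W$, and $x \in W \implies W \subseteq U$.

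I claim $V$ is the required neighborhood. Since $\{x\} \subseteq V' \rb V$, \rprop{rb-props} gives $\{x\} \rb V$, so $V$ is a neighborhood of $x$. For the strong relation, $V \rb^s W$ yields $V \rb W$ and hence $V \subseteq W$ by \rprop{rbs-props} together with \rprop{rb-props}; combined with $x \in V' \subseteq V \subseteq W$ this forces $x \in W$, and therefore $W \subseteq U$. Finally $V \rb^s W \subseteq U$ gives $V \rb^s U$ by \rprop{rbs-props}, completing the argument.

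The only delicate point is the bookkeeping of the two closure steps: strong regularity must be applied to the innermost cover $C_0$ and ordinary regularity on the outside, so that the outermost witness $V'$ captures the membership $x \in V'$ (and hence the neighborhood relation $\{x\} \rb V$) while the inner relation $V \rb^s W$ is preserved to feed into \rprop{rbs-props}. Reversing the order would leave the outermost set merely containing $x$ without a witness making it a neighborhood. Everything else is a routine chase through \rprop{rb-props} and \rprop{rbs-props}.
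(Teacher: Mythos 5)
Your proof is correct and is exactly the adaptation the paper intends: the paper's proof of this lemma simply says it is "essentially the same as the proof of \rlem{rb-point}", and your argument reproduces that proof with the inner application of \axref{CR} replaced by an application of the strong regularity condition, which is precisely the right substitution. Your remark about the order of the two closure steps is also accurate — reversing them would yield only $V \rb U$ rather than $V \rb^s U$.
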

\begin{proof}
The proof is essentially the same as the proof of \rlem{rb-point}.
\end{proof}

The following proposition relates the strong regularity conditions of topological spaces and cover spaces:

\begin{prop}[u-strongly-regular]
If $X$ is a strongly regular cover space, then its underlying topological space is strongly regular.
\end{prop}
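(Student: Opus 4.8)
I want to show that if $X$ is a strongly regular cover space, then $S(X)$ is a strongly regular topological space. Recall that $S(X)$ is strongly regular means every open set $U$ is the union of open sets $V'$ with $V' \rb^s_{S(X)} U$, where $V' \rb^s_{S(X)} U$ is the \emph{topological} strong-below relation $X = \mathrm{int}(X \setminus V') \cup U$. So the plan is to take an open $U$ and a point $x \in U$, and produce an open neighborhood $V'$ of $x$ with $V' \rb^s_{S(X)} U$.

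\begin{proof}[Proof sketch]
The plan is to mirror the argument of \rprop{top-regular}, replacing $\rb$ by $\rb^s$ throughout. Fix an open set $U$ in $S(X)$ and a point $x \in U$; by \rprop{top-neighborhood}, $U$ is a neighborhood of $x$, so $\{x\} \rb_X U$. Since $X$ is strongly regular, \rlem{rbs-point} gives a neighborhood $V$ of $x$ with $V \rb^s_X U$. First I would pass to an open neighborhood: by \rprop{cover-int} the cover witnessing $V \rb^s_X U$ can be replaced by its interiors, so without loss of generality I obtain an open neighborhood $V'$ of $x$ with $V' \rb^s_X U$ (applying \rlem{rbs-point} once more to interpolate an open set between $x$ and $V$, exactly as \rprop{top-regular} interpolates $V'$ via \rprop{top-neighborhood}). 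It then remains to show that the cover-space relation $V' \rb^s_X U$ implies the topological relation $V' \rb^s_{S(X)} U$, i.e.\ that $X = \mathrm{int}(X \setminus V') \cup U$.

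The key step, and the one I expect to be the main obstacle, is precisely this conversion from $\rb^s_X$ to $\rb^s_{S(X)}$. By definition $V' \rb^s_X U$ means $\{ X \setminus V', U \}$ is a Cauchy cover. I want to deduce that every point lies in $\mathrm{int}(X \setminus V') \cup U$. The natural move is to apply \rprop{cover-int}, which says that $\{ \mathrm{int}(X \setminus V'), \mathrm{int}(U) \}$ is again a Cauchy cover, hence a genuine cover of $X$. Since $U$ is open we have $\mathrm{int}(U) = U$, so this reads $X = \mathrm{int}(X \setminus V') \cup U$, which is exactly the topological condition $V' \rb^s_{S(X)} U$. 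The subtlety to check is that $\mathrm{int}$ here refers to the interior in $S(X)$, which by \rlem{int-char} is $\{ y \mid \{y\} \rb_X (X \setminus V') \}$; this is consistent, since the topological strong-below relation $\rb^s_{S(X)}$ is defined using the interior operator of $S(X)$.

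Putting the pieces together: for each $x \in U$ I have produced an open neighborhood $V'_x$ of $x$ with $V'_x \rb^s_{S(X)} U$, and $U = \bigcup_{x \in U} V'_x$ exhibits $U$ as a union of open sets strongly rather below it. Hence $S(X)$ is strongly regular. The only genuinely new ingredient beyond the template of \rprop{top-regular} is the observation that \rprop{cover-int} turns the two-element Cauchy cover $\{X \setminus V', U\}$ into the open cover $\{\mathrm{int}(X\setminus V'), U\}$, which is what encodes strong regularity at the topological level; everything else is a direct transcription of the regular case.
\end{proof}
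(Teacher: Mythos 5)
Your proof is correct and follows exactly the route the paper intends: the paper's own proof is just the remark that the argument of \rprop{top-regular} goes through with \rlem{rbs-point} in place of \rlem{rb-point}, and the key conversion you identify (applying \rprop{cover-int} to the two-element Cauchy cover $\{X \setminus V', U\}$ to obtain $X = \mathrm{int}(X\setminus V') \cup U$) is precisely the strong analogue of the first paragraph of that proof. The only cosmetic wobble is the phrase about ``applying \rlem{rbs-point} once more'' to get an open $V'$; the cleaner step is simply $V' = \mathrm{int}(V)$ together with monotonicity of $\rb^s$ from \rprop{rbs-props}, which is what your parenthetical reference to \rprop{top-neighborhood} amounts to anyway.
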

\begin{proof}
The proof follows from \rlem{rbs-point} and is analogous to the proof of \rprop{top-regular}.
\end{proof}

We define a topological space $X$ to be \emph{strongly Hausdorff} if, for every pair of points $x,y \in X$, the following holds:
If the intersection of every neighborhood of $x$ with every neighborhood of $y$ is not empty, then $x = y$.

\begin{lem}
Every separated strongly regular cover space is strongly Hausdorff.
\end{lem}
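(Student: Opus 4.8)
The plan is to reduce strong Hausdorffness to the separatedness hypothesis by way of the characterization in \rlem{separated-char}. Fix points $x,y$ such that the intersection of every neighborhood of $x$ with every neighborhood of $y$ is nonempty. Since $X$ is separated, to conclude $x = y$ it suffices to verify one of the equivalent conditions of \rlem{separated-char} for the pair $(x,y)$, and I would aim for the condition that every neighborhood of $x$ contains $y$.

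So let $U$ be a neighborhood of $x$, and try to show $y \in U$. The key tool is strong regularity in the form of \rlem{rbs-point}: there exists a neighborhood $V$ of $x$ with $V \rb^s U$, which by definition means that $\{ X \setminus V, U \}$ is a Cauchy cover. Now the neighborhood filter $y^\wedge$ is a Cauchy filter, so it must intersect this Cauchy cover; that is, either $X \setminus V$ or $U$ is a neighborhood of $y$. In the second case $y \in U$ directly. In the first case, $V$ is a neighborhood of $x$ and $X \setminus V$ is a neighborhood of $y$, so the strong Hausdorff hypothesis forces the intersection $V \cap (X \setminus V)$ to be nonempty; but this set is provably empty, a contradiction, from which $y \in U$ follows as well. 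Either way $y \in U$, so every neighborhood of $x$ contains $y$, and separatedness yields $x = y$.

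The main point to get right is the constructive bookkeeping: the hypothesis supplies only nonemptiness (a double negation) rather than an actual witness, so the argument must route the impossible case through the genuine disjunction furnished by $y^\wedge$ meeting the finite Cauchy cover $\{ X \setminus V, U \}$, where the offending intersection $V \cap (X \setminus V)$ is literally empty and thus collides with the nonemptiness assumption. This is precisely where strong regularity (rather than ordinary regularity) is essential, since it is what produces a two-element Cauchy cover of the shape $\{ X \setminus V, U \}$; the ordinary relation $\rb$ would not give a complement of $V$ as one of the pieces, and the clean case split would be unavailable.
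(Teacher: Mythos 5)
Your proof is correct and follows essentially the same route as the paper's: both invoke \rlem{rbs-point} to obtain a neighborhood $V$ of $x$ with $V \rb^s U$, then exploit the two-element Cauchy cover $\{ X \setminus V, U \}$ and the Cauchy filter $y^\wedge$ to get a genuine disjunction, dismissing the $X \setminus V$ case via the emptiness of $V \cap (X \setminus V)$. The only cosmetic difference is that you verify the condition ``every neighborhood of $x$ contains $y$'' from \rlem{separated-char}, whereas the paper verifies the equivalent condition that every Cauchy cover contains a set including both points.
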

\begin{proof}
Let $x$ and $y$ be points in $X$ such that the intersection of every neighborhood of $x$ with every neighborhood of $y$ is not empty.
We need to show that every Cauchy cover $C$ contains a set that includes both $x$ and $y$. 

By \rlem{rbs-point}, there exist $U \in C$ and a neighborhood $V$ of $x$ such that $V \rb^s U$.
Then either $y \in U$ or the complement of $V$ is a neighborhood of $y$.
The latter case is impossible since this would imply that $V$ is a neighborhood of $x$ that does not intersect a neighborhood of $y$.
Thus, $y \in U$, and the result follows.
\end{proof}

A continuous function with values in a Hausdorff space is determined by its values on a dense subset.
For strongly Hausdorff spaces, a weaker condition on the subset suffices.
A subset $S$ of a topological space is called \emph{weakly dense} if every open set that does not intersect $S$ is empty.
Similarly, a function is said to be \emph{weakly dense} if its image is weakly dense. 

We now prove a version of \rprop{dense-unique} for strongly Hausdorff spaces:

\begin{prop}[weakly-dense-unique]
Let $Y$ be a strongly Hausdorff topological space, $S$ a weakly dense subset of a topological space $X$, and $f,g : X \to Y$ continuous functions.
If $f$ and $g$ agree on every point of $S$, then $f = g$.
\end{prop}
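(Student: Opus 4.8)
The plan is to mimic the proof of \rprop{dense-unique}, but to replace the use of the Hausdorff condition with the strongly Hausdorff condition and, correspondingly, to weaken the density requirement on $S$ to weak density. Fix a point $x \in X$; the goal is to show $f(x) = g(x)$. Since $Y$ is strongly Hausdorff, it suffices to prove that the intersection of every neighborhood of $f(x)$ with every neighborhood of $g(x)$ is \emph{not empty} (rather than inhabited, as in the Hausdorff case). So let $U$ be a neighborhood of $f(x)$ and $V$ a neighborhood of $g(x)$; I must derive a contradiction from the assumption that $U \cap V = \varnothing$.

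First I would pass to interiors: replacing $U$ and $V$ by their interiors, I may assume $U$ and $V$ are open neighborhoods of $f(x)$ and $g(x)$ respectively, and it still suffices to show $U \cap V$ is not empty. Then, by continuity of $f$ and $g$, the set $f^{-1}(U) \cap g^{-1}(V)$ is an open neighborhood of $x$, hence an inhabited open subset of $X$. The key step is to invoke weak density of $S$: since this open set is inhabited (in particular nonempty), weak density forces it to intersect $S$, for weak density says every open set \emph{disjoint} from $S$ is empty. Concretely, if $f^{-1}(U) \cap g^{-1}(V)$ did not meet $S$, it would be empty, contradicting that it contains $x$.

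Thus there is a point $x' \in S \cap f^{-1}(U) \cap g^{-1}(V)$. Then $f(x') \in U$ and $g(x') \in V$, and since $f$ and $g$ agree on $S$, we have $f(x') = g(x') \in U \cap V$, so $U \cap V$ is inhabited, hence not empty. This contradicts the assumption $U \cap V = \varnothing$, completing the argument that every neighborhood of $f(x)$ meets every neighborhood of $g(x)$ nonemptily, and so $f(x) = g(x)$ by the strongly Hausdorff property.

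I expect the main subtlety to be the careful handling of the constructive distinction between ``nonempty'' (not empty) and ``inhabited.'' The strongly Hausdorff hypothesis and weak density are both phrased in terms of negated emptiness rather than positive inhabitation, so the proof must be organized as a proof by contradiction: assume $U \cap V = \varnothing$, and show this is impossible. One must resist the temptation to directly produce a witness in $U \cap V$ from the outset, since weak density only yields that $f^{-1}(U) \cap g^{-1}(V)$ cannot be simultaneously inhabited and disjoint from $S$. The rest of the argument is a routine adaptation of \rprop{dense-unique}.
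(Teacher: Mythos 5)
Your proof is correct and takes essentially the same route as the paper's: fix $x$, assume $U \cap V = \varnothing$ for neighborhoods $U$ of $f(x)$ and $V$ of $g(x)$, use weak density (legitimately, since the goal is $\bot$, so the double negation supplied by weak density can be stripped) to obtain $x' \in S \cap f^{-1}(U) \cap g^{-1}(V)$, and contradict $U \cap V = \varnothing$ via $f(x') = g(x')$. Your explicit discussion of the inhabited-versus-nonempty distinction is precisely the point the paper compresses into the phrase ``we can assume.''
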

\begin{proof}
Let $x \in X$.
To show that $f(x) = g(x)$, let $U$ be a neighborhood of $f(x)$ and $V$ a neighborhood of $g(x)$.
We need to prove that $U \cap V \neq \varnothing$. 

Assume, for contradiction, that $U \cap V = \varnothing$.
Then $f^{-1}(U) \cap g^{-1}(V)$ is a neighborhood of $x$.
Since $S$ is weakly dense, we can assume that this neighborhood contains a point $x' \in S$.
Thus, $U$ and $V$ both contain $f(x') = g(x')$, which contradicts $U \cap V = \varnothing$.
\end{proof}

To define strongly complete spaces, we first need to introduce the concept of weakly proper filters.

\begin{defn}
A \emph{weakly proper filter} is a filter that does not contain the empty set.
A \emph{weakly Cauchy filter} on a cover space $X$ is a weakly proper filter that intersects every Cauchy cover.
\end{defn}

\begin{remark}
Classically, weakly proper filters and weakly Cauchy filters coincide with proper filters and Cauchy filters, respectively.
Constructively, however, these conditions are generally weaker.
Nevertheless, as we will see in Section~\ref{sec:reals}, there are cases where weakly Cauchy filters coincide with Cauchy filters even in the constructive setting.
\end{remark}

We define a function between cover spaces as a \emph{strongly Cauchy map} if it preserves weakly Cauchy filters.
Clearly, every cover map is strongly Cauchy, and every strongly Cauchy map is Cauchy.

The equivalence of weakly Cauchy filters is defined as before:
Two weakly Cauchy fitlers are \emph{equivalent} if every Cauchy cover contains an element from both filters.
Equivalently, weakly Cauchy filters are equivalent if and only if their intersection is a weakly Cauchy filter.
In particular, if one weakly Cauchy filter is a subset of another, they are equivalent.

We now state an analogue of \rlem{cauchy-filter-rb}:

\begin{lem}[weakly-cauchy-filter-rb]
Let $X$ be a cover space, and let $F$ and $G$ be equivalent weakly Cauchy filters on $X$.
If $U$ and $V$ are subsets of $X$ such that $V \rb^s U$, and $V \in F$, then $U \in G$.
\end{lem}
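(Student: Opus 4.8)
The plan is to mirror the proof of \rlem{cauchy-filter-rb}, but the argument is in fact more direct because the strong rather-below relation gives us an explicit two-element Cauchy cover to work with. First I would unfold the hypothesis $V \rb^s U$: by definition this means that the two-element collection $\{ X \backslash V, U \}$ is a Cauchy cover of $X$. Since $F$ and $G$ are equivalent weakly Cauchy filters, this Cauchy cover must contain an element belonging to both $F$ and $G$. Because the cover has only two members, the common element is either $X \backslash V$ or $U$.

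The next step is a short case analysis. If the common element is $U$, then in particular $U \in G$ and we are done immediately, without even using that $V \in F$. The remaining case is that $X \backslash V$ belongs to both filters, and here I would derive a contradiction: since $V \in F$ by hypothesis and $X \backslash V \in F$ in this case, and filters are closed under finite intersections, we get $(X \backslash V) \cap V = \varnothing \in F$. This contradicts the fact that $F$ is a weakly proper filter, i.e.\ does not contain the empty set. Hence this case is impossible, and the common element must be $U$, giving $U \in G$.

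There is essentially no serious obstacle here; the content is a clean case split. The one point worth emphasizing is \emph{why} the weaker hypotheses suffice. In \rlem{cauchy-filter-rb} one passes through an arbitrary Cauchy cover and must produce a witness showing that some $W$ \emph{intersects} $V$, which genuinely requires the filters to be proper (to extract an inhabitant of $W \cap V$). In the present strongly rather-below setting the relevant Cauchy cover is the explicit pair $\{ X \backslash V, U \}$, so instead of needing an inhabited intersection we only need the incompatibility of $V$ with $X \backslash V$, and that incompatibility is detected by weak properness alone: the offending filter would have to contain $\varnothing$. This is precisely the trade-off that makes the statement hold for weakly Cauchy filters and $\rb^s$ rather than for Cauchy filters and $\rb$.
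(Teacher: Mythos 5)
Your proof is correct and follows essentially the same route as the paper's: both unfold $V \rb^s U$ into the two-element Cauchy cover $\{ X \backslash V, U \}$, use the equivalence of $F$ and $G$ to obtain a common element, and rule out $X \backslash V$ via weak properness of $F$ together with $V \in F$. The additional commentary on why weak properness suffices here but not in \rlem{cauchy-filter-rb} is accurate and a nice observation, but the core argument is identical.
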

\begin{proof}
Since $V \rb^s U$, either $U \in F \cap G$ or $(X \backslash V) \in F \cap G$.
The latter case is impossible because $V \in $F and $\varnothing \notin F$.
Thus, $U \in G$.
\end{proof}

Finally, we confirm that the equivalence of weakly Cauchy filters satisfies the properties of an equivalence relation:

\begin{prop}
The equivalence of Cauchy filters is indeed an equivalence relation.
\end{prop}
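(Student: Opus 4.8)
The plan is to follow the structure of the proof of \rprop{cauchy-filter-equiv}, replacing $\rb$ by $\rb^s$ throughout. Write $F \sim G$ to mean that the weakly Cauchy filters $F$ and $G$ are equivalent. Reflexivity and symmetry are immediate and require no new ideas: symmetry holds because the defining condition---that every Cauchy cover contains a set lying in both filters---is visibly symmetric in the two filters, while reflexivity holds because a weakly Cauchy filter $F$ meets every Cauchy cover by definition, so any witnessing set of a given Cauchy cover lies in both copies of $F$. Thus the only substantive point is transitivity, and I expect it to be the crux, exactly as in the proper-filter case.

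For transitivity, suppose $F \sim G$ and $G \sim H$, fix an arbitrary Cauchy cover $C$, and aim to produce a single set $U \in C$ with $U \in F \cap H$. The strategy is to pass to the strong-regularity refinement, locate a set shared by $F$ and $G$ inside it, and then promote that set to a member of $C$ on both the $F$-side and the $H$-side. Concretely, since the ambient space is strongly regular, the collection $\{ V \mid \exists U \in C, V \rb^s U \}$ is again a Cauchy cover. Because $F \sim G$, this cover contains some $V \in F \cap G$, together with a witness $U \in C$ satisfying $V \rb^s U$.

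Now I invoke \rlem{weakly-cauchy-filter-rb} twice. Applying it to the pair $F \sim F$ (using the reflexivity just established) with $V \in F$ and $V \rb^s U$ yields $U \in F$; applying it to $G \sim H$ with $V \in G$ and $V \rb^s U$ yields $U \in H$. Hence $U \in F \cap H$ is an element of $C$, and as $C$ was arbitrary this establishes $F \sim H$.

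The main obstacle, and the reason strong regularity is essential here, is that the argument of \rprop{cauchy-filter-equiv} cannot be reused verbatim. There, \rlem{cauchy-filter-rb} converted $V \rb U$ and $V \in F$ into $U \in G$ by intersecting two members of $F$ and using that the intersection is inhabited---a step that relies on $F$ being a proper filter. For a merely weakly proper filter, nonemptiness of an intersection does not constructively exhibit an element, so this route fails. The fix is precisely to work with $\rb^s$ instead of $\rb$: from $V \rb^s U$ one gets the alternative that either $U$ or $X \backslash V$ belongs to the filter, and combining this with $V \in F$ and $\varnothing \notin F$ forces $U \in F$ without ever naming a point. This is what \rlem{weakly-cauchy-filter-rb} packages, and producing a Cauchy cover phrased in terms of $\rb^s$ from an arbitrary one is exactly the content of strong regularity.
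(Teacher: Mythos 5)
Your proof is correct and is exactly the argument the paper intends: its own proof of this proposition simply defers to \rprop{cauchy-filter-equiv}, and your adaptation---replacing \axref{CR} by strong regularity of the space and \rlem{cauchy-filter-rb} by \rlem{weakly-cauchy-filter-rb}---is the right way to read that deferral. Your closing observation about why the earlier argument cannot be reused verbatim (inhabitedness of $W \cap V$ is not constructively available for merely weakly proper filters, which is precisely what $\rb^s$ circumvents) correctly pinpoints the one substantive change that the paper's one-line proof glosses over.
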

\begin{proof}
The proof is identical to the proof of \rprop{cauchy-filter-equiv}.
\end{proof}

A weakly Cauchy filter $F$ is called \emph{strongly regular} if, for every $U \in F$, there exists a set $V \in F$ such that $V \rb^s U$.
Strongly regular weakly Cauchy filters serve as unique representatives of their equivalence classes in the context of strongly regular cover spaces.

\begin{example}
For every point $x$ in a strongly regular cover space, the neighborhood filter $x^\wedge$ is strongly regular.
This follows directly from \rlem{rbs-point}.
\end{example}

The following propositions establish that every weakly Cauchy filter is equivalent to a unique strongly regular one:

\begin{prop}[strongly-regular-minimal]
Let $F$ and $G$ be equivalent weakly Cauchy filters.
If $F$ is strongly regular, then $F \subseteq G$.
\end{prop}
\begin{proof}
The proof is analogous to that of \rprop{regular-minimal}.
\end{proof}

\begin{prop}
Let $F$ be a weakly Cauchy filter.
The intersection of all weakly Cauchy subfilters of $F$ is a strongly regular weakly Cauchy filter.
This is the unique strongly regular filter equivalent to $F$.
\end{prop}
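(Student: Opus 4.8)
The plan is to follow the proof of \rprop{regular-filter-repr} essentially verbatim, replacing the rather‑below relation $\rb$ by the strong rather‑below relation $\rb^s$, the regularity axiom \axref{CR} by the strong regularity hypothesis on $X$, \rlem{cauchy-filter-rb} by \rlem{weakly-cauchy-filter-rb}, and \rprop{regular-minimal} by \rprop{strongly-regular-minimal}. Write $M$ for the intersection of all weakly Cauchy subfilters of $F$. Since $F$ is itself such a subfilter, this intersection runs over an inhabited family, so $M$ is a filter and $M \subseteq F$. In particular $\varnothing \notin M$, so $M$ is weakly proper; note that here the check is actually easier than in the proper case, where one must verify inhabitedness of each member rather than merely the absence of $\varnothing$.

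To see that $M$ is weakly Cauchy, fix a Cauchy cover $C$. By strong regularity of $X$, the collection $\{ V \mid \exists U \in C, V \rb^s U \}$ is again a Cauchy cover, so the weakly Cauchy filter $F$ meets it: there are $V \in F$ and $U \in C$ with $V \rb^s U$. Every weakly Cauchy subfilter of $F$ is equivalent to $F$, so \rlem{weakly-cauchy-filter-rb} forces $U$ to lie in each of them, whence $U \in M \cap C$. For strong regularity of $M$, I set $G = \{ U \mid \exists W \in F,\ \exists V,\ W \rb^s V \text{ and } V \rb^s U \}$. The monotonicity and meet‑stability clauses of \rprop{rbs-props} show that $G$ is upward closed and closed under finite intersection, and $G$ is weakly proper because $\varnothing \in G$ would, via the chain $\rb^s \implies \rb \implies \subseteq$, force some $W \in F$ with $W = \varnothing$, contradicting weak properness of $F$. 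Iterating strong regularity twice shows $G$ is weakly Cauchy, and $G \subseteq F$ gives $M \subseteq G$. Hence each $U \in M$ admits $W \in F$ and $V$ with $W \rb^s V$ and $V \rb^s U$; since $M$ and $F$ are equivalent, \rlem{weakly-cauchy-filter-rb} places $V$ in $M$, exhibiting the required $V \rb^s U$ with $V \in M$.

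Finally, $M \subseteq F$ makes $M$ equivalent to $F$. For uniqueness, any strongly regular weakly Cauchy filter $N$ equivalent to $F$ is equivalent to $M$, and applying \rprop{strongly-regular-minimal} in both directions ($N$ strongly regular gives $N \subseteq M$, and $M$ strongly regular gives $M \subseteq N$) yields $N = M$. I expect no genuine obstacle: the only points needing care are the weak‑properness checks for $M$ and $G$, which rely on the implications $\rb^s \implies \rb \implies \subseteq$ together with $\varnothing \notin F$, and the observation that a weakly Cauchy subfilter is automatically equivalent to $F$, so that \rlem{weakly-cauchy-filter-rb} applies uniformly to every member of the intersecting family.
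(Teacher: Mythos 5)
Your proposal is correct and is precisely the argument the paper intends: its proof of this proposition is simply ``analogous to that of Proposition~\ref{prop:regular-filter-repr},'' and you carry out that analogy faithfully, with the right substitutions ($\rb^s$ for $\rb$, strong regularity of $X$ for \axref{CR}, Lemma~\ref{lem:weakly-cauchy-filter-rb} for Lemma~\ref{lem:cauchy-filter-rb}, and Proposition~\ref{prop:strongly-regular-minimal} for uniqueness). The weak-properness checks for $M$ and $G$ via $\rb^s \implies \subseteq$ are exactly the points where the analogue differs from the proper case, and you handle them correctly.
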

\begin{proof}
The proof is analogous to that of \rprop{regular-filter-repr}.
\end{proof}

Now, we can discuss strongly complete cover spaces.

\begin{defn}
A strongly regular cover space is called \emph{strongly complete} if it is separated and every weakly Cauchy filter is equivalent to the neighborhood filter of some point.
\end{defn}

The following lemmas are analogous to those for regular spaces:

\begin{lem}[weak-filter-point-char]
Let $F$ be a weakly Cauchy filter in a strongly complete cover space.
Then $F^{\vee \wedge} = \{ U \mid \exists V \in F, V \rb^s U \}$.
\end{lem}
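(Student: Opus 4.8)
The plan is to mirror the proof of \rlem{filter-point-char}, replacing the relation $\rb$ and its accompanying lemmas by their strong counterparts throughout. By the definition of strong completeness, $F$ is equivalent to the neighborhood filter $F^{\vee\wedge}$ of the point $F^\vee$, and both are weakly Cauchy filters. Since neighborhood filters in a strongly regular cover space are strongly regular, $F^{\vee\wedge}$ is a strongly regular weakly Cauchy filter equivalent to $F$; hence \rprop{strongly-regular-minimal} yields the inclusion $F^{\vee\wedge} \subseteq F$, which I will need in the harder direction.

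For the inclusion $\{ U \mid \exists V \in F,\ V \rb^s U \} \subseteq F^{\vee\wedge}$, I would suppose $V \in F$ and $V \rb^s U$. Since $F$ and $F^{\vee\wedge}$ are equivalent weakly Cauchy filters and $V \in F$, \rlem{weakly-cauchy-filter-rb} immediately gives $U \in F^{\vee\wedge}$.

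For the reverse inclusion, take $U \in F^{\vee\wedge}$, so that $U$ is a neighborhood of $F^\vee$. Applying \rlem{rbs-point} in the strongly regular cover space, there is a neighborhood $V$ of $F^\vee$ with $V \rb^s U$; thus $V \in F^{\vee\wedge}$, and by the inclusion $F^{\vee\wedge} \subseteq F$ noted above we get $V \in F$. This exhibits $U$ as a member of the right-hand set, completing the equality.

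I expect no serious obstacle, as the argument is a direct transcription of \rlem{filter-point-char}. The only step that genuinely uses the strong hypotheses, rather than being purely formal, is the inclusion $F^{\vee\wedge} \subseteq F$: it relies on the fact that neighborhood filters are strongly regular and that a strongly regular weakly Cauchy filter is minimal in its equivalence class (\rprop{strongly-regular-minimal}). Once this is secured, \rlem{weakly-cauchy-filter-rb} and \rlem{rbs-point} supply the two inclusions exactly as their $\rb$-analogues did in the complete case.
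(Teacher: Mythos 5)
Your proof is correct and is exactly the transcription the paper intends: its own proof of this lemma simply says ``analogous to \rlem{filter-point-char}'', and you have carried out that analogy with the right substitutions (\rlem{weakly-cauchy-filter-rb} for \rlem{cauchy-filter-rb}, \rlem{rbs-point} for \rlem{rb-point}, and \rprop{strongly-regular-minimal} to secure $F^{\vee\wedge} \subseteq F$).
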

\begin{proof}
The proof is analogous to that of \rlem{filter-point-char}.
\end{proof}

\begin{lem}[weak-filter-lift]
Let $f : X \to Y$ be a weakly dense embedding between cover spaces, and let $F$ be a weakly Cauchy filter on $Y$.
Then
\[ G = \{ U \mid \exists V,V' \subseteq Y, f^{-1}(V) \subseteq U, V' \rb V, V' \in F \} \]
is a weakly Cauchy filter on $X$ such that $f(G)$ is equivalent to $F$.
\end{lem}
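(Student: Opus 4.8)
The plan is to mirror the proof of \rlem{filter-lift} almost verbatim, since $G$ is defined by exactly the same formula and the relation $\rb$ (rather than $\rb^s$) appears. First I would observe that $G$ is upward closed and closed under finite intersections, hence a filter; this requires no change from the regular case. The three substantive tasks are then: that $G$ is weakly proper, i.e.\ $\varnothing \notin G$; that $G$ meets every Cauchy cover of $X$; and that $f(G)$ is equivalent to $F$.

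For the last two I expect the original arguments to transfer with only ``Cauchy'' replaced by ``weakly Cauchy'' where appropriate, since a weakly Cauchy filter meets every Cauchy cover by definition. Concretely, to show $G$ meets a Cauchy cover $C$ of $X$, the embedding property of $f$ makes $\{ V' \mid \exists V, V' \rb V, \exists U \in C, f^{-1}(V) \subseteq U \}$ a Cauchy cover of $Y$; as $F$ is weakly Cauchy it meets this cover, yielding $V' \in F$, $V' \rb V$, $U \in C$, and $f^{-1}(V) \subseteq U$, so that $U \in C \cap G$. For the equivalence, given a Cauchy cover $C$ of $Y$, weak Cauchyness of $F$ supplies $V' \in F$ and $V \in C$ with $V' \rb V$; then $f^{-1}(V) \in G$ gives $V \in f(G)$, and upward closure of $F$ gives $V \in F$, so $V \in C \cap F \cap f(G)$.

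The real point of departure is weak properness. In the regular case, properness was obtained by extracting an honest point of $V'$ and then, via genuine density, a point $x$ with $f(x) \in V$. A weakly dense embedding need not produce such a point, so I would instead argue by contradiction. Suppose $\varnothing \in G$; then there are $V, V'$ with $f^{-1}(V) = \varnothing$, $V' \rb V$, and $V' \in F$. By \rlem{int-char}, $V' \rb V$ forces $V' \subseteq \mathrm{int}(V)$, and $\mathrm{int}(V)$ is an open subset of $Y$ disjoint from the image of $f$ (as $\mathrm{int}(V) \subseteq V$ and $f^{-1}(V) = \varnothing$). Weak density of $f$ then forces $\mathrm{int}(V) = \varnothing$, whence $V' = \varnothing \in F$, contradicting weak properness of $F$. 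This is the only place where weak density, as opposed to density, enters, and it is where I expect the main (though modest) obstacle to lie: one must route through the interior characterization rather than through an actual point of the image.
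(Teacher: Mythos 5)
Your proposal is correct and follows essentially the same route as the paper: the Cauchy condition and the equivalence with $F$ are inherited verbatim from the proof of \rlem{filter-lift} (which only uses the embedding property and the fact that the filter meets every Cauchy cover), and weak properness is the one new point, handled exactly as the paper does by noting $V' \subseteq \mathrm{int}(V)$ and that $\mathrm{int}(V)$ is an open set missing the image of $f$. Your phrasing via \rlem{int-char} and the definition of weak density is, if anything, slightly more directly constructive than the paper's contradiction-flavored wording, but it is the same argument.
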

\begin{proof}
We have already shown in \rlem{filter-lift} that this filter satisfies the Cauchy condition.
It remains to show that $G$ is weakly proper.

Assume, for contradiction, that $\varnothing \in G$.
Then there exist sets $V$ and $V'$ such that $f^{-1}(V) \subseteq \varnothing$, $V' \rb V$, and $V' \in F$.
We will show that $V'$ is empty, which contradicts the fact that $F$ is weakly proper.

Suppose $y \in V'$.
Then $y$ belongs to the interior of $V$.
Since $f$ is weakly dense and the interior of $V$ is inhabited, we can assume that $V$ intersects the image of $f$.
This contradicts the condition $f^{-1}(V) \subseteq \varnothing$.
Thus, $G$ is weakly proper.
\end{proof}

\begin{lem}[strongly-complete-part]
Let $X$ be a strongly regular cover space, $Y$ a separated strongly regular cover space, and $f : X \to Y$ a weakly dense embedding.
Then $Y$ is strongly complete if and only if, for every strongly regular Cauchy filter $F$ on $X$, there exists a point $y \in Y$ such that $f(F)$ is equivalent to the neighborhood filter of $y$.
\end{lem}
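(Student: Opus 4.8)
The plan is to follow the proof of \rlem{complete-part} almost verbatim, substituting weakly Cauchy filters for Cauchy filters, the strong relation $\rb^s$ for $\rb$, and \rlem{weak-filter-lift} for \rlem{filter-lift}. The forward implication should fall straight out of the definition of strong completeness, and the converse should be where the lifting lemma does the work.

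For the \emph{only if} direction, I would assume $Y$ is strongly complete and fix a strongly regular weakly Cauchy filter $F$ on $X$. Since the embedding $f$ is in particular a cover map, it is strongly Cauchy, so $f(F)$ is a weakly Cauchy filter on $Y$; strong completeness then directly produces a point $y \in Y$ with $f(F)$ equivalent to $y^\wedge$. Weak density of $f$ plays no role here.

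For the converse, I would assume the stated condition and show that every weakly Cauchy filter $F$ on $Y$ is equivalent to some neighborhood filter; combined with the standing hypothesis that $Y$ is separated, this is exactly strong completeness. Given such an $F$, \rlem{weak-filter-lift} yields a weakly Cauchy filter $G$ on $X$ with $f(G)$ equivalent to $F$. The filter $G$ constructed there uses the relation $\rb$ and need not be strongly regular, so I would replace it by its unique strongly regular representative $G'$, which exists by the earlier proposition that every weakly Cauchy filter is equivalent to a unique strongly regular one. Applying the hypothesis to $G'$ then supplies a point $y \in Y$ with $f(G')$ equivalent to $y^\wedge$.

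The main obstacle is bridging the gap between the filter $G$ that the lifting lemma controls and the strongly regular filter $G'$ to which the hypothesis applies: I must know that the equivalence of $G$ and $G'$ on $X$ forces $f(G)$ and $f(G')$ to be equivalent on $Y$. I would isolate this as a general fact, namely that a cover map preserves equivalence of weakly Cauchy filters. Indeed, for any Cauchy cover $C$ of $Y$, the preimage family $\{ f^{-1}(U) \mid U \in C \}$ is a Cauchy cover of $X$, so equivalence of $G$ and $G'$ provides some $U \in C$ with $f^{-1}(U) \in G \cap G'$, whence $U \in f(G) \cap f(G')$. With this fact established, transitivity of the equivalence relation gives that $F$ is equivalent to $f(G)$, to $f(G')$, and finally to $y^\wedge$, completing the converse.
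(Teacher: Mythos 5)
Your proposal is correct and follows the same route as the paper, whose entire proof is that the hard direction follows from \rlem{weak-filter-lift} and the other is straightforward. The extra care you take in passing from the lifted filter $G$ (built with $\rb$, hence not necessarily strongly regular) to its strongly regular representative $G'$, and in checking that the cover map $f$ preserves equivalence of weakly Cauchy filters, is exactly the right way to fill in what the paper leaves implicit; it also shows why the hypothesis should be read as ranging over strongly regular \emph{weakly} Cauchy filters.
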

\begin{proof}
The ``if'' direction follows from \rlem{weak-filter-lift}.
The converse is straightforward.
\end{proof}

With these lemmas, we can prove an analogue of \rthm{dense-lift}:

\begin{thm}[weakly-dense-lift]
For every cover space $X$, every strongly regular cover space $Y$, every strongly complete cover space $Z$, every weakly dense embedding $f : X \to Y$, and every strongly Cauchy map $g : X \to Z$,
there exists a unique strongly Cauchy map $\widetilde{g} : Y \to Z$ such that $\widetilde{g} \circ f = g$.
Moreover, if $g$ is a cover map, then so is $\widetilde{g}$.
\end{thm}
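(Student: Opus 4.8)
The plan is to transcribe the proof of \rthm{dense-lift} almost verbatim, replacing each tool by its strongly regular counterpart: \rprop{dense-unique} by \rprop{weakly-dense-unique}, \rlem{filter-lift} by \rlem{weak-filter-lift}, \rlem{filter-point-char} by \rlem{weak-filter-point-char}, ``Cauchy'' by ``weakly Cauchy'' throughout, and the relation $\rb$ by $\rb^s$ precisely where strong regularity of $Z$ or strong completeness is invoked. For uniqueness, I note that $Z$ is strongly complete, hence separated and strongly regular, and therefore strongly Hausdorff; moreover every strongly Cauchy map is Cauchy and thus continuous by \rprop{cauchy-continuous}. Since $f(X)$ is weakly dense in $Y$, any two strongly Cauchy extensions of $g$ agree on a weakly dense subset and hence coincide by \rprop{weakly-dense-unique}.

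For existence I first define the map. For each $y \in Y$ the neighborhood filter $y^\wedge$ is Cauchy, hence weakly Cauchy, so \rlem{weak-filter-lift} yields a weakly Cauchy filter $G_y$ on $X$ with $f(G_y)$ equivalent to $y^\wedge$. As $g$ is strongly Cauchy, $g(G_y)$ is a weakly Cauchy filter on $Z$, and strong completeness provides a unique point $g(G_y)^\vee$; I set $\widetilde{g}(y) = g(G_y)^\vee$. To check that $\widetilde{g}$ is strongly Cauchy, weak properness of $\widetilde{g}(F)$ is automatic since $\widetilde{g}^{-1}(\varnothing) = \varnothing \notin F$. For the Cauchy condition, let $F$ be a weakly Cauchy filter on $Y$ and $C$ a Cauchy cover on $Z$, and let $G_F$ be the filter from \rlem{weak-filter-lift}. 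Strong regularity of $Z$ makes $\{ U' \mid \exists U \in C,\ U' \rb^s U \}$ a Cauchy cover, which $g(G_F)$ must meet, giving $U' \in g(G_F)$ and $U \in C$ with $U' \rb^s U$. Unwinding $G_F$ produces $V' \in F$ and $V$ with $f^{-1}(V) \subseteq g^{-1}(U')$ and $V' \rb V$, and I then show $V' \subseteq \widetilde{g}^{-1}(U)$: for $y \in V'$ the set $V$ is a neighborhood of $y$, so \rlem{rb-point} together with the definition of $G_y$ gives $U' \in g(G_y)$, and since $U' \rb^s U$, \rlem{weak-filter-point-char} shows $U$ is a neighborhood of $\widetilde{g}(y)$. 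Hence $\widetilde{g}^{-1}(U) \supseteq V' \in F$, so $U \in \widetilde{g}(F)$.

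The identity $\widetilde{g} \circ f = g$ follows as before: for $x \in X$ and a neighborhood $W$ of $\widetilde{g}(f(x))$, \rlem{weak-filter-point-char} gives $W' \in g(G_{f(x)})$ with $W' \rb^s W$, whence $W' \rb W$; unwinding $G_{f(x)}$ places $g(x)$ in $W'$ and shows $W$ is a neighborhood of $g(x)$, so $g(x) \in W$, and separatedness of $Z$ with \rlem{separated-char} gives $\widetilde{g}(f(x)) = g(x)$. When $g$ is a cover map, I start from the Cauchy cover $\{ g^{-1}(W') \mid \exists W \in D,\ W' \rb^s W \}$ on $X$ (using strong regularity of $Z$ and that $g$ is a cover map), transfer it along the embedding $f$, and close it under $\rb$ twice via \axref{CR} to obtain a Cauchy cover of $Y$ of the form
\[ \{ V'' \mid \exists V, V', W, W',\ V'' \rb V',\ V' \rb V,\ W \in D,\ W' \rb^s W,\ f^{-1}(V) \subseteq g^{-1}(W') \}; \]
for $y \in V''$ the chain $V'' \rb V' \rb V$ makes $V'$ a neighborhood of $y$ and supplies the relation needed to put $g^{-1}(W')$ into $G_y$, so the same unwinding shows this cover refines $\{ \widetilde{g}^{-1}(W) \mid W \in D \}$.

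The main obstacle is bookkeeping. One must use $\rb^s$ exactly at the outermost step, where strong regularity of $Z$ and \rlem{weak-filter-point-char} are applied, while retaining ordinary $\rb$ in the interior chains, because \rlem{weak-filter-lift} defines $G_y$ through $\rb$; confusing the two relations would break either the construction of $G_y$ or the application of strong completeness. Everything else is a direct translation of \rthm{dense-lift}.
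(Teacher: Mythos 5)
Your proposal is correct and follows exactly the route the paper intends: the paper's proof simply declares existence "analogous to Theorem~\ref{thm:dense-lift}, using Lemma~\ref{lem:weak-filter-point-char} and Lemma~\ref{lem:weak-filter-lift}" and derives uniqueness from Proposition~\ref{prop:weakly-dense-unique} and Proposition~\ref{prop:cauchy-continuous}, which is precisely your translation. Your explicit bookkeeping of where $\rb^s$ must replace $\rb$ (only at the outer step where strong regularity of $Z$ and Lemma~\ref{lem:weak-filter-point-char} enter, keeping $\rb$ in the chains feeding the definition of $G_y$) is the one nontrivial point of the "analogous" argument, and you have it right.
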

\begin{proof}
The uniqueness follows from \rprop{weakly-dense-unique} and \rprop{cauchy-continuous}.
The proof of existence is analogous to that of \rthm{dense-lift}, using \rlem{weak-filter-point-char} and \rlem{weak-filter-lift}.
\end{proof}

A \emph{strong completion} of a strongly regular cover space $X$ is a strongly complete cover space $Y$ together with a weakly dense embedding $X \to Y$.
\rthm{weakly-dense-lift} implies that such a completion is unique up to isomorphism.
As before, we can show that every strongly regular cover space has a strong completion:

\begin{thm}[strong-completion]
For every strongly regular cover space $X$, there exists a strongly complete cover space $C(X)$ and a weakly dense embedding $\eta_X : X \to C(X)$.
\end{thm}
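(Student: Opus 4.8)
The plan is to reproduce the construction of \rthm{completion} with every ingredient replaced by its strong counterpart: regular Cauchy filters by strongly regular weakly Cauchy filters, the relation $\rb$ by $\rb^s$, density by weak density, and completeness by strong completeness. Accordingly, I would take $C(X)$ to be the set of strongly regular weakly Cauchy filters on $X$, write $\widetilde{U}$ for the set of such filters containing a subset $U \subseteq X$, and declare a cover of $C(X)$ to be Cauchy exactly when it is refined by $\{ \widetilde{U'} \mid U' \in C' \}$ for some Cauchy cover $C'$ of $X$. That these are genuine covers, and that \axref{CT}, \axref{CE}, and \axref{CG} hold, is word for word the argument of \rthm{completion}: it appeals only to the fact that the relevant filters meet every Cauchy cover, which weakly Cauchy filters also do, and never to properness.

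The first place the strong setting is genuinely needed is in verifying strong regularity. I would first show that $V \rb^s_X U$ implies $\widetilde{V} \rb^s_{C(X)} \widetilde{U}$. From $V \rb^s_X U$ the cover $\{ \widetilde{X \backslash V}, \widetilde{U} \}$ is Cauchy in $C(X)$, and since no weakly proper filter can contain both $V$ and $X \backslash V$, we have $\widetilde{X \backslash V} \subseteq C(X) \backslash \widetilde{V}$; hence $\{ C(X) \backslash \widetilde{V}, \widetilde{U} \}$ is refined by a Cauchy cover and is therefore itself Cauchy, which is exactly $\widetilde{V} \rb^s_{C(X)} \widetilde{U}$. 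Strong regularity of $C(X)$ then follows as \axref{CR} did before: for a Cauchy cover $C$ refined by $\{ \widetilde{U'} \mid U' \in C' \}$, strong regularity of $X$ makes $\{ V' \mid \exists U' \in C',\, V' \rb^s_X U' \}$ Cauchy, and its image under $\widetilde{(-)}$ refines $\{ V \mid \exists U \in C,\, V \rb^s_{C(X)} U \}$ by the observation just made together with \rprop{rbs-props}.

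I would then set $\eta_X(x) = x^\wedge$ and check it is a weakly dense embedding. That it is a cover map and an embedding is identical to \rthm{completion} (the cover-map step uses only \axref{CR}). The genuinely new, and in my view hardest, step is weak density, which I would prove contrapositively: let $V$ be open in $C(X)$ with $\eta_X^{-1}(V) = \varnothing$ and suppose $F \in V$, so $\{ F \} \rb_{C(X)} V$. Running the density computation of \rthm{completion} with the strong ingredients — producing a Cauchy cover $C'$ of $X$ whose image refines $\{ W \mid F \in W \implies W \subseteq V \}$, making $\{ V' \mid \exists U' \in C',\, V' \rb^s_X U' \}$ Cauchy by strong regularity of $X$, using that the weakly Cauchy filter $F$ meets it, and applying \rlem{weakly-cauchy-filter-rb} to pass from $V' \in F$ with $V' \rb^s_X U'$ to $U' \in F$ — I would obtain $\eta_X^{-1}(V) \in F$. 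Since $\eta_X^{-1}(V) = \varnothing$, this puts $\varnothing \in F$, contradicting weak properness, so $V = \varnothing$. This is exactly the point at which one must settle for weak density rather than density: because a weakly proper filter need not consist of inhabited sets, from $\eta_X^{-1}(V) \in F$ one can only conclude that this preimage is not the empty set, i.e.\ that $V$ cannot fail to meet the image, and not that the image actually contains a point of $V$.

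Finally, separatedness and strong completeness mirror their counterparts in \rthm{completion}. Two equivalent points $F, G$ of $C(X)$ are equivalent as weakly Cauchy filters, so the uniqueness of strongly regular representatives (\rprop{strongly-regular-minimal}) gives $F = G$. For strong completeness I would invoke \rlem{strongly-complete-part}, whose hypothesis asks that each point $F$ of $C(X)$ — that is, each strongly regular weakly Cauchy filter on $X$ — have $\eta_X(F)$ equivalent to the neighborhood filter of some point; taking that point to be $F$ itself, the weak-density computation already shows $\eta_X^{-1}(V) \in F$ for every neighborhood $V$ of $F$, so $F^\wedge \subseteq \eta_X(F)$ and the two are equivalent. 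I expect the weak-density argument to be the main obstacle, as it is the only step that forces the passage from proper to weakly proper filters and hence justifies the entire ``weak'' layer of the theory that makes this theorem distinct from \rthm{completion}.
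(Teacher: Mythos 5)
Your proposal is correct and follows the paper's proof essentially step for step: the same construction of $C(X)$ from strongly regular weakly Cauchy filters, the same verification that $V \rb^s_X U$ implies $\widetilde{V} \rb^s_{C(X)} \widetilde{U}$ via weak properness of the filters, and the same contradiction with weak properness to get weak density. If anything you are more explicit than the paper about the one point it leaves implicit when deferring to \rthm{completion} --- that the density computation must replace $\rb$ by $\rb^s$ and \rlem{cauchy-filter-rb} by \rlem{weakly-cauchy-filter-rb}, since weakly Cauchy filters need not consist of inhabited sets.
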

\begin{proof}
Define $C(X)$ as the set of strongly regular weakly Cauchy filters on $X$.
For every subset $U \subseteq X$, define $\widetilde{U} \subseteq C(X)$ as the set of all strongly regular weakly Cauchy filters containing $U$.
The Cauchy covers of $C(X)$ are defined as the set of covers $C$ that are refined by $\{ \widetilde{U'} \mid U' \in C' \}$ for some Cauchy cover $C'$ of $X$.
The proof that $C(X)$ forms a precover space is analogous to that in \rthm{completion}.

To prove that $\widetilde{V} \rb^s_{C(X)} \widetilde{U}$ whenever $V \rb^s_X U$, note that $\widetilde{X \backslash V}$ does not intersect $\widetilde{V}$.
Indeed, if a filter $F \in C(X)$ belonged to their intersection, it would contain both $X \backslash V$ and $V$, which is impossible since $F$ is weakly proper.
This implies that $\{ \widetilde{X \backslash V}, \widetilde{U} \}$ refines $\{ C(X) \backslash \widetilde{V}, \widetilde{U} \}$.
Thus, $V \rb^s_X U$ implies $\widetilde{V} \rb^s_{C(X)} \widetilde{U}$, showing that $C(X)$ is strongly regular.

Next, define $\eta_X(x)$ as the neighborhood filter $x^\wedge$.
We can prove that $\eta_X$ is an embedding of cover spaces as in \rthm{completion}.
To show that $\eta_X$ is weakly dense, let $V$ be an open subset of $C(X)$ that does not intersect the image of $\eta_X$.
We need to prove that $V$ is empty.
Suppose $F \in V$.
In \rthm{completion}, we already proved that the set $\{ x \in X \mid \eta_X(x) \in V \}$ belongs to $F$, but this set is empty by assumption, contradicting the weak properness of $F$.
Thus, $V$ must be empty, proving that $\eta_X$ is dense.

Finally, the completeness of $C(X)$ follows from \rlem{strongly-complete-part} as in \rthm{completion}, completing the proof.
\end{proof}

\begin{cor}[strongly-complete-reflective]
The category of strongly complete cover spaces forms a reflective subcategory of the category of strongly regular cover spaces.
\end{cor}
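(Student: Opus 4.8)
The plan is to exhibit the strong completion construction $C(-)$ as a reflector, with the weakly dense embedding $\eta_X : X \to C(X)$ serving as the reflection arrow at each object. All of the substantive work has already been done: \rthm{strong-completion} supplies, for every strongly regular cover space $X$, a strongly complete cover space $C(X)$ together with a weakly dense embedding $\eta_X$, and $\eta_X$ is in particular a cover map. Since morphisms in both categories are cover maps and every cover map between strongly complete spaces is again a cover map, the strongly complete cover spaces form a full subcategory of the strongly regular ones; thus it remains only to verify the universal property of $\eta_X$.

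For the universal property, I would fix a strongly complete cover space $Z$ and a cover map $g : X \to Z$, and apply \rthm{weakly-dense-lift} with $Y = C(X)$ and $f = \eta_X$. Since $\eta_X$ is a weakly dense embedding and every cover map is strongly Cauchy, the hypotheses of that theorem are met, yielding a unique strongly Cauchy map $\widetilde{g} : C(X) \to Z$ with $\widetilde{g} \circ \eta_X = g$. The ``moreover'' clause of \rthm{weakly-dense-lift} guarantees that $\widetilde{g}$ is itself a cover map because $g$ is. Uniqueness as a cover map follows from the uniqueness clause of the theorem, since it already holds within the larger class of strongly Cauchy maps, which contains all cover maps.

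This establishes $\eta_X$ as a universal arrow from $X$ into the subcategory of strongly complete cover spaces, so by the standard characterization of reflective subcategories a left adjoint to the inclusion exists, and the subcategory is reflective. The argument is word-for-word parallel to the one establishing \rcor{complete-reflective} from \rthm{completion} and \rthm{dense-lift}, with the strong analogues substituted throughout. I do not expect a genuine obstacle: the only point requiring any care is the mismatch between the class of maps handled by \rthm{weakly-dense-lift} (strongly Cauchy maps) and the cover maps that are the morphisms of the categories in question, and this is resolved by the containment of cover maps in strongly Cauchy maps together with the cover-map preservation clause of the theorem.
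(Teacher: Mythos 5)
Your proposal is correct and follows exactly the route the paper intends: the corollary is stated without proof precisely because it is the immediate consequence of \rthm{strong-completion} (supplying the reflection arrow $\eta_X$) and \rthm{weakly-dense-lift} (supplying the universal property, with the cover-map preservation clause handling the mismatch between strongly Cauchy maps and cover maps), mirroring how \rcor{complete-reflective} follows from \rthm{completion} and \rthm{dense-lift}.
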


Now, let us discuss the relationship between strongly complete cover spaces and sober topological spaces.
First, recall the definition of sober topological spaces.
Let $X$ be a topological space.

A filter $F$ on $X$ is called \emph{open} if, for every $U \in F$, there exists an open set $V \in F$ such that $V \subseteq U$.
An open filter $F$ is called \emph{completely prime} if, for every family of open sets $\{ U_i \}_{i \in I}$, the condition $\bigcup_{i \in I} U_i \in F$ implies $U_i \in F$ for some $i \in I$.
If $x$ is a point in $X$, then the neighborhood filter $x^\wedge$ is a completely prime open filter.

A topological space is called \emph{sober} if every completely prime open filter is of the form $x^\wedge$ for a unique point $x \in X$.

\begin{prop}[u-sober]
If $X$ is a strongly complete cover space, then its underlying topological space is sober.
\end{prop}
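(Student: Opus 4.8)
The plan is to establish the bijection required by soberness. Given a completely prime open filter $F$ on the underlying topological space $S(X)$, I first show that $F$ is a weakly Cauchy filter on the cover space $X$; then strong completeness provides a point $x$ with $F$ equivalent to $x^\wedge$; and finally I show $F = x^\wedge$ as filters on the topological space, using complete primeness to upgrade the equivalence to an equality. Uniqueness will follow from the fact that $S(X)$ is strongly Hausdorff (being separated and strongly regular), via the separation already encoded in \rlem{separated-char}.

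\textbf{Step 1: $F$ is weakly Cauchy.} Let $F$ be a completely prime open filter on $S(X)$. Since $F$ is open and does not contain $\varnothing$ (a completely prime filter over the empty union would force $\varnothing \in F$ to be witnessed impossibly), $F$ is weakly proper. To see that $F$ meets every Cauchy cover $C$, I pass to the open refinement $\{\mathrm{int}(U) \mid U \in C\}$, which is Cauchy by \rprop{cover-int} and hence covers $X$. Thus $\bigcup_{U \in C} \mathrm{int}(U) = X \in F$, and complete primeness yields some $U \in C$ with $\mathrm{int}(U) \in F$, so $U \in F$ by upward closure. Hence $F$ intersects $C$, and $F$ is weakly Cauchy.

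\textbf{Step 2: produce the point and show $F = x^\wedge$.} By strong completeness of $X$, there is a point $x$ such that $F$ is equivalent to $x^\wedge$. I claim $F = x^\wedge$. For one inclusion, if $U \in x^\wedge$ then $\{x\} \rb U$, and by \rlem{rbs-point} there is a neighborhood $V$ of $x$ with $V \rb^s U$; since $x^\wedge$ is strongly regular and equivalent to $F$, \rlem{weakly-cauchy-filter-rb} gives $U \in F$. Conversely, suppose $U \in F$. Because $F$ is open, there is an open $V \in F$ with $V \subseteq U$; I must show $x \in V$, for then $V$ is a neighborhood of $x$ and $U \in x^\wedge$. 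Here I use complete primeness together with strong regularity of $S(X)$ (\rprop{u-strongly-regular}): write $V$ as a union of open sets strongly rather below $V$, so some such $W \rb^s_{S(X)} V$ lies in $F$; by equivalence and \rlem{weakly-cauchy-filter-rb} applied in the cover space, $V \in x^\wedge$, i.e.\ $x \in V$.

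\textbf{The main obstacle} is Step 2's converse inclusion $F \subseteq x^\wedge$: equivalence of weakly Cauchy filters only guarantees agreement up to the strongly-rather-below relation, so passing from $U \in F$ to membership of $x$ in an open subset requires genuinely exploiting complete primeness to extract an element of $F$ that is strongly rather below $V$, after which \rlem{weakly-cauchy-filter-rb} closes the gap. Care is needed to ensure the strong-regularity decomposition of $V$ is an \emph{open} cover so complete primeness applies. Uniqueness of $x$ is comparatively routine: if $x^\wedge = y^\wedge = F$ as filters, then $x$ and $y$ satisfy condition \eqref{sc:sub} of \rlem{separated-char}, so separatedness of $X$ forces $x = y$.
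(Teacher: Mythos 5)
Your overall strategy matches the paper's: show that a completely prime open filter $F$ is weakly Cauchy, invoke strong completeness to get a point $x$ with $F$ equivalent to $x^\wedge$, and upgrade the equivalence to an equality. (The paper does the upgrade more compactly by showing that $F$ is \emph{strongly regular} and appealing to the uniqueness of strongly regular representatives, i.e.\ \rprop{strongly-regular-minimal} applied in both directions; your two explicit inclusions unfold the same computation.) Step 1 and the inclusion $x^\wedge \subseteq F$ are correct as written.

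The one genuine problem is in the converse inclusion $F \subseteq x^\wedge$. You decompose the open set $V \in F$ using strong regularity of $S(X)$, extract some $W \in F$ with $W \rb^s_{S(X)} V$, and then cite \rlem{weakly-cauchy-filter-rb}. But that lemma requires the \emph{cover-space} relation $W \rb^s_X V$, i.e.\ that $\{ X \setminus W, V \}$ is a \emph{Cauchy} cover; the topological relation only tells you that $\{ \mathrm{int}(X \setminus W), V \}$ is an open neighborhood cover, and in a cover space an open cover need not be Cauchy --- the implication runs from $\rb^s_X$ to $\rb^s_{S(X)}$, not back. So the lemma does not apply to the witness you actually produced. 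The repair is exactly the paper's move: apply \rlem{rbs-point} directly in the cover space to write $V = \bigcup \{ \mathrm{int}(W) \mid W \rb^s_X V \}$, a union of open sets indexed by witnesses of the cover-space relation; complete primeness then hands you $W \in F$ with $W \rb^s_X V$, and \rlem{weakly-cauchy-filter-rb} finishes. (Alternatively, once $x^\wedge \subseteq F$ is established, you could argue pointwise: $x \in \mathrm{int}(X \setminus W) \cup V$, and the first case would put both $W$ and a subset of its complement into $F$, contradicting weak properness, so $x \in V$.) The rest --- weak properness of $F$, the use of \rprop{cover-int}, and uniqueness via separatedness --- is fine.
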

\begin{proof}
Let $F$ be a completely prime open filter.
Clearly, $F$ is also weakly Cauchy.
By \rlem{rbs-point}, for every open set $U$, we have $U = \bigcup \{ V \mid V \rb^s_X U \}$.
It follows that $F$ is strongly regular.
By the completeness of $X$, there exists a unique point $x \in X$ such that $F = x^\wedge$.
\end{proof}

Next, we prove the converse of this proposition.

\begin{prop}[t-complete]
If $X$ is a strongly regular sober space, then $T(X)$ is a strongly complete cover space.
\end{prop}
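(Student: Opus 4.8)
The plan is to verify directly the three requirements in the definition of a strongly complete cover space for $T(X)$: strong regularity, separatedness, and the condition that every weakly Cauchy filter is equivalent to a neighborhood filter. The first two are short. Strong regularity of $T(X)$ is immediate from the fact that $X$ is strongly regular as a topological space (the corresponding example above). For separatedness, recall that a cover space is separated precisely when $(-)^\wedge$ is injective, and that a cover space is separated iff its underlying topological space is Hausdorff. Since $X$ is regular, the cover-space neighborhood filter $x^\wedge$ coincides with the topological neighborhood filter of $x$, which is a completely prime open filter; soberness guarantees that such a filter determines its point uniquely, so $x^\wedge = y^\wedge$ forces $x = y$, giving separatedness.

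The substance is the completeness clause. Because every weakly Cauchy filter is equivalent to a strongly regular one and equivalence is transitive, I would reduce to a strongly regular weakly Cauchy filter $F$. The strategy is to extract from $F$ a completely prime open filter, invoke soberness to obtain a point $x$ with $\hat F = x^\wedge$, and then conclude that $F$ is equivalent to $x^\wedge$. Set $\hat F = \{\, U \mid \exists V \in F,\ V\ \text{open},\ V \subseteq U \,\}$, the filter generated by the open members of $F$. This is an open filter, it is contained in $F$, and it is weakly proper (were $\varnothing \in \hat F$, some open $\varnothing$ would lie in $F$, contradicting weak properness of $F$).

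Two facts about $\hat F$ must then be checked. First, $\hat F$ is weakly Cauchy: given a Cauchy cover $C$ of $T(X)$, strong regularity of $T(X)$ produces $V \in F$ and $U \in C$ with $V \rb^s U$; since a cover is Cauchy in $T(X)$ exactly when the interiors of its members cover $X$, one also has $V \rb^s \mathrm{int}(U)$, so $\mathrm{int}(U) \in F$ by \rlem{weakly-cauchy-filter-rb}, whence $U \in \hat F$ and $\hat F$ meets $C$. Second, $\hat F$ is completely prime: given open sets $\{ U_i \}_{i}$ with $\bigcup_i U_i \in \hat F$, choose an open $V_0 \in F$ with $V_0 \subseteq \bigcup_i U_i$ and, using strong regularity of $F$, an open $V_1 \in F$ with $V_1 \rb^s V_0$. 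Then $\{ \mathrm{int}(X \setminus V_1) \} \cup \{ U_i \}_i$ is an open cover of $X$ (because $X = \mathrm{int}(X \setminus V_1) \cup V_0$), hence a Cauchy cover of $T(X)$, and $\hat F$ must meet it; the branch $\mathrm{int}(X \setminus V_1) \in \hat F \subseteq F$ is impossible, as it would intersect $V_1 \in F$ in $\varnothing$, so some $U_i \in \hat F$.

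Finally, soberness yields a unique point $x$ with $\hat F = x^\wedge$ (the topological neighborhood filter, which equals the cover-space neighborhood filter since $X$ is regular); as $\hat F \subseteq F$ and both are weakly Cauchy, $F$ is equivalent to $x^\wedge$, establishing completeness. I expect the complete-primeness step to be the main obstacle: the delicate points are assembling the correct open Cauchy cover out of strong regularity and ruling out the complement alternative via weak properness. The remaining verifications closely parallel the regular case, so I anticipate they will be routine.
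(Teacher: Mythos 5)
Your proposal is correct and follows essentially the same route as the paper: the heart of both arguments is showing that a strongly regular weakly Cauchy filter is completely prime by using strong regularity to produce $V \in F$ with $V \rb^s \bigcup_i U_i$, forming the Cauchy cover $\{ X \backslash V \} \cup \{ U_i \}_i$, and ruling out the branch $X \backslash V \in F$ via weak properness. The only difference is your auxiliary filter $\hat F$, which is actually equal to $F$ itself (strong regularity of $F$ already forces $\mathrm{int}(U) \in F$ for every $U \in F$, so $F$ is an open filter), so the paper applies soberness to $F$ directly; the extra bookkeeping is harmless but unnecessary.
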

\begin{proof}
It suffices to show that every strongly regular weakly Cauchy filter is completely prime.
Let $F$ be a strongly regular weakly Cauchy filter, and let $\{ U_i \}_{i \in I}$ be a family of open sets such that $\bigcup_{i \in I} U_i \in F$.
Since $F$ is strongly regular, there exists $V \in F$ such that $V \rb^s \bigcup_{i \in I} U_i$.
This implies that $\{ X \backslash V \} \cup \{ U_i \mid i \in I \}$ is a neighborhood cover of $X$.

Since $F$ is a weakly proper filter and $V \in F$, the set $X \backslash V$ does not belong to $F$.
Thus, there exists some $i \in I$ such that $U_i \in F$, because $F$ is a Cauchy filter.
This completes the proof.
\end{proof}

Now, we can show that the sets of Cauchy maps, strongly Cauchy maps, and continuous maps between strongly complete cover spaces coincide:

\begin{prop}[u-fully-faithful]
Let $X$ be a complete cover space and $Y$ be a cover space.
Then a map $f : X \to Y$ is Cauchy if and only if it is continuous.
If $X$ is strongly complete, then such a map is also strongly Cauchy.
\end{prop}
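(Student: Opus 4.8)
The plan is to treat the two directions of the biconditional separately and to observe that one is already in hand: every Cauchy map is continuous by \rprop{cauchy-continuous}. The substance is the converse, that a continuous $f : X \to Y$ is Cauchy when $X$ is complete. Given a Cauchy filter $F$ on $X$, completeness provides the point $x = F^\vee$ with $F$ equivalent to $x^\wedge$; crucially, since $x^\wedge$ is regular, \rprop{regular-minimal} upgrades this equivalence to the inclusion $x^\wedge \subseteq F$. Applying $f$ preserves inclusions of filters, so $f(x^\wedge) \subseteq f(F)$, and $f(F)$ is proper because $F$ is. Since a larger filter meets a Cauchy cover whenever a smaller one does, it therefore suffices to prove that $f(x^\wedge)$ is Cauchy.

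I regard this reduction as the main point: the equivalence of $F$ with $x^\wedge$ alone would not help, because it is not clear that $f$ preserves equivalence (that is essentially the conclusion we are after), whereas the inclusion $x^\wedge \subseteq F$ transports trivially through $f$. Granting the reduction, the core step is routine. Fix a Cauchy cover $C$ of $Y$. The neighborhood filter $f(x)^\wedge$ is a Cauchy filter on $Y$, so it meets $C$, yielding some $V \in C$ that is a neighborhood of $f(x)$. Continuity of $f$, together with \rprop{top-neighborhood} to identify cover-space neighborhoods on $Y$ with those of $S(Y)$, ensures that $f^{-1}(V)$ is a neighborhood of $x$, that is, $f^{-1}(V) \in x^\wedge$, so $V \in C \cap f(x^\wedge)$. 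Hence $f(x^\wedge)$ meets every Cauchy cover and is Cauchy, completing the converse.

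For the final assertion I would run the identical argument with weakly Cauchy filters in place of Cauchy filters. If $X$ is strongly complete and $F$ is weakly Cauchy, then $F$ is equivalent to $x^\wedge$ for a point $x$; since $x^\wedge$ is strongly regular and weakly Cauchy, \rprop{strongly-regular-minimal} again yields $x^\wedge \subseteq F$, hence $f(x^\wedge) \subseteq f(F)$. The condition $\varnothing \notin F$ forces $\varnothing \notin f(F)$, so $f(F)$ is weakly proper, and the already-established fact that $f(x^\wedge)$ meets every Cauchy cover passes upward to $f(F)$. Thus $f(F)$ is weakly Cauchy, and $f$ is strongly Cauchy.
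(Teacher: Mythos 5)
Your proposal is correct and follows essentially the same route as the paper: both reduce to the inclusion $x^\wedge \subseteq F$ coming from completeness, pick an element of the Cauchy cover that is a neighborhood of $f(x)$, and pull it back through continuity into $x^\wedge \subseteq F$ (the paper simply does this directly rather than isolating $f(x^\wedge)$ as an intermediate filter). The weakly Cauchy case is handled the same way in both.
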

\begin{proof}
If $f$ is Cauchy, then it is continuous by \rprop{cauchy-continuous}.
Conversely, assume that $f$ is continuous.
Let $F$ be a Cauchy filter on $X$, and let $C$ be a Cauchy cover of $Y$.
We need to show that there exists $U \in C$ such that $f^{-1}(U) \in F$.

Since $X$ is complete, there exists a point $x \in X$ such that $x^\wedge \subseteq F$.
As $C$ is a Cauchy cover, there exists $U \in C$ such that $\{ f(x) \} \rb U$.
Since $f$ is continuous, $f^{-1}(U)$ is a neighborhood of $x$.
Because $x^\wedge \subseteq F$, we conclude that $f^{-1}(U) \in F$.

If $X$ is strongly complete, the same proof shows that $f$ preserves weakly Cauchy filters, making $f$ strongly Cauchy.
\end{proof}

Finally, we can prove that the category of strongly complete cover spaces is equivalent to the category of strongly regular sober spaces:

\begin{thm}[cover-cauchy-top]
The forgetful functor from the category of cover spaces and strongly Cauchy maps to the category of topological spaces restricts to an equivalence between the category of strongly complete cover spaces with strongly Cauchy maps and the category of strongly regular sober spaces.
\end{thm}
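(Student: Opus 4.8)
The plan is to exhibit the restricted forgetful functor, call it $U$, as a fully faithful and essentially surjective functor; by the standard characterization this suffices for an equivalence, and it frees us from having to check that $T$ is a strict inverse (it need not be, since $T(S(X))$ may carry strictly more Cauchy covers than $X$). First I would check that $U$ is well defined on the two subcategories in question. On objects: a strongly complete cover space is by definition strongly regular, so its underlying space is strongly regular by \rprop{u-strongly-regular} and sober by \rprop{u-sober}; hence $U$ lands in strongly regular sober spaces. On morphisms: every strongly Cauchy map is Cauchy, and every Cauchy map is continuous by \rprop{cauchy-continuous}, so $U$ sends a strongly Cauchy map to the underlying continuous function.

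Next I would verify full faithfulness. Faithfulness is immediate, since a strongly Cauchy map is just a function with an extra property, so two strongly Cauchy maps with the same underlying function are equal, and $U$ preserves underlying functions. For fullness, let $X$ and $Y$ be strongly complete cover spaces and let $h \colon S(X) \to S(Y)$ be continuous. I would apply \rprop{u-fully-faithful} with domain $X$: because $X$ is strongly complete and $h$ is continuous, $h$ is in fact strongly Cauchy as a map $X \to Y$, and this strongly Cauchy map clearly has underlying function $h$. Thus every morphism in the target is hit, and $U$ is full. This is the step where strong completeness of the \emph{domain} is essential, and it is the crux of the argument; everything substantive has been isolated into \rprop{u-fully-faithful}.

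Finally I would establish essential surjectivity. Given a strongly regular sober space $Z$, the cover space $T(Z)$ is strongly complete by \rprop{t-complete}, and $S(T(Z)) = Z$ since $Z$ is in particular regular, so $U(T(Z)) = Z$; hence $Z$ lies in the image of $U$ (indeed on the nose). Combining well-definedness, full faithfulness, and essential surjectivity, $U$ is an equivalence between the category of strongly complete cover spaces with strongly Cauchy maps and the category of strongly regular sober spaces. The main obstacle is conceptual rather than computational: one must remember that the morphisms on the cover-space side are strongly Cauchy maps, not cover maps, so that fullness can be supplied by \rprop{u-fully-faithful}; with that in hand the theorem is an assembly of the preceding propositions.
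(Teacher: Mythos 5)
Your proposal is correct and follows the paper's own proof: well-definedness via \rprop{u-strongly-regular}, \rprop{u-sober}, and \rprop{cauchy-continuous}, fullness and faithfulness via \rprop{u-fully-faithful}, and essential surjectivity via \rprop{t-complete}. The paper's proof is just a more compressed assembly of the same propositions.
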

\begin{proof}
By \rprop{u-strongly-regular}, \rprop{u-sober}, and \rprop{u-fully-faithful}, we have a functor that is fully faithful.
Additionally, \rprop{t-complete} implies that this functor is essentially surjective on objects.
Thus, the forgetful functor restricts to an equivalence between the two categories.
\end{proof}

\section{Locales}
\label{sec:locales}

In the previous section, we showed that the category of strongly complete cover spaces and strongly Cauchy maps is equivalent to the category of strongly regular sober topological spaces.
In this section, we establish a similar result for the category of strongly complete cover spaces and cover maps instead of strongly Cauchy maps.
To achieve this, we replace the category of sober topological spaces with the category of locales.

Recall that a \emph{frame} is a complete distributive lattice.
A map of frames is a function that preserves finite meets and arbitrary joins.
The category of locales is the opposite of the category of frames.
Given a locale $M$, we denote its underlying frame by $O_M$.
Elements of $O_M$ are called \emph{opens} of $M$.
For a map of locales $f : M \to N$, the corresponding frame map is written as $f^* : O_N \to O_M$.

A \emph{filter} in a lattice $M$ is an upward closed subset of $M$ that is closed under finite meets.
A \emph{completely prime filter} in a complete lattice is a filter $F$ such that the following condition holds:
\[ \bigvee_{j \in J} a_j \in F \implies \exists j \in J, a_j \in F. \]
A \emph{point} of a locale $M$ is a completely prime filter in $O_M$.
The set of points of a locale is denoted by $P(M)$.
There are adjoint functions $\varepsilon^* \dashv \varepsilon_*$ between $O_M$ and the power set of $P(M)$:
\[ \varepsilon^*(a) = \{ F \in P(M) \mid a \in F \}, \quad \varepsilon_*(U) = \bigvee \{ a \in O_M \mid \varepsilon^*(a) \subseteq U \}. \]

The set of points of a locale $M$ can be equipped with the structure of a precover space.
We define $\mathcal{C}_{P(M)}$ as the set of those $C$ for which $\bigvee \{ \varepsilon_*(U) \mid U \in C \} = \top$.

\begin{prop}
For every locale $M$, the pair $(P(M),\mathcal{C}_{P(M)})$ is a precover space.
\end{prop}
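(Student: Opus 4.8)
The plan is to verify the three precover-space axioms \axref{CT}, \axref{CE}, \axref{CG} for $(P(M),\mathcal{C}_{P(M)})$, after first checking that every $C\in\mathcal{C}_{P(M)}$ is genuinely a cover of $P(M)$. Everything rests on the adjunction $\varepsilon^*\dashv\varepsilon_*$, which I would record at the outset in the form
\[ \varepsilon^*(a)\subseteq U \iff a\le\varepsilon_*(U). \]
The forward implication is immediate from the definition of $\varepsilon_*$ as a join, and the backward implication follows once we know that $\varepsilon^*$ preserves arbitrary joins: indeed $\varepsilon^*(a)=\{F\mid a\in F\}$ preserves joins precisely because each point $F$ is a completely prime filter, so $\bigvee_j a_j\in F$ iff $a_j\in F$ for some $j$. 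The same filter properties show that $\varepsilon^*$ preserves finite meets and the top element, so $\varepsilon^*$ is a frame homomorphism; in particular the counit $\varepsilon^*(\varepsilon_*(U))\subseteq U$ holds and $\varepsilon_*$ is monotone.

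With these tools the preliminary claim is quick: if $\bigvee\{\varepsilon_*(U)\mid U\in C\}=\top$ and $F\in P(M)$, then $\top\in F$ forces $\varepsilon_*(U)\in F$ for some $U\in C$ by complete primeness, whence $F\in\varepsilon^*(\varepsilon_*(U))\subseteq U$; thus $\bigcup C=P(M)$. Axiom \axref{CT} then reduces to $\varepsilon_*(P(M))=\top$, which is immediate since every $a\in O_M$ satisfies $\varepsilon^*(a)\subseteq P(M)$. For \axref{CE}, if $C$ refines $D$ then monotonicity of $\varepsilon_*$ gives $\varepsilon_*(U)\le\varepsilon_*(U')$ for a chosen superset $U'\in D$ of each $U\in C$, so $\top=\bigvee_{U\in C}\varepsilon_*(U)\le\bigvee_{U'\in D}\varepsilon_*(U')$.

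The substantive case is \axref{CG}. Here I would first prove the inequality $\varepsilon_*(U)\wedge\varepsilon_*(V)\le\varepsilon_*(U\cap V)$, which follows from the adjunction together with meet-preservation of $\varepsilon^*$: applying $\varepsilon^*$ to the left-hand side gives $\varepsilon^*(\varepsilon_*(U))\cap\varepsilon^*(\varepsilon_*(V))\subseteq U\cap V$ by the counit, and the adjunction then transposes this. Next, given $\bigvee_{U\in C}\varepsilon_*(U)=\top$ and $\bigvee_{V\in D_U}\varepsilon_*(V)=\top$ for each $U$, the infinite distributive law of the frame $O_M$ yields $\varepsilon_*(U)=\bigvee_{V\in D_U}\bigl(\varepsilon_*(U)\wedge\varepsilon_*(V)\bigr)$, and joining over $U\in C$ gives $\top=\bigvee\{\varepsilon_*(U)\wedge\varepsilon_*(V)\mid U\in C,\ V\in D_U\}$. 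Combined with the submultiplicativity inequality, this bounds $\bigvee\{\varepsilon_*(U\cap V)\mid U\in C,\ V\in D_U\}$ below by $\top$, so the meet cover $\{U\cap V\}$ lies in $\mathcal{C}_{P(M)}$ (and is a cover by the preliminary claim), establishing \axref{CG}.

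The main obstacle is this last axiom: it is the only place where frame distributivity is genuinely needed, and it requires the interaction of $\varepsilon_*$ with finite meets in exactly the right (inequality) direction. I would therefore be careful to route that step through the adjunction and the frame-homomorphism property of $\varepsilon^*$, rather than attempting a direct computation from the definition of $\varepsilon_*$ as a join, where the meet of two joins would be awkward to control.
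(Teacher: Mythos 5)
Your proposal is correct and follows essentially the same route as the paper: the cover claim via complete primeness of points, \axref{CT} trivially, \axref{CE} from monotonicity of $\varepsilon_*$, and \axref{CG} from frame distributivity together with the fact that $\varepsilon_*$ preserves (the relevant direction of) finite meets. The paper states these steps tersely; your write-up just supplies the details, routing the meet-preservation of $\varepsilon_*$ through the adjunction rather than citing that right adjoints preserve meets, which is the same fact.
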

\begin{proof}
First, let us verify that every $C$, satisfying $\bigvee \{ \varepsilon_*(U) \mid U \in C \} = \top$, is indeed a cover.
If $F$ is a point of $M$, there exist a set $U \in C$ and an open $a \in F$ such that $\varepsilon^*(a) \subseteq U$.
This implies that $F \in U$.

Condition \axref{CT} is obvious, and \axref{CE} follows from the monotonicity of $\varepsilon_*$.
Condition \axref{CG} is satisfied due to the distributivity of meets over joins and the fact that $\varepsilon_*$ preserves meets.
\end{proof}

The following simple lemma provides a useful characterization of Cauchy covers in $P(M)$:

\begin{lem}[locale-cauchy]
If $M$ is a locale, then a set is a Cauchy cover if and only if it is of the form $\{ \varepsilon^*(a) \mid a \in C \}$ for some set of opens $C \subseteq O_M$ such that $\bigvee C = \top$.
\end{lem}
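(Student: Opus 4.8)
The plan is to prove the biconditional in \rlem{locale-cauchy} by establishing both directions, leaning on the adjunction $\varepsilon^* \dashv \varepsilon_*$ and the definition of $\mathcal{C}_{P(M)}$ as those covers $C$ with $\bigvee \{ \varepsilon_*(U) \mid U \in C \} = \top$. The key observation to record at the outset is that $\varepsilon^*$ and $\varepsilon_*$ form an adjunction, so $\varepsilon^*(\varepsilon_*(U)) \subseteq U$ for every $U \subseteq P(M)$ and $a \leq \varepsilon_*(\varepsilon^*(a))$ for every open $a$; moreover, since $\varepsilon^*$ is a frame map it preserves arbitrary joins, and I expect the composite $\varepsilon_* \circ \varepsilon^*$ to be the identity on $O_M$, i.e. $\varepsilon_*(\varepsilon^*(a)) = a$. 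I would verify this last point directly from the definition $\varepsilon_*(\varepsilon^*(a)) = \bigvee \{ b \mid \varepsilon^*(b) \subseteq \varepsilon^*(a) \}$, noting that $a$ itself appears in the indexing set so $a \leq \varepsilon_*(\varepsilon^*(a))$, while the reverse inequality comes from applying $\varepsilon^*$ and using the counit.

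For the ``if'' direction, suppose $C = \{ \varepsilon^*(a) \mid a \in C_0 \}$ for some $C_0 \subseteq O_M$ with $\bigvee C_0 = \top$. Then using $\varepsilon_*(\varepsilon^*(a)) = a$ I compute
\[
\bigvee \{ \varepsilon_*(U) \mid U \in C \} = \bigvee_{a \in C_0} \varepsilon_*(\varepsilon^*(a)) = \bigvee_{a \in C_0} a = \top,
\]
so $C$ is a Cauchy cover by definition. For the ``only if'' direction, suppose $C$ is a Cauchy cover, so $\bigvee \{ \varepsilon_*(U) \mid U \in C \} = \top$. The natural candidate is to replace each $U \in C$ by the open $\varepsilon_*(U)$ and form the set $C' = \{ \varepsilon_*(U) \mid U \in C \}$, which satisfies $\bigvee C' = \top$ by hypothesis. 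The issue is that the corresponding cover $\{ \varepsilon^*(\varepsilon_*(U)) \mid U \in C \}$ consists of the sets $\varepsilon^*(\varepsilon_*(U))$, and by the counit of the adjunction these satisfy $\varepsilon^*(\varepsilon_*(U)) \subseteq U$; hence $\{ \varepsilon^*(\varepsilon_*(U)) \mid U \in C \}$ refines $C$. Since it is of the required form $\{ \varepsilon^*(a) \mid a \in C' \}$ with $\bigvee C' = \top$, it is a Cauchy cover, and $C$ being refined-by this cover (read in the direction used in the definition) shows $C$ has the desired form up to refinement.

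The main obstacle is pinning down exactly what ``of the form'' means in the statement relative to the closure of $\mathcal{C}_{P(M)}$ under refinement via \axref{CE}: the literal equality $C = \{ \varepsilon^*(a) \mid a \in C_0 \}$ may be too strong, since an arbitrary Cauchy cover need only contain, or be refined by, such a canonical cover rather than equal it. I would resolve this by reading the lemma as asserting that the Cauchy covers are precisely those refined by a canonical cover $\{ \varepsilon^*(a) \mid a \in C_0 \}$ with $\bigvee C_0 = \top$, which matches the pattern already used for $C(X)$ in \rthm{completion}, where Cauchy covers are defined up to refinement by a distinguished family. Under that reading the ``only if'' direction is exactly the refinement argument above, and the ``if'' direction combines the displayed join computation with \axref{CE} to conclude that anything refined by a canonical cover is Cauchy. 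The remaining care is simply to confirm that $\varepsilon^*$ preserves the joins used and that $\varepsilon_* \circ \varepsilon^* = \mathrm{id}$, both of which follow from the frame-map property of $\varepsilon^*$ and the adjunction.
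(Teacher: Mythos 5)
Your ``if'' direction is essentially the paper's: from the unit inequality $a \leq \varepsilon_*(\varepsilon^*(a))$ one gets $\top = \bigvee C_0 \leq \bigvee \{ \varepsilon_*(U) \mid U \in C \}$, which is exactly what the definition of $\mathcal{C}_{P(M)}$ requires. Be careful, however, with your auxiliary claim that $\varepsilon_* \circ \varepsilon^* = \mathrm{id}$: this holds precisely when $\varepsilon^*$ reflects the order (i.e.\ when $M$ is spatial), and your proposed justification --- apply $\varepsilon^*$ and use the counit --- only yields the triangle identity $\varepsilon^* \varepsilon_* \varepsilon^* = \varepsilon^*$, not $\varepsilon_*(\varepsilon^*(a)) \leq a$. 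Fortunately your displayed computation only needs $\bigvee_{a \in C_0} a \leq \bigvee_{a \in C_0} \varepsilon_*(\varepsilon^*(a))$, so nothing breaks, but the intermediate equality should be replaced by this inequality.

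On the ``only if'' direction you depart from the paper, and to your credit. The paper sets $D' = \{ \varepsilon_*(U) \mid U \in D \}$ and asserts $D = \{ \varepsilon^*(a) \mid a \in D' \}$; this literal equality fails in general, since $\varepsilon^*(\varepsilon_*(U))$ is the union of all $\varepsilon^*(a)$ contained in $U$ and may be a proper subset of $U$ (an arbitrary member of a Cauchy cover need not be the extent of any open). Your diagnosis --- that the lemma must be read as saying a set is Cauchy if and only if it is refined by a canonical cover $\{ \varepsilon^*(a) \mid a \in C_0 \}$ with $\bigvee C_0 = \top$ --- is the correct repair, and your argument via the counit $\varepsilon^*(\varepsilon_*(U)) \subseteq U$ proves it. Since $\mathcal{C}_{P(M)}$ is closed under \axref{CE} by monotonicity of $\varepsilon_*$, this weaker formulation supports every later use of the lemma in the paper (all of which invoke only the ``if'' direction), so the discrepancy lies with the paper's literal statement and proof rather than with your argument.
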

\begin{proof}
Suppose $\bigvee C = \top$.
Then $C' = \{ \varepsilon^*(a) \mid a \in C \}$ is a Cauchy cover because:
\[ \top = \bigvee C \leq \bigvee \{ \varepsilon_*(U) \mid U \in C' \}. \]
Conversely, let $D$ be a Cauchy cover.
Define $D' = \{ \varepsilon_*(U) \mid U \in D \}$.
Since $\bigvee D' = \top$, it follows that $D = \{ \varepsilon^*(a) \mid a \in D' \}$, completing the proof.
\end{proof}

The following lemma describes the neighborhood relation in $P(M)$ in terms of the original locale $M$:

\begin{lem}[locale-cover-neighborhood]
Let $M$ be a locale, and let $F$ be a completely prime filter in $O_M$.
If $U$ is a neighborhood of $F$ in $P(M)$, then there exists an open $a \in O_M$ such that $\varepsilon^*(a) \subseteq U$.
\end{lem}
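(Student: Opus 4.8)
The plan is to unfold the definition of a neighborhood in $P(M)$ and then apply the complete primeness of $F$ to the join that witnesses the Cauchy-cover condition. The whole argument is short; the one piece of content is recognizing that the relevant Cauchy-cover condition is literally an equation $\bigvee(\cdots) = \top$, to which complete primeness applies directly.

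First I would rewrite the hypothesis. Saying that $U$ is a neighborhood of $F$ means $\{F\} \rb U$, which by the definition of the rather-below relation in a cover space says that the collection
\[ D = \{ W \subseteq P(M) \mid \overlap{W}{\{F\}} \implies W \subseteq U \} = \{ W \subseteq P(M) \mid F \in W \implies W \subseteq U \} \]
is a Cauchy cover of $P(M)$. By the very definition of $\mathcal{C}_{P(M)}$, this is exactly the assertion that $\bigvee \{ \varepsilon_*(W) \mid W \in D \} = \top$.

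Next I would use that $F$ is a point, i.e.\ a completely prime filter in $O_M$. Since $F$ is a filter it contains $\top$, and $\top$ equals the join $\bigvee \{ \varepsilon_*(W) \mid W \in D \}$; complete primeness then produces some $W \in D$ with $\varepsilon_*(W) \in F$. Setting $a = \varepsilon_*(W)$, the membership $a \in F$ means precisely $F \in \varepsilon^*(a)$ by the definition of $\varepsilon^*$. The counit of the adjunction $\varepsilon^* \dashv \varepsilon_*$ gives $\varepsilon^*(a) = \varepsilon^*(\varepsilon_*(W)) \subseteq W$, so in particular $F \in W$. As $W \in D$ and $F \in W$, the defining implication of $D$ forces $W \subseteq U$, whence $\varepsilon^*(a) \subseteq W \subseteq U$, which is the desired open.

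I do not expect a genuine obstacle here: once the neighborhood condition is translated into the equation $\bigvee \{ \varepsilon_*(W) \mid W \in D \} = \top$, everything is formal. The only step requiring a little care is the direction of the adjunction — one must use the counit $\varepsilon^*\varepsilon_* \subseteq \mathrm{id}$ rather than the unit — and the observation that $F \in \varepsilon^*(a) \subseteq W$ is what activates the implication defining $D$. (Alternatively, one could phrase the extraction of $W$ through \rlem{locale-cauchy}, but applying the definition of $\mathcal{C}_{P(M)}$ directly is cleaner.)
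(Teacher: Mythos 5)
Your proposal is correct and follows essentially the same route as the paper: unfold $\{F\} \rb U$ into the equation $\bigvee \{ \varepsilon_*(W) \mid F \in W \implies W \subseteq U \} = \top$, apply complete primeness of $F$, and use the counit $\varepsilon^*\varepsilon_* \leq \mathrm{id}$ to trigger the implication $F \in W \implies W \subseteq U$. The only cosmetic difference is that the paper first shows each $\varepsilon_*(W)$ lies in $\{ a \mid a \in F \implies \varepsilon^*(a) \subseteq U \}$ and applies primeness to that family, whereas you apply primeness directly and then reason about the extracted $W$; the two orderings are interchangeable.
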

\begin{proof}
Suppose that ${ F } \rb U$, which means $\top = \bigvee \{ \varepsilon_*(W) \mid F \in W \implies W \subseteq U \}$.
Let $W$ be a set satisfying $F \in W \implies W \subseteq U$.
If $\varepsilon_*(W) \in F$, then $F \in \varepsilon^*\varepsilon_*(W) \subseteq W$, which implies $\varepsilon^*\varepsilon_*(W) \subseteq W \subseteq U$.

Thus, $\varepsilon_*(W)$ belongs to the set $\{ a \in O_M \mid a \in F \implies \varepsilon^*(a) \subseteq U \}$.
It follows that $\top = \bigvee \{ a \in O_M \mid a \in F \implies \varepsilon^*(a) \subseteq U \}$.
Since $F$ is completely prime, there exists $a \in F$ such that $\varepsilon^*(a) \subseteq U$.
This completes the proof.
\end{proof}

Every map of locales $f : M \to N$ induces a function on points, $P(f) : P(M) \to P(N)$, defined by:
\[ P(f)(F) = \{ a \in O_N \mid f^*(a) \in F \}. \]
It is straightforward to verify that $P(f)$ is a cover map.
Thus, we obtain a functor $P : \cat{Loc} \to \cat{Precov}$.

For every open $b$ in a locale $M$, there exists the largest open $\neg b$ satisfying equation $b \wedge \neg b = \bot$.
This open is given by the formula:
\[ \neg b = \bigvee \{ a \in O_M \mid b \wedge a = \bot \}. \]

\begin{defn}
Given two opens $b,a \in O_M$ in a locale $M$, we say that $b$ is \emph{rather below} $a$, written $b \rb a$, if $\neg b \vee a = \top$.
A locale $M$ is called \emph{regular} if, for every open $a \in O_M$, we have $a = \bigvee \{ b \in O_M \mid b \rb a \}$.
\end{defn}

The following proposition shows that every regular locale gives rise to a strongly regular cover space:

\begin{prop}[locale-regular]
If $M$ is a regular locale, then $P(M)$ is a strongly complete cover space.
\end{prop}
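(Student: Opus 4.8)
The plan is to verify the three defining properties of a strongly complete cover space for $P(M)$: strong regularity, separatedness, and the fact that every weakly Cauchy filter is equivalent to a neighborhood filter. The bridge between the order-theoretic relation $\rb$ on $O_M$ and the relation $\rb^s$ on $P(M)$ is the key technical observation I would establish first: \emph{if $b \rb a$ in $M$, then $\varepsilon^*(b) \rb^s_{P(M)} \varepsilon^*(a)$.} Indeed, since points are weakly proper filters, $b \wedge \neg b = \bot$ forces $\varepsilon^*(\neg b) \subseteq P(M) \setminus \varepsilon^*(b)$, and because $\neg b \vee a = \top$ the family $\{\varepsilon^*(\neg b), \varepsilon^*(a)\}$ is a Cauchy cover by \rlem{locale-cauchy}; this cover refines $\{P(M)\setminus\varepsilon^*(b), \varepsilon^*(a)\}$, which is therefore Cauchy by \axref{CE}. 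Strong regularity then follows: writing an arbitrary Cauchy cover $C$ as $\{\varepsilon^*(a) \mid a \in C_0\}$ with $\bigvee C_0 = \top$ (\rlem{locale-cauchy}), regularity of $M$ gives $\bigvee\{b \mid \exists a \in C_0,\, b \rb a\} = \top$, so $\{\varepsilon^*(b) \mid \exists a \in C_0,\, b\rb a\}$ is a Cauchy cover contained in (hence refining) $\{V \mid \exists U \in C,\, V \rb^s U\}$.

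For separatedness, suppose two points $F,G$ are equivalent, and let $a \in F$. By regularity and complete primeness there is $b \in F$ with $b \rb a$, so $\{\varepsilon^*(\neg b), \varepsilon^*(a)\}$ is a Cauchy cover containing a single set through both $F$ and $G$. That set cannot be $\varepsilon^*(\neg b)$, since $b \in F$ together with $b \wedge \neg b = \bot$ precludes $\neg b \in F$; hence $G \in \varepsilon^*(a)$, i.e.\ $a \in G$. By symmetry $F = G$.

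The main work, and the step I expect to be the principal obstacle, is completeness. Given a weakly Cauchy filter $\mathcal{F}$ on $P(M)$, I would define the candidate point by the \emph{rounded} prescription
\[ F = \{ a \in O_M \mid \exists b \in O_M,\, b \rb a,\, \varepsilon^*(b) \in \mathcal{F} \}. \]
That $F$ is an upward-closed, meet-closed filter follows from monotonicity and meet-compatibility of $\rb$ on $O_M$ together with the fact that $\varepsilon^*$ preserves finite meets. Complete primeness is the delicate point, because constructively one cannot argue by taking complements, and the rounded definition is exactly what makes it go through. Suppose $\bigvee_j a_j \in F$, witnessed by $b \rb \bigvee_j a_j$ with $\varepsilon^*(b) \in \mathcal{F}$. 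Using regularity to rewrite $\bigvee_j a_j = \bigvee\{ d \mid \exists j,\, d \rb a_j \}$, the relation $b \rb \bigvee_j a_j$ yields $\neg b \vee \bigvee\{d \mid \exists j,\, d \rb a_j\} = \top$, so by \rlem{locale-cauchy} the family $\{\varepsilon^*(\neg b)\} \cup \{\varepsilon^*(d) \mid \exists j,\, d \rb a_j\}$ is a Cauchy cover. Since $\mathcal{F}$ is weakly Cauchy it meets this cover; the alternative $\varepsilon^*(\neg b) \in \mathcal{F}$ is impossible, as it would give $\varepsilon^*(b) \cap \varepsilon^*(\neg b) = \varepsilon^*(\bot) = \varnothing \in \mathcal{F}$, contradicting weak properness. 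Hence some $\varepsilon^*(d) \in \mathcal{F}$ with $d \rb a_j$, which is precisely $a_j \in F$.

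Finally I would show $F^\wedge \subseteq \mathcal{F}$, whence the two weakly Cauchy filters are equivalent. Let $V$ be a neighborhood of $F$; as in the proof of \rlem{locale-cover-neighborhood}, complete primeness of $F$ produces $a \in F$ with $\varepsilon^*(a) \subseteq V$. By definition of $F$ there is $b \rb a$ with $\varepsilon^*(b) \in \mathcal{F}$, and $b \rb a$ gives $b \leq a$, so $\varepsilon^*(b) \subseteq \varepsilon^*(a) \subseteq V$; upward closure of $\mathcal{F}$ then yields $V \in \mathcal{F}$. Since $F^\wedge$ is a Cauchy (hence weakly Cauchy) filter contained in $\mathcal{F}$, the two are equivalent, so every weakly Cauchy filter converges. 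Together with strong regularity and separatedness, this establishes that $P(M)$ is strongly complete.
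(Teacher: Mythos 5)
Your proposal is correct and follows essentially the same route as the paper's proof: the same bridge $b \rb a \Rightarrow \varepsilon^*(b) \rb^s \varepsilon^*(a)$, the same rounded filter $\{a \mid \exists b \rb a,\ \varepsilon^*(b) \in \mathcal{F}\}$, and the same Cauchy cover $\{\varepsilon^*(\neg b)\} \cup \{\varepsilon^*(d) \mid \exists j,\ d \rb a_j\}$ with weak properness excluding the $\neg b$ branch. The only cosmetic difference is that you argue separatedness directly from the definition of equivalent points rather than via the neighborhood characterization, which changes nothing of substance.
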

\begin{proof}
\textbf{Step 1: $P(M)$ is strongly regular.}
First, we show that if $a,b \in O_M$ are such that $b \rb a$, then $\varepsilon^*(b) \rb^s \varepsilon^*(a)$.
Suppose $b \rb a$, i.e., $\top \leq \neg b \vee a$.
Then the set $\{ \varepsilon^*(\neg b), \varepsilon^*(a) \}$ is a Cauchy cover.
It suffices to show that $\varepsilon^*(\neg b)$ is a subset of $X \backslash \varepsilon^*(b)$.
This follows because every point of $M$ is a proper filter: if a point contains both $\neg b$ and $b$, it must also contain $\bot$, which is impossible.
Thus, $\varepsilon^*(b) \rb^s \varepsilon^*(a)$, and regularity of $M$ implies that $P(M)$ is strongly regular.

\textbf{Step 2: $P(M)$ is separated.}
Let $F$ and $G$ be a pair of equivalent points in $P(M)$.
By \rlem{separated-char}, every neighborhood $U$ of $F$ contains $G$, and vice versa.
We now show that $F \subseteq G$.

Let $a \in F$.
Since $M$ is regular, $a = \bigvee \{ b \mid b \rb a \}$.
Thus, there exists some $b \in F$ such that $b \rb a$.
This implies $\varepsilon^*(b) \rb \varepsilon^*(a)$, and since $F \in \varepsilon^*(b)$, the set $\varepsilon^*(a)$ is a neighborhood of $F$.
By equivalence, $G \in \varepsilon^*(a)$, which implies $a \in G$.
Similarly, every $a \in G$ belongs to $F$, so $F = G$.
Hence, $P(M)$ is separated.

\textbf{Step 3: $P(M)$ is strongly complete.}
For every weakly Cauchy filter $F$ in $P(M)$, define a point $\widetilde{F}$ in $M$ as follows:
\[
\widetilde{F} = \{ a \in O_M \mid \exists b \rb a, \varepsilon^*(b) \in F \}.
\]
It is straightforward to verify that $\widetilde{F}$ is a filter in $O_M$.

To show that $\widetilde{F}$ is completely prime, suppose $\widetilde{F}$ contains $\bigvee_{j \in J} a_j$.
Then there exists some $b \in O_M$ such that $b \rb \bigvee_{j \in J} a_j$ and $\varepsilon^*(b) \in F$.
The set 
\[
\{ \varepsilon^*(\neg b) \} \cup \{ \varepsilon^*(b') \mid \exists j \in J, \, b' \rb a_j \}
\]
is a Cauchy cover.
If $F$ contains $\varepsilon^*(\neg b)$, then it also contains
$\varepsilon^*(\neg b) \cap \varepsilon^*(b) = \varepsilon^*(\bot) = \varnothing$,
contradicting the fact that $F$ is weakly proper.
Thus, there exists some $b' \in O_M$ such that $\varepsilon^*(b') \in F$ and $b' \rb a_j$ for some $j \in J$.
This implies $a_j \in \widetilde{F}$, proving that $\widetilde{F}$ is completely prime.

Finally, to show that $\widetilde{F}^\wedge$ is equivalent to $F$, consider any neighborhood $U$ of $\widetilde{F}$ in $P(M)$.
By \rlem{locale-cover-neighborhood}, there exist opens $a,b \in O_M$ such that $b \rb a$, $\varepsilon^*(b) \in F$, and $\varepsilon^*(a) \subseteq U$.
This implies $U \in F$, completing the proof that $P(M)$ is strongly complete.
\end{proof}

Another property of locales that we will require is properness.
A locale $M$ is called \emph{proper} if $\varepsilon_*(\varnothing) = \bot$.
This notion also has a corresponding interpretation in the context of precover spaces.
A precover space is called \emph{proper} if $C$ is a Cauchy cover whenever $C \cup \{ \varnothing \}$ is a Cauchy cover.

The following proposition establishes a connection between these two notions:

\begin{prop}[locale-cover-proper]
If a locale $M$ is proper, then the precover space $P(M)$ is also proper.
\end{prop}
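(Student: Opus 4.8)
The plan is to reduce everything to the characterization of Cauchy covers of $P(M)$ as those collections $C$ satisfying $\bigvee \{ \varepsilon_*(U) \mid U \in C \} = \top$. Given this characterization, properness of the precover space amounts to the statement that adjoining the empty set to a collection does not affect whether its associated join reaches $\top$.

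First I would assume that $C \cup \{ \varnothing \}$ is a Cauchy cover, so that $\bigvee \{ \varepsilon_*(U) \mid U \in C \cup \{ \varnothing \} \} = \top$. Since the term indexed by $\varnothing$ contributes exactly $\varepsilon_*(\varnothing)$, the join splits as
\[ \bigvee \{ \varepsilon_*(U) \mid U \in C \cup \{ \varnothing \} \} = \Big( \bigvee \{ \varepsilon_*(U) \mid U \in C \} \Big) \vee \varepsilon_*(\varnothing). \]
Next I would invoke properness of the locale $M$, which gives $\varepsilon_*(\varnothing) = \bot$. Substituting this in, the disjunct $\varepsilon_*(\varnothing)$ is absorbed, so the right-hand side reduces to $\bigvee \{ \varepsilon_*(U) \mid U \in C \}$. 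Hence this join equals $\top$, which is precisely the condition for $C$ to be a Cauchy cover of $P(M)$.

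There is no real obstacle here: the entire argument is the observation that $\varepsilon_*$ sends $\varnothing$ to $\bot$ under properness, combined with the monotonicity and termwise splitting of joins. The only point requiring a moment's care is confirming that the join over $C \cup \{ \varnothing \}$ genuinely decomposes as the join over $C$ joined with $\varepsilon_*(\varnothing)$; this holds because the join is computed over all members of the index collection and $\varnothing$ is one of them regardless of whether it already lies in $C$.
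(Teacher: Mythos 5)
Your proof is correct and follows exactly the same route as the paper's: assume $C \cup \{\varnothing\}$ is Cauchy, split off the $\varepsilon_*(\varnothing)$ term from the join, use properness of $M$ to replace it with $\bot$, and conclude that the join over $C$ alone is $\top$. No gaps.
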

\begin{proof}
Assume that $M$ is proper and that $C \cup \{ \varnothing \}$ is a Cauchy cover of $P(M)$.
By definition, this implies $\top = \bigvee \{ \varepsilon_*(U) \mid U \in C \} \vee \varepsilon_*(\varnothing)$.
Since $M$ is proper, $\varepsilon_*(\varnothing) = \bot$, so we obtain $\top = \bigvee \{ \varepsilon_*(U) \mid U \in C \}$.
Thus, $C$ is also a Cauchy cover, completing the proof.
\end{proof}

The following simple lemma provides a useful characterization of the properness property for locales:

\begin{lem}[locale-proper]
A locale is proper if and only if the only open that has no points is the bottom element $\bot$.
\end{lem}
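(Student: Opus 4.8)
The plan is to reduce the entire statement to the Galois connection $\varepsilon^* \dashv \varepsilon_*$, which asserts that $\varepsilon^*(a) \subseteq U$ if and only if $a \leq \varepsilon_*(U)$ for every open $a \in O_M$ and every subset $U \subseteq P(M)$. First I would rephrase the hypothesis: an open $a$ has a point exactly when some completely prime filter $F$ satisfies $F \in \varepsilon^*(a)$, so ``$a$ has no points'' means $\varepsilon^*(a) = \varnothing$, equivalently $\varepsilon^*(a) \subseteq \varnothing$. Instantiating the adjunction at $U = \varnothing$ then yields the one observation that drives everything: an open $a$ has no points if and only if $a \leq \varepsilon_*(\varnothing)$. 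Phrased differently, $\varepsilon_*(\varnothing)$ is the largest open without points.

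Given this observation, both implications are immediate. For the forward direction I would assume $M$ is proper, i.e. $\varepsilon_*(\varnothing) = \bot$; then every point-free open $a$ satisfies $a \leq \varepsilon_*(\varnothing) = \bot$, so $a = \bot$, showing that $\bot$ is the only such open. For the converse I would apply the observation to the open $\varepsilon_*(\varnothing)$ itself: the reflexive inequality $\varepsilon_*(\varnothing) \leq \varepsilon_*(\varnothing)$ shows, through the equivalence, that $\varepsilon^*(\varepsilon_*(\varnothing)) = \varnothing$, so $\varepsilon_*(\varnothing)$ is itself point-free. The hypothesis that $\bot$ is the only point-free open then forces $\varepsilon_*(\varnothing) = \bot$, which is exactly properness. (Equivalently, one may invoke the counit $\varepsilon^* \varepsilon_* \subseteq \mathrm{id}$ of the adjunction to see directly that $\varepsilon^*(\varepsilon_*(\varnothing)) \subseteq \varnothing$.)

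I do not expect a real obstacle, since the whole argument is a one-line consequence of the adjunction; the only point deserving care is keeping it constructive. In particular I would verify that no completely prime filter contains $\bot$ --- which follows by applying complete primeness to the empty join $\bigvee_{j \in \varnothing} a_j = \bot$ --- so that $\bot$ is automatically point-free and the set of point-free opens is genuinely nonempty. With that in hand, the equivalence ``$a$ point-free iff $a \leq \varepsilon_*(\varnothing)$'' and both directions above go through without any appeal to excluded middle.
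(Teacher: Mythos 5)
Your argument is correct and is essentially the paper's proof, which simply unwinds the definition $\varepsilon_*(\varnothing) = \bigvee \{ a \in O_M \mid \varepsilon^*(a) \subseteq \varnothing \}$: your ``key observation'' that $\varepsilon_*(\varnothing)$ is the largest point-free open is exactly this definition read through the adjunction, and both directions then follow immediately. The extra care you take (the counit inequality for the converse, and the check that no completely prime filter contains $\bot$) is sound and constructive, though the paper leaves all of it implicit.
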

\begin{proof}
This result follows immediately from the definition of $\varepsilon_*(\varnothing)$.
\end{proof}

The following proposition demonstrates that the functor $P : \cat{Loc} \to \cat{Precov}$ restricts to a fully faithful embedding from the category of proper regular locales to the category of cover spaces:

\begin{prop}[locale-ff]
If $M$ is a proper locale and $N$ is a regular locale, then the function $P : \cat{Loc}(M,N) \to \cat{Cov}(P(M),P(N))$ is a bijection.
\end{prop}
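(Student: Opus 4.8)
The plan is to prove injectivity and surjectivity of $P$ separately, using properness of $M$ and regularity of $N$ in an essential way. I first record two preliminary facts. Since $h$ is a cover map it is continuous by \rprop{cauchy-continuous}, and since $N$ is regular each $\varepsilon^*(a)$ is open in $S(P(N))$ (as in Step~1 of \rprop{locale-regular}, $b \rb a$ gives $\varepsilon^*(b) \rb \varepsilon^*(a)$, so $\varepsilon^*(a)$ is a neighborhood of each of its points). Moreover, for every open $U$ of $S(P(M))$ one has $\varepsilon^*\varepsilon_*(U) = U$: the inclusion $\varepsilon^*\varepsilon_*(U) \subseteq U$ is the counit, while \rlem{locale-cover-neighborhood} produces, for each $F \in U$, an open $a \in F$ with $\varepsilon^*(a) \subseteq U$, whence $a \le \varepsilon_*(U)$ and so $\varepsilon_*(U) \in F$. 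Finally, properness of $M$ gives $\varepsilon_*(\varnothing) = \bot$ by \rlem{locale-proper}. Throughout I translate the equation $P(f) = h$ into the pointwise statement $\varepsilon^*(f^*(a)) = h^{-1}(\varepsilon^*(a))$ for all $a \in O_N$.

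For injectivity, suppose $P(f) = P(g)$, so that $f^*(b) \in F \iff g^*(b) \in F$ for every point $F$ and every $b$. Fix $a \in O_N$ and $b \rb a$. No point can contain $f^*(b) \wedge \neg g^*(b)$: such a point would contain $f^*(b)$, hence $g^*(b)$ by the hypothesis, contradicting that it also contains $\neg g^*(b)$ and is proper. By properness of $M$ this forces $f^*(b) \wedge \neg g^*(b) = \bot$, i.e.\ $f^*(b) \le \neg\neg g^*(b)$. On the other hand $b \rb a$ means $\neg b \vee a = \top$; applying the frame map $g^*$ and using $g^*(\neg b) \le \neg g^*(b)$ yields $\neg g^*(b) \vee g^*(a) = \top$, from which a short calculation gives $\neg\neg g^*(b) \le g^*(a)$. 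Hence $f^*(b) \le g^*(a)$, and since $N$ is regular, $f^*(a) = \bigvee \{ f^*(b) \mid b \rb a \} \le g^*(a)$. By symmetry $f^* = g^*$, so $f = g$.

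For surjectivity, given a cover map $h \colon P(M) \to P(N)$, set $\phi(a) = \varepsilon_*(h^{-1}(\varepsilon^*(a)))$ and define the candidate frame map by regularizing:
\[ f^*(a) = \bigvee \{ \phi(b) \mid b \rb a \}. \]
That $P(f) = h$ will follow from $\varepsilon^*(f^*(a)) = \bigcup \{ \varepsilon^*\varepsilon_*(h^{-1}(\varepsilon^*(b))) \mid b \rb a \} = \bigcup \{ h^{-1}(\varepsilon^*(b)) \mid b \rb a \} = h^{-1}(\varepsilon^*(a))$, using that each $h^{-1}(\varepsilon^*(b))$ is open, together with the preliminary fact and regularity of $N$. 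It remains to check that $f^*$ is a frame map. Preservation of finite meets is routine, since $\varepsilon^*$ and $h^{-1}$ preserve finite intersections, $\varepsilon_*$ preserves meets, and $\rb$ is closed under finite meets (as in \rprop{rb-props}); moreover $f^*(\bot) = \phi(\bot) = \bot$ comes from $\varepsilon_*(\varnothing) = \bot$, and $f^*(\top) = \top$ from $\phi(\top) = \top$.

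The crux is preservation of arbitrary joins, and this is where I expect the main difficulty. The inequality $\bigvee_i f^*(a_i) \le f^*(\bigvee_i a_i)$ is immediate from monotonicity, but the reverse cannot be obtained by a naive points argument: properness of $M$ only yields that two opens with the same points agree up to double negation, which is insufficient. Instead I use the cover-map property of $h$, which via \rlem{locale-cauchy} says exactly that $\bigvee C = \top$ in $O_N$ implies $\bigvee_{a \in C} \phi(a) = \top$ in $O_M$. Given $b \rb \bigvee_i a_i$, so that $\neg b \vee \bigvee_i a_i = \top$, I refine each $a_i$ by regularity to obtain the cover $\{ \neg b \} \cup \{ b' \mid \exists i,\ b' \rb a_i \}$ of $O_N$, whose join is still $\top$; applying the cover-map property gives $\phi(\neg b) \vee \bigvee_i f^*(a_i) = \top$. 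Since $\phi(b) \wedge \phi(\neg b) = \phi(\bot) = \bot$, meeting with $\phi(b)$ yields $\phi(b) \le \bigvee_i f^*(a_i)$, and taking the join over all $b \rb \bigvee_i a_i$ gives $f^*(\bigvee_i a_i) \le \bigvee_i f^*(a_i)$. Thus $f^*$ is a frame map, $f$ is a locale map with $P(f) = h$, and the regularization is precisely what lets the cover-map hypothesis substitute for the missing spatiality of $M$.
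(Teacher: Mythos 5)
Your proof is correct and follows essentially the same route as the paper: the same candidate $f^*(a) = \bigvee \{ \varepsilon_*(h^{-1}(\varepsilon^*(b))) \mid b \rb a \}$, the same use of properness of $M$ to force pointless opens to be $\bot$ in the injectivity argument, and the same Cauchy-cover argument (via \rlem{locale-cauchy} and the cover-map hypothesis) for preservation of joins. The only cosmetic differences are that you verify $P(f) = h$ through the identity $\varepsilon^*\varepsilon_*(U) = U$ on open sets (using openness of $h^{-1}(\varepsilon^*(b))$ and \rlem{locale-cover-neighborhood}) rather than the paper's two separate inclusions, and you route injectivity through $\neg g^*(b)$ and $\neg\neg g^*(b) \leq g^*(a)$ instead of $g^*(\neg b)$ directly; both variants are sound.
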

\begin{proof}
We first prove injectivity.
Let $f,g : M \to N$ be locale maps such that $P(f) = P(g)$.
To prove $f = g$, it suffices to show $f^*(a) \leq g^*(a)$ for every $a \in O_N$, by symmetry.

Since $N$ is regular, we can write $a = \bigvee \{ b \in O_N \mid b \rb a \}$.
Thus, it is enough to show $f^*(b) \leq g^*(a)$ for every $b \rb a$.
From $\top = \neg b \vee a$, we have:
\[
f^*(b) = f^*(b) \wedge (g^*(\neg b) \vee g^*(a)).
\]
Therefore, it suffices to show that $f^*(b) \wedge g^*(\neg b) = \bot$.
By \rlem{locale-proper}, it is enough to show that the open $f^*(b) \wedge g^*(\neg b)$ has no points.
Suppose, for contradiction, that there is a completely prime filter $F$ containing $f^*(b) \wedge g^*(\neg b)$.
Then $F$ contains both $f^*(b)$ and $g^*(\neg b)$, which implies $P(f)(F) = P(g)(F)$ contains both $b$ and $\neg b$, which is a contradiction.
This proves injectivity.

Next, we prove surjectivity.
Let $f : P(M) \to P(N)$ be a cover map.
Define $g : M \to N$ as follows:
\[
g^*(a) = \bigvee \{ \varepsilon_*(f^{-1}(\varepsilon^*(b))) \mid b \rb a \}.
\]
It is straightforward to verify that $g^*$ preserves finite meets because $\varepsilon_*(f^{-1}(\varepsilon^*(-)))$ does.

To show that $g^*$ preserves joins, it suffices to prove: $g^*(\bigvee_{j \in J} a_j) \leq \bigvee_{j \in J} g^*(a_j)$.
This amounts to showing:
\[
\varepsilon_*(f^{-1}(\varepsilon^*(b))) \leq \bigvee \{ \varepsilon_*(f^{-1}(\varepsilon^*(b'))) \mid \exists j \in J, b' \rb a_j \},
\]
for every $b \in O_N$ such that $b \rb \bigvee_{j \in J} a_j$.

Since $N$ is regular, we have:
\[
\top = \neg b \vee \bigvee \{ b' \in O_N \mid \exists j \in J, b' \rb a_j \}.
\]
By \rlem{locale-cauchy}, the set
\[
\{ \varepsilon^*(\neg b) \} \cup \bigcup \{ \varepsilon^*(b') \mid \exists j \in J, b' \rb a_j \}
\]
is a Cauchy cover in $P(N)$.
Since $f$ is a cover map, the set
\[
\{ f^{-1}(\varepsilon^*(\neg b)) \} \cup \bigcup \{ f^{-1}(\varepsilon^*(b')) \mid \exists j \in J, b' \rb a_j \}
\]
is a Cauchy cover in $P(M)$.
Thus:
\[
\varepsilon_*(f^{-1}(\varepsilon^*(\neg b))) \vee \bigvee \{ \varepsilon_*(f^{-1}(\varepsilon^*(b'))) \mid \exists j \in J, b' \rb a_j \} = \top.
\]
It remains to show:
\[
\varepsilon_*(f^{-1}(\varepsilon^*(\neg b))) \wedge \varepsilon_*(f^{-1}(\varepsilon^*(b))) = \bot.
\]
This follows because $\varepsilon_*(f^{-1}(\varepsilon^*(-)))$ preserves meets and $M$ is proper:
$\varepsilon_*(f^{-1}(\varepsilon^*(\neg b))) \wedge \varepsilon_*(f^{-1}(\varepsilon^*(b))) = \varepsilon_*(f^{-1}(\varepsilon^*(\neg b \wedge b))) = \varepsilon_*(f^{-1}(\varepsilon^*(\bot))) = \varepsilon_*(f^{-1}(\varnothing)) = \varepsilon_*(\varnothing) = \bot$.
This completes the proof that $g^*$ preserves joins.

Finally, we show $P(g) = f$.
For any point $F \in P(M)$, we have:
\[
P(g)(F) = \{ a \in O_N \mid \exists b \rb a, \varepsilon_*(f^{-1}(\varepsilon^*(b))) \in F \}.
\]
First, we show $P(g)(F) \subseteq f(F)$.
Suppose $a \in P(g)(F)$.
Then there exists $b \rb a$ such that $\varepsilon_*(f^{-1}(\varepsilon^*(b))) \in F$.
Since $\varepsilon^*(\varepsilon_*(f^{-1}(\varepsilon^*(b)))) \subseteq f^{-1}(\varepsilon^*(b))$, it follows that $F \in f^{-1}(\varepsilon^*(b))$.
Thus, $a \in f(F)$.

Conversely, let $a \in f(F)$.
Since $N$ is regular, there exist opens $c,b \in f(F)$ such that $c \rb b$ and $b \rb a$.
By \rlem{locale-cauchy}, $\{ \varepsilon^*(\neg c), \varepsilon^*(b) \}$ is a Cauchy cover.
Since $f$ is a cover map, $\{ f^{-1}(\varepsilon^*(\neg c)), f^{-1}(\varepsilon^*(b)) \}$ is also a Cauchy cover.
Thus:
\[
\top = \varepsilon_*(f^{-1}(\varepsilon^*(\neg c))) \vee \varepsilon_*(f^{-1}(\varepsilon^*(b))).
\]
Since $F$ is a completely prime filter, either $\varepsilon_*(f^{-1}(\varepsilon^*(\neg c)))$ or $\varepsilon_*(f^{-1}(\varepsilon^*(b)))$ belongs to $F$.
The former is impossible because $\neg c \notin f(F)$, so $a \in P(g)(F)$.
This concludes the proof.
\end{proof}

Next, we construct a locale from a precover space.
We define the frame of opens of this locale using a method described in Section~2.11 of \cite{stone-spaces} and Section~0.1.3 of \cite{vickers-compact}, which we briefly recall here.

A \emph{coverage} on a meet-semilattice $P$ is a relation $\triangleleft_0$ between elements of $P$ and subsets of $P$, satisfying the following conditions:
\begin{itemize}
\item If $a \triangleleft_0 U$, then $b \leq a$ for every $b \in U$.
\item If $a \triangleleft_0 U$ and $b \leq a$, then there exists a set $V$ such that $b \triangleleft_0 V$ and, for every $c \in V$, it holds that $c \leq d$ for some $d \in U$.
\end{itemize}

Given a coverage $(P,\triangleleft_0)$, a subset $U$ of $P$ is called a \emph{$\triangleleft_0$-ideal} if it is downward closed and $a \in U$ whenever $a \triangleleft_0 V$ and $V \subseteq U$.
The set of $\triangleleft_0$-ideals forms a frame under the inclusion relation.
We denote this frame by $F(P,\triangleleft_0)$.
The idea behind this definition is that the frame $F(P, \triangleleft_0)$ is generated under joins by elements of $P$ under the relations specified by $\triangleleft_0$.

Given a coverage $(P,\triangleleft_0)$, it is convenient to define a new relation $\triangleleft$ as the closure of $\triangleleft_0$ under the following rules:
\begin{itemize}
\item If $a \in U$, then $a \triangleleft U$.
\item If $a \leq b$, then $a \triangleleft \{ b \}$.
\item If $a \triangleleft U$ and $b \triangleleft V$ for every $b \in U$, then $a \triangleleft V$.
\end{itemize}

Now, for any subset $U$ of $P$, we define $\overline{U} = \{ a \in P \mid a \triangleleft U \}$.
This set is the smallest $\triangleleft_0$-ideal containing $U$.
The join of a set $U$ is given by $\overline{\bigcup U}$.
Moreover, there exists a meet-preserving monotone map $[-] : P \to F(P,\triangleleft_0)$, defined by $[a] = \overline{ \{ a \} }$.
The frame $F(P,\triangleleft_0)$ is generated under joins by elements of the form $[a]$.
Additionally, we have the following equivalence:
\[
[a] \leq \bigvee_{j \in J} [b_j] \quad \text{if and only if} \quad a \triangleleft \{ b_j \mid j \in J \}.
\]

We are now ready to define a locale $L(X)$ for every precover space $X$.
The frame of opens of this locale is generated by subsets of $X$ with the following coverage:
\begin{itemize}
\item $U \triangleleft_0 \{ U \cap V \mid V \in C \}$ for every Cauchy cover $C$.
\item $U \triangleleft_0 \{ V \mid V \rb^s_X U \}$ for every set $U \subseteq X$.
\item $\varnothing \triangleleft_0 \varnothing$.
\end{itemize}

This locale is regular by construction, as shown in the following lemma:

\begin{lem}[cover-locale-regular]
For every precover space $X$, the locale $L(X)$ is regular.
\end{lem}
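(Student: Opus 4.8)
**The plan is to prove regularity directly from the definition of a regular locale, by showing that each generating open $[U]$ is the join of opens rather below it.** Recall that $L(X)$ is regular if for every open $a \in O_{L(X)}$ we have $a = \bigvee \{ b \mid b \rb a \}$, where $b \rb a$ means $\neg b \vee a = \top$. Since the frame $F(P,\triangleleft_0)$ is generated under joins by the elements $[U]$ for $U \subseteq X$, and joins distribute over the rather-below relation in the appropriate sense, it suffices to verify the condition on generators: I would show that $[U] = \bigvee \{ [V] \mid [V] \rb [U] \}$ for every subset $U \subseteq X$. An arbitrary open is a join of such $[U]$, and if each summand is a join of opens rather below it, the whole open inherits the property.

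The key computation is to connect the localic relation $\rb$ on $F(P,\triangleleft_0)$ with the cover-space relation $\rb^s_X$ that was built into the coverage. The second clause of the coverage, $U \triangleleft_0 \{ V \mid V \rb^s_X U \}$, translates directly into the frame inequality $[U] \leq \bigvee \{ [V] \mid V \rb^s_X U \}$, using the stated equivalence $[a] \leq \bigvee_j [b_j] \iff a \triangleleft \{ b_j \}$. The reverse inequality $[V] \leq [U]$ when $V \rb^s_X U$ follows because $\rb^s_X$ implies $\subseteq$ (by \rprop{rbs-props} together with \rprop{rb-props}, item~\eqref{rb:sub}), and $[-]$ is monotone. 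So $[U] = \bigvee \{ [V] \mid V \rb^s_X U \}$. It then remains to show that each such $[V]$ is localically rather below $[U]$, i.e.\ that $\neg [V] \vee [U] = \top$. The natural candidate for a complementary open is $[X \backslash V]$: from $V \rb^s_X U$ we have that $\{ X \backslash V, U \}$ is a Cauchy cover, which via the first coverage clause gives $[X] \triangleleft \{ [X \backslash V], [U] \}$, i.e.\ $\top = [X] \leq [X \backslash V] \vee [U]$. Combined with $[X\backslash V] \wedge [V] = \bot$ (since $(X \backslash V) \cap V = \varnothing$ and $[\varnothing] = \bot$ via the third coverage clause $\varnothing \triangleleft_0 \varnothing$), this shows $[X \backslash V] \leq \neg [V]$, whence $\neg [V] \vee [U] \geq [X \backslash V] \vee [U] = \top$, so $[V] \rb [U]$.

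\textbf{The main obstacle will be the reduction from arbitrary opens to generators.} Showing $[U] = \bigvee\{[V] \mid [V] \rb [U]\}$ is clean, but to conclude regularity I must handle an arbitrary open $a = \bigvee_{i} [U_i]$ and argue that $a = \bigvee \{ b \mid b \rb a \}$. Here I would use that $[V] \rb [U_i]$ implies $[V] \rb a$ (since $a \geq [U_i]$ and rather-below is upward-stable in its right argument in any frame), so that $a = \bigvee_i [U_i] = \bigvee_i \bigvee \{ [V] \mid [V] \rb [U_i] \} \leq \bigvee \{ b \mid b \rb a \} \leq a$, forcing equality. The one point requiring care is verifying that the meet $[V] \wedge [X\backslash V] = \bot$ really holds in the frame and not merely that the underlying sets are disjoint; this rests on the coverage clause $\varnothing \triangleleft_0 \varnothing$ forcing $[\varnothing] = \bot$, together with the fact that $[-]$ preserves meets, so that $[V] \wedge [X \backslash V] = [V \cap (X\backslash V)] = [\varnothing] = \bot$. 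Once these frame-level identities are in place, the regularity of $L(X)$ follows immediately.
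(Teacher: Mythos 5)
Your proposal is correct and follows essentially the same route as the paper: reduce to the generators $[U]$, use the coverage clause to get $[U] \leq \bigvee \{ [V] \mid V \rb^s_X U \}$, and then show $V \rb^s_X U$ implies $[V] \rb [U]$ via $\top = [X \backslash V] \vee [U]$ together with $[X \backslash V] \wedge [V] = [\varnothing] = \bot$. The extra details you supply (the reduction from arbitrary opens to generators and the upward stability of $\rb$ in its right argument) are standard facts the paper leaves implicit.
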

\begin{proof}
It suffices to verify regularity for the generating elements.
From the property $[U] \leq \bigvee \{ [V] \mid V \rb^s_X U \}$, it is enough to show that $V \rb^s_X U$ implies $[V] \rb [U]$.
To see this, note that $\{ X \backslash V, U \}$ is a Cauchy cover, which implies $\top = [X \backslash V] \vee [U]$.
Therefore, it remains to show that $[X \backslash V] \leq \neg [V]$.
This follows from the fact that 
\[
[X \backslash V] \wedge [V] = [(X \backslash V) \cap V] = [\varnothing] = \bot.
\]
\end{proof}

The following lemma provides a useful description of the frame of opens of $L(X)$ in terms of Cauchy covers of $X$:

\begin{lem}[locale-cover]
Let $X$ be a strongly regular cover space, and let $U$, $U'$, and $V_j$ be subsets of $X$ such that $U' \rb^s_X U$ and $[U] \leq \bigvee_{j \in J} [V_j]$.
Then $\{ X \backslash U' \} \cup \{ V_j \mid j \in J \}$ is a Cauchy cover of $X$.
\end{lem}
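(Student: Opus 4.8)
The plan is to translate the hypothesis $[U] \leq \bigvee_{j \in J} [V_j]$ into the cover relation $U \triangleleft \{ V_j \mid j \in J \}$ using the characterization of the order on $L(X)$ recalled above, and then to prove the following more general statement by induction on the derivation of $\triangleleft$: for all subsets $U$ and all families $\mathcal{V}$ of subsets with $U \triangleleft \mathcal{V}$, and for every $U'$ with $U' \rb^s_X U$, the collection $\{ X \backslash U' \} \cup \mathcal{V}$ is a Cauchy cover. Taking $\mathcal{V} = \{ V_j \mid j \in J \}$ then yields the lemma. Recall that $\triangleleft$ is generated from $\triangleleft_0$ by reflexivity ($a \in \mathcal{V} \Rightarrow a \triangleleft \mathcal{V}$), monotonicity ($a \leq b \Rightarrow a \triangleleft \{b\}$), and transitivity, so the induction has base cases for reflexivity, monotonicity, and the three clauses defining $\triangleleft_0$, together with an inductive step for transitivity.

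The base cases are all routine applications of the closure axioms. For reflexivity, $\{X\backslash U', U\}$ is a Cauchy cover (by definition of $U'\rb^s_X U$) which refines $\{X\backslash U'\}\cup\mathcal V$ because $U\in\mathcal V$, so \axref{CE} applies. For monotonicity we use \rprop{rbs-props} to pass from $U'\rb^s_X U\subseteq V$ to $U'\rb^s_X V$. For the clause $U\triangleleft_0\{U\cap V\mid V\in C\}$ with $C$ Cauchy, we intersect the Cauchy covers $\{X\backslash U',U\}$ and $C$ via \axref{CG} and observe that the result refines $\{X\backslash U'\}\cup\mathcal V$. For the clause $U\triangleleft_0\{V\mid V\rb^s_X U\}$, we apply the strong regularity of $X$ to the Cauchy cover $\{X\backslash U',U\}$: the resulting cover $\{V\mid V\rb^s_X(X\backslash U')\text{ or }V\rb^s_X U\}$ refines $\{X\backslash U'\}\cup\mathcal V$, since $V\rb^s_X(X\backslash U')$ forces $V\subseteq X\backslash U'$ while $V\rb^s_X U$ puts $V$ into $\mathcal V$. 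The clause $\varnothing\triangleleft_0\varnothing$ is immediate, as $U'\rb^s_X\varnothing$ already makes $\{X\backslash U'\}$ Cauchy by \axref{CE}.

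The inductive step for transitivity is the main obstacle. Here $U\triangleleft\mathcal U$ with $W\triangleleft\mathcal V$ for every $W\in\mathcal U$, and the difficulty is that the induction hypothesis for $W\triangleleft\mathcal V$ can only be applied once we supply a witness $W'\rb^s_X W$, which the raw cover $\mathcal U$ does not provide. To manufacture such witnesses we first invoke the induction hypothesis on $U\triangleleft\mathcal U$ to get that $\{X\backslash U'\}\cup\mathcal U$ is a Cauchy cover, and then apply the strong regularity of $X$ to obtain the Cauchy cover $D=\{V\mid V\rb^s_X(X\backslash U')\text{ or }\exists W\in\mathcal U,\,V\rb^s_X W\}$. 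The plan is then to refine $D$ using \axref{CG}. For each subset $V$ we set $E_V=\{X\backslash V\}\cup\mathcal V\cup\{A\mid A\subseteq X\backslash U'\}$; crucially this definition does not depend on which disjunct of $V\in D$ holds, so no (non-constructive) case split enters the construction of the family $\{E_V\}_{V\in D}$.

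It remains to check two things about $E_V$ for $V\in D$. First, $E_V$ is a Cauchy cover: if $V\rb^s_X(X\backslash U')$ then $\{X\backslash V,X\backslash U'\}$ is Cauchy and sits inside $E_V$, while if $V\rb^s_X W$ for some $W\in\mathcal U$ then the induction hypothesis for $W\triangleleft\mathcal V$ (with witness $V\rb^s_X W$) makes $\{X\backslash V\}\cup\mathcal V$ Cauchy and this too sits inside $E_V$; since ``$E_V$ is Cauchy'' is a proposition, the disjunction $V\in D$ suffices, and \axref{CE} gives the claim. Second, $\{V\cap A\mid A\in E_V\}$ refines $\{X\backslash U'\}\cup\mathcal V$: the element $X\backslash V$ gives $V\cap(X\backslash V)=\varnothing$, an element $B\in\mathcal V$ gives $V\cap B\subseteq B$, and any $A\subseteq X\backslash U'$ gives $V\cap A\subseteq X\backslash U'$. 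Applying \axref{CG} to $D$ and the family $\{E_V\}$ produces a Cauchy cover refining $\{X\backslash U'\}\cup\mathcal V$, and a final appeal to \axref{CE} completes the induction and hence the proof.
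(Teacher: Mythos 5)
Your proposal is correct and follows essentially the same route as the paper's proof: induction on the derivation of $U \triangleleft \{V_j\}_{j \in J}$, with the transitivity step handled by applying strong regularity to the cover obtained from the outer induction hypothesis so as to manufacture $\rb^s$-witnesses for the inner induction hypothesis, and then combining via \axref{CG}. The only differences are cosmetic (your family $E_V$ is made uniform in $V$ by throwing in all subsets of $X \backslash U'$, where the paper's $D_W$ is defined by a disjunction; and you spell out the reflexivity and monotonicity cases that the paper dismisses as trivial).
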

\begin{proof}
The proof proceeds by induction on the generation of $U \triangleleft \{ V_j \mid j \in J \}$.
First, let us consider the base case $\triangleleft_0$:
\begin{itemize}
\item If $\{ V_j \mid j \in J \} = \{ U \cap V \mid V \in C \}$ for a Cauchy cover $C$, then $\{ (X \backslash U') \cap V \mid V \in C \} \cup \{ U \cap V \mid V \in C \}$ is a Cauchy cover by \axref{CG}.
        This cover refines $\{ X \backslash U' \} \cup \{ U \cap V \mid V \in C \}$.
\item If $\{ V_j \mid j \in J \} = \{ V \mid V \rb^s_X U \}$, then $\{ X \backslash U' \} \cup \{ V \mid V \rb^s_X U \}$ is a Cauchy cover since $X$ is strongly regular.
\item If $U = \varnothing$ and $\{ V_j \mid j \in J \} = \{ V \mid V \rb^s_X U \} = \varnothing$, then $U' = \varnothing$, and $\{ X \backslash U' \}$ is a Cauchy cover by \axref{CT}.
\end{itemize}

The only non-trivial remaining case is the transitivity rule.
Suppose $U \triangleleft \{ W_i \mid i \in I \}$ and $W_i \triangleleft \{ V_j \mid j \in J \}$ for every $i \in I$.
By the induction hypothesis, $\{ X \backslash U'\} \cup \{ W_i \mid i \in I \}$ is a Cauchy cover.
Since $X$ is strongly regular, the set
\[ C = \{ X \backslash U'\} \cup \{ W \mid \exists i \in I, W \rb^s_X W_i \} \]
is also a Cauchy cover.
For each $W \in C$, we define $D_W$ as follows:
\[ D_W = \{ V \mid W = X \backslash U' \textrm{ or } \exists i \in I, W \rb^s_X W_i, V \in \{ X \backslash W \} \cup \{ V_j \mid j \in J \} \}. \]

This set is a Cauchy cover for every $W \in C$.
Indeed, if $W = X \backslash U'$, then $D_W$ is Cauchy as it refines $\{ X \}$.
Otherwise, there exists $i \in I$ such that $W \rb^s_X W_i$, and $D_W$ is Cauchy by the induction hypothesis.

By \axref{CG}, the set $\{ W \cap V \mid W \in C, V \in D_W \}$ is a Cauchy cover.
We show that it refines $\{ X \backslash U'\} \cup \{ V_j \mid j \in J \}$.
If $W = X \backslash U'$, then $W \cap V \subseteq X \backslash U'$ for any $V \in D_W$.
Otherwise, there exists $i \in I$ such that $W \rb^s_X W_i$.
In this case, either $V = X \backslash W$ or $V = V_j$ for some $j \in J$.
In the former case, $W \cap V = \varnothing \subseteq X \backslash U'$.
In the latter case, $W \cap V \subseteq V_j$.

This shows that $\{ X \backslash U'\} \cup \{ V_j \mid j \in J \}$ is a Cauchy cover, completing the proof.
\end{proof}

The following two lemmas follow easily from the previous one:

\begin{lem}[locale-top-cover]
If $X$ is a proper strongly regular cover space, then a set $C$ is a Cauchy cover of $X$ if and only if $\bigvee \{ [U] \mid U \in C \} = \top$.
\end{lem}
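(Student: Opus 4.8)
The plan is to unwind both directions through the characterization $[a] \leq \bigvee_{j\in J}[b_j]$ if and only if $a \triangleleft \{ b_j \mid j \in J \}$ recalled just above the lemma, after first identifying the top of $L(X)$. Since every subset $U \subseteq X$ satisfies $U \leq X$ in the generating meet-semilattice, the rule ``$a \leq b$ implies $a \triangleleft \{b\}$'' gives $U \triangleleft \{X\}$, so $[X]$ contains every generator and is therefore the top element $\top$. Consequently the condition $\bigvee \{ [U] \mid U \in C \} = \top$ is equivalent to $[X] \leq \bigvee \{ [U] \mid U \in C \}$, which by the displayed equivalence is in turn equivalent to $X \triangleleft C$. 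The whole lemma thus reduces to showing that $C$ is a Cauchy cover if and only if $X \triangleleft C$.

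For the forward implication I would argue directly from the coverage. If $C$ is a Cauchy cover, then the first clause defining $\triangleleft_0$ gives $X \triangleleft_0 \{ X \cap V \mid V \in C \}$; since $V \subseteq X$ we have $X \cap V = V$, so this family is exactly $C$ and hence $X \triangleleft C$. By the previous paragraph this is precisely $\bigvee \{ [U] \mid U \in C \} = \top$. Note that this direction uses neither strong regularity nor properness.

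The backward implication is the substantive one, and here I would lean on \rlem{locale-cover}. Assume $\bigvee \{ [U] \mid U \in C \} = \top$, i.e.\ $[X] \leq \bigvee_{U \in C} [U]$. By \rprop{rbs-props} we have $X \rb^s_X X$, so \rlem{locale-cover} applies with $U = U' = X$ and $\{ V_j \}_{j}$ taken to be $C$, yielding that $\{ X \backslash X \} \cup C = \{ \varnothing \} \cup C$ is a Cauchy cover of $X$. Properness is then exactly what removes the spurious empty set: since $X$ is proper, $C \cup \{ \varnothing \}$ being Cauchy forces $C$ itself to be a Cauchy cover, completing the proof. The main obstacle, such as it is, lies in recognizing that \rlem{locale-cover} specializes cleanly at $U = U' = X$ so that $X \backslash U' = \varnothing$, and that properness is precisely the hypothesis needed to discard the resulting $\varnothing$; everything else is bookkeeping with the relation $\triangleleft$.
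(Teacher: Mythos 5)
Your proof is correct and follows the paper's argument exactly: the forward direction is immediate from the coverage defining $L(X)$, and the backward direction applies \rlem{locale-cover} with $U = U' = X$ (so that $X \backslash U' = \varnothing$) and then uses properness to discard the empty set. Your write-up merely makes explicit the bookkeeping ($[X] = \top$ and $X \rb^s X$) that the paper leaves implicit.
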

\begin{proof}
The ``only if'' direction is immediate from the definition of $L(X)$.
Conversely, suppose $\bigvee \{ [U] \mid U \in C \} = \top$.
Then, by \rlem{locale-cover}, the set $C \cup \{ \varnothing \}$ is a Cauchy cover.
Since $X$ is proper, $C$ itself must also be a Cauchy cover.
\end{proof}

\begin{lem}[locale-point-cover]
If $U$ is a neighborhood of a point $x$ in a strongly regular cover space and $[U] \leq \bigvee_{j \in J} [V_j]$, then $V_j$ is a neighborhood of $x$ for some $j \in J$.
\end{lem}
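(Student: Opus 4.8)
The plan is to transport the locale-theoretic hypothesis $[U] \leq \bigvee_{j \in J} [V_j]$ back into the language of Cauchy covers by means of \rlem{locale-cover}, and then to exploit the fact that the neighborhood filter $x^\wedge$ is a Cauchy filter. The inequality $[U] \leq \bigvee_{j \in J} [V_j]$ is precisely the data \rlem{locale-cover} consumes; the only extra ingredient that lemma needs is a set $U'$ with $U' \rb^s_X U$, and strong regularity is exactly what supplies one in the vicinity of $x$.

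Concretely, I would first invoke \rlem{rbs-point}: since $U$ is a neighborhood of $x$ in the strongly regular cover space $X$, there exists a neighborhood $U'$ of $x$ with $U' \rb^s_X U$. Feeding $U'$, $U$, and the hypothesis $[U] \leq \bigvee_{j \in J} [V_j]$ into \rlem{locale-cover} shows that $\{ X \backslash U' \} \cup \{ V_j \mid j \in J \}$ is a Cauchy cover of $X$. Because $x^\wedge$ is a Cauchy filter, it meets this cover, so either $X \backslash U' \in x^\wedge$ or $V_j \in x^\wedge$ for some $j \in J$. In the second case $V_j$ is by definition a neighborhood of $x$, which is exactly the conclusion sought.

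The remaining task, and the only genuinely delicate point, is to eliminate the first disjunct. Since $U'$ is a neighborhood of $x$, \rprop{rb-props}\eqref{rb:sub} gives $x \in U'$. If $X \backslash U'$ were also a neighborhood of $x$, the same reasoning would give $x \in X \backslash U'$, i.e.\ $x \notin U'$, contradicting $x \in U'$; hence $X \backslash U' \notin x^\wedge$. Constructively, this negation combined with the disjunction furnished by the Cauchy condition forces $V_j \in x^\wedge$ for some $j \in J$, completing the argument. The main thing to watch is that we never decide a priori which member of the cover lies in $x^\wedge$ — we merely rule out the complement branch by contradiction, which is constructively legitimate, and read off a neighborhood among the $V_j$ from what survives.
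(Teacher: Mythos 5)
Your proof is correct and follows essentially the same route as the paper's: both invoke \rlem{rbs-point} to produce something strongly rather below $U$ near $x$ and then feed it, together with $[U] \leq \bigvee_{j \in J} [V_j]$, into \rlem{locale-cover}. The only difference is in the final extraction step and is cosmetic: the paper takes $U' = \{x\}$, refines the resulting Cauchy cover via \axref{CR}, and observes that the point $x$ must lie in some element other than $X \setminus \{x\}$, whereas you take $U'$ to be a neighborhood of $x$ and use that the Cauchy filter $x^\wedge$ meets the cover, discharging the $X \backslash U'$ branch by disjunctive syllogism (which is indeed constructively valid).
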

\begin{proof}
By \rlem{rbs-point}, we have $\{ x \} \rb^s U$.
From \rlem{locale-cover} and \axref{CR}, it follows that the set $\{ X \setminus \{ x \} \} \cup \{ V' \mid \exists j \in J, V' \rb V_j \}$ is a Cauchy cover.
Since $x \notin X \setminus \{ x \}$, it must belong to some $V'$ such that $V' \rb V_j$ for some $j \in J$.
Thus, $V_j$ is a neighborhood of $x$.
\end{proof}

Finally, we are ready to prove the main result of this section:

\begin{thm}[locale-equiv]
The functor $P : \cat{Loc} \to \cat{Precov}$ restricts to an equivalence between the category of proper regular locales and the category of proper strongly complete cover spaces.
\end{thm}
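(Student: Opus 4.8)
The plan is to show that $P$ sends proper regular locales to proper strongly complete cover spaces, that it is full and faithful on these subcategories, and that it is essentially surjective; together these give the asserted equivalence. The first two points are essentially already available. On objects, if $M$ is a proper regular locale then $P(M)$ is strongly complete by \rprop{locale-regular} and proper by \rprop{locale-cover-proper}, so $P$ does restrict. For fullness and faithfulness, any two objects $M,N$ of the source category have in particular $M$ proper and $N$ regular, so \rprop{locale-ff} says that $P \colon \cat{Loc}(M,N) \to \cat{Cov}(P(M),P(N))$ is a bijection.

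The substance is essential surjectivity. Given a proper strongly complete cover space $X$, I would take $M = L(X)$, which is regular by \rlem{cover-locale-regular}, and build an isomorphism $\Phi \colon X \to P(L(X))$, $\Phi(x) = F_x := \{ a \in O_{L(X)} \mid \exists U \in a,\ \{x\} \rb U \}$, the opens of $L(X)$ containing a neighborhood of $x$. That $F_x$ is an upward-closed, finite-meet-closed family is immediate (meets via \rprop{rb-props}), and it is completely prime—hence a genuine point, in particular not containing $\bot$—by \rlem{locale-point-cover}: a witness $U \in a$ neighborhood of $x$ with $[U] \le \bigvee_j a_j = \bigvee_{j}\bigvee_{V \in a_j}[V]$ yields some $V \in a_j$ that is again a neighborhood of $x$.

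Before bijectivity I would establish that $L(X)$ is proper, using \rlem{locale-proper}: it suffices to prove that the largest open with no points, $\varepsilon_*(\varnothing)$, is $\bot$. Writing it as $\bigvee\{ [U] \mid U \in \varepsilon_*(\varnothing) \}$, each such $U$ must have empty interior, for a point $x \in \mathrm{int}(U)$ would make $\varepsilon_*(\varnothing) \in F_x$, contradicting that $\varepsilon_*(\varnothing)$ has no points; then by \rlem{int-char} the only $V$ with $V \rb^s U$ is $\varnothing$, so the generating relation $U \triangleleft_0 \{ V \mid V \rb^s_X U \} = \{\varnothing\}$ forces $[U] \le [\varnothing] = \bot$, whence $\varepsilon_*(\varnothing) = \bot$. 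For bijectivity, injectivity follows from \rlem{separated-char}: if $F_x = F_y$ then every neighborhood $U$ of $x$ has $[U] \in F_y$, and \rlem{locale-point-cover} (with the trivial join $[U] \le [U]$) makes $U$ a neighborhood of $y$, so $x^\wedge \subseteq y^\wedge$, and symmetrically, giving $x = y$. For surjectivity, from a point $G$ I would form the filter $\mathcal{F}_G = \{ U \subseteq X \mid [U] \in G \}$ on $X$; it is weakly proper as $\bot \notin G$, and weakly Cauchy by \rlem{locale-top-cover}, which turns $\top \in G$ into a member of $G \cap \{[U] \mid U \in C\}$ for each Cauchy cover $C$. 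Strong completeness of $X$ then yields a point $x$ with $x^\wedge$ equivalent to $\mathcal{F}_G$; since neighborhood filters are strongly regular, \rprop{strongly-regular-minimal} gives $x^\wedge \subseteq \mathcal{F}_G$ and hence $F_x \subseteq G$, and a short argument that comparable points of a regular locale coincide (using complete primeness and $\neg b \vee a = \top$) upgrades this to $F_x = G$.

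Finally I would match Cauchy covers to conclude $\Phi$ is an isomorphism of cover spaces. The key computations are $[U] = [\mathrm{int}(U)]$ (both directions from the coverage and \rlem{int-char}) and $\varepsilon^*([U]) = \Phi(\mathrm{int}(U))$. Combining these with \rlem{locale-cauchy} and \rlem{locale-top-cover}: a Cauchy cover of $P(L(X))$ may be taken as $\{\varepsilon^*(a)\}$ with $\bigvee a = \top$, whose $\Phi$-preimages are refined by the family $\{\mathrm{int}(U)\}$, Cauchy in $X$ by \rlem{locale-top-cover}; conversely a Cauchy cover $C$ of $X$ has $\bigvee\{[U] \mid U \in C\} = \top$, so $\{\varepsilon^*([U])\} = \{\Phi(\mathrm{int}(U))\}$ is Cauchy in $P(L(X))$ and refines $\{\Phi(U)\}$, forcing the latter to be Cauchy by \axref{CE}. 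The main obstacle I anticipate is exactly this surjectivity-and-cover-matching step: one must carefully shuttle between membership $[U] \in G$ in the frame $O_{L(X)}$ and neighborhoods of the point of $\mathcal{F}_G$, invoking regularity of $L(X)$, strong regularity and completeness of $X$, and \rlem{locale-top-cover} together with \rlem{locale-point-cover}, while taking care not to conflate the three ``rather below'' relations $\rb_X$, $\rb^s_X$, and $\rb$ in $L(X)$.
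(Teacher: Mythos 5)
Your proposal is correct and follows the paper's overall architecture: the restriction on objects and full faithfulness come from \rprop{locale-regular}, \rprop{locale-cover-proper} and \rprop{locale-ff}, and essential surjectivity is obtained by taking $M = L(X)$ and studying the unit $x \mapsto \{\, a \mid \exists U \in a,\ \{x\} \rb_X U \,\}$, with the same arguments for complete primeness (via \rlem{locale-point-cover}) and for properness of $L(X)$. Where you genuinely diverge is the endgame. The paper verifies that this unit is a cover map, weakly dense, and an embedding, and then invokes \rthm{weakly-dense-lift}, i.e.\ uniqueness of strong completions, to conclude it is an isomorphism once $X$ is strongly complete. You instead prove bijectivity directly --- injectivity from \rlem{separated-char} together with \rlem{locale-point-cover}, and surjectivity by pulling a point $G$ back to the weakly Cauchy filter $\{ U \mid [U] \in G \}$, applying strong completeness and \rprop{strongly-regular-minimal}, and finishing with the standard fact that comparable points of a regular locale coincide --- and then match Cauchy covers in both directions through $\varepsilon^*([U]) = \Phi(\mathrm{int}(U))$. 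Both routes work. The paper's buys economy by reusing the completion machinery and isolates the cleaner intermediate statement that $P(L(X))$ is the strong completion of any proper strongly regular $X$; yours is more self-contained at this point but must supply the comparable-points lemma (not stated in the paper, though its proof is the two lines you indicate) and should make explicit that the Cauchyness of $\{ \mathrm{int}(U) \}$ needs \rprop{cover-int} or \axref{CR} on top of \rlem{locale-top-cover}, since $x \in U$ alone does not give $[U] \in F_x$. These are presentational, not substantive, gaps.
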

\begin{proof}
By \rprop{locale-ff}, \rprop{locale-regular}, and \rprop{locale-cover-proper}, the functor $P$ restricts to a fully faithful functor between the required categories.
We only need to show that it is essentially surjective on objects.

Let $X$ be a proper strongly regular cover space with.
Define a function $\eta_X : X \to P(L(X))$ by
\[ \eta_X(x) = \{ C \mid \exists U, \{ x \} \rb_X U, [U] \leq C \}. \]
It is clear that $\eta_X(x)$ is a filter.
To show that it is completely prime, we only need to consider joins of the form $\bigvee_{j \in J} [V_j]$ since the elements of the form $[U]$ generate $L(X)$.
Suppose $U$ is a neighborhood of $x$ such that $[U] \leq \bigvee_{j \in J} [V_j]$, and $\bigvee_{j \in J} [V_j]$ belongs to $\eta_X(x)$.
By \rlem{locale-point-cover}, $V_j$ is a neighborhood of $x$ for some $j \in J$.
Thus, $[V_j]$ belongs to $\eta_X(x)$, proving that $\eta_X(x)$ is a completely prime filter.

By \rlem{cover-locale-regular}, $L(X)$ is regular.
We now show that it is proper.
It suffices to prove that $[U] = \bot$ whenever $[U]$ has no points.
Since $[U] \leq \bigvee \{ [V] \mid V \rb^s_X U \}$, it is enough to show that $[V] \leq \bot$ whenever $V \rb^s_X U$.
If $x$ is a point of $V$, then $\eta_X(x)$ is a point of $[U]$, contradicting the assumption on $U$.
Hence, $V$ must be empty, implying $[V] \leq \bot$.
Thus, $L(X)$ is proper.

To show that $\eta_X$ is a weakly dense embedding, we first verify that it is a cover map.
Let $C$ be a Cauchy cover of $P(L(X))$.
By definition, 
\[
\top = \bigvee \{ \varepsilon_*(W) \mid W \in C \} = \bigvee \{ [U] \mid \exists W \in C, \varepsilon^*([U]) \subseteq W \}.
\]
By \rlem{locale-top-cover} and \axref{CR}, the set 
\[
\{ V \mid \exists U, V \rb U, \exists W \in C, \varepsilon^*([U]) \subseteq W \}
\]
is a Cauchy cover of $X$.
We now show that this cover refines $\{ \eta_X^{-1}(W) \mid W \in C \}$.
Let $V$, $U$, and $W$ be such that $V \rb U$, $W \in C$, and $\varepsilon^*([U]) \subseteq W$.
If $x \in V$, then $U$ is a neighborhood of $x$, so $[U]$ belongs to $\eta_X(x)$.
Consequently, $\eta_X(x) \in W$, implying $V \subseteq \eta_X^{-1}(W)$.
This proves that $\eta_X$ is a cover map.

Next, we show that $\eta_X$ is weakly dense.
Let $U$ be an open set in $P(L(X))$ that does not intersect the image of $\eta_X$.
We need to prove that $U$ is empty.
Suppose $U$ contains a point $F$.
Since $\{ F \} \rb^s_{P(L(X))} U$, we have 
\[
\bigvee \{ [V] \mid \varepsilon^*([V]) \subseteq U \} \vee \bigvee \{ b \mid F \notin \varepsilon^*(b) \} = \top.
\]
Since $F$ is completely prime and $F$ cannot contain an open $b$ such that $F \notin \varepsilon^*(b)$, it must contain an open $[V]$ such that $\varepsilon^*([V]) \subseteq U$.
Furthermore, because $[V] \leq \bigvee \{ [V'] \mid V' \rb^s_X V \}$, it also contains an open $[V']$ such that $V' \rb^s_X V$.
Since $F$ is completely prime, $V'$ cannot be empty.
Thus, we may assume that there exists a point $x \in V'$.
Since $\{ x \} \rb_X V$, we have $[V] \in \eta_X(x)$.
This implies $\eta_X(x) \in \varepsilon^*([V])$ and hence $\eta_X(x) \in U$, contradicting the assumption on $U$.
Thus, $\eta_X$ is weakly dense.

Finally, we prove that $\eta_X$ is an embedding.
We need to show that, for every Cauchy cover $C$ of $X$, the set $\{ V \mid \exists U \in C, \eta_X^{-1}(V) \subseteq U \}$ is a Cauchy cover of $P(L(X))$.
By \rlem{locale-top-cover}, this set is a Cauchy cover of $P(L(X))$ if and only if 
\[
\{ W \mid \exists U \in C, \exists V, \varepsilon^*([W]) \subseteq V, \eta_X^{-1}(V) \subseteq U \}
\]
is a Cauchy cover of $X$.
This holds because it is refined by $C$.
Thus, $\eta_X$ is an embedding.

If $X$ is strongly complete, then \rthm{weakly-dense-lift} implies that $\eta_X$ is an isomorphism, as it is a weakly dense embedding.
This concludes the proof.
\end{proof}

\section{Real numbers}
\label{sec:reals}

The aim of this section is to study the cover space of real numbers.
We will define this cover space, demonstrate that it is strongly complete, and show that it corresponds to the locale of real numbers through the equivalence described in the previous section.
Additionally, we will prove that the field operations on real numbers are cover maps.

The cover space of real numbers $\mathbb{R}$ is defined as the one induced by the usual Euclidean metric space structure on $\mathbb{R}$.
Since this cover space structure is derived from a metric space, its underlying topology is the usual topology on $\mathbb{R}$.
Using the identification from \rprop{dedekind-cuts-filters}, a set $U$ is a neighborhood of a Dedekind filter $F$ if and only if there exists an open interval $(a,b) \in F$ such that $(a,b) \subseteq U$.

There is an inclusion $\mathbb{Q} \to \mathbb{R}$ that maps each rational number to its neighborhood filter.
It is straightforward to verify that the cover space structure on $\mathbb{Q}$, induced by the metric space structure, is isomorphic to the transferred structure.

\begin{prop}[real-completion]
The cover space of real numbers is the completion of $\mathbb{Q}$.
\end{prop}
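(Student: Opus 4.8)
The plan is to exhibit $\mathbb{R}$ as a complete cover space equipped with a dense embedding $\iota : \mathbb{Q} \to \mathbb{R}$, $q \mapsto q^\wedge$; once this is done, the uniqueness of completions granted by \rthm{dense-lift} identifies $\mathbb{R}$ with the completion $C(\mathbb{Q})$. Recall that $\mathbb{R}$ is separated, being induced by a metric. By \rprop{dedekind-cauchy} the regular Cauchy filters on $\mathbb{Q}$ are exactly the Dedekind filters, and by \rprop{dedekind-cuts-filters} these correspond bijectively to Dedekind cuts, that is, to the points of $\mathbb{R}$; this dictionary is what lets us locate the limit of an arbitrary regular Cauchy filter.

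First I would record that $\iota$ is a dense embedding. It is an embedding because the metric structure on $\mathbb{Q}$ agrees with the structure transferred along $\iota$, as already noted in the text. For density, let $r \in \mathbb{R}$ correspond to the Dedekind filter $F$ and let $V$ be a neighborhood of $r$. By the neighborhood characterization for $\mathbb{R}$ there is an open interval $(a,b) \in F$ with $(a,b) \subseteq V$; since $F$ is proper, $(a,b)$ is inhabited, so it contains a rational $q$, and then $\iota(q) = q^\wedge$ lies in $(a,b) \subseteq V$. Thus every neighborhood of every point of $\mathbb{R}$ meets the image of $\iota$, so $\iota$ is dense by \rprop{top-neighborhood}.

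The main work is to verify completeness, for which I would invoke \rlem{complete-part}: since $\iota$ is a dense embedding into the separated space $\mathbb{R}$, it suffices to show that every regular Cauchy filter $F$ on $\mathbb{Q}$ pushes forward to a filter equivalent to the neighborhood filter of some point. Let $r$ be the real number corresponding to $F$ under the dictionary above; I claim $r^\wedge \subseteq \iota(F)$, which (both being Cauchy filters) forces $\iota(F)$ and $r^\wedge$ to be equivalent. Indeed, if $V$ is a neighborhood of $r$, pick $(a,b) \in F$ with $(a,b) \subseteq V$; then $\iota^{-1}(V) \supseteq \iota^{-1}((a,b)) = (a,b) \in F$, so $\iota^{-1}(V) \in F$ by upward closure, and hence $V \in \iota(F)$. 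This establishes completeness, and the proposition then follows from the uniqueness of completions. The delicate point throughout is keeping the two readings of an open interval straight — as a subset of $\mathbb{Q}$ belonging to $F$ and as a subset of $\mathbb{R}$ contained in $V$ — and checking that $\iota$ intertwines them via $\iota^{-1}((a,b)_{\mathbb{R}}) = (a,b)_{\mathbb{Q}}$; this compatibility is exactly what makes the identification of the limit point with $r$ go through.
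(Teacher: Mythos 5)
Your proposal is correct and follows the same route as the paper's proof: density from the metric topology, embedding from the agreement of the metric and transferred structures, separation from the metric, and completeness via \rlem{complete-part} combined with \rprop{dedekind-cauchy}. The paper states these steps very tersely, and your write-up simply fills in the details (in particular the verification that $r^\wedge \subseteq \iota(F)$) that the paper leaves implicit.
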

\begin{proof}
Since the underlying topology of $\mathbb{R}$ is the usual topology induced by the metric, $\mathbb{Q}$ is dense in $\mathbb{R}$.
The inclusion $\mathbb{Q} \to \mathbb{R}$ is also an embedding by definition.
Moreover, because the underlying cover space of any metric space is always separated, \rlem{complete-part} and \rprop{dedekind-cauchy} imply that $\mathbb{R}$ is complete.
\end{proof}

In Section~\ref{sec:strongly-regular}, we defined a stronger version of completeness.
Interestingly, for certain spaces, this stronger version coincides with the weaker one.
To demonstrate this, we first introduce a few definitions:

\begin{defn}
Let $\mathcal{B}$ be a base of a cover space $X$, and let $U,V$ be subsets of $X$.
We say that $V$ is \emph{strongly $\mathcal{B}$-rather below} $U$, written $V \rb^s_\mathcal{B} U$, if $\{ X \backslash V, U \} \in \mathcal{B}$.

A base $\mathcal{B}$ is called \emph{strongly regular} if, for every $C \in \mathcal{B}$,
the collection $\{ V \mid \exists U \in C, V \rb^s_\mathcal{B} U \}$ is a Cauchy cover.
\end{defn}

\begin{remark}
If a cover space $X$ has a strongly regular base, then $X$ is strongly regular.
Conversely, the collection of all Cauchy covers of a strongly regular cover space forms a strongly regular base.
\end{remark}

\begin{example}
The collection of uniform covers of a metric space is a strongly regular base.
\end{example}

\begin{defn}
Let $\mathcal{B}$ be a base for a cover space $X$.
We say that $\mathcal{B}$ is \emph{strongly proper} if, for every cover $C \in \mathcal{B}$, the subset of $C$ consisting of inhabited sets belongs to $\mathcal{B}$.
\end{defn}

\begin{remark}
If the collection of all Cauchy covers of a cover space $X$ forms a strongly proper base, then $X$ is proper.
Classically, the converse is also true, but constructively, this is not always the case.
\end{remark}

\begin{example}
The collection of uniform covers of a metric space is a strongly proper base.
\end{example}

The following lemma demonstrates that strongly regular weakly Cauchy covers in a cover space $X$ satisfy a stronger regularity condition if $X$ has a strongly regular base:

\begin{lem}[cauchy-filter-base]
Let $X$ be a cover space with a strongly regular base $\mathcal{B}$, and let $F$ be a strongly regular weakly Cauchy cover in $X$.
Then, for every $U \in F$, there exists $V \in F$ such that $V \rb^s_\mathcal{B} U$.
\end{lem}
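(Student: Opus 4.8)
The plan is to upgrade the strong regularity of the base $\mathcal{B}$ from its generators to \emph{all} Cauchy covers, and then to extract the required $V$ by intersecting $F$ with a Cauchy cover manufactured from $U$. First I would check that the relation $\rb^s_\mathcal{B}$ satisfies the three hypotheses of \rlem{subbase-regular}. Reflexivity $X \rb^s_\mathcal{B} X$ holds because $\{X\} \in \mathcal{B}$ refines $\{\varnothing, X\}$, so the latter lies in $\mathcal{B}$ by \axref{BE}; the monotonicity $W \rb^s_\mathcal{B} V \subseteq U \Rightarrow W \rb^s_\mathcal{B} U$ follows from \axref{BE} since $\{X\setminus W, V\}$ refines $\{X\setminus W, U\}$; and the meet condition follows from \axref{BI} followed by \axref{BE}, exactly as in \rprop{rbs-props}. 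The defining property of a strongly regular base supplies the last hypothesis of \rlem{subbase-regular}, namely that $\{V \mid \exists U' \in C, V \rb^s_\mathcal{B} U'\} \in \overline{\mathcal{B}}$ for each $C \in \mathcal{B}$. I therefore conclude that the same holds for every $C \in \overline{\mathcal{B}}$: for every Cauchy cover $C$ of $X$, the collection $\{V \mid \exists U' \in C, V \rb^s_\mathcal{B} U'\}$ is again a Cauchy cover.

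With this in hand, given $U \in F$ I would use the strong regularity of $F$ to find $W \in F$ with $W \rb^s U$, so that $C := \{X\setminus W, U\}$ is a Cauchy cover by the definition of $\rb^s$. Applying the conclusion above to $C$ produces the Cauchy cover $\{V \mid V \rb^s_\mathcal{B}(X\setminus W)\text{ or }V \rb^s_\mathcal{B} U\}$, and since $F$ is weakly Cauchy it must contain one of its members: some $V \in F$ with $V \rb^s_\mathcal{B}(X\setminus W)$ or $V \rb^s_\mathcal{B} U$.

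The crux is to rule out the first alternative. If $V \rb^s_\mathcal{B}(X\setminus W)$, then $V \subseteq X\setminus W$ by \rprop{rbs-props} and \rprop{rb-props}, whence $V \cap W = \varnothing$; but $V, W \in F$ give $V \cap W \in F$, contradicting the weak properness of $F$. Hence the second alternative holds and this $V$ is the desired element. I expect this disjunction-elimination to be the only delicate point, and it is harmless constructively: from $A \lor B$ and $A \to \bot$ one obtains $B$, so no appeal to excluded middle is needed.
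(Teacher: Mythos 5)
Your proof is correct and follows essentially the same route as the paper: both apply \rlem{subbase-regular} to the relation $\rb^s_\mathcal{B}$ to propagate strong regularity of the base to all Cauchy covers, then apply the result to the Cauchy cover $\{X\setminus W, U\}$ coming from the strong regularity of $F$, and finally use weak properness of $F$ to exclude the alternative landing in $X\setminus W$. The only cosmetic difference is that the paper refines your cover $\{V \mid V \rb^s_\mathcal{B}(X\setminus W)\text{ or }V\rb^s_\mathcal{B} U\}$ to $\{X\setminus W\}\cup\{V \mid V\rb^s_\mathcal{B} U\}$ before intersecting with $F$, and leaves the disjunction elimination implicit where you spell it out.
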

\begin{proof}
Since $F$ is strongly regular, there exists $V \in F$ such that $V \rb^s U$.
Because $\mathcal{B}$ is strongly regular, \rlem{subbase-regular} implies that the collection $\{ X \backslash V \} \cup \{ W \mid W \rb^s_\mathcal{B} U \}$ is a Cauchy cover.
As $F$ is a weakly Cauchy filter, there exists $W \in F$ such that $W \rb^s_\mathcal{B} U$.
\end{proof}

We can show that strongly regular weakly Cauchy filters are Cauchy under appropriate conditions:

\begin{prop}
If a cover space $X$ has a strongly proper strongly regular base $\mathcal{B}$, then every strongly regular weakly Cauchy filter is Cauchy.
\end{prop}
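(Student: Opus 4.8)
The plan is to show that every member of a strongly regular weakly Cauchy filter $F$ is inhabited; since $F$ already meets every Cauchy cover by hypothesis, upgrading weak properness to properness in this way turns $F$ into a genuine Cauchy filter. So fix $U \in F$, and aim to produce a point of $U$. The difficulty here is entirely constructive: we cannot decide whether $U$ is inhabited or empty, so a witness must be extracted indirectly, exploiting the interaction between the strong regularity of $F$ and the strong properness of the base $\mathcal{B}$.

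First I would apply \rlem{cauchy-filter-base} to obtain a set $V \in F$ with $V \rb^s_\mathcal{B} U$, that is, $\{ X \backslash V, U \} \in \mathcal{B}$. This is the key step: the bare strong regularity of $F$ only yields some $V \in F$ with $V \rb^s U$, and the corresponding two-element cover $\{ X \backslash V, U \}$ need not lie in $\mathcal{B}$, so strong properness could not be applied to it directly. \rlem{cauchy-filter-base} is precisely the tool that replaces the abstract relation $\rb^s$ by the base-level relation $\rb^s_\mathcal{B}$, forcing the cover into $\mathcal{B}$.

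Next, since $\mathcal{B}$ is strongly proper, the subcover $D$ of $\{ X \backslash V, U \}$ consisting of its inhabited members belongs to $\mathcal{B}$, hence is a Cauchy cover. Because $F$ is weakly Cauchy, it meets $D$: there is some $W \in D \cap F$, so $W$ is inhabited and either $W = X \backslash V$ or $W = U$. In the first case both $X \backslash V$ and $V$ lie in $F$, whence $\varnothing = V \cap (X \backslash V) \in F$, contradicting weak properness; this branch is therefore vacuous. In the remaining case $W = U$, so $U$ is inhabited, as required. As $U \in F$ was arbitrary, $F$ is a proper filter, and being weakly Cauchy it is Cauchy.

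I expect the only real obstacle to be the first step, namely recognizing that strong properness cannot be applied to the cover arising from the plain strong regularity of $F$ and that \rlem{cauchy-filter-base} is exactly what bridges this gap. Once the inhabited witness $W \in D \cap F$ is in hand, ruling out the branch $W = X \backslash V$ by weak properness is routine, and the passage from ``every member inhabited'' to ``Cauchy'' is immediate.
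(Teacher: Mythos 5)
Your proposal is correct and follows essentially the same route as the paper: invoke \rlem{cauchy-filter-base} to get $V \in F$ with $V \rb^s_\mathcal{B} U$, pass to the inhabited subcover of $\{X \backslash V, U\}$ via strong properness, and rule out the $X \backslash V$ branch by weak properness. Your remark that plain strong regularity of $F$ would not suffice (since $\{X\backslash V, U\}$ must land in $\mathcal{B}$ for strong properness to apply) correctly identifies the one non-routine point.
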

\begin{proof}
Let $F$ be a strongly regular weakly Cauchy filter, and suppose $U \in F$.
By \rlem{cauchy-filter-base}, there exists $V \in F$ such that $V \rb^s_\mathcal{B} U$.
Since $\mathcal{B}$ is strongly proper, the collection $\{ X \backslash V \} \cup \{ U \mid \exists x \in U \}$ is a Cauchy cover.
As $F$ satisfies the Cauchy condition, either $U$ is inhabited or $F$ contains the complement of $V$.
However, since $F$ is weakly proper, the latter case is impossible.
Therefore, $U$ must be inhabited, completing the proof.
\end{proof}

\begin{cor}
If a complete cover space has a strongly proper strongly regular base, then it is strongly complete.
\end{cor}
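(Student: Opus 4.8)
The plan is to assemble this corollary directly from the immediately preceding proposition together with the filter machinery developed earlier in the section, since almost all of the work has already been done. First I would note that $X$ is strongly regular: by the remark following the definition of a strongly regular base, every cover space possessing a strongly regular base is itself strongly regular, so the hypothesis that $\mathcal{B}$ is strongly regular already places $X$ among the strongly regular cover spaces for which strong completeness is even defined. Separatedness of $X$ is immediate, as $X$ is complete and completeness includes separatedness by definition. Thus the only substantive task is to show that every weakly Cauchy filter is equivalent to the neighborhood filter of some point.

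Let $F$ be a weakly Cauchy filter on $X$. I would first replace $F$ by its canonical representative: by the proposition asserting that every weakly Cauchy filter is equivalent to a unique strongly regular one, there is a strongly regular weakly Cauchy filter $F'$ equivalent to $F$. The key input is then the immediately preceding proposition, which applies precisely because $\mathcal{B}$ is both strongly proper and strongly regular: it guarantees that $F'$, being strongly regular and weakly Cauchy, is in fact a proper Cauchy filter. Now I would invoke completeness of $X$: since $F'$ is a Cauchy filter, it is equivalent to $x^\wedge$ for some point $x \in X$.

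Finally I would chain the two equivalences. Because $F$ is equivalent to $F'$ and $F'$ is equivalent to $x^\wedge$, transitivity yields that $F$ is equivalent to $x^\wedge$, which is exactly what strong completeness requires. Combined with strong regularity and separatedness, this shows that $X$ is strongly complete.

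The one point requiring a little care --- rather than a real obstacle --- is the interplay between the two notions of filter equivalence. The equivalence obtained from completeness (that $F'$ is equivalent to $x^\wedge$) is phrased for Cauchy filters, whereas the equivalence between $F$ and $F'$ lives among weakly Cauchy filters; to combine them I rely on the fact that both equivalences are defined by the same condition (every Cauchy cover contains a set from each filter) and that every Cauchy filter is in particular weakly Cauchy. The transitivity furnished by the weakly Cauchy analogue of \rprop{cauchy-filter-equiv} then applies verbatim, so no extra hypotheses are needed.
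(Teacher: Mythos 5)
Your proof is correct and follows exactly the route the paper intends: the corollary is stated without proof as an immediate consequence of the preceding proposition, and you supply precisely the missing steps (pass to the strongly regular representative, apply the proposition to conclude it is Cauchy, use completeness, and chain the equivalences via the weakly Cauchy analogue of transitivity). Your remark on reconciling the two notions of filter equivalence is a legitimate point of care and is handled correctly, since both equivalences are defined by the same condition.
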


\begin{cor}[metric-complete]
A metric space is complete if and only if it is strongly complete.
\end{cor}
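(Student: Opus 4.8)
The plan is to prove each implication by reducing to results already in hand, noting first the asymmetry between the two notions: strong completeness quantifies over the constructively larger class of \emph{weakly} Cauchy filters, so the implication from strong completeness to completeness should be essentially formal, while the reverse implication carries the real content and will lean on the immediately preceding corollary.

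For the direction strongly complete $\implies$ complete, I would argue as follows. A strongly complete cover space is by definition separated, so separatedness is automatic, and it remains only to show that every Cauchy filter is equivalent to the neighborhood filter of some point. The key observation is that every proper filter is weakly proper: if all of its members are inhabited, then the empty set, being uninhabited, cannot belong to it. Consequently every Cauchy filter is in particular a weakly Cauchy filter, and strong completeness directly supplies a point whose neighborhood filter is equivalent to it. This yields completeness with no further work.

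For the direction complete $\implies$ strongly complete, the plan is to invoke the preceding corollary, which asserts that a complete cover space admitting a strongly proper strongly regular base is strongly complete. The whole task therefore reduces to exhibiting such a base on a metric space, and the two preceding examples do exactly this: the collection of uniform covers of a metric space is a strongly regular base, and the same collection is also a strongly proper base. Since both properties hold for one and the same collection, the uniform covers constitute a strongly proper strongly regular base, and the corollary applies verbatim.

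The main (and only mild) obstacle I anticipate is conceptual rather than technical: one must recognize that completeness and strong completeness differ precisely through the proper versus weakly proper distinction on filters, and one must take care that the two examples furnish the \emph{single} base demanded by the preceding corollary rather than two unrelated structures. Once these points are in place, both implications become immediate citations.
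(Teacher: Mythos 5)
Your proposal is correct and matches the derivation the paper intends: the forward direction is the formal observation that proper filters are weakly proper (so Cauchy filters are weakly Cauchy and strong completeness supplies the required point), and the reverse direction applies the immediately preceding corollary to the single collection of uniform covers, which the two preceding examples show to be simultaneously a strongly regular and a strongly proper base of a metric space. No gaps.
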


Next, we show that the localic reals correspond to the cover space of real numbers.
Recall that the frame of opens of the locale $\mathbb{R}_l$ of real numbers is generated by the set of open intervals with rational endpoints, along with the empty set, under the following coverage relation:
\begin{itemize}
\item $(a,d) \triangleleft_0 \{ (a,c), (b,d) \}$ for all rational numbers $a < b < c < d$.
\item $(a,d) \triangleleft_0 \{ (b,c) \mid a < b < c < d \}$ for all rational numbers $a < d$.
\item $\varnothing \triangleleft_0 \varnothing$.
\end{itemize}

While the locale $L(\mathbb{R})$ is generated by all subsets of $\mathbb{R}$, the locale $\mathbb{R}_l$ is generated by a much smaller set of open intervals with rational endpoints.
To establish an isomorphism between these locales, it is convenient to identify a smaller generating set for $L(\mathbb{R})$.
To achieve this, we introduce the following definition:

\begin{defn}
Let $\mathcal{G}$ be a set of subsets of a strongly regular cover space $X$.
We say that $\mathcal{G}$ \emph{generates} $X$ if there exists a strongly regular base $\mathcal{B}$ such that every cover in $\mathcal{B}$ is refined by a Cauchy cover consisting entirely of elements from $\mathcal{G}$.
\end{defn}

\begin{example}
Any metric space is generated by the set of open balls.
\end{example}

The following lemma shows that stronger regularity conditions on a cover space imply stronger regularity properties for the corresponding locale:

\begin{lem}[gen-regular]
Let $\mathcal{B}$ be a strongly regular base of a cover space $X$.
Then, for every $U \subseteq X$, we have $[U] = \bigvee \{ [V] \mid V \rb^s_\mathcal{B} U \}$ in $L(X)$.
\end{lem}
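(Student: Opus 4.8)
The plan is to establish the two inequalities separately. The inequality $\bigvee \{ [V] \mid V \rb^s_\mathcal{B} U \} \leq [U]$ is immediate: whenever $V \rb^s_\mathcal{B} U$, the pair $\{ X \backslash V, U \}$ lies in $\mathcal{B}$ and is therefore a cover of $X$, so $V \subseteq U$, and the generating rule $a \leq b \implies a \triangleleft \{b\}$ then gives $[V] \leq [U]$; taking the join yields the claim. The substance of the lemma is the reverse inequality $[U] \leq \bigvee \{ [V] \mid V \rb^s_\mathcal{B} U \}$, which by the defining property of the frame $L(X)$ is equivalent to the covering relation $U \triangleleft \{ V \mid V \rb^s_\mathcal{B} U \}$.

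First I would upgrade the strong regularity of the base to arbitrary Cauchy covers. The relation $\rb^s_\mathcal{B}$ satisfies the three hypotheses of \rlem{subbase-regular}: one checks $X \rb^s_\mathcal{B} X$, the monotonicity property, and compatibility with binary meets directly from \axref{BE} and \axref{BI}. Since $\mathcal{B}$ is a strongly regular base generating $X$ (so that $\overline{\mathcal{B}} = \mathcal{C}_X$), the hypothesis of \rlem{subbase-regular} holds for every $C \in \mathcal{B}$, and the lemma then yields that for every Cauchy cover $C$ the collection $\{ W \mid \exists U' \in C, W \rb^s_\mathcal{B} U' \}$ is again Cauchy. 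This is the same mechanism already exploited in the proof of \rlem{cauchy-filter-base}.

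For the covering relation I would proceed in two stages through the transitivity rule for $\triangleleft$. The coverage defining $L(X)$ contains $U \triangleleft_0 \{ V' \mid V' \rb^s_X U \}$, so it suffices to show $V' \triangleleft \{ V \mid V \rb^s_\mathcal{B} U \}$ for each $V'$ with $V' \rb^s_X U$. For such a $V'$ the pair $\{ X \backslash V', U \}$ is a Cauchy cover, and applying the upgraded strong regularity to it produces a Cauchy cover which, since every set strongly $\mathcal{B}$-below $X \backslash V'$ is contained in $X \backslash V'$, refines $D = \{ X \backslash V' \} \cup \{ W \mid W \rb^s_\mathcal{B} U \}$; hence $D$ is Cauchy by \axref{CE}. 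The first coverage rule now gives $V' \triangleleft_0 \{ V' \cap W \mid W \in D \}$. I then split on $W$: when $W = X \backslash V'$ the intersection $V' \cap W$ is $\varnothing$, which is $\triangleleft$ anything via $\varnothing \triangleleft_0 \varnothing$; when $W \rb^s_\mathcal{B} U$, from $V' \cap W \subseteq W \rb^s_\mathcal{B} U$ and the monotonicity of $\rb^s_\mathcal{B}$ we get $V' \cap W \rb^s_\mathcal{B} U$, so $V' \cap W$ already belongs to the target family. Transitivity yields $V' \triangleleft \{ V \mid V \rb^s_\mathcal{B} U \}$, and a second application gives $U \triangleleft \{ V \mid V \rb^s_\mathcal{B} U \}$, the desired inequality.

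The main obstacle is the gap between $\rb^s_X$, which the coverage of $L(X)$ supplies for free, and $\rb^s_\mathcal{B}$, which appears in the statement; bridging it requires the passage from base-level strong regularity to strong regularity for all Cauchy covers via \rlem{subbase-regular}, followed by intersecting a witnessing Cauchy cover with $V'$ so that only the pieces strongly $\mathcal{B}$-below $U$ (together with an empty piece) survive. The points demanding care are the verification that $\rb^s_\mathcal{B}$ genuinely meets the hypotheses of \rlem{subbase-regular} and the use of $\overline{\mathcal{B}} = \mathcal{C}_X$, both of which are invoked implicitly.
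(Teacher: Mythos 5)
Your proof is correct and follows essentially the same route as the paper's: reduce to $[V] \leq \bigvee \{ [W] \mid W \rb^s_\mathcal{B} U \}$ for $V \rb^s_X U$ via the coverage rule for $\rb^s_X$, upgrade the base-level strong regularity to all Cauchy covers through \rlem{subbase-regular} (the same mechanism the paper uses in \rlem{cauchy-filter-base}) to see that $\{ X \backslash V \} \cup \{ W \mid W \rb^s_\mathcal{B} U \}$ is Cauchy, and then apply the first coverage rule. The paper merely leaves the final case split on $V \cap W$ and the verification of the hypotheses of \rlem{subbase-regular} implicit, which you spell out.
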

\begin{proof}
By the coverage relation of $L(X)$, we know that $[U] = \bigvee \{ [V] \mid V \rb^s U \}$.
Thus, it suffices to show that $[V] \leq \bigvee \{ [W] \mid W \rb^s_\mathcal{B} U \}$ for every $V \rb^s U$.
Since $\mathcal{B}$ is strongly regular, the set $\{ X \backslash V \} \cup \{ W \mid W \rb^s_\mathcal{B} U \}$ is a Cauchy cover.
The coverage relation of $L(X)$ then implies the desired equation.
\end{proof}

The following lemma demonstrates how the concept of generation translates into a property of the locale associated with $X$, providing further motivation for the terminology.

\begin{lem}[locale-gen]
Let $X$ be a precover space generated by a set $\mathcal{G}$.
Then $L(X)$ is generated under joins by elements of the form $[G]$ for $G \in \mathcal{G}$.
\end{lem}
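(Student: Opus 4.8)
The plan is to reduce the claim to a statement about single generators of the coverage. By construction the frame $O_{L(X)}$ is generated under joins by the elements $[U]$ for arbitrary subsets $U \subseteq X$, so it suffices to show that each such $[U]$ is itself a join of elements of the form $[G]$ with $G \in \mathcal{G}$; concretely, I would prove the identity $[U] = \bigvee \{ [G] \mid G \in \mathcal{G},\ [G] \leq [U] \}$. The inequality $\geq$ is immediate, since every join term lies below $[U]$, so the whole content lies in the inequality $\leq$.

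To obtain $\leq$, I would first fix the strongly regular base $\mathcal{B}$ witnessing that $\mathcal{G}$ generates $X$ and apply \rlem{gen-regular} to write $[U] = \bigvee \{ [V] \mid V \rb^s_\mathcal{B} U \}$. It then suffices to bound each $[V]$ with $V \rb^s_\mathcal{B} U$ by a join of generators lying below $[U]$. Fix such a $V$; by definition $\{ X \backslash V, U \} \in \mathcal{B}$, and since $\mathcal{G}$ generates $X$ through $\mathcal{B}$, this cover is refined by a Cauchy cover $C \subseteq \mathcal{G}$.

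The key step is then a refinement computation using the coverage of $L(X)$. From the Cauchy cover $C$ we have $V \triangleleft_0 \{ V \cap G \mid G \in C \}$, hence $[V] \leq \bigvee \{ [V \cap G] \mid G \in C \}$. Because $C$ refines $\{ X \backslash V, U \}$, each $G \in C$ satisfies either $G \subseteq X \backslash V$ or $G \subseteq U$: in the first case $V \cap G = \varnothing$, so $[V \cap G] = \bot$; in the second case monotonicity of $[-]$ gives $[V \cap G] \leq [G] \leq [U]$. Discarding the bottom terms yields $[V] \leq \bigvee \{ [G] \mid G \in C,\ G \subseteq U \}$, a join of generators each below $[U]$. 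Combining this with the expression from \rlem{gen-regular} gives $[U] \leq \bigvee \{ [G] \mid G \in \mathcal{G},\ [G] \leq [U] \}$, which is the required inequality.

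I expect the main obstacle to be the bookkeeping in this key step, and in particular keeping it constructive: one must verify that the generators of $C$ meeting $V$ are forced into $U$ (so that the only surviving join terms are the desired $[G] \subseteq U$), while the terms with $G \subseteq X \backslash V$ genuinely collapse through $V \cap G = \varnothing$ and $[\varnothing] = \bot$. Everything else is monotonicity of $[-]$ and the defining coverage relations of $L(X)$, which are routine.
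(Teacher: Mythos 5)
Your proof is correct and follows essentially the same route as the paper: reduce to the generators $[U]$, apply \rlem{gen-regular} to write $[U]$ as a join over $V \rb^s_\mathcal{B} U$, and then use the refining Cauchy cover $C \subseteq \mathcal{G}$ of $\{X \backslash V, U\}$ together with the coverage relation to bound $[V]$ by a join of $[G]$'s below $[U]$. The only cosmetic difference is that the paper first passes from $C$ to the Cauchy cover $\{X \backslash V\} \cup \{G \in \mathcal{G} \mid G \subseteq U\}$ before invoking the coverage, whereas you compute directly against $C$; the case split you worry about is supplied constructively by the refinement itself, so there is no gap.
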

\begin{proof}
Since $L(X)$ is generated by elements of the form $[U]$ for $U \subseteq X$, it suffices to show that every such element can be expressed as a join of elements of the form $[G]$ for $G \in \mathcal{G}$.
By \rlem{gen-regular}, we know that $[U] = \bigvee \{ [V] \mid V \rb^s_\mathcal{B} U \}$ for any $U \subseteq X$.
If $V \rb^s_\mathcal{B} U$, then by the defining property of $\mathcal{B}$, the collection $\{ X \backslash V \} \cup \{ G \in \mathcal{G} \mid G \subseteq U \}$ is a Cauchy cover.
Consequently, $[V] \leq \bigvee \{ [G] \mid G \in \mathcal{G}, G \subseteq U \}$.
Thus, $[U] = \bigvee \{ [G] \mid G \in \mathcal{G}, G \subseteq U \}$, completing the proof.
\end{proof}

\begin{prop}
The locale of real numbers $\mathbb{R}_l$ is isomorphic to $L(\mathbb{R})$.
\end{prop}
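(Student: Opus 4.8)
The plan is to present the two frames as generated by the same data and then match their coverages. First I would show that $\mathbb{R}$ is generated, in the sense of the definition preceding this proposition, by the set $\mathcal{G}$ of open intervals $(a,b)$ with rational endpoints. Taking $\mathcal{B}$ to be the base of uniform covers, any $C \in \mathcal{B}$ refines $\{ B_\varepsilon(x) \mid x \in \mathbb{R} \}$ for some $\varepsilon > 0$; replacing $C$ by its interiors via \rprop{cover-int} and covering each point of $\mathbb{R}$ by a rational interval of width $< \varepsilon$ contained in the relevant interior produces a Cauchy cover drawn from $\mathcal{G}$ that refines $C$. Hence by \rlem{locale-gen} the frame $O_{L(\mathbb{R})}$ is generated under joins by the elements $[(a,b)]$ for rational $a < b$ (with $[\varnothing] = \bot$). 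Since $O_{\mathbb{R}_l}$ is generated by the same intervals, the candidate isomorphism is the one sending $(a,b) \mapsto [(a,b)]$.

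To obtain a frame map $\phi : O_{\mathbb{R}_l} \to O_{L(\mathbb{R})}$ with $\phi(a,b) = [(a,b)]$ through the universal property of the coverage presentation, I would check that this meet-preserving assignment carries each of the two defining coverage relations of $\mathbb{R}_l$ to a join identity in $L(\mathbb{R})$. For the first, given $a < b < c < d$, the family $\{ (-\infty,c), (b,\infty) \}$ is a Cauchy cover of $\mathbb{R}$ (it is refined by the uniform cover of balls of radius less than $(c-b)/2$, using cotransitivity of the order), so the first clause of the coverage of $L(\mathbb{R})$ gives $(a,d) \triangleleft_0 \{ (a,c), (b,d) \}$, that is $[(a,d)] \le [(a,c)] \vee [(b,d)]$. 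For the second, whenever $a < b < c < d$ one checks $(b,c) \rb^s_\mathbb{R} (a,d)$, since $\{ \mathbb{R} \backslash (b,c), (a,d) \}$ is a Cauchy cover because every Dedekind real lies in $(a,d)$ or in $(-\infty,b) \cup (c,\infty)$; combining the $\rb^s$-clause of the coverage with \rlem{gen-regular} and \rlem{locale-gen} then yields $[(a,d)] = \bigvee \{ [(b,c)] \mid a < b < c < d \}$, matching the second coverage relation.

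For the inverse I would define $\psi : O_{L(\mathbb{R})} \to O_{\mathbb{R}_l}$ on the generating semilattice by $[U] \mapsto \bigvee \{ (b,c) \mid (b,c) \rb^s_\mathbb{R} U \}$. Using \rprop{rbs-props} it is routine that this preserves finite meets and the top element, and the $\rb^s$-clause and the empty-set clause of the coverage of $L(\mathbb{R})$ are verified directly (for the latter, no rational interval is strongly rather below $\varnothing$). The only substantial point is the clause $U \triangleleft_0 \{ U \cap V \mid V \in C \}$ for a Cauchy cover $C$: one must show $\psi([U]) \le \bigvee_{V \in C} \psi([U \cap V])$, which reduces to proving $(b,c) \le \bigvee_{V \in C} \psi([U \cap V])$ for each rational interval $(b,c) \rb^s_\mathbb{R} U$. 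Here $\overline{(b,c)} \subseteq \mathrm{int}(U)$, so for $a < b' < c' < c$ the compact interval $[b',c']$ is covered by the open sets $\mathrm{int}(U \cap V)$ for $V \in C$.

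The main obstacle is exactly this last step: passing from the Cauchy cover $C$ to a formal covering of $(b',c')$ inside $\mathbb{R}_l$. This is the Heine--Borel phenomenon, namely extracting a finite subcover of the closed bounded interval $[b',c']$ and then, by a Lebesgue-number argument, expressing $(b',c')$ as a join of rational subintervals each strongly rather below one of the finitely many $\mathrm{int}(U \cap V)$; equivalently it is the statement that Cauchy covers of the metric space $\mathbb{R}$ and formal covers of $\mathbb{R}_l$ coincide, which \rlem{locale-top-cover} and \rlem{locale-cover} reduce to precisely this compactness input. An alternative is to avoid building $\psi$ by hand: $\mathbb{R}$ is a proper, strongly regular, strongly complete cover space by \rprop{real-completion} and \rcor{metric-complete}, so \rthm{locale-equiv} yields $P(L(\mathbb{R})) \cong \mathbb{R}$, while the points of $\mathbb{R}_l$ are the Dedekind filters of \rprop{dedekind-cuts-filters}; identifying these two cover spaces and invoking \rprop{locale-ff}, whose full faithfulness reflects isomorphisms, produces $L(\mathbb{R}) \cong \mathbb{R}_l$. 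Granting the coverage compatibility, $\phi$ and $\psi$ agree with the identity on generators via \rlem{gen-regular} and the coverage of $\mathbb{R}_l$, so $\psi \phi = \mathrm{id}$ and $\phi \psi = \mathrm{id}$, giving the desired isomorphism.
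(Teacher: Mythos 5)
Your forward map is exactly the paper's: $f^*((a,b)) = [(a,b)]$, checked against the two coverage relations of $\mathbb{R}_l$, with \rlem{locale-gen} giving that the $[(a,b)]$ generate $O_{L(\mathbb{R})}$. Where you diverge is in how the two frames are shown to be identified: the paper argues that $f^*$ is injective and surjective, hence a bijective frame map and therefore an isomorphism, dismissing injectivity with the remark that distinct intervals map to distinct sets; you instead build an explicit inverse $\psi([U]) = \bigvee \{ (b,c) \mid (b,c) \rb^s_\mathbb{R} U \}$ and verify it against the coverage of $L(\mathbb{R})$. Your route is the more honest one: injectivity of $f^*$ on the whole frame (not just on generators) is precisely the statement that the extra relations $U \triangleleft_0 \{ U \cap V \mid V \in C \}$ imposed by arbitrary Cauchy covers $C$ are already derivable in $\mathbb{R}_l$, which is exactly the Heine--Borel step you isolate. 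So what you have identified as "the main obstacle" is not an artifact of your method --- it is the mathematical content that the paper's phrase "clearly injective by construction" conceals. The cost of your route is that this step still has to be carried out: you sketch the finite-subcover-plus-Lebesgue-number argument but do not execute it, and the tools that make it routine (total boundedness of $[b',c']$, \rcor{tb-cauchy-uni}, \rcor{tb-metric}) only appear in the section on compactness, which follows this proposition in the paper; that is a forward reference rather than a circularity, but it should be made explicit or replaced by a direct $\varepsilon$-argument.

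Two cautions on your fallback route via \rthm{locale-equiv}. First, the statement "the cover space of real numbers is isomorphic to $P(\mathbb{R}_l)$" is the corollary the paper \emph{derives from} this proposition, so you must establish $P(\mathbb{R}_l) \cong \mathbb{R}$ independently --- as cover spaces, not merely as sets of points --- before reflecting the isomorphism along \rprop{locale-ff}. Second, matching the cover space structure on $P(\mathbb{R}_l)$ (Cauchy covers being those $C$ with $\bigvee \{ \varepsilon_*(U) \mid U \in C \} = \top$) with the metric structure on $\mathbb{R}$ requires, in one direction, showing that every metric Cauchy cover yields a formal cover in $\mathbb{R}_l$; this is the same Heine--Borel content again, so the alternative does not actually avoid the obstacle, it only relocates it.
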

\begin{proof}
Define a map $f : L(\mathbb{R}) \to \mathbb{R}_l$ by:
\[ f^*((a,b)) = [\{ x \in \mathbb{R} \mid a < x < b \}], \quad f^*(\varnothing) = [\varnothing]. \]
It is straightforward to verify that $f^*$ respects the coverage relation on $\mathbb{R}_l$.
To prove that $f$ is an isomorphism, it suffices to show that $f^*$ is bijective.

The map $f^*$ is clearly injective by construction, as distinct intervals map to distinct sets.
By \rlem{locale-gen}, the locale $L(\mathbb{R})$ is generated by elements of the form $[B_\varepsilon(x)]$, which implies that $f^*$ is surjective.
\end{proof}

\begin{cor}
The cover space of real numbers is isomorphic to $P(\mathbb{R}_l)$.
\end{cor}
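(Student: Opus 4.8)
The plan is to combine the isomorphism of locales $\mathbb{R}_l \cong L(\mathbb{R})$ established in the preceding proposition with the equivalence of \rthm{locale-equiv}. Since $P : \cat{Loc} \to \cat{Precov}$ is a functor, applying it to that isomorphism immediately yields $P(\mathbb{R}_l) \cong P(L(\mathbb{R}))$ in $\cat{Precov}$, and in fact in $\cat{Cov}$. Thus the only remaining task is to identify $P(L(\mathbb{R}))$ with the cover space $\mathbb{R}$ itself. This identification is exactly the content of the essential-surjectivity half of \rthm{locale-equiv}: for a proper strongly complete cover space $X$, the canonical map $\eta_X : X \to P(L(X))$ constructed there is an isomorphism of cover spaces. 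Taking $X = \mathbb{R}$ and composing the two isomorphisms, I would obtain $\mathbb{R} \cong P(L(\mathbb{R})) \cong P(\mathbb{R}_l)$, which is the claim.

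To invoke \rthm{locale-equiv} for $X = \mathbb{R}$, I must verify that $\mathbb{R}$ is a proper, strongly regular, and strongly complete cover space. Strong regularity holds because $\mathbb{R}$ is a metric space, and every metric space is strongly regular as a cover space. Completeness of $\mathbb{R}$ is \rprop{real-completion}, and since the cover structure on $\mathbb{R}$ is induced by a metric, \rcor{metric-complete} upgrades completeness to strong completeness. Properness would be traced back to the observation that the uniform covers of a metric space form a strongly proper (and strongly regular) base, which is precisely what forces the precover space $\mathbb{R}$ to be proper in the sense required by the theorem. With these three properties in hand, $\eta_{\mathbb{R}}$ is an isomorphism.

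I expect the genuine work to lie entirely in confirming these hypotheses rather than in the final composition, which is purely formal. In particular, the properness of $\mathbb{R}$ as a (pre)cover space is the point that most deserves care: constructively, properness is not automatic and must be derived from the strongly proper base of uniform covers rather than assumed. Everything else is bookkeeping, namely that functoriality of $P$ transports the locale isomorphism to a cover-space isomorphism and that \rthm{locale-equiv} supplies the remaining half. Consequently the argument should be brief, consisting mainly of citations to \rprop{real-completion}, \rcor{metric-complete}, the metric-space examples of strong regularity and strong properness, and \rthm{locale-equiv}, assembled into the chain $\mathbb{R} \cong P(L(\mathbb{R})) \cong P(\mathbb{R}_l)$.
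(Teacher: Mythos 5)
Your proposal is correct and follows essentially the same route as the paper: cite \rprop{real-completion} and \rcor{metric-complete} to get strong completeness of $\mathbb{R}$, then chain $\mathbb{R} \simeq P(L(\mathbb{R})) \simeq P(\mathbb{R}_l)$ via \rthm{locale-equiv} and the preceding proposition. Your extra care about verifying properness (via the strongly proper base of uniform covers) is a hypothesis the paper's own two-line proof leaves implicit, so if anything your version is slightly more complete.
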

\begin{proof}
By \rprop{real-completion} and \rcor{metric-complete}, the cover space $\mathbb{R}$ is strongly complete.
By \rthm{locale-equiv} and the previous proposition, we have the isomorphism $\mathbb{R} \simeq P(L(\mathbb{R})) \simeq P(\mathbb{R}_l)$.
\end{proof}

This corollary establishes a bijection between localic maps $M \to \mathbb{R}_l$ and cover maps $P(M) \to \mathbb{R}$ for proper locales $M$.
Consequently, it provides a convenient framework for constructing localic real-valued maps.
In general, the condition for a function $f : X \to \mathbb{R}$ to be a cover map can appear complicated.
However, there often exists a simple sufficient condition that ensures a function is a cover map.

\begin{defn}
Let $X$ and $Y$ be cover spaces, and let $\mathcal{B}_X$ be a subbase for $X$.
A function $f : X \to Y$ is said to be \emph{locally uniform} (relative to $\mathcal{B}_X$) if, for every Cauchy cover $E$ of $Y$,
there exists $C \in \mathcal{B}_X$ such that, for every $U \in C$, we have
\[ \{ V \mid \exists W \in E, U \cap V \subseteq f^{-1}(W) \} \in \mathcal{B}_X. \]
\end{defn}

If $X$ is a uniform space, we will always consider locally uniform functions relative to the subbase of uniform covers.
Unfolding the definition of uniform covers in metric spaces, we obtain the following proposition:

\begin{prop}
Let $X$ and $Y$ be metric spaces.
A function $f : X \to Y$ is locally uniform if and only if, for every $\varepsilon > 0$, there exists $\delta_1 > 0$ such that,
for every $x_1 \in X$, there exists $\delta_2 > 0$ such that, for every $x_2 \in X$, there exists $y \in Y$ satisfying $B_{\delta_1}(x_1) \cap B_{\delta_2}(x_2) \subseteq f^{-1}(B_\varepsilon(y))$.
\end{prop}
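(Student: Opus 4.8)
The plan is to read off both implications by translating the definition through the fact that, for a metric space, a cover is uniform precisely when it has a \emph{Lebesgue number}: $C$ is uniform iff there is $\delta > 0$ such that every ball $B_\delta(x)$ is contained in some member of $C$. The generators of the uniform covers are the ball covers $E_\varepsilon = \{ B_\varepsilon(y) \mid y \in Y \}$, and the two explicit quantities $\delta_1,\delta_2$ in the statement will appear as Lebesgue numbers: $\delta_1$ of the cover $C \in \mathcal{B}_X$ produced by local uniformity, and $\delta_2$ of the cover $D_U = \{ V \mid \exists W \in E, U \cap V \subseteq f^{-1}(W) \}$ attached to a member $U \in C$. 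Since each $D_U$ is downward closed, ``$D_U$ has a Lebesgue number'' and ``$D_U$ is a uniform cover'' coincide, which is what lets the inner condition be phrased purely in terms of balls.

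For the forward implication, fix $\varepsilon > 0$ and apply local uniformity to the Cauchy cover $E_\varepsilon$, obtaining $C \in \mathcal{B}_X$; let $\delta_1$ be a Lebesgue number for $C$. Given $x_1$, choose $U \in C$ with $B_{\delta_1}(x_1) \subseteq U$, and let $\delta_2$ be a Lebesgue number for the uniform cover $D_U$. Then for any $x_2$ the ball $B_{\delta_2}(x_2)$ lies in $D_U$, so there is $W = B_\varepsilon(y)$ with $U \cap B_{\delta_2}(x_2) \subseteq f^{-1}(B_\varepsilon(y))$; intersecting with $B_{\delta_1}(x_1) \subseteq U$ yields exactly $B_{\delta_1}(x_1) \cap B_{\delta_2}(x_2) \subseteq f^{-1}(B_\varepsilon(y))$. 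The only point to watch is that the member $U$ and the number $\delta_2$ are obtained from existential statements, which is harmless as we are establishing an existence claim at each quantifier alternation.

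For the converse, assume the metric condition and first check the defining requirement for the generators $E_\varepsilon$: take $C = \{ B_{\delta_1}(x) \mid x \in X \}$, which is uniform, and for $U = B_{\delta_1}(x_1)$ the hypothesis supplies $\delta_2$ with every $B_{\delta_2}(x_2) \in D_U$, so $\delta_2$ is a Lebesgue number for $D_U$ and $D_U$ is uniform. To pass to an arbitrary Cauchy cover $E$ of $Y$, I would show that the collection of covers satisfying the requirement contains all $E_\varepsilon$ and is closed under \axref{CT}, \axref{CE}, and \axref{CG}; as the uniform covers form a base generated by the $E_\varepsilon$ and $\mathcal{C}_Y = \overline{\mathcal{B}_Y}$, this collection is then all of $\mathcal{C}_Y$. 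The \axref{CT} and \axref{CE} cases are routine: if $E'$ refines $E$ then $D_U^{E'} \subseteq D_U^{E}$, and enlarging a uniform cover keeps it uniform, so the same witness $C$ serves. The hard part will be \axref{CG}: given $E = \{ W \cap W' \mid W \in E_0, W' \in E_W \}$ with witnesses for $E_0$ and for each $E_W$, one must splice a single uniform cover $C$ of $X$ together with uniform covers $D_U$, merging the Lebesgue numbers coming from $E_0$ with those coming from the $E_W$ while keeping everything inside the subbase $\mathcal{B}_X$. This is precisely the bookkeeping of \rlem{subbase-regular}, carried out by one application of \axref{CG} on the $X$-side; the dependence of the inner Lebesgue numbers on the chosen members is the delicate point. (If it suffices to test $E$ only on the base $\mathcal{B}_Y$, this difficulty evaporates: every uniform $E$ is refined by some $E_\varepsilon$, and closure under \axref{CE} alone completes the proof.)
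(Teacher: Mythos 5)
Your forward direction is correct and is exactly the intended unfolding: apply the definition to the ball cover $E_\varepsilon$ of $Y$, extract a Lebesgue number $\delta_1$ for the witness $C$, then for each $x_1$ a Lebesgue number $\delta_2$ for $D_U$ where $B_{\delta_1}(x_1) \subseteq U \in C$; downward closure of $D_U$ does the rest.

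The converse has a genuine gap exactly where you flagged it, and it cannot be closed by the route you sketch. The \axref{CG} step asks you to produce, for $E = \{ W \cap W' \mid W \in E_0, W' \in E_W \}$, a single \emph{uniform} cover $C$ of $X$ all of whose associated covers $D_U^E$ are again \emph{uniform}. Splicing the witnesses for $E_0$ and for the $E_W$ by an application of \axref{CG} on the $X$-side yields a cover of the form $\{ V \cap Z \mid V \in D, Z \in G_V \}$ with the inner covers $G_V$ varying with $V$; such a cover is Cauchy but has no Lebesgue number in general, so it does not lie in $\mathcal{B}_X$, which is what the definition demands. This is not bookkeeping: the class of covers $E$ of $Y$ satisfying the local-uniformity condition is genuinely not closed under \axref{CG}. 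For a concrete obstruction take $Y = \mathbb{R}$ with the Cauchy cover $E^* = \{ (k-1,k+1) \cap B_{2^{-\abs{k}}}(y) \mid k \in \mathbb{Z},\, y \in \mathbb{R} \}$, and let $X$ be a disjoint union of clusters $K_n$ of diameter $2^{-n}$ (mutual distance $1$), where $K_n$ contains pairs $a_{n,m}, b_{n,m}$ for $m \geq n$ with $d(a_{n,m},b_{n,m}) = 2^{-m}$, all other intra-cluster distances equal to $2^{-n-1}$, and $f(a_{n,m}) = m$, $f(b_{n,m}) = m + 8 \cdot 2^{-m}$. This $f$ satisfies the $\varepsilon$--$\delta_1$--$\delta_2$ condition (for a given $\varepsilon$ the offending pairs are $d$-separated by roughly $\varepsilon/4$, so $\delta_2$ exists for each cluster), yet every uniform cover of $X$ has a member $U$ containing an entire cluster $K_n$, and for such $U$ the cover $D_U^{E^*}$ has no Lebesgue number: for every $\rho$ there is $m$ with $2^{-m} < \rho$, and $U \cap B_\rho(a_{n,m})$ contains the pair $\{a_{n,m}, b_{n,m}\}$, whose image $\{m, m + 8 \cdot 2^{-m}\}$ fits in no member of $E^*$.

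The upshot is that your parenthetical escape hatch is the only correct reading: the quantifier ``for every Cauchy cover $E$ of $Y$'' in the definition of local uniformity has to be taken over the uniform covers of $Y$ only (equivalently over the ball covers $E_\varepsilon$, by the refinement argument you already give for \axref{CE}), and then the whole proof collapses to the routine unfolding. The paper offers no proof beyond the phrase ``unfolding the definition,'' which is coherent only under that restricted reading; your attempt engages honestly with the literal definition and correctly isolates, but does not and cannot repair, the point where the literal reading fails.
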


The next proposition establishes that every locally uniform function is a cover map.
In the following section, we will see that for certain cover spaces, the converse is also true.

\begin{prop}[locally-uniform-cover-map]
Every locally uniform function is a cover map.
\end{prop}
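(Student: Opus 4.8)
The plan is to unwind both definitions and reduce the claim to a direct application of the closure axioms \axref{CG} and \axref{CE}. Fix a subbase $\mathcal{B}_X$ relative to which $f : X \to Y$ is locally uniform; since $\mathcal{B}_X$ is a subbase for the cover space $X$, every cover in $\mathcal{B}_X$ is in particular a Cauchy cover of $X$. To show that $f$ is a cover map, I must show that for every Cauchy cover $E$ of $Y$, the collection $\{ f^{-1}(W) \mid W \in E \}$ is a Cauchy cover of $X$.

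First I would apply the definition of local uniformity to the Cauchy cover $E$. This yields a cover $C \in \mathcal{B}_X$ such that, for every $U \in C$, the collection
\[ D_U = \{ V \mid \exists W \in E, U \cap V \subseteq f^{-1}(W) \} \]
belongs to $\mathcal{B}_X$. Both $C$ and each $D_U$ are therefore Cauchy covers of $X$.

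Next I would apply \axref{CG} to $C$ together with the family $\{ D_U \}_{U \in C}$, concluding that $\{ U \cap V \mid U \in C, V \in D_U \}$ is a Cauchy cover of $X$. The key observation is that this cover refines $\{ f^{-1}(W) \mid W \in E \}$: given $U \in C$ and $V \in D_U$, the defining condition of $D_U$ provides some $W \in E$ with $U \cap V \subseteq f^{-1}(W)$, so every element of the former cover is contained in an element of the latter. Finally, by \axref{CE}, since $\{ f^{-1}(W) \mid W \in E \}$ is refined by a Cauchy cover, it is itself a Cauchy cover of $X$. This establishes that $f$ is a cover map.

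I do not expect any serious obstacle here; the statement is essentially an exercise in matching the shape of the local uniformity condition to the generating axiom \axref{CG}. The only points requiring slight care are observing that membership in the subbase $\mathcal{B}_X$ guarantees membership in $\mathcal{C}_X$, so that \axref{CG} is applicable to $C$ and to the family $\{ D_U \}_{U \in C}$, and then noting that the set-theoretic inclusions $U \cap V \subseteq f^{-1}(W)$ furnished by the definition are precisely a refinement in the sense required by \axref{CE}.
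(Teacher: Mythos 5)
Your proposal is correct and matches the paper's proof essentially line for line: apply local uniformity to $E$ to obtain $C$ and the family $\{D_U\}_{U \in C}$ in $\mathcal{B}_X$, use \axref{CG} to form the Cauchy cover $\{U \cap V \mid U \in C, V \in D_U\}$, and conclude by \axref{CE} since this refines $\{f^{-1}(W) \mid W \in E\}$. No differences worth noting.
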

\begin{proof}
Let $f : X \to Y$ be a locally uniform function.
It suffices to show that for every Cauchy cover $E$ of $Y$, the cover $\{ f^{-1}(W) \mid W \in E \}$ is Cauchy.
Let $E$ be a Cauchy cover of $Y$.
By the definition of local uniformity, there exists a Cauchy cover $C \in \mathcal{B}_X$ such that, for every $U \in C$, the set
$D_U = \{ V \mid \exists W \in E, U \cap V \subseteq f^{-1}(W) \}$
belongs to $\mathcal{B}_X$.
It follows that the cover $\{ U \cap V \mid U \in C, V \in D_U \}$ is Cauchy.
Since it refines $\{ f^{-1}(W) \mid W \in E \}$, the proof is completed.
\end{proof}

Finally, we will demonstrate that the field operations on $\mathbb{R}$ are cover maps.
First, we establish that dense embeddings are closed under products:

\begin{lem}
Let $X$ and $X'$ be precover spaces, $Y$ and $Y'$ be cover spaces, and $f : X \to Y$ and $g : X' \to Y'$ be cover maps.
If $f$ and $g$ are dense embeddings, then so is $f \times g : X \times X' \to Y \times Y'$.
\end{lem}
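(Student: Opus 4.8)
The plan is to prove the statement in two parts: first that $f \times g$ is a dense embedding qua embedding, and second that its image is dense. The embedding part is already handled by \rlem{prod-embedding}, which shows products of embeddings are embeddings, so the only genuinely new content is density. Thus the main work is to show that if $f$ and $g$ have dense images, then $f \times g$ has dense image in $Y \times Y'$.

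For density, recall that a subset is dense exactly when every point of the ambient space is a limit point, equivalently (by \rprop{top-neighborhood}) when every inhabited open set meets the image. So I would take an arbitrary point $(y, y') \in Y \times Y'$ and an arbitrary neighborhood of it, and show this neighborhood meets the image of $f \times g$. The key structural fact is that the precover (hence topological) structure on the product $Y \times Y'$ is generated by preimages of covers under the two projections $\pi_1, \pi_2$, as used in the proof of \rlem{prod-embedding}. Consequently a basic neighborhood of $(y,y')$ can be taken of the form $\pi_1^{-1}(U) \cap \pi_2^{-1}(U')$ where $U$ is a neighborhood of $y$ in $Y$ and $U'$ a neighborhood of $y'$ in $Y'$. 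It suffices to hit such basic neighborhoods, since every neighborhood contains one.

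Now I would invoke density of $f$ and $g$ separately. Since $f$ is dense and $U$ is an inhabited (it contains $y$) neighborhood of $y$, there is a point $x \in X$ with $f(x) \in U$; similarly density of $g$ gives $x' \in X'$ with $g(x') \in U'$. Then the point $(x, x') \in X \times X'$ satisfies $(f \times g)(x,x') = (f(x), g(x')) \in \pi_1^{-1}(U) \cap \pi_2^{-1}(U')$, so the basic neighborhood meets the image. This establishes that the image of $f \times g$ is dense. Combined with \rlem{prod-embedding}, which gives that $f \times g$ is an embedding, we conclude that $f \times g$ is a dense embedding.

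The main obstacle I anticipate is purely bookkeeping rather than conceptual: I must be careful that ``neighborhood of $(y,y')$ in the product'' really does contain a set of the product form $\pi_1^{-1}(U) \cap \pi_2^{-1}(U')$ with $U, U'$ neighborhoods of the respective coordinates. This requires unwinding the definition of the product precover structure (generated by projection preimages of Cauchy covers) and confirming that $\{(y,y')\} \rb_{Y \times Y'} N$ for a neighborhood $N$ forces such a refinement — precisely the description of product covers used in \rlem{prod-embedding}. Once that reduction to basic neighborhoods is in hand, the density argument is immediate from one-variable density applied coordinatewise, and there is no interaction between the two factors to worry about.
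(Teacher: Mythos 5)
Your argument is correct, but it takes a genuinely different route from the paper's. The paper never reduces to box neighborhoods: it picks, via \rlem{rb-point}, a neighborhood $V$ of $(y,y')$ with $V \rb U$, applies \rlem{cover-map-rb} to the slice map $\langle \mathrm{id}_Y, \mathrm{const}(y')\rangle : Y \to Y \times Y'$ to see that $\{ y_1 \mid (y_1,y') \in V \}$ is a neighborhood of $y$, uses density of $f$ to find $x$ with $(f(x),y') \in V$, concludes from $V \rb U$ that $U$ is a neighborhood of $(f(x),y')$, and then repeats the same step in the second coordinate; this handles the factors one at a time using only lemmas already in hand. Your route instead hinges on the claim that every neighborhood of $(y,y')$ in $Y \times Y'$ contains a box $\pi_1^{-1}(U) \cap \pi_2^{-1}(U')$ with $U,U'$ neighborhoods of $y,y'$. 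That claim is true, but it does not follow merely from the description of the generating covers cited in \rlem{prod-embedding}; the clean way to get it is to apply \rlem{closure-filter} to the filter of supersets of such boxes: by \axref{CR} in $Y$ and in $Y'$ this filter meets every subbasic cover $\{\pi_i^{-1}(W) \mid W \in C\}$, hence every Cauchy cover of the product, and in particular the cover $\{ W \mid (y,y') \in W \implies W \subseteq N \}$ witnessing $\{(y,y')\} \rb N$, which yields the desired box inside $N$. Once that reduction is actually carried out, your coordinatewise density argument is immediate and finishes the proof; what the paper's slice-map argument buys is that it sidesteps having to establish this neighborhood-base fact at all.
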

\begin{proof}
Since embeddings are closed under products by \rlem{prod-embedding}, it suffices to show that $f \times g$ is dense whenever $f$ and $g$ are dense.

Let $(y,y') \in Y \times Y'$ and let $U$ be a neighborhood of $(y,y')$.
By \rlem{rb-point}, there exists a neighborhood $V$ of $(y,y')$ such that $V \rb U$.
Applying \rlem{cover-map-rb} to the map $\langle \mathrm{id}_Y, \mathrm{const}(y') \rangle$, we deduce that the set $\{ y_1 \mid (y_1,y') \in V \}$ is a neighborhood of $y$.
Since $f$ is dense, there exists $x \in X$ such that $(f(x),y') \in V$.
It follows that $U$ is a neighborhood of $(f(x),y')$.

Now, applying \rlem{cover-map-rb} to the map $\langle \mathrm{const}(f(x)), \mathrm{id}_{Y'} \rangle$ and the neighborhood $U$,
we find that the set $\{ y_2 \mid (f(x),y_2) \in U \}$ is a neighborhood of $(f(x),y')$.
Since $g$ is dense, there exists $x' \in X'$ such that $(f(x),f(x')) \in U$.
Thus, $f \times g$ is dense.
\end{proof}

This lemma implies that we can extend cover maps $\mathbb{Q}^n \to \mathbb{R}$ to cover maps $\mathbb{R}^n \to \mathbb{R}$.
Since the ring operations on $\mathbb{Q}$ are locally uniform, \rprop{locally-uniform-cover-map} ensures that they extend to cover maps on $\mathbb{R}$.
By the uniqueness of extensions, these maps satisfy the ring axioms.

We now aim to show that the inverse function is a cover map.
Let $\mathbb{R}_*$ denote the union of negative and positive real numbers.
It is well-known that a real number is invertible if and only if it belongs to $\mathbb{R}_*$.
Our goal is to prove that the function $(-)^{-1} : \mathbb{R}_* \to \mathbb{R}$ is a cover map.

If we equip $\mathbb{R}_*$ with the induced cover space structure from $\mathbb{R}$, this map is not a cover map.
The issue lies in the fact that $\mathbb{R}_*$ is incomplete under this cover space structure, and its completion is the entire real line $\mathbb{R}$.
If such a cover map could be defined, it would necessarily extend to $\mathbb{R}$, which is impossible.
Therefore, we must impose a different cover space structure on $\mathbb{R}_*$.

To address this, we introduce a more general construction that applies to any open subset of a complete cover space.
Let $S$ be an open subset of a cover space $X$.
We define the following subbase on $S$:
\[ \mathcal{B}_S = \{ \{ V \cap S \mid V \in D \} \mid D \in \mathcal{C}_X \} \cup \{ \{ V \mid V \rb_X V_1 \rb_X \ldots \rb_X V_n \rb_X S \} \mid n \in \mathbb{N} \}. \]

\begin{lem}
Let $S$ be an open subset of a cover space $X$.
Then $(S,\overline{\mathcal{B}_S})$ is a cover space.
\end{lem}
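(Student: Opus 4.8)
The plan is to recognize $(S,\overline{\mathcal{B}_S})$ as the precover space generated by the subbase $\mathcal{B}_S$ and then to verify the single remaining axiom \axref{CR} by invoking \rlem{subbase-regular}. First I would observe that, by construction, $\overline{\mathcal{B}_S}$ is closed under \axref{CT}, \axref{CE}, and \axref{CG}, so only \axref{CR} needs checking, and that every member of $\mathcal{B}_S$ is genuinely a cover of $S$. This is immediate for the covers $\{V\cap S\mid V\in D\}$, and for $\{V\mid V\rb_X V_1\rb_X\cdots\rb_X V_n\rb_X S\}$ it follows because \rprop{rb-props}(1) forces every such $V$ to satisfy $V\subseteq S$, while openness of $S$ together with repeated application of \rlem{rb-point} produces, for each $x\in S$, a chain $\{x\}\rb_X V\rb_X V_1\rb_X\cdots\rb_X V_n\rb_X S$ placing $x$ in some member. \rlem{closure-filter}, applied to the filter of sets containing a given point, then guarantees that every element of $\overline{\mathcal{B}_S}$ is a cover.

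To apply \rlem{subbase-regular} I would take $\rb$ to be the genuine rather-below relation $\rb_S$ of $(S,\overline{\mathcal{B}_S})$; its three structural hypotheses ($S\rb_S S$; enlarging the right argument; compatibility with intersections) are exactly items (4), (2), and (3) of \rprop{rb-props}, whose proofs use only \axref{CT}, \axref{CE}, and \axref{CG} and hence hold in any precover space. With this choice the conclusion of \rlem{subbase-regular} is precisely \axref{CR}, so it remains only to verify its hypothesis on the two families of generators.

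The key sub-observation, used for both families, is that $\rb_X$ descends to $\rb_S$ under restriction: if $A\rb_X B$ for $A,B\subseteq X$, then $(A\cap S)\rb_S(B\cap S)$, and in particular $A\rb_S B$ when $A,B\subseteq S$. Indeed the witnessing cover $\{W\subseteq X\mid \overlap{W}{A}\implies W\subseteq B\}\in\mathcal{C}_X$, intersected with $S$, is a generator of $\mathcal{B}_S$ of the first kind, and each of its members $W\cap S$ lies in $\{W'\subseteq S\mid \overlap{W'}{A\cap S}\implies W'\subseteq B\cap S\}$; so \axref{CE} places this latter witness in $\overline{\mathcal{B}_S}$. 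Granting this, both generator types are handled by a single refinement argument. For $C=\{V\cap S\mid V\in D\}$ with $D\in\mathcal{C}_X$, axiom \axref{CR} in $X$ gives $D'=\{V'\mid\exists V\in D,\ V'\rb_X V\}\in\mathcal{C}_X$, so $\{V'\cap S\mid V'\in D'\}$ is again a generator of the first kind; by the sub-observation each $V'\cap S$ satisfies $V'\cap S\rb_S V\cap S$ with $V\cap S\in C$, so this generator refines $\{W\mid\exists U\in C,\ W\rb_S U\}$, which therefore lies in $\overline{\mathcal{B}_S}$ by \axref{CE}. For $C_n=\{V\mid V\rb_X V_1\rb_X\cdots\rb_X V_n\rb_X S\}$, I would show that the next generator $C_{n+1}$ refines the required expansion: any $V\in C_{n+1}$ lies in a chain $V\rb_X V_0\rb_X\cdots\rb_X V_n\rb_X S$, whence $V_0\in C_n$ and, by the sub-observation, $V\rb_S V_0$; thus $C_{n+1}$ is contained in $\{W\mid\exists U\in C_n,\ W\rb_S U\}$, and \axref{CE} again finishes the case.

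With both generator families verified, \rlem{subbase-regular} yields \axref{CR} for every cover in $\overline{\mathcal{B}_S}$, completing the proof that $(S,\overline{\mathcal{B}_S})$ is a cover space. I expect the main obstacle to be isolating and proving the sub-observation that $\rb_X$ transfers to $\rb_S$ under restriction to $S$; once that comparison is in place the rest is bookkeeping, with the inductive burden over the generation of $\overline{\mathcal{B}_S}$ entirely absorbed by \rlem{subbase-regular}.
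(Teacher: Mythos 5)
Your proposal is correct and follows essentially the same route as the paper: check that the members of $\mathcal{B}_S$ are covers of $S$ (via \rprop{rb-props} and \rlem{rb-point} for the second family), and then obtain \axref{CR} from \rlem{subbase-regular}. The paper leaves the verification of the hypothesis of \rlem{subbase-regular} on the two generator families implicit ("by construction"); your explicit treatment — the descent of $\rb_X$ to $\rb_S$ under intersection with $S$, \axref{CR} in $X$ for the first family, and the observation that $C_{n+1}$ witnesses the expansion of $C_n$ for the second — is exactly the intended argument, correctly filled in.
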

\begin{proof}
First, observe that $\mathcal{B}_S$ consists of covers.
For sets of the form $\{ V \cap S \mid V \in D \}$, this follows from the fact that $D$ is a cover of $X$.
For sets of the form $\{ V \mid V = V_1 \rb_X \ldots \rb_X V_n = S \}$, this follows from \rlem{rb-point} and the openness of $S$.
The regularity of $(S,\overline{\mathcal{B}_S})$ follows from \rlem{subbase-regular} since covers in $\mathcal{B}_S$ satisfy the conditions of this lemma by construction.
\end{proof}

The following lemma establishes that neighborhoods in $(S,\overline{\mathcal{B}_X})$ coincide with those in $X$:

\begin{lem}[subspace-neighborhood]
Let $S$ be an open subset of a cover space $X$.
If $x$ is a point in $S$ and $U$ is a subset of $S$, then $U$ is a neighborhood of $x$ in $S$ if and only if it is a neighborhood of $x$ in $X$.
\end{lem}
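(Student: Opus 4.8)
Both conditions are instances of the single relation $\{x\} \rb U$: by definition $U$ is a neighborhood of $x$ in $S$ precisely when $\{x\} \rb_S U$, and a neighborhood of $x$ in $X$ precisely when $\{x\} \rb_X U$, where $\rb_S$ refers to the structure $\overline{\mathcal{B}_S}$. I would prove the two implications separately. For the direction from $X$ to $S$, suppose $\{x\} \rb_X U$, so that $D := \{ W \subseteq X \mid x \in W \implies W \subseteq U \} \in \mathcal{C}_X$. The restricted cover $\{ W \cap S \mid W \in D \}$ is one of the generating covers of $\mathcal{B}_S$, hence lies in $\overline{\mathcal{B}_S}$; using $x \in S$ and $U \subseteq S$, each $W \cap S$ satisfies $x \in W \cap S \implies W \cap S \subseteq U$, so this cover consists of members of $\{ W' \subseteq S \mid x \in W' \implies W' \subseteq U \}$, and \axref{CE} makes the latter a Cauchy cover of $S$, i.e. $\{x\} \rb_S U$. (Equivalently, the inclusion $S \hookrightarrow X$ is a cover map.)

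The reverse implication is the substantive one, and the plan is to route it through a regularity property of $\overline{\mathcal{B}_S}$ expressed in terms of the \emph{ambient} relation $\rb_X$ rather than the intrinsic one:
\[ (\dagger)\qquad \text{for every } E \in \overline{\mathcal{B}_S}, \quad \{ V \mid \exists W \in E,\ V \rb_X W \} \in \overline{\mathcal{B}_S}. \]
This is exactly the statement one obtains by running the induction of \rlem{subbase-regular} for the relation $\rb_X$: the three algebraic hypotheses on $\rb_X$ are furnished by \rprop{rb-props}, and the generating covers of $\mathcal{B}_S$ meet the base requirement. For a chain cover $\{ V \mid V \rb_X V_1 \rb_X \dots \rb_X V_n \rb_X S \}$, prepending one more $\rb_X$-step lands in the chain cover of length $n+1$, again in $\mathcal{B}_S$; in particular the trivial cover $\{S\}$ causes no trouble because $\{ V \mid V \rb_X S \}$ is already the $n=0$ chain cover, so no separate ``$S \rb_X S$'' hypothesis is needed. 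For a restricted cover $\{ V \cap S \mid V \in D \}$ with $D \in \mathcal{C}_X$, one observes that $W \rb_X V$ and $W \rb_X S$ together give $W \rb_X (V \cap S)$ by \rprop{rb-props}, and then combines the cover $\{ W \mid \exists V \in D,\ W \rb_X V \}$ (Cauchy in $X$ by \axref{CR}) with the cover $\{ W \mid W \rb_X S\}$ via \axref{CG} to produce an element of $\overline{\mathcal{B}_S}$ refining $\{ W \mid \exists V \in D,\ W \rb_X (V \cap S) \}$. The only genuinely non-trivial inductive case, \axref{CG}, is precisely the one already carried out in the proof of \rlem{subbase-regular}, so it need not be redone here.

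Granting $(\dagger)$, the conclusion is immediate. Assume $U$ is a neighborhood of $x$ in $S$, i.e. $C := \{ W \subseteq S \mid x \in W \implies W \subseteq U \} \in \overline{\mathcal{B}_S}$. Applying $(\dagger)$ to $C$ shows that $\{ V \mid \exists W \in C,\ V \rb_X W \}$ is a Cauchy cover of $S$; since $x \in S$, some member $V$ of it contains $x$, witnessed by some $W \in C$ with $V \rb_X W$. Then $V \subseteq W$ forces $x \in W$, whence $W \subseteq U$ by the definition of $C$, and finally $\{x\} \subseteq V \rb_X W \subseteq U$ yields $\{x\} \rb_X U$ by \rprop{rb-props}. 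Thus $U$ is a neighborhood of $x$ in $X$.

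The main obstacle is isolating $(\dagger)$ and recognizing that it is the right device. A naive attempt to transfer neighborhoods by inducting directly on the generation of $\overline{\mathcal{B}_S}$ runs into a synchronization problem at the \axref{CG} step — one must match the component of the outer cover $D$ containing a given set with the correct dependent cover $D_V$, and doing so constructively for the sets not containing $x$ is delicate. Packaging this work once, as $\rb_X$-regularity of the subbase in the style of \rlem{subbase-regular}, avoids repeating that argument and makes the transfer of neighborhoods a one-line consequence of \rprop{rb-props}.
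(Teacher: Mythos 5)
Your proof is correct, but it is organized around a different intermediate statement than the paper's. The paper handles the substantive direction by introducing the point-specific filter $F_x = \{ W \subseteq S \mid \{x\} \rb_X W \}$, checking that it meets every generating cover in $\mathcal{B}_S$ (using \rlem{rb-point} and the openness of $S$), and then invoking \rlem{closure-filter} to conclude that $F_x$ meets the cover $\{ W \subseteq S \mid x \in W \implies W \subseteq U \}$; the member it meets is the desired $W$ with $\{x\} \rb_X W \subseteq U$. Your $(\dagger)$ is instead a global regularity statement about all of $\overline{\mathcal{B}_S}$ with respect to the ambient relation $\rb_X$, from which the paper's intermediate claim follows by picking the element of the refined cover that contains $x$. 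Both arguments are inductions on the generation of $\overline{\mathcal{B}_S}$; the paper absorbs the \axref{CG} step into the closure of a filter under finite intersections (a lighter tool), while you absorb it into the machinery of \rlem{subbase-regular}, at the price of proving something stronger than is needed for a single point $x$. One caveat on the citation: \rlem{subbase-regular} as stated requires $S \rb_X S$, which fails for a general open $S$ (e.g.\ $(0,1) \subseteq \mathbb{R}$), so it cannot be applied as a black box; you correctly note that this hypothesis is used only in the \axref{CT} case and substitute the $n=0$ chain cover there, but this does mean re-running the induction rather than citing the lemma verbatim. Your base-case verifications (prepending a $\rb_X$-step to pass to the chain cover of length $n+1$; combining the \axref{CR}-cover of $D$ with the $n=0$ chain cover via \axref{CG} and \rprop{rb-props} for the restricted covers) are sound, and your forward direction coincides with the paper's observation that the inclusion $S \to X$ is a cover map together with \rlem{cover-map-rb}.
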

\begin{proof}
Since the inclusion function $S \to X$ is a cover map, \rlem{cover-map-rb} implies the ``if'' direction.
For the converse, observe that $F_x = \{ W \subseteq S \mid \{ x \} \rb_X W \}$ is a filter on $S$.
It is straightforward to verify that this filter intersects every element of $\mathcal{B}_X$ using \rlem{rb-point} and the fact that $S$ is open.
By \rlem{closure-filter}, $F_x$ also intersects every element of $\overline{\mathcal{B}_X}$.
If $U$ is a neighborhood of $x$ in $S$, then $\{ W \mid x \in W \implies W \subseteq U \}$ belongs to $\overline{\mathcal{B}_X}$.
Consequently, there exists a set $W$ such that $\{ x \} \rb_X W$ and $W \subseteq U$.
Thus, $U$ is a neighborhood of $x$ in $X$.
\end{proof}

We can now show that $(S,\overline{\mathcal{B}_S})$ is complete whenever $X$ is complete:

\begin{prop}
Let $S$ be an open subset of a complete cover space $X$.
Then $(S,\overline{\mathcal{B}_S})$ is a complete cover space.
\end{prop}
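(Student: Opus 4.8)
The plan is to verify the two defining requirements of completeness for $(S,\overline{\mathcal{B}_S})$ directly: that it is separated, and that every Cauchy filter on $S$ is equivalent to the neighborhood filter of some point of $S$. The two essential tools are the inclusion $\iota \colon S \to X$, which is a cover map (the preimage of a Cauchy cover $D$ of $X$ is $\{ V \cap S \mid V \in D \} \in \mathcal{B}_S$) and hence Cauchy, and \rlem{subspace-neighborhood}, which identifies the neighborhoods of a point of $S$ with its $X$-neighborhoods that happen to lie in $S$.

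For separatedness, I would take $x,y \in S$ that are equivalent in $S$, so that $x^{\wedge_S} = y^{\wedge_S}$ by \rlem{separated-char}. Given any $X$-neighborhood $U$ of $x$, the set $U \cap S$ is again an $X$-neighborhood of $x$ by \rprop{rb-props} (since $\{x\} \rb_X U$ and $\{x\} \rb_X S$), and it lies in $S$, so by \rlem{subspace-neighborhood} it is an $S$-neighborhood of $x$, hence of $y$, hence an $X$-neighborhood of $y$; therefore so is $U$. By symmetry $x^{\wedge_X} = y^{\wedge_X}$, and since $X$ is separated this forces $x = y$.

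For the filter condition, let $F$ be a Cauchy filter on $S$. I would push it forward to the Cauchy filter $\iota(F)$ on $X$ and put $x = \iota(F)^\vee$, the unique point of $X$ with $x^{\wedge_X}$ equivalent to $\iota(F)$; since $x^{\wedge_X}$ is regular, \rprop{regular-minimal} gives $x^{\wedge_X} \subseteq \iota(F)$. The crucial step is to show $x \in S$: because $F$ is Cauchy it meets the cover $\{ V \mid V \rb_X S \} \in \mathcal{B}_S$, so some $V_0 \in F$ satisfies $V_0 \rb_X S$; as $V_0 \subseteq S$ we have $V_0 \in \iota(F)$, and \rlem{filter-point-char} then yields $S \in x^{\wedge_X}$, i.e.\ $\{x\} \rb_X S$, whence $x \in S$ by \rprop{rb-props}. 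It then remains to prove $F$ equivalent to $x^{\wedge_S}$, for which the inclusion $x^{\wedge_S} \subseteq F$ suffices (a Cauchy filter contained in another being equivalent to it): if $U \in x^{\wedge_S}$, then $U \subseteq S$ and, by \rlem{subspace-neighborhood}, $U$ is an $X$-neighborhood of $x$, so $U \in x^{\wedge_X} \subseteq \iota(F)$; since $\iota^{-1}(U) = U$, this gives $U \in F$.

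The step I expect to be the main obstacle is precisely establishing $x \in S$. This is where the non-obvious part of the definition of $\mathcal{B}_S$ — the covers $\{ V \mid V \rb_X V_1 \rb_X \cdots \rb_X V_n \rb_X S \}$ — does its work: already the case $n = 0$ forces every $S$-Cauchy filter to contain a set rather below $S$, which pins the $X$-limit of its pushforward inside the open set $S$ rather than on its boundary. Everything else reduces to transporting neighborhoods and filters between $S$ and $X$ through \rlem{subspace-neighborhood} and invoking the completeness of $X$.
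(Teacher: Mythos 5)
Your proposal is correct and follows essentially the same route as the paper: push a Cauchy filter on $S$ forward along the inclusion to a Cauchy filter $G$ on $X$, use the $n=0$ cover $\{ V \mid V \rb_X S \}$ from $\mathcal{B}_S$ together with \rlem{filter-point-char} to place $G^\vee$ inside $S$, and transport neighborhoods back via \rlem{subspace-neighborhood}. The only cosmetic difference is that the paper dispatches separatedness in one line by citing \rprop{embedding-injective}, whereas you verify it by hand; both arguments reduce to the separatedness of $X$ through the inclusion.
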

\begin{proof}
Since the inclusion function $S \to X$ is a cover map, \rprop{embedding-injective} implies that $S$ is separated.
We now show that $S$ is complete.
Suppose $F$ is a Cauchy filter on $S$.
Define $G = \{ V \mid V \cap S \in F \}$, which is a Cauchy filter on $X$.
By \rlem{subspace-neighborhood}, it suffices to show that $G^\vee$ belongs to $S$.

Since $\{ V \mid V \rb_X S \}$ is a Cauchy cover of $S$, there exists $V \in F$ such that $V \rb_X S$.
Now, \rlem{filter-point-char} implies that $S$ is a neighborhood of $G^\vee$.
This completes the proof.
\end{proof}

We now return to the problem of proving that the inverse function $(-)^{-1} : \mathbb{R}_* \to \mathbb{R}_*$ is a cover map.
For this, we assume that $\mathbb{R}_*$ is equipped with the cover space structure as described above.
To proceed, we provide a simpler characterization of this structure on $\mathbb{R}_*$:

\begin{lem}
Let $V$ be a subset of $\mathbb{R}$.
The following conditions are equivalent:
\begin{enumerate}
\item \label{it:inv-srb} $V \rb^s_\mathbb{R} \mathbb{R}_*$.
\item \label{it:inv-rb} $V \rb_\mathbb{R} \mathbb{R}_*$.
\item \label{it:inv-char} There exists a rational $\varepsilon > 0$ such that $V \cap B_\varepsilon(0) = \varnothing$.
\end{enumerate}
\end{lem}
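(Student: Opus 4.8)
The plan is to prove the cycle \eqref{it:inv-srb} $\Rightarrow$ \eqref{it:inv-rb} $\Rightarrow$ \eqref{it:inv-char} $\Rightarrow$ \eqref{it:inv-srb}, which yields all three equivalences. The first implication is immediate: the first item of \rprop{rbs-props} states that $V \rb^s_\mathbb{R} \mathbb{R}_*$ always entails $V \rb_\mathbb{R} \mathbb{R}_*$, so no work is needed there.

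For \eqref{it:inv-rb} $\Rightarrow$ \eqref{it:inv-char}, I would exploit the distinguished point $0$. Assuming $V \rb_\mathbb{R} \mathbb{R}_*$, the collection $\mathcal{W} = \{ W \mid \overlap{W}{V} \implies W \subseteq \mathbb{R}_* \}$ is a Cauchy cover of $\mathbb{R}$. The neighborhood filter $0^\wedge$ is a Cauchy filter, so it satisfies the Cauchy condition and hence intersects $\mathcal{W}$: there is a neighborhood $W$ of $0$ with $\overlap{W}{V} \implies W \subseteq \mathbb{R}_*$. Since $0 \in W$ but $0 \notin \mathbb{R}_*$ (the element $0$ is not invertible), $W$ is not a subset of $\mathbb{R}_*$, and therefore $W$ cannot intersect $V$, i.e. $W \cap V = \varnothing$. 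Because the underlying topology of $\mathbb{R}$ is the metric topology, $W$ contains a ball $B_\varepsilon(0)$ for some rational $\varepsilon > 0$, and then $V \cap B_\varepsilon(0) = \varnothing$. The advantage of routing through the Cauchy filter $0^\wedge$ rather than a direct refinement argument is that it sidesteps any question of whether every Cauchy cover of a metric space is refined by a ball cover.

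For \eqref{it:inv-char} $\Rightarrow$ \eqref{it:inv-srb}, given a rational $\varepsilon > 0$ with $V \cap B_\varepsilon(0) = \varnothing$, I would show that $\{ \mathbb{R} \backslash V, \mathbb{R}_* \}$ is a Cauchy cover by exhibiting the ball cover $\{ B_{\varepsilon/3}(x) \mid x \in \mathbb{R} \}$ as a refinement; closure of Cauchy covers under refinement \axref{CE} then finishes the argument. The crucial point is constructive: for each $x$, cotransitivity of the order on the reals applied to $\varepsilon/3 < 2\varepsilon/3$ gives $\abs{x} > \varepsilon/3$ or $\abs{x} < 2\varepsilon/3$. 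In the first case, every $y \in B_{\varepsilon/3}(x)$ satisfies $\abs{y} \geq \abs{x} - \abs{y - x} > 0$, so $y \in \mathbb{R}_*$ and thus $B_{\varepsilon/3}(x) \subseteq \mathbb{R}_*$. In the second case, $\abs{y} \leq \abs{x} + \abs{y - x} < \varepsilon$, so $y \in B_\varepsilon(0) \subseteq \mathbb{R} \backslash V$ and thus $B_{\varepsilon/3}(x) \subseteq \mathbb{R} \backslash V$. Hence each basic ball lands in one of the two sets, establishing the refinement.

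The main obstacle is this last implication, where one must avoid deciding either ``$x = 0$'' or ``$y \in V$''. The overlapping thresholds $\varepsilon/3$ and $2\varepsilon/3$ are chosen precisely so that the two alternatives produced by cotransitivity each yield a clean inclusion with room to spare, so that no decision about the exact location of $x$ relative to $0$ is ever required.
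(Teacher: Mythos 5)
Your proof is correct and follows essentially the same route as the paper: the first implication is immediate, the second extracts a neighborhood of $0$ from the Cauchy cover witnessing $V \rb_\mathbb{R} \mathbb{R}_*$ (you via the Cauchy filter $0^\wedge$, the paper via \axref{CR}) and uses $0 \notin \mathbb{R}_*$ to force disjointness from $V$, and the third refines $\{\mathbb{R} \backslash V, \mathbb{R}_*\}$ by a uniform ball cover using cotransitivity (the paper uses radius $\varepsilon/4$ with a three-way split via \axref{DS}; your $\varepsilon/3$ with a two-way split on $\abs{x}$ is an equivalent variant).
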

\begin{proof}
It is clear that \eqref{it:inv-srb} implies \eqref{it:inv-rb}.
We now show that \eqref{it:inv-rb} implies \eqref{it:inv-char}. 
Suppose $V \rb_\mathbb{R} \mathbb{R}_*$.
Then the family $\{ W' \mid \exists W, W' \rb_\mathbb{R} W, \overlap{W}{V} \implies W \subseteq \mathbb{R}_* \}$ is a cover of $\mathbb{R}$.
Consequently, there exists a neighborhood $W$ of $0$ such that $\overlap{W}{V} \implies W \subseteq \mathbb{R}_*$.
Since $0 \notin \mathbb{R}_*$, this neighborhood $W$ does not intersect $V$.
Therefore, there exists an open ball around $0$ that does not intersect $V$.

Finally, we show that \eqref{it:inv-char} implies \eqref{it:inv-srb}.
To do so, it suffices to show that the Cauchy cover $\{ B_{\varepsilon/4}(x) \mid x \in \mathbb{R} \}$ refines $\{ \mathbb{R} \setminus V, \mathbb{R}_* \}$.
Let $x \in \mathbb{R}$.
By \axref{DS}, one of the following holds: $-\varepsilon/2 < x < \varepsilon/2$, $x < -\varepsilon/4$, or $x > \varepsilon/4$.
If $-\varepsilon/2 < x < \varepsilon/2$, then $B_{\varepsilon/4}(x) \subseteq \mathbb{R} \setminus V$.
If $x < -\varepsilon/4$ or $x > \varepsilon/4$, then $B_{\varepsilon/4}(x) \subseteq \mathbb{R}_*$.
This completes the proof.
\end{proof}

\begin{prop}
The cover space structure on $\mathbb{R}_*$ is generated by the following subbase:
\[ \mathcal{B}_{\mathbb{R}_*} = \{ \{ B_\varepsilon(x) \cap \mathbb{R}_* \mid x \in \mathbb{Q}, x \neq 0 \} \mid \varepsilon > 0 \} \cup \{ \{ (- \infty, - \varepsilon) \cup (\varepsilon, \infty) \mid \varepsilon > 0 \} \}. \]
\end{prop}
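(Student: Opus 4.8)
The plan is to unfold the phrase ``generated by''. Writing $\mathcal{C}_{\mathbb{R}_*}$ for the structure on $\mathbb{R}_*$ introduced earlier as $\overline{\mathcal{B}_S}$ with $X=\mathbb{R}$ and $S=\mathbb{R}_*$, the assertion is exactly that $\overline{\mathcal{B}_{\mathbb{R}_*}}=\overline{\mathcal{B}_S}$. Since $\overline{(-)}$ is a closure operator (it produces the minimal precover structure containing a subbase), it suffices to prove the two inclusions $\mathcal{B}_{\mathbb{R}_*}\subseteq\overline{\mathcal{B}_S}$ and $\mathcal{B}_S\subseteq\overline{\mathcal{B}_{\mathbb{R}_*}}$, and in each case the verification will exhibit an already-Cauchy cover that \emph{refines} the target cover, so that \axref{CE} applies. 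Throughout I abbreviate $W_\delta=(-\infty,-\delta)\cup(\delta,\infty)$, so that $\mathcal{B}_{\mathbb{R}_*}$ consists of the ball covers $A_\varepsilon=\{B_\varepsilon(x)\cap\mathbb{R}_*\mid x\in\mathbb{Q},\,x\neq 0\}$ and the single cover $C_B=\{W_\delta\mid\delta>0\}$, while $\mathcal{B}_S$ consists of the covers $\{V\cap\mathbb{R}_*\mid V\in D\}$ for $D\in\mathcal{C}_{\mathbb{R}}$ together with the chain covers $T_n=\{V\mid V\rb_{\mathbb{R}}V_1\rb_{\mathbb{R}}\cdots\rb_{\mathbb{R}}V_n\rb_{\mathbb{R}}\mathbb{R}_*\}$. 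The preceding lemma, characterizing $V\rb_{\mathbb{R}}\mathbb{R}_*$ as ``$V\cap B_\varepsilon(0)=\varnothing$ for some rational $\varepsilon>0$'', is the main tool.

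For $\mathcal{B}_S\subseteq\overline{\mathcal{B}_{\mathbb{R}_*}}$ I would treat the two families in turn. Given $D\in\mathcal{C}_{\mathbb{R}}$, the metric structure on $\mathbb{R}$ gives $D$ a positive Lebesgue number, i.e.\ some $\varepsilon>0$ with each $B_\varepsilon(x)$ contained in a member of $D$; hence each $B_\varepsilon(x')\cap\mathbb{R}_*\in A_\varepsilon$ lies in some $V\cap\mathbb{R}_*$, so $A_\varepsilon$ refines $\{V\cap\mathbb{R}_*\mid V\in D\}$ and \axref{CE} places the latter in $\overline{\mathcal{B}_{\mathbb{R}_*}}$. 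For a chain cover $T_n$, I would show $C_B$ refines it by checking $W_\delta\in T_n$ for every $\delta>0$: choosing $\delta>\delta_1>\cdots>\delta_n>0$ produces a descending chain $W_\delta\rb_{\mathbb{R}}W_{\delta_1}\rb_{\mathbb{R}}\cdots\rb_{\mathbb{R}}W_{\delta_n}\rb_{\mathbb{R}}\mathbb{R}_*$. The two facts needed are $W_{\delta_n}\rb_{\mathbb{R}}\mathbb{R}_*$, immediate from the preceding lemma, and $W_\delta\rb_{\mathbb{R}}W_{\delta'}$ for $0<\delta'<\delta$, which I would get from a routine metric estimate: with $\rho=(\delta-\delta')/2$, the Cauchy cover $\{B_\rho(x)\mid x\in\mathbb{R}\}$ refines $\{W\mid\overlap{W}{W_\delta}\implies W\subseteq W_{\delta'}\}$, because any $\rho$-ball meeting $W_\delta$ has all of its points within distance $2\rho=\delta-\delta'$ of $W_\delta$, hence inside $W_{\delta'}$.

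Conversely, for $\mathcal{B}_{\mathbb{R}_*}\subseteq\overline{\mathcal{B}_S}$ I would use the same two comparisons in the opposite roles. Fixing any $n$, every $V\in T_n$ satisfies $V\rb_{\mathbb{R}}\mathbb{R}_*$ by collapsing the chain through \rprop{rb-props}, so by the preceding lemma $V\subseteq W_{\varepsilon/2}\in C_B$ for a suitable $\varepsilon$; thus $T_n$ refines $C_B$ and \axref{CE} gives $C_B\in\overline{\mathcal{B}_S}$. For $A_\varepsilon$, I would refine it by the $\mathcal{B}_S$-cover $\{B_{\varepsilon/2}(x)\cap\mathbb{R}_*\mid x\in\mathbb{R}\}$: given $x\in\mathbb{R}$, pick a rational $p\in(x-\varepsilon/2,x)$ and a rational $q\in(x,x+\varepsilon/2)$; since $p<q$ they are distinct, and testing $p=0$ (decidable for rationals) lets me select a nonzero rational $x'\in\{p,q\}$ with $|x-x'|<\varepsilon/2$, whence $B_{\varepsilon/2}(x)\subseteq B_\varepsilon(x')$ and $B_{\varepsilon/2}(x)\cap\mathbb{R}_*\subseteq B_\varepsilon(x')\cap\mathbb{R}_*\in A_\varepsilon$. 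Applying \axref{CE} gives $A_\varepsilon\in\overline{\mathcal{B}_S}$, and combining both inclusions yields $\overline{\mathcal{B}_{\mathbb{R}_*}}=\overline{\mathcal{B}_S}$.

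The only genuinely non-formal points are the metric verification of $W_\delta\rb_{\mathbb{R}}W_{\delta'}$ (and the associated construction of the descending chain) and the constructive selection of a nonzero rational center; everything else is bookkeeping. I expect the main pitfall to be purely directional: in each of the four cases one must keep straight \emph{which} cover refines \emph{which}, since \axref{CE} only propagates Cauchyness from a refining cover to the coarser one.
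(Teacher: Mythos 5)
Your overall strategy --- proving the two inclusions between $\overline{\mathcal{B}_{\mathbb{R}_*}}$ and $\overline{\mathcal{B}_S}$, matching the ball covers against the restricted covers and $C_B$ against the chain covers $T_n$ --- is exactly the paper's (very terse) argument, and the parts concerning $C_B$ versus $T_n$, the metric verification of $W_\delta \rb_{\mathbb{R}} W_{\delta'}$, and the constructive selection of a nonzero rational center are all correct. The gap is in the first half of the inclusion $\mathcal{B}_S \subseteq \overline{\mathcal{B}_{\mathbb{R}_*}}$: you assert that every $D \in \mathcal{C}_{\mathbb{R}}$ has a positive Lebesgue number. That is false. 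A Lebesgue number exists for \emph{uniform} covers, but $\mathcal{C}_{\mathbb{R}}$ is the closure $\overline{\mathcal{U}_{\mathbb{R}}}$ under \axref{CT}, \axref{CE}, \axref{CG}, and \axref{CG} produces covers whose mesh varies from place to place. For instance, taking $C = \{ B_1(n) \mid n \in \mathbb{Z} \}$ and $D_{B_1(n)} = \{ B_{1/(1+\abs{n})}(x) \mid x \in \mathbb{R} \}$, the cover $\{ B_1(n) \cap B_{1/(1+\abs{n})}(x) \mid n \in \mathbb{Z}, x \in \mathbb{R} \}$ is Cauchy but admits no $\varepsilon > 0$ such that every $\varepsilon$-ball is contained in a member. (Compare \rcor{tb-cauchy-uni}: Cauchy covers coincide with the base of uniform covers only in totally bounded spaces, and $\mathbb{R}$ is not totally bounded; this extra generality is precisely what distinguishes cover spaces from uniform spaces.) So as written, this step fails for non-uniform $D$.

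The conclusion of that step is nevertheless true; what is missing is a reduction to the uniform case. One shows by induction on the generation of $D \in \overline{\mathcal{U}_{\mathbb{R}}}$ that $\{ V \cap \mathbb{R}_* \mid V \in D \}$ lies in the closure of $\{ \{ V \cap \mathbb{R}_* \mid V \in D' \} \mid D' \in \mathcal{U}_{\mathbb{R}} \}$: the cases \axref{CT} and \axref{CE} are immediate because restriction preserves refinement, and for \axref{CG} the restriction of $\{ U \cap V \mid U \in C, V \in D_U \}$ is refined by the \axref{CG}-combination of the restricted covers. This reduces the problem to uniform $D$, where your Lebesgue-number argument (followed by the passage to nonzero rational centers) does apply and shows that some $A_\varepsilon$ refines the restriction. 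With that repair the proof is complete and agrees with the paper's, which compresses this entire step into the single sentence that the first family ``generates the transferred cover space structure on $\mathbb{R}_*$''.
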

\begin{proof}
The first set in this union generates the transferred cover space structure on $\mathbb{R}_*$, while the second set generates the same covers as
\[ \{ \{ V \mid V \rb_\mathbb{R} V_1 \rb_\mathbb{R} \ldots \rb_\mathbb{R} V_n \rb_\mathbb{R} \mathbb{R}_* \} \mid n \in \mathbb{N} \} \]
as shown by the previous lemma.
Therefore, this subbase generates the same set of covers as the one described earlier for a general open set.
\end{proof}

We are now ready to prove that the inverse function is a cover map:

\begin{prop}
The inverse function $(-)^{-1} : \mathbb{R}_* \to \mathbb{R}_*$ is a cover map.
\end{prop}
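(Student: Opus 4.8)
The plan is to check the cover-map condition on a subbase of the codomain. Write $f = (-)^{-1}$ and observe that the family of covers $D$ of the codomain $\mathbb{R}_*$ for which $\{ f^{-1}(V) \mid V \in D \}$ is a Cauchy cover of the domain is closed under \axref{CT}, \axref{CE}, and \axref{CG}, because $f^{-1}$ preserves the whole space, inclusions, and finite intersections. Since the codomain carries the structure $\overline{\mathcal{B}_{\mathbb{R}_*}}$, it therefore suffices to verify that $f^{-1}$ sends each cover of the subbase $\mathcal{B}_{\mathbb{R}_*}$ to a Cauchy cover; that is, I must treat the ball covers $C_\varepsilon = \{ B_\varepsilon(x) \cap \mathbb{R}_* \mid x \in \mathbb{Q},\ x \neq 0 \}$ and the single cover $D_0 = \{ (-\infty,-\varepsilon) \cup (\varepsilon,\infty) \mid \varepsilon > 0 \}$.

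The cover $D_0$ is immediate: since $1/y$ has absolute value exceeding $\varepsilon$ exactly when $|y| < 1/\varepsilon$, we get $f^{-1}(D_0) = \{ B_\delta(0) \cap \mathbb{R}_* \mid \delta > 0 \}$. Every ball $B_\eta(x) \cap \mathbb{R}_*$ is bounded, hence contained in $B_\delta(0) \cap \mathbb{R}_*$ for $\delta = |x| + \eta$, so any ball cover $C_\eta$ refines $f^{-1}(D_0)$, and \axref{CE} shows $f^{-1}(D_0)$ is Cauchy.

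The ball covers $C_\varepsilon$ are the crux. Here $f^{-1}(C_\varepsilon) = \{ P_x \mid x \in \mathbb{Q},\ x \neq 0 \}$ with $P_x = \{ y \in \mathbb{R}_* \mid |1/y - x| < \varepsilon \}$, and the obstacle is that inversion distorts scale without bound near $0$, so no single metric cover of the domain refines $f^{-1}(C_\varepsilon)$. To get around this I apply \axref{CG} with the away-from-zero cover as the outer cover and magnitude-adapted ball covers as the inner ones. Concretely, take the outer cover $D_0$, with pieces $A_\delta = (-\infty,-\delta) \cup (\delta,\infty)$, and assign to $A_\delta$ the ball cover $C_{\eta(\delta)}$ where $\eta(\delta) = \varepsilon \delta^2 / 2$. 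On $A_\delta$ the map $y \mapsto 1/y$ is Lipschitz with constant $\delta^{-2}$, while the whole image $f(A_\delta)$ lies in $B_{1/\delta}(0)$; combining these two estimates shows that each piece $A_\delta \cap V$ with $V \in C_{\eta(\delta)}$ has image of diameter below $\varepsilon$, even for pieces meeting both rays. Choosing a nonzero rational $x$ within $\varepsilon$ of $1/y^\ast$ for a reference point $y^\ast$ of the piece then yields $A_\delta \cap V \subseteq P_x$ for the doubled radius $2\varepsilon$. By \axref{CG} the cover $\{ A_\delta \cap V \mid \delta > 0,\ V \in C_{\eta(\delta)} \}$ is Cauchy, and it refines $f^{-1}(C_{2\varepsilon})$; since $\varepsilon$ was arbitrary, \axref{CE} shows every $f^{-1}(C_\varepsilon)$ is Cauchy, and by the reduction $f$ is a cover map.

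I expect the scale distortion near $0$ to be the main difficulty: a uniform pullback fails, and the purpose of the enriched structure on $\mathbb{R}_*$, specifically the covers $(-\infty,-\varepsilon)\cup(\varepsilon,\infty)$ in $\mathcal{B}_{\mathbb{R}_*}$, is exactly to provide the outer cover that makes the \axref{CG} combination work. The remaining care is bookkeeping: confirming that the Lipschitz bound together with the crude containment $f(A_\delta) \subseteq B_{1/\delta}(0)$ controls the image diameter in all cases, and absorbing the rational approximation of $1/y^\ast$ into the factor-of-two change of radius.
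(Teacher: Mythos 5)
Your argument follows essentially the same route as the paper's: reduce to the two families in the subbase $\mathcal{B}_{\mathbb{R}_*}$, dispose of the away-from-zero cover by noting that its preimage $\{ B_{1/\varepsilon}(0) \cap \mathbb{R}_* \mid \varepsilon > 0 \}$ is refined by a bounded ball cover, and handle the ball covers via an \axref{CG} combination of $\{ (-\infty,-\delta) \cup (\delta,\infty) \}_{\delta > 0}$ with ball covers of radius on the order of $\varepsilon\delta^2$, using the identity $\abs{1/y - 1/z} = \abs{y-z}/(\abs{y}\abs{z})$ on points of absolute value exceeding $\delta$. The paper's refining cover is $\{ ((-\infty,-\delta)\cup(\delta,\infty)) \cap B_{\varepsilon\delta^2/(1+\varepsilon\delta)}(x) \}$ with $x$ a nonzero rational, and each piece is sent to $e^{-1}(B_\varepsilon(1/x) \cap \mathbb{R}_*)$ using the \emph{center} $x$; the radius is tuned so that any point $z$ of the piece forces $\abs{x} > \delta/(1+\varepsilon\delta)$, making the containment a single universally quantified statement that is vacuously true for empty pieces.

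The one step of yours that does not go through as written is the final selection of ``a reference point $y^*$ of the piece'' together with a nonzero rational within $\varepsilon$ of $1/y^*$. The refinement condition quantifies over \emph{all} pieces $A_\delta \cap B_{\eta(\delta)}(x)$, including empty ones, and constructively one cannot in general case-split on whether a subset of $\mathbb{R}$ is inhabited; so the assignment of a target element to each piece is not defined. This is repairable: either observe that for rational $\delta$, $\eta(\delta)$, $x$ the inhabitedness of $A_\delta \cap B_{\eta(\delta)}(x)$ is decidable (it is inhabited iff $x + \eta(\delta) > \delta$ or $x - \eta(\delta) < -\delta$) and send empty pieces to any fixed element, or, more cleanly, imitate the paper and use the center $x$ as the reference point, deriving the lower bound $\abs{x} > \delta - \eta(\delta)$ from the hypotheses on an arbitrary point of the piece so that no case split is needed. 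With that adjustment (and your harmless doubling of the radius in place of the paper's $\varepsilon\delta^2/(1+\varepsilon\delta)$), the argument is complete and matches the paper's.
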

\begin{proof}
Let $e : \mathbb{R}_* \to \mathbb{R}_*$ denote the inverse function.
First, observe that the cover $\{ e^{-1}((- \infty, - \varepsilon) \cup (\varepsilon, \infty)) \mid \varepsilon > 0 \}$ is refined by the uniform cover consisting of open balls of radius $1$.
Thus, it suffices to show that $\{ e^{-1}(B_\varepsilon(y) \cap \mathbb{R}_*) \mid y \in \mathbb{Q}, y \neq 0 \}$ is a Cauchy cover for every $\varepsilon > 0$.

We will prove that this cover is refined by the Cauchy cover
\[ \left\{ ((- \infty, - \delta) \cup (\delta, \infty)) \cap B_{\frac{\varepsilon \delta^2}{1 + \varepsilon \delta}}(x) \mid \delta > 0, x \in \mathbb{Q}, x \neq 0 \right\}. \]
To establish this, we need to show that
\[ ((- \infty, - \delta) \cup (\delta, \infty)) \cap B_{\frac{\varepsilon \delta^2}{1 + \varepsilon \delta}}(x) \subseteq e^{-1}\left(B_\varepsilon\left(\frac{1}{x}\right) \cap \mathbb{R}_*\right). \]

Let $z$ be a real number such that $\abs{z} > \delta$ and $\abs{z - x} < \frac{\varepsilon \delta^2}{1 + \varepsilon \delta}$.
We need to show that $\abs{\frac{1}{z} - \frac{1}{x}} < \varepsilon$.
First, note that $\abs{x} \geq \abs{z} - \abs{z - x} > \delta - \frac{\varepsilon \delta^2}{1 + \varepsilon \delta} = \frac{\delta}{1 + \varepsilon \delta}$.
Finally, we can complete the proof: $\abs{\frac{1}{z} - \frac{1}{x}} = \frac{\abs{z - x}}{\abs{z} \abs{x}} < \varepsilon$.
\end{proof}

\section{Compactness}
\label{sec:compact}

In this section, we define compact and totally bounded spaces and use these notions to characterize cover maps for certain cover spaces.

A cover space $X$ is said to be \emph{totally bounded} if, for every Cauchy cover $C$, there exists a finite subcover of $C$.
Explicitly, this means there exist sets $U_1, \ldots, U_n \in C$ such that $X = U_1 \cup \ldots \cup U_n$.
A cover space is called \emph{compact} if it is both complete and totally bounded.

Note that we do not require the finite subcover $\{ U_1, \ldots, U_n \}$ to be Cauchy.
However, under mild additional assumptions, we can choose a finite Cauchy subcover:

\begin{prop}[cauchy-fin]
Let $X$ be a cover space with a strongly proper regular base $\mathcal{B}$.
Then $X$ is totally bounded if and only if every Cauchy cover of $X$ has a finite subcover that belongs to $\mathcal{B}$.
\end{prop}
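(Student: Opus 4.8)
The ``if'' direction is immediate: a finite subcover belonging to $\mathcal{B}$ is in particular a finite subcover, since every member of $\mathcal{B}$ is a cover, so this condition already forces $X$ to be totally bounded. The substance lies in the ``only if'' direction, and the plan there is to shrink a given Cauchy cover $C$ by one step of $\mathcal{B}$-rather-below, extract a finite subcover of the shrunk cover using total boundedness, and then lift this back to a finite subcover of $C$ that lives in $\mathcal{B}$.

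In detail, I would first invoke regularity of the base: since $\mathcal{B}$ satisfies \axref{BR}, \rlem{subbase-regular} guarantees that $C^\flat = \{ V \mid \exists U \in C, V \rb_\mathcal{B} U \}$ is again a Cauchy cover. Applying total boundedness to $C^\flat$ produces finitely many $V_1, \dots, V_n \in C^\flat$ with $X = V_1 \cup \dots \cup V_n$, and for each $i$ a finite existential elimination supplies a witness $U_i \in C$ with $V_i \rb_\mathcal{B} U_i$. Since $V_i \rb_\mathcal{B} U_i$ implies $V_i \subseteq U_i$, the finite family $\{ U_1, \dots, U_n \}$ already covers $X$ and sits inside $C$; the only remaining task is to show it belongs to $\mathcal{B}$.

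For that I would form the covers $R_i = \{ W \mid \overlap{W}{V_i} \implies W \subseteq U_i \}$, which lie in $\mathcal{B}$ by the definition of $\rb_\mathcal{B}$, and take their common refinement $R = \{ W_1 \cap \dots \cap W_n \mid W_i \in R_i \}$, which lies in $\mathcal{B}$ by finitely many applications of \axref{BI}. The crucial point is that any inhabited $W = W_1 \cap \dots \cap W_n$ is contained in some $U_j$: choosing $x \in W$ and using $X = \bigcup_i V_i$ gives $x \in V_j$ for some $j$, so that $\overlap{W_j}{V_j}$ and hence $W \subseteq W_j \subseteq U_j$. At this stage I would use strong properness of $\mathcal{B}$ to pass to the subcollection $R^{+}$ of inhabited members of $R$, which is still a cover in $\mathcal{B}$; by the previous observation $R^{+}$ refines $\{ U_1, \dots, U_n \}$, so \axref{BE} yields $\{ U_1, \dots, U_n \} \in \mathcal{B}$, as desired.

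The main obstacle is exactly this final lifting step, namely deducing $\{ U_1, \dots, U_n \} \in \mathcal{B}$ from the fact that only the inhabited members of $R$ refine it. Constructively one cannot simply discard the empty intersections $W_1 \cap \dots \cap W_n$ from $R$, and it is precisely the strong properness hypothesis that licenses replacing $R$ by its inhabited part while remaining inside $\mathcal{B}$; without it the argument has no way to remove the possibly empty pieces and breaks down.
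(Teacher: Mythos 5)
Your proof is correct and follows essentially the same route as the paper's: shrink the given cover via $\rb_\mathcal{B}$ using the regularity of the base, extract a finite subcover by total boundedness, form the common refinement of the covers witnessing $V_i \rb_\mathcal{B} U_i$ via \axref{BI}, discard the empty intersections using strong properness, and conclude with \axref{BE}. The paper compresses the last two steps by defining the refinement $E$ with the inhabitedness condition built in, but the argument is the same.
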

\begin{proof}
The ``if'' direction is obvious.
To prove the converse, suppose $C$ is a Cauchy cover of $X$.
Since $\mathcal{B}$ is regular, the set $\{ V \mid \exists U \in C, V \rb_\mathcal{B} U \}$ is also a Cauchy cover.
As $X$ is totally bounded, there exist sets $V_1, \ldots, V_n$ and corresponding sets $U_1, \ldots, U_n \in C$ such that $\{ V_1, \ldots, V_n \}$ is a cover of $X$ and $V_i \rb_\mathcal{B} U_i$ for all $i$.
Now, by \axref{BI} and the fact that $\mathcal{B}$ is strongly proper, the set
\[ E = \{ W_1 \cap \ldots \cap W_n \mid (\forall i, \overlap{W_i}{V_i} \implies W_i \subseteq U_i), \exists x \in \bigcap_i W_i \} \]
belongs to $\mathcal{B}$.

To complete the proof, we show that $E$ refines $\{ U_1, \ldots, U_n \}$.
Let $W_1, \ldots, W_n$ be sets satisfying $\overlap{W_i}{V_i} \implies W_i \subseteq U_i$ for all $i$, and suppose there exists a point $x \in \bigcap_i W_i$.
Since $\{ V_1, \ldots, V_n \}$ is a cover of $X$, there exists some $i$ such that $x \in V_i$.
It follows that $W_1 \cap \ldots \cap W_n \subseteq W_i \subseteq U_i$, which proves that $E$ refines $\{ U_1, \ldots, U_n \}$.
\end{proof}

For strongly regular bases, the conditions of the previous proposition can be relaxed.
To achieve this, we introduce the following definition:

\begin{defn}
Let $\mathcal{B}$ be a base for a cover space $X$.
We say that $\mathcal{B}$ is \emph{proper} if $C \cup \{ \varnothing \} \in \mathcal{B}$ implies $C \in \mathcal{B}$ for every cover $C$.
\end{defn}

\begin{example}
A cover space $X$ is proper if and only if the collection of all Cauchy covers of $X$ is a proper base.
\end{example}

\begin{example}
Any base of an inhabited cover space is proper.
\end{example}

Now, we are ready to prove a version of \rprop{cauchy-fin} for strongly regular bases:

\begin{prop}[cauchy-ws-fin]
Let $X$ be a cover space with a proper strongly regular base $\mathcal{B}$.
Then $X$ is totally bounded if and only if every Cauchy cover of $X$ has a finite subcover that belongs to $\mathcal{B}$.
\end{prop}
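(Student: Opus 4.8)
The plan is to follow the proof of \rprop{cauchy-fin}, substituting the strong rather-below relation $\rb^s_\mathcal{B}$ for $\rb_\mathcal{B}$ and using properness where that argument used strong properness. The ``if'' direction is immediate, since a finite subcover belonging to $\mathcal{B}$ is in particular a finite subcover. For the converse I would fix a Cauchy cover $C$ and construct a finite subcover of $C$ lying in $\mathcal{B}$.

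First I would promote strong regularity from the base covers to all Cauchy covers. One checks that $\rb^s_\mathcal{B}$ satisfies the three hypotheses of \rlem{subbase-regular}: $X \rb^s_\mathcal{B} X$ holds because $\{X\}$ refines $\{\varnothing,X\}$; upward closure in the right argument follows from \axref{BE}; and the meet condition follows from \axref{BI} together with the observation that the four-element intersection cover of $\{X\setminus V,U\}$ and $\{X\setminus V',U'\}$ refines $\{X\setminus(V\cap V'),\,U\cap U'\}$. Since $\mathcal{B}$ is a strongly regular base, the hypothesis of \rlem{subbase-regular} holds on $\mathcal{B}$, and its conclusion yields that $\{\,V \mid \exists U\in C,\ V\rb^s_\mathcal{B} U\,\}$ is a Cauchy cover. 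Total boundedness then produces finitely many sets $V_1,\dots,V_n$ with witnesses $U_1,\dots,U_n\in C$ such that $\{V_1,\dots,V_n\}$ covers $X$ and $V_i\rb^s_\mathcal{B} U_i$, that is, $\{X\setminus V_i,U_i\}\in\mathcal{B}$ for each $i$.

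Next I would take the $n$-fold intersection of these two-element covers. Iterating \axref{BI} gives
\[ E=\{\,W_1\cap\dots\cap W_n \mid \forall i,\ W_i\in\{X\setminus V_i,U_i\}\,\}\in\mathcal{B}. \]
The decisive simplification over \rprop{cauchy-fin} is that each member of $E$ comes labelled by its choice of factors: if some factor is $U_j$ the intersection lies in $U_j$, while the single all-complements choice gives $\bigcap_i(X\setminus V_i)=X\setminus\bigcup_i V_i=\varnothing$, because the $V_i$ cover $X$. Hence $E$ refines $\{U_1,\dots,U_n,\varnothing\}$, and by \axref{BE} this finite cover belongs to $\mathcal{B}$.

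Finally I would discard the empty set. From $V_i\rb^s_\mathcal{B} U_i$ one gets $V_i\subseteq U_i$ (via \rprop{rbs-props} and \rprop{rb-props}), so $\{U_1,\dots,U_n\}$ is a cover of $X$ and a subcover of $C$; since $\{U_1,\dots,U_n\}\cup\{\varnothing\}\in\mathcal{B}$ and $\mathcal{B}$ is proper, properness delivers $\{U_1,\dots,U_n\}\in\mathcal{B}$, the required finite subcover. I expect the only real obstacle to be this last point: the all-complements intersection contributes $\varnothing$ to $E$, and constructively one cannot simply absorb it into some $U_i$ (the cover $\{U_1,\dots,U_n\}$ may be empty, and inhabitedness is undecidable), so it must be removed by hand. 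This is exactly where plain properness replaces the strong properness of \rprop{cauchy-fin}, and it suffices only because strong regularity reduced each $V_i\rb^s_\mathcal{B} U_i$ to the two-element cover $\{X\setminus V_i,U_i\}$, leaving $\varnothing$ as the unique offending member.
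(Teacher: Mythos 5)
Your proof is correct and follows essentially the same route as the paper: pass to the Cauchy cover $\{V \mid \exists U \in C,\ V \rb^s_\mathcal{B} U\}$, extract a finite subcover $V_1,\dots,V_n$ with witnesses $U_i$, form the $n$-fold intersection cover $E$ via \axref{BI}, observe that $E$ refines $\{U_1,\dots,U_n,\varnothing\}$, and discard $\varnothing$ by properness. The only difference is that you make explicit the appeal to \rlem{subbase-regular} needed to apply strong regularity of $\mathcal{B}$ to an arbitrary Cauchy cover $C$, a step the paper leaves implicit.
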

\begin{proof}
The ``if'' direction is obvious.
To prove the converse, suppose $C$ is a Cauchy cover of $X$.
Since $\mathcal{B}$ is strongly regular, the set $\{ V \mid \exists U \in C, V \rb^s_\mathcal{B} U \}$ is also a Cauchy cover.
As $X$ is totally bounded, there exist sets $V_1, \ldots, V_n$ and corresponding sets $U_1, \ldots, U_n \in C$ such that $\{ V_1, \ldots, V_n \}$ forms a cover of $X$ and $V_i \rb^s_\mathcal{B} U_i$ for all $i$.
By \axref{BI}, the set
\[ E = \{ W_1 \cap \ldots \cap W_n \mid \forall i, W_i \in \{ X \backslash V_i, U_i \} \} \]
belongs to $\mathcal{B}$.

Since $\mathcal{B}$ is proper, it suffices to show that $E$ refines $\{ U_1, \ldots, U_n, \varnothing \}$.
Consider an element $W_1 \cap \ldots \cap W_n \in E$.
Either $W_i = X \backslash V_i$ for every $i$ or there exists $i$ such that $W_i = U_i$.
In the former case, $W_i \cap \ldots \cap W_n = \varnothing$ since $\{ V_1, \ldots, V_n \}$ is a cover.
In the latter case, $W_i \cap \ldots \cap W_n \subseteq W_i = U_i$.
Thus, $E$ refines $\{ U_1, \ldots, U_n, \varnothing \}$, completing the proof.
\end{proof}

The previous two propositions imply that Cauchy covers often take a simpler form in totally bounded cover spaces:

\begin{cor}[tb-cauchy-uni]
Let $X$ be a totally bounded cover space, and let $\mathcal{B}$ be a base for $X$ which is either strongly proper and regular or proper and strongly regular.
Then the collection of Cauchy covers of $X$ coincides with $\mathcal{B}$.
\end{cor}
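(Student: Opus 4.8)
The plan is to prove the two inclusions $\mathcal{B} \subseteq \mathcal{C}_X$ and $\mathcal{C}_X \subseteq \mathcal{B}$ separately, where $\mathcal{C}_X$ denotes the collection of Cauchy covers of $X$. The first inclusion is immediate from the meaning of a base: since $\mathcal{B}$ is a base for $X$, its closure $\overline{\mathcal{B}}$ equals $\mathcal{C}_X$, and because $\mathcal{B} \subseteq \overline{\mathcal{B}}$ always holds, every cover in $\mathcal{B}$ is a Cauchy cover. So the only substantive content is the reverse inclusion.

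For the reverse inclusion, I would take an arbitrary Cauchy cover $C \in \mathcal{C}_X$ and exploit total boundedness through the two propositions already established. The hypothesis splits into two cases according to which combination of conditions $\mathcal{B}$ satisfies: if $\mathcal{B}$ is strongly proper and regular, I would invoke \rprop{cauchy-fin}; if $\mathcal{B}$ is proper and strongly regular, I would invoke \rprop{cauchy-ws-fin}. In either case, since $X$ is totally bounded, the relevant proposition guarantees more than a bare finite subcover: it produces a finite subcover $\{ U_1, \ldots, U_n \}$ of $C$ that actually belongs to $\mathcal{B}$.

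The key observation is then that any subcover $\{ U_1, \ldots, U_n \} \subseteq C$ refines $C$, since each $U_i$ is itself a member of $C$ and hence $U_i \subseteq U_i \in C$. Having a cover in $\mathcal{B}$ that refines $C$, I would apply axiom \axref{BE} (closure of a base under coarsening) to conclude $C \in \mathcal{B}$. This gives $\mathcal{C}_X \subseteq \mathcal{B}$, and together with the first inclusion yields $\mathcal{C}_X = \mathcal{B}$, as claimed.

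There is no serious obstacle here: the corollary is essentially a packaging of the two preceding propositions. The only points requiring care are the correct case-split on the two alternative hypotheses on $\mathcal{B}$, and the recognition that the cited propositions deliver a finite subcover \emph{inside} $\mathcal{B}$ rather than merely a set-theoretic subcover, so that \axref{BE} can promote membership from the finite refinement back to the original Cauchy cover $C$.
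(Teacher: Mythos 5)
Your proof is correct and is exactly the argument the paper intends (the corollary is stated without proof, immediately after the remark that ``the previous two propositions imply'' it): one inclusion is the triviality $\mathcal{B} \subseteq \overline{\mathcal{B}} = \mathcal{C}_X$, and the other follows because \rprop{cauchy-fin} or \rprop{cauchy-ws-fin} yields a finite subcover lying in $\mathcal{B}$, which refines the original cover, so \axref{BE} puts the original cover in $\mathcal{B}$.
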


\begin{cor}
In a totally bounded metric space, Cauchy covers coincide with uniform covers.
\end{cor}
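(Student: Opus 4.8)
The plan is to recognize this statement as an immediate instance of \rcor{tb-cauchy-uni}, applied to the base of uniform covers. For a metric space $X$, viewed as a cover space, the natural candidate for the base $\mathcal{B}$ is the collection of uniform covers. Two earlier examples record exactly what is needed: the collection of uniform covers of a metric space is a strongly regular base, and it is also a strongly proper base. Both facts establish in passing that the uniform covers do form a base, so \axref{BT}, \axref{BE}, and \axref{BI} are available.

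It remains to check that these two facts supply the hypotheses of \rcor{tb-cauchy-uni}, which requires a base that is either strongly proper and regular, or proper and strongly regular. Here I would record two elementary implications between the regularity and properness conditions. First, strong regularity implies regularity: since $V \rb^s_\mathcal{B} U$ entails $V \rb_\mathcal{B} U$ (the base-level analogue of \rprop{rbs-props}, because $\{ X \backslash V, U \}$ refines $\{ W \mid \overlap{W}{V} \implies W \subseteq U \}$), the cover $\{ V \mid \exists U \in C, V \rb^s_\mathcal{B} U \}$ is contained in $\{ V \mid \exists U \in C, V \rb_\mathcal{B} U \}$, so the latter is Cauchy by \axref{CE}. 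Second, strong properness implies properness: if $C \cup \{ \varnothing \} \in \mathcal{B}$, then restricting to inhabited members removes $\varnothing$ and lands in $\mathcal{B}$ by strong properness, and this restriction refines $C$, so $C \in \mathcal{B}$ by \axref{BE}. Consequently the uniform covers form a base that is simultaneously strongly proper and regular, matching the first alternative in \rcor{tb-cauchy-uni} (and, equally, proper and strongly regular, matching the second).

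Since a totally bounded metric space is in particular a totally bounded cover space equipped with such a base, \rcor{tb-cauchy-uni} would then yield that its collection of Cauchy covers coincides with $\mathcal{B}$, that is, with the uniform covers. I expect essentially no obstacle in this argument; the only point requiring any care is confirming that the strong forms of regularity and properness imply their ordinary counterparts, so that the hypotheses of \rcor{tb-cauchy-uni} are genuinely satisfied by the uniform-cover base.
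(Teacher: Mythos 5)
Your proposal is correct and matches the paper's intended argument: the corollary is an immediate instance of \rcor{tb-cauchy-uni} applied to the base of uniform covers, whose strong properness and strong regularity are recorded in the earlier examples. Your extra care in checking that strong regularity implies regularity and strong properness implies properness (so that either alternative hypothesis of \rcor{tb-cauchy-uni} is met) is exactly the small bookkeeping the paper leaves implicit, and it is carried out correctly.
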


The following proposition provides a useful characterization of totally bounded cover spaces:

\begin{prop}[tb-aux]
Let $\mathcal{B}_X$ be a subbase for a cover space $X$.
Then $X$ is totally bounded if and only if every Cauchy cover in $\mathcal{B}_X$ has a finite subcover.
\end{prop}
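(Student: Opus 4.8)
The plan is to prove the two directions separately, with essentially all the content concentrated in the ``if'' direction, which I would establish by induction on the generation of the Cauchy covers from the subbase. Throughout I use that a subbase for a cover space means $\mathcal{C}_X = \overline{\mathcal{B}_X}$, so that every cover in $\mathcal{B}_X$ is automatically Cauchy.

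The ``only if'' direction is immediate: since $\mathcal{B}_X \subseteq \overline{\mathcal{B}_X} = \mathcal{C}_X$, every cover in $\mathcal{B}_X$ is a Cauchy cover, so if $X$ is totally bounded then each such cover has a finite subcover by definition. For the converse, I would assume every cover in $\mathcal{B}_X$ admits a finite subcover and show, by induction on the generation of $\overline{\mathcal{B}_X}$ (the same inductive technique used in \rlem{closure-filter} and \rlem{subbase-regular}), that every $C \in \overline{\mathcal{B}_X}$ has a finite subcover. The base case splits into the generators $C \in \mathcal{B}_X$, handled directly by the hypothesis, and the trivial cover $\{ X \}$ contributed by \axref{CT}, for which $\{ X \}$ is itself a finite subcover. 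For the step corresponding to \axref{CE}, if $C$ has a finite subcover $U_1, \dots, U_n$ and $C$ refines $D$, then choosing for each $i$ a set $W_i \in D$ with $U_i \subseteq W_i$ yields a finite subcover $W_1, \dots, W_n$ of $D$.

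The only step with any substance is the one for \axref{CG}. Here $C$ and $\{ D_U \}_{U \in C}$ lie in $\overline{\mathcal{B}_X}$ and all have finite subcovers by the induction hypothesis, and the task is to produce a finite subcover of $\{ U \cap V \mid U \in C, V \in D_U \}$. I would take a finite subcover $U_1, \dots, U_n \in C$ of $X$, and for each $i$ a finite subcover $V_{i,1}, \dots, V_{i,m_i} \in D_{U_i}$ of $X$, and then verify that the finite family $\{ U_i \cap V_{i,j} \}$ covers $X$: given any $x \in X$, pick $i$ with $x \in U_i$ and then $j$ with $x \in V_{i,j}$, so that $x \in U_i \cap V_{i,j}$. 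Since this is a finite subfamily of the combined cover, the inductive step goes through, and the induction yields that every Cauchy cover has a finite subcover, i.e.\ $X$ is totally bounded.

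I do not expect any genuine obstacle: the induction on generation is routine, the \axref{CE} case amounts to enlarging each chosen set, and the \axref{CG} case is the only one requiring a (simple) combinatorial argument, namely closing the finite subcovers under the pairwise intersections dictated by \axref{CG}. The one point to state carefully is that the finite subcover of $C$ involves only finitely many indices $U$, so that only finitely many of the $D_U$ are invoked, keeping the resulting family finite.
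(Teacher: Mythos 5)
Your proposal is correct and follows exactly the route the paper intends: the paper's proof consists of the single remark that the result ``follows directly by induction on the construction of $\overline{\mathcal{B}}$'', and your argument is precisely that induction spelled out, with the \axref{CT}, \axref{CE}, and \axref{CG} cases handled as one would expect. The \axref{CG} step, where you combine a finite subcover $U_1,\dots,U_n$ of $C$ with finite subcovers of each $D_{U_i}$, is the only substantive point and you treat it correctly.
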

\begin{proof}
The proof follows directly by induction on the construction of $\overline{\mathcal{B}}$.
\end{proof}

This proposition yields the usual definition of total boundedness in the cases of uniform spaces and metric spaces:

\begin{cor}
A uniform space is totally bounded if and only if every uniform cover has a finite subcover.
\end{cor}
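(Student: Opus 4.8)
The plan is to derive the corollary as a direct specialization of \rprop{tb-aux}. Recall that the cover space associated with a uniform space $(X,\mathcal{U}_X)$ has $\overline{\mathcal{U}_X}$ as its collection of Cauchy covers, and that the collection $\mathcal{U}_X$ of uniform covers forms a regular base (as recorded in the example establishing \axref{BR} from \axref{UU}). In particular, $\mathcal{U}_X$ is a subbase for this cover structure, and every uniform cover is itself a Cauchy cover of $X$ since $\mathcal{U}_X \subseteq \overline{\mathcal{U}_X}$.

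First I would take $\mathcal{B}_X = \mathcal{U}_X$ in \rprop{tb-aux}. Because every element of $\mathcal{U}_X$ is already a Cauchy cover, the hypothesis ``every Cauchy cover in $\mathcal{B}_X$ has a finite subcover'' collapses to precisely ``every uniform cover has a finite subcover.'' The conclusion of \rprop{tb-aux}, namely that this condition is equivalent to total boundedness of $X$ as a cover space, is then exactly the statement of the corollary.

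The hard part is essentially absent: the only thing to confirm is that $\mathcal{U}_X$ genuinely qualifies as a subbase for the cover structure $\overline{\mathcal{U}_X}$ and that all its members are Cauchy covers, both of which are immediate from the construction of the cover space out of a uniform space. I therefore expect the proof to be a one-line invocation of \rprop{tb-aux} applied to the subbase of uniform covers, with no genuine obstacle to overcome.
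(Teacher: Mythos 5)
Your proposal is correct and matches the paper's intent exactly: the corollary is stated immediately after \rprop{tb-aux} with no separate proof, precisely because it is the specialization of that proposition to the subbase $\mathcal{U}_X$ of uniform covers. Nothing further is needed.
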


\begin{cor}[tb-metric]
A metric space $X$ is totally bounded if and only if, for every $\varepsilon > 0$,
there exist points $x_1, \ldots, x_n \in X$ such that every point of $X$ is $\varepsilon$-close to one of the points $x_1, \ldots, x_n$.
\end{cor}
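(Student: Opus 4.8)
The plan is to obtain this as a direct instance of \rprop{tb-aux}, taking as subbase the collection of metric ball covers
$\mathcal{B}_X = \{ \{ B_\varepsilon(x) \mid x \in X \} \mid \varepsilon > 0 \}$.
First I would check that $\mathcal{B}_X$ really is a subbase for the metric cover structure, i.e.\ that $\overline{\mathcal{B}_X}$ is exactly the collection of uniform covers. Each ball cover is itself a uniform cover, so $\overline{\mathcal{B}_X}$ is contained in the metric structure; conversely, a cover is uniform precisely when some ball cover refines it, so a single application of \axref{CE} to the members of $\mathcal{B}_X$ already produces every uniform cover. Hence the two closures coincide and $\mathcal{B}_X$ is a subbase for $X$.

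Granting this, \rprop{tb-aux} says that $X$ is totally bounded if and only if every cover lying in $\mathcal{B}_X$ has a finite subcover. The covers in $\mathcal{B}_X$ are exactly the ball covers $\{ B_\varepsilon(x) \mid x \in X \}$, one for each $\varepsilon > 0$, so the criterion becomes: for every $\varepsilon > 0$ there are finitely many centres $x_1,\ldots,x_n \in X$ with $X = B_\varepsilon(x_1) \cup \cdots \cup B_\varepsilon(x_n)$. The remaining step is the routine observation that this union equals $X$ exactly when every point of $X$ lies in some $B_\varepsilon(x_i)$, i.e.\ is $\varepsilon$-close to one of $x_1,\ldots,x_n$. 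Reading the resulting equivalence in both directions yields the corollary.

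The argument involves no estimates, so the only point requiring care is the subbase verification, and there the single subtlety is the direction of refinement: one must use that a cover is uniform exactly when it is refined by some ball cover, so that \axref{CE}, applied to the Cauchy covers $\{ B_\varepsilon(x) \mid x \in X \}$, pushes ball covers \emph{onto} all uniform covers rather than the other way round. Once the subbase claim is in place, both implications fall out simultaneously from \rprop{tb-aux}, the forward direction using that each ball cover is a Cauchy cover and the reverse using that finite subcovers of ball covers are precisely finite $\varepsilon$-nets.
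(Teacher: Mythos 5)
Your proposal is correct and matches the paper's intended route: the corollary is a direct instance of \rprop{tb-aux} applied to the subbase of ball covers (equivalently, a specialization of the preceding corollary on uniform spaces), and your verification that the ball covers generate the metric cover structure, together with the observation that finite subcovers of ball covers are exactly finite $\varepsilon$-nets, is exactly what is needed. Your remark on the direction of refinement is also the right reading of the definition of uniform covers, since the axiom \axref{UT} forces uniform covers to be those \emph{refined by} some ball cover.
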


\begin{example}
Every bounded subset of $\mathbb{R}^n$ satisfies the conditions of \rcor{tb-metric}.
Therefore, a subset of $\mathbb{R}^n$ is totally bounded if and only if it is bounded.
\end{example}

We can derive the Tychonoff theorem as an immediate consequence of the previous propositions.
While the theorem is not constructively true for topological spaces, it does hold for locales, albeit with a non-trivial proof (see \cite{vickers-tychonoff,vickers-compact}).
In the context of cover spaces, the result becomes straightforward:

\begin{prop}
Let $\{ X_i \}_{i \in I}$ be a collection of totally bounded cover spaces.
Then the product $\prod_{i \in I} X_i$ is also totally bounded.
\end{prop}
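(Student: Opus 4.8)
The plan is to apply \rprop{tb-aux} to a natural subbase for the product. Recall from the construction of products in $\cat{Precov}$ that the cover space structure on $\prod_{i \in I} X_i$ is generated by the subbase
\[ \mathcal{B} = \{ \{ \pi_i^{-1}(U) \mid U \in C \} \mid i \in I,\ C \in \mathcal{C}_{X_i} \}, \]
consisting of the preimages, under each projection $\pi_i$, of the Cauchy covers of the factor $X_i$. Every element of $\mathcal{B}$ is indeed a Cauchy cover of the product, since each $\pi_i$ is a cover map. By \rprop{tb-aux} it therefore suffices to show that every cover belonging to $\mathcal{B}$ admits a finite subcover.

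Next I would fix an arbitrary element of $\mathcal{B}$, say $\{ \pi_k^{-1}(U) \mid U \in C \}$ for some index $k \in I$ and some Cauchy cover $C$ of $X_k$. Since $X_k$ is totally bounded, $C$ admits a finite subcover $U_1, \ldots, U_n \in C$ with $X_k = U_1 \cup \cdots \cup U_n$. Taking preimages under $\pi_k$ gives
\[ \prod_{i \in I} X_i = \pi_k^{-1}(X_k) = \pi_k^{-1}(U_1) \cup \cdots \cup \pi_k^{-1}(U_n), \]
so that $\{ \pi_k^{-1}(U_1), \ldots, \pi_k^{-1}(U_n) \}$ is a finite subcover of the chosen subbase element. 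As this argument applies uniformly to every element of $\mathcal{B}$, \rprop{tb-aux} yields that $\prod_{i \in I} X_i$ is totally bounded.

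The point worth emphasizing is that there is essentially no obstacle here: each subbase cover depends on only a single coordinate, so total boundedness of that one factor immediately produces the finite subcover, with no interaction between distinct factors and no appeal to any choice principle. All of the genuine work --- propagating the finite-subcover property through the closure operations \axref{CT}, \axref{CE}, and \axref{CG} that generate the full collection of Cauchy covers from a subbase --- has already been absorbed into the induction proving \rprop{tb-aux}. This is precisely why the Tychonoff theorem, which is delicate for topological spaces and nontrivial even for locales, reduces here to a one-line observation once the subbase characterization of total boundedness is in hand.
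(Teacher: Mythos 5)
Your proof is correct and follows essentially the same route as the paper: both apply \rprop{tb-aux} to the subbase of covers pulled back along the projections, and then use total boundedness of the single relevant factor to extract a finite subcover of each subbase element. The additional commentary about where the real work lives (in the induction behind \rprop{tb-aux}) is accurate but not needed for the argument.
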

\begin{proof}
The cover space structure on $\prod_{i \in I} X_i$ is generated by covers of the form $C' = \{ \pi_i^{-1}(U) \mid U \in C \}$, where  $i \in I$ and $C$ is a Cauchy cover of $X_i$.
By \rprop{tb-aux}, it suffices to show that $C'$ has a finite subcover.
Since $X_i$ is totally bounded, there exist sets $U_1, \ldots, U_n \in C$ that cover $X_i$.
Then $\pi_i^{-1}(U_1), \ldots, \pi_i^{-1}(U_n)$ form a finite subcover of $C'$, completing the proof.
\end{proof}

\begin{cor}
Let $\{ X_i \}_{i \in I}$ be a collection of compact cover spaces.
Then the product $\prod_{i \in I} X_i$ is also compact.
\end{cor}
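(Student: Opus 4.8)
The plan is to unwind the definition of compactness: a cover space is compact precisely when it is both complete and totally bounded. It therefore suffices to prove that $\prod_{i \in I} X_i$ enjoys each of these two properties separately, using that every $X_i$ does.

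Total boundedness is immediate from the previous proposition, which shows that an arbitrary product of totally bounded cover spaces is totally bounded; since each compact $X_i$ is in particular totally bounded, so is $\prod_{i \in I} X_i$.

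For completeness, the cleanest route is categorical. By \rcor{complete-reflective}, the complete cover spaces form a full reflective subcategory of $\cat{Cov}$, and $\cat{Cov}$, being a topological category, is complete as a category and in particular has all products. A standard fact about full reflective subcategories is that they are closed under every limit that exists in the ambient category: if a family of objects of the subcategory admits a product in the larger category, then the reflection unit on that product is an isomorphism, so the product again lies in the subcategory. Applying this to the complete spaces $\{ X_i \}_{i \in I}$, the product $\prod_{i \in I} X_i$ formed in $\cat{Cov}$ is complete. Combined with total boundedness, $\prod_{i \in I} X_i$ is compact.

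The only delicate point is the appeal to closure of reflective subcategories under products; I would either cite it as standard or, to keep the argument self-contained, verify completeness directly. In the direct approach one takes a Cauchy filter $F$ on the product, pushes it forward along each projection $\pi_i$ (a cover map, hence Cauchy) to obtain a Cauchy filter $\pi_i(F)$ on $X_i$, and uses completeness of $X_i$ to produce a point $x_i$ with $\pi_i(F)$ equivalent to $x_i^\wedge$; since neighborhood filters are regular, \rprop{regular-minimal} then gives $x_i^\wedge \subseteq \pi_i(F)$, so that $\pi_i^{-1}(U_i) \in F$ for every neighborhood $U_i$ of $x_i$. Setting $x = (x_i)_{i \in I}$, every finite intersection of such subbasic neighborhoods lies in $F$, and since these generate the neighborhood filter of $x$ in the product one concludes $x^\wedge \subseteq F$, i.e.\ $F$ is equivalent to $x^\wedge$. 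The main obstacle in this direct route is confirming that finite intersections of subbasic neighborhoods $\pi_i^{-1}(U_i)$ generate the neighborhood filter of $x$ in the product cover structure, which requires analyzing the $\rb$-relation on the product via \rlem{rb-point}.
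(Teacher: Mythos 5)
Your main argument is exactly the paper's: total boundedness of the product follows from the preceding proposition, and completeness follows because the complete cover spaces form a (full) reflective subcategory of $\cat{Cov}$ and reflective subcategories are closed under products. The direct filter-pushforward verification you sketch as a fallback is not needed and is not what the paper does, but the primary route is correct and identical in substance.
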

\begin{proof}
The subcategory of complete cover spaces is reflective in the category of cover spaces, which implies that it is closed under products.
Therefore, the result follows directly from the previous proposition.
\end{proof}

The following proposition provides a useful characterization of Cauchy covers under appropriate assumptions:

\begin{prop}
Let $X$ be a cover space with a strongly regular base $\mathcal{B}$, and let $C$ be a Cauchy cover of $X$ consisting of totally bounded inhabited sets.
Then a set $E$ is a Cauchy cover of $X$ if and only if $\{ V \mid \exists W \in E, U \cap V \subseteq W \} \in \mathcal{B}$, for every $U \in C$.
\end{prop}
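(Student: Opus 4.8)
The plan is to abbreviate, for each $U \in C$, the candidate cover $D_U := \{ V \mid \exists W \in E,\, U \cap V \subseteq W \}$, so that the claim reads: $E \in \mathcal{C}_X$ if and only if $D_U \in \mathcal{B}$ for every $U \in C$. The backward implication is immediate. If every $D_U$ lies in $\mathcal{B} \subseteq \mathcal{C}_X$, then since $C$ is a Cauchy cover, axiom \axref{CG} makes $\{ U \cap V \mid U \in C,\, V \in D_U \}$ a Cauchy cover; by the definition of $D_U$ each $U \cap V$ is contained in some $W \in E$, so this cover refines $E$, and \axref{CE} gives $E \in \mathcal{C}_X$.

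For the forward implication, fix $U \in C$ and assume $E$ is Cauchy. First I would replace $E$ by its strongly-regular $\mathcal{B}$-refinement $E^s := \{ V \mid \exists W \in E,\, V \rb^s_\mathcal{B} W \}$. The relation $\rb^s_\mathcal{B}$ meets the hypotheses of \rlem{subbase-regular}: reflexivity $X \rb^s_\mathcal{B} X$, monotonicity in the upper argument, and closure under binary meets all follow from \axref{BT}, \axref{BE}, and \axref{BI} applied to the defining condition $\{ X \setminus V, U \} \in \mathcal{B}$, while the base hypothesis of that lemma is precisely the strong regularity of $\mathcal{B}$. Hence \rlem{subbase-regular}, taken with subbase $\mathcal{B}$, yields that $E^s$ is a Cauchy cover. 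Next I equip $U$ with the transferred cover structure along the inclusion $\iota : U \to X$, for which $\iota$ is a cover map; thus $\{ V \cap U \mid V \in E^s \}$ is a Cauchy cover of $U$. Since $U$ is totally bounded, finitely many of these already cover $U$, so there are $V_1, \dots, V_n \in E^s$ with $U \subseteq V_1 \cup \dots \cup V_n$, together with witnesses $W_j \in E$ satisfying $V_j \rb^s_\mathcal{B} W_j$, i.e. $\{ X \setminus V_j, W_j \} \in \mathcal{B}$.

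Finally I would assemble these into an element of $\mathcal{B}$ refining $D_U$. By \axref{BI}, the cover $\mathcal{F} := \{ A_1 \cap \dots \cap A_n \mid A_j \in \{ X \setminus V_j, W_j \} \text{ for each } j \}$ lies in $\mathcal{B}$, and I claim $\mathcal{F} \subseteq D_U$. If some $A_j = W_j$, then $A_1 \cap \dots \cap A_n \subseteq W_j \in E$, so the element lies in $D_U$. Otherwise every $A_j = X \setminus V_j$, whence $A_1 \cap \dots \cap A_n = X \setminus (V_1 \cup \dots \cup V_n)$ is disjoint from $U$; since $U$ is inhabited and covered by $E$ some $W \in E$ exists, and $U \cap (A_1 \cap \dots \cap A_n) = \varnothing \subseteq W$, so again the element lies in $D_U$. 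As $\mathcal{F}$ is a cover contained in $D_U$, the set $D_U$ is a cover refined by $\mathcal{F}$, and \axref{BE} gives $D_U \in \mathcal{B}$. The main obstacle is exactly this forward direction, and within it the need to perform the refinement at the level of the base $\mathcal{B}$ rather than of $\mathcal{C}_X$: total boundedness of $U$ only supplies a finite subfamily, so it is essential first to pass to the $\mathcal{B}$-strong refinement $E^s$ via \rlem{subbase-regular} and then to reconstitute a genuine base cover through the meet-closure \axref{BI}, using the inhabitedness of $U$ to dispose of the ``all complements'' case.
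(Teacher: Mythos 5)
Your proof is correct. The backward direction is exactly the paper's (an application of \axref{CG} and \axref{CE}), and your forward direction reaches the same conclusion by a different decomposition. The paper transfers the base $\mathcal{B}$ itself to the subspace $U$, observes that the transferred base $\mathcal{B}_U$ is strongly regular and (because $U$ is inhabited) proper, and then simply cites \rcor{tb-cauchy-uni} to get $\{U \cap W \mid W \in E\} \in \mathcal{B}_U$, which unfolds to the desired membership in $\mathcal{B}$. You instead keep everything on $X$: you form the strongly regular refinement $E^s$ via \rlem{subbase-regular}, extract a finite subcover of $U$ by total boundedness, and reconstitute an element of $\mathcal{B}$ refining $D_U$ through the \axref{BI}-closure of the covers $\{X \setminus V_j, W_j\}$, using the inhabitedness of $U$ (hence of $E$) to absorb the all-complements term. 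This is essentially an inlined, relativized rerun of the argument of \rprop{cauchy-ws-fin}, so the combinatorial core is the same; what your version buys is that you never have to verify that transferring a strongly regular base along an inclusion again yields a strongly regular (and proper) base of the subspace, a point the paper asserts rather tersely. What the paper's version buys is brevity and reuse of \rcor{tb-cauchy-uni}. Your finite case analysis (``some $A_j = W_j$ or all $A_j = X \setminus V_j$'') is constructively legitimate by finite distribution of disjunctions, exactly as in the paper's own proof of \rprop{cauchy-ws-fin}.
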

\begin{proof}
If $\{ V \mid \exists W \in E, U \cap V \subseteq W \} \in \mathcal{B}$ for every $U \in C$, then \axref{CG} implies that $E$ is a Cauchy cover.
Conversely, suppose that $E$ is a Cauchy cover, and let $U \in C$.
The strongly regular base $\mathcal{B}$ can be transferred to $U$ in the same way that cover space structures are transferred.
Since $U$ is inhabited, the transferred base $\mathcal{B}_U$ is proper.
By \rcor{tb-cauchy-uni}, we have $\{ U \cap W \mid W \in E \} \in \mathcal{B}_U$.
This means that $\{ V \mid \exists W \in E, U \cap V \subseteq U \cap W \} \in \mathcal{B}$, which establishes the desired property.
\end{proof}

\begin{example}
Let $X$ be a metric space in which bounded sets are totally bounded, and let $\delta_1 > 0$.
Define $C$ as the set of open balls of radius $\delta_1$.
Then a set $E$ is a Cauchy cover of $X$ if and only if, for every $x_1 \in X$, there exists $\delta_2 > 0$ such that, for every $x_2 \in X$,
there exists $W \in E$ such that $B_{\delta_1}(x_1) \cap B_{\delta_2}(x_2) \subseteq W$.
\end{example}

\begin{cor}
Let $X$ be a cover space with a strongly regular base $\mathcal{B}$ such that the set of totally bounded inhabited sets belongs to $\mathcal{B}$.
Then a function $X \to Y$ is a cover map if and only if it is locally uniform with respect to $\mathcal{B}$.
\end{cor}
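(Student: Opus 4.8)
The plan is to handle the two implications separately, noting that the hypothesis furnishes a single distinguished cover $C \in \mathcal{B}$ — the collection of all totally bounded inhabited subsets of $X$ — which will serve as the witness required in the definition of local uniformity. The ``if'' direction is immediate: if $f : X \to Y$ is locally uniform with respect to $\mathcal{B}$, then since $\mathcal{B}$ is in particular a subbase, \rprop{locally-uniform-cover-map} applies directly and shows that $f$ is a cover map.

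For the ``only if'' direction, suppose $f$ is a cover map and let $E$ be an arbitrary Cauchy cover of $Y$. First I would observe that $\{ f^{-1}(W) \mid W \in E \}$ is a Cauchy cover of $X$, precisely because $f$ is a cover map. I would then apply the preceding characterization proposition to the Cauchy cover $\{ f^{-1}(W) \mid W \in E \}$, taking $C$ to be the cover of totally bounded inhabited sets (which lies in $\mathcal{B}$ by hypothesis). Since that cover is Cauchy, the proposition yields, for every $U \in C$, that the set $\{ V \mid \exists W \in E,\ U \cap V \subseteq f^{-1}(W) \}$ belongs to $\mathcal{B}$; here I have simply rewritten membership in $\{ f^{-1}(W) \mid W \in E \}$ as the existence of some $W \in E$ with the relevant set equal to $f^{-1}(W)$. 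This is exactly the defining condition for $f$ to be locally uniform relative to $\mathcal{B}$, with $C$ as the required element of $\mathcal{B}$, so $f$ is locally uniform.

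I do not expect a genuine obstacle; the mathematical content lies entirely in the preceding proposition, and this corollary merely repackages it once one direction is supplied by \rprop{locally-uniform-cover-map}. The only point needing care is that the witnessing cover $C$ in the definition of local uniformity can be chosen to be the fixed cover of totally bounded inhabited sets, independently of $E$ — this is precisely what the assumption that this cover belongs to $\mathcal{B}$ guarantees.
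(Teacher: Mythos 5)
Your proof is correct and is exactly the intended argument: the paper states this as a corollary with no separate proof, because it is precisely the preceding proposition (applied with $C$ the fixed Cauchy cover of totally bounded inhabited sets and $E' = \{ f^{-1}(W) \mid W \in E \}$) combined with \rprop{locally-uniform-cover-map} for the converse. Your identification of the uniform witness $C$ independent of $E$, and the rewriting of the membership condition, are both exactly what is needed.
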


\begin{cor}
Let $X$ be a metric space in which every bounded inhabited set is totally bounded.
Then a function $X \to Y$ is a cover map if and only if it is locally uniform.
\end{cor}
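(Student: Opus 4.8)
The plan is to deduce this from the preceding corollary by choosing, for the metric space $X$, the base $\mathcal{B}$ to consist of the uniform covers of $X$. This collection is a strongly regular base, as recorded in an earlier example, and by the standing convention for uniform spaces the phrase ``locally uniform'' for $X$ already means locally uniform relative to $\mathcal{B}$. Hence the two notions of local uniformity coincide, and it remains only to verify the hypothesis of the previous corollary: that the cover whose members are exactly the totally bounded inhabited subsets of $X$ belongs to $\mathcal{B}$.

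To check this, I would start from the uniform cover $\{ B_\varepsilon(x) \mid x \in X \}$ by open balls of some fixed radius $\varepsilon > 0$. Each ball $B_\varepsilon(x)$ is inhabited, since it contains $x$, and bounded, since any two of its points lie within distance $2\varepsilon$; by the hypothesis on $X$ it is therefore totally bounded. Thus every member of the ball cover is itself a totally bounded inhabited set, so the ball cover refines the cover consisting of all totally bounded inhabited sets. Applying $\axref{BE}$, this larger cover is again a uniform cover, i.e.\ it belongs to $\mathcal{B}$.

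With this hypothesis in place, the previous corollary applies directly and yields that $f : X \to Y$ is a cover map if and only if it is locally uniform with respect to $\mathcal{B}$, which is exactly local uniformity for the metric space $X$. There is no genuine obstacle here; the only step requiring a moment's care is confirming that open balls are bounded and hence fall under the totally-bounded hypothesis, which is what lets the ball cover witness that the totally-bounded-inhabited cover lies in $\mathcal{B}$.
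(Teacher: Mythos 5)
Your proposal is correct and follows exactly the route the paper intends: the corollary is stated without proof as an instance of the preceding corollary with $\mathcal{B}$ the uniform covers, and the paper's own example just above (taking $C$ to be the open balls of radius $\delta_1$) confirms that the witnessing Cauchy cover of totally bounded inhabited sets is obtained from the ball cover via refinement, precisely as you argue.
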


\begin{cor}
A function $\mathbb{R}^n \to Y$ is a cover map if and only if it is locally uniform.
\end{cor}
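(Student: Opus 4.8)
The plan is to reduce the statement to the immediately preceding corollary, which asserts that for a metric space in which every bounded inhabited set is totally bounded, a function into any cover space is a cover map if and only if it is locally uniform. Applied to $X = \mathbb{R}^n$ with its Euclidean metric, this yields exactly the claim, so the only thing to verify is that $\mathbb{R}^n$ satisfies the hypothesis of that corollary, namely that every bounded inhabited subset of $\mathbb{R}^n$ is totally bounded.

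This last fact was already recorded in the example following \rcor{tb-metric}, which identifies the totally bounded subsets of $\mathbb{R}^n$ with the bounded ones. Concretely, if $S \subseteq \mathbb{R}^n$ is inhabited and contained in a cube $[-M,M]^n$, then for any $\varepsilon > 0$ one fixes a finite grid of rational points in the cube with mesh smaller than $\varepsilon/\sqrt{n}$; using \axref{DS} coordinatewise, every point of the cube---and hence of $S$---lies within $\varepsilon$ of some grid point. By \rcor{tb-metric}, $S$ is therefore totally bounded.

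Combining these two observations, the preceding corollary applies verbatim to $\mathbb{R}^n$, establishing the desired equivalence. The only step requiring constructive attention is the total boundedness of bounded sets, but this is entirely contained in the finite-grid argument underlying \rcor{tb-metric} and the cited example, so no further obstacle arises.
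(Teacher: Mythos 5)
Your proof is correct and takes essentially the same route as the paper: the corollary is stated there without proof precisely as a specialization of the preceding corollary to $X = \mathbb{R}^n$, relying on the fact (recorded in the example following \rcor{tb-metric}) that a bounded inhabited subset of $\mathbb{R}^n$ is totally bounded. Your finite-grid verification of that fact is the intended argument, so nothing further is needed.
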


We conclude this section with a characterization of compact subspaces.
Recall that a subspace of a cover space is closed if it contains all of its limit points.
The following proposition provides a characterization of complete subspaces within a complete cover space:

\begin{prop}
A subspace of a complete cover space is complete if and only if it is closed.
\end{prop}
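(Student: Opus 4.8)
The plan is to work with the subspace structure on $S \subseteq X$ given by the transferred (embedding) cover structure, so that the inclusion $i : S \to X$ is an injective cover map and an embedding; in particular a cover $C$ of $S$ is Cauchy exactly when $\{ V \subseteq X \mid \exists U \in C,\ V \cap S \subseteq U \}$ is Cauchy in $X$. Since $i$ is an injective Cauchy map into the separated space $X$, \rprop{embedding-injective} already shows $S$ is separated, so in both directions I only need to handle the filter/limit-point condition. I would also record two elementary facts: for $x \in S$, a neighborhood $U$ of $x$ in $X$ restricts to a neighborhood $U \cap S$ of $x$ in $S$ (by \rlem{cover-map-rb}, since $\{x\} \rb_X U$ yields $\{x\} \rb_S U \cap S$), and $i$ preserves filter equivalence (because $i(F \cap G) = i(F) \cap i(G)$ and $i$ is Cauchy).

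For the direction that a closed subspace is complete, I would take a Cauchy filter $F$ on $S$ and push it forward to the Cauchy filter $i(F)$ on $X$. Completeness of $X$ yields a point $x$ with $x^\wedge \sim i(F)$, and by \rprop{regular-minimal} $x^\wedge \subseteq i(F)$. First I would check $x \in S$: every neighborhood $U$ of $x$ lies in $x^\wedge \subseteq i(F)$, so $U \cap S \in F$ is inhabited, making $x$ a limit point of $S$, hence $x \in S$ by closedness. Then I would show $F$ is equivalent to the neighborhood filter $x^{\wedge_S}$ of $x$ in $S$: given a Cauchy cover $C$ of $S$, form the associated Cauchy cover $C' = \{ V \mid \exists U \in C,\ V \cap S \subseteq U \}$ of $X$, and apply \axref{CR} together with the fact that $i(F) \cap x^\wedge$ is Cauchy to obtain $V \in i(F) \cap x^\wedge$ with $V \rb_X V_0$ and $V_0 \cap S \subseteq U \in C$. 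Using \rlem{cover-map-rb} and \rlem{int-char} in $S$, the set $U$ becomes a common member of $C$, $F$, and $x^{\wedge_S}$, which is exactly the equivalence condition.

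For the converse, I would start from a limit point $x$ of $S$ and form the trace filter $F_S = \{ W \subseteq S \mid \exists U \in x^\wedge,\ U \cap S \subseteq W \}$, which is proper because $x$ is a limit point, and Cauchy by the same \axref{CR} argument as above. Completeness of $S$ then supplies a point $y \in S$ with $y^{\wedge_S} \sim F_S$. Pushing forward along $i$ and using $x^\wedge \subseteq i(F_S)$ and $y^\wedge \subseteq i(y^{\wedge_S})$ (both from the neighborhood-restriction fact), I would chain the equivalences $y^\wedge \sim i(y^{\wedge_S}) \sim i(F_S) \sim x^\wedge$, so that $x$ and $y$ are equivalent points of $X$; since $X$ is separated, \rlem{separated-char} forces $x = y \in S$, proving $S$ is closed.

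The main obstacle I anticipate is not any single hard estimate but the careful bookkeeping of translating between filters and covers on $S$ and on $X$ across the embedding: one must repeatedly convert a Cauchy cover of $S$ into the corresponding Cauchy cover $C'$ of $X$, apply regularity \axref{CR} in $X$, and then transport the resulting $\rb$-relation back to $S$ via \rlem{cover-map-rb}. The conceptual crux is the identification, in the converse direction, of the limit point $x$ with the point $y$ produced by completeness; this hinges on $i$ preserving filter equivalence and on separatedness of $X$, and it is precisely where the argument would collapse without the hypothesis that $X$ is separated.
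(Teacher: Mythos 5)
Your proposal is correct and follows essentially the same route as the paper: both directions use the pushforward filter $\{V \mid V\cap S\in F\}$ (resp.\ the trace filter of $x^\wedge$), \rprop{embedding-injective} for separatedness, closedness via limit points, and separatedness of $X$ to identify the limit point with the point supplied by completeness. The only cosmetic difference is that you establish the filter equivalences by direct cover manipulations and chaining via \rprop{regular-minimal}, where the paper invokes \rlem{filter-point-char}.
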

\begin{proof}
First, suppose that $S$ is a closed subspace of a complete space $X$.
By \rprop{embedding-injective}, $S$ is separated.
Let us prove that it is complete.
Let $F$ be a Cauchy filter on $S$.
Then $G = \{ U \mid U \cap S \in F \}$ is a Cauchy filter on $X$.

To show that $G^\vee$ belongs to $S$, it suffices to prove that $G^\vee$ is a limit point of $S$ since $S$ is closed.
Let $U$ be a neighborhood of $G^\vee$.
By \rlem{filter-point-char}, there exists $V \in G$ such that $V \rb_X U$.
Since $F$ is proper and $V \cap S \in F$, there exists a point $x \in V \cap S$.
Thus, $U$ and $S$ intersect, which implies that $G^\vee$ is a limit point of $S$.

Next, we need to prove that $\{ G^\vee \} \rb_S V$ implies $V \in F$ for all $V$.
The condition $\{ G^\vee \} \rb_S V$ implies that the collection
\[ \{ W \mid \exists W', S \cap W \subseteq W', (G^\vee \in W' \implies W' \subseteq V) \} \]
is a Cauchy cover of $X$.
Thus, there exist sets $W$ and $W'$ such that $\{ G^\vee \} \rb_X W$, $S \cap W \subseteq W'$, and $W' \subseteq V$.
By \rlem{filter-point-char}, there exists $W'' \in G$ such that $W'' \rb_X W'$.
It follows that $V \in F$.

Now, let us prove the converse.
Suppose that $S$ is complete and let $x$ be a limit point of $S$.
Define $F = \{ V \mid \exists W, \{ x \} \rb_X W, U \cap S \subseteq V \}$.
Then $F$ is a Cauchy filter on $S$.
The fact that $F$ is proper follows from the assumption that $x$ is a limit point of $S$, and the other properties of Cauchy filters are straightforward to verify.
To show that $x \in S$, it suffices to prove that $x = F^\vee$.

By \rlem{separated-char}, it is enough to show that any pair of neighborhoods of $x$ and $F^\vee$ intersect.
Let $U$ be a neighborhood of $x$, and let $V$ be a neighborhood of $F^\vee$.
By \rlem{filter-point-char}, $V \in F$, which means there exists a set $W$ such that $\{ x \} \rb_X W$ and $W \cap S \subseteq V$.
Since $x$ is a limit point of $S$ and $U \cap W$ is a neighborhood of $x$, there exists a point $y \in U \cap W \cap S$.
Thus, $U$ and $V$ intersect, completing the proof.
\end{proof}

\begin{cor}[closed-compact]
A subspace of a complete cover space is compact if and only if it is closed and totally bounded.
\end{cor}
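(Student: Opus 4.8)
The plan is to combine the definition of compactness with the characterization of complete subspaces established in the preceding proposition. Recall that, by definition, a cover space is compact precisely when it is both complete and totally bounded. Applying this to the subspace $S$ of a complete cover space $X$, equipped with its transferred cover structure, the claim reduces to showing that $S$ is complete if and only if it is closed.

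But this is exactly the content of the preceding proposition, which asserts that a subspace of a complete cover space is complete if and only if it is closed. Substituting this equivalence into the definition of compactness yields immediately that $S$ is compact if and only if it is closed and totally bounded.

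Thus the only steps are: first, unfold the definition of compactness for the subspace $S$ to obtain ``complete and totally bounded''; and second, replace ``complete'' by ``closed'' via the preceding proposition. I do not expect any substantive obstacle here, since both ingredients are already in hand and the corollary is a direct logical consequence. The only point requiring minimal care is to confirm that total boundedness and completeness both refer to $S$ with its induced subspace structure, whereas closedness refers to $S$ as a subset of $X$; but the preceding proposition already bridges precisely these notions, so nothing further is needed.
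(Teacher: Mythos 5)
Your proof is correct and matches the paper's intent exactly: the corollary is stated immediately after the proposition that a subspace of a complete cover space is complete if and only if it is closed, and it follows by unfolding the definition of compactness as completeness plus total boundedness, just as you do. No further comment is needed.
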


As a consequence, we obtain a constructive version of the Heine-Borel theorem:

\begin{cor}
A subspace of $\mathbb{R}^n$ is compact if and only if it is closed and bounded.
\end{cor}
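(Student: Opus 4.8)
The plan is to reduce this statement to \rcor{closed-compact}, which already characterizes the compact subspaces of an arbitrary complete cover space as precisely the closed and totally bounded ones. Thus the corollary follows once two facts are in place: that $\mathbb{R}^n$ is itself a complete cover space, and that for subsets of $\mathbb{R}^n$ total boundedness coincides with boundedness.

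First I would establish completeness of $\mathbb{R}^n$. By \rprop{real-completion} (equivalently, by \rcor{metric-complete}) the cover space $\mathbb{R}$ is complete, and by \rcor{complete-reflective} the complete cover spaces form a reflective subcategory of $\cat{Cov}$. A reflective subcategory is closed under limits, in particular under finite products, so the $n$-fold product $\mathbb{R}^n$ is again complete. With completeness secured, \rcor{closed-compact} applies directly: a subspace $S \subseteq \mathbb{R}^n$ is compact if and only if it is closed and totally bounded. It then remains only to rewrite ``totally bounded'' as ``bounded''. Since $\mathbb{R}^n$ is a metric space, \rcor{tb-metric} characterizes total boundedness of $S$ by the existence of a finite $\varepsilon$-net for every $\varepsilon > 0$; as recorded in the example following that corollary, for subsets of $\mathbb{R}^n$ this is in turn equivalent to boundedness. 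Substituting this equivalence into \rcor{closed-compact} yields the claim.

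The one point I would treat with care, and the main obstacle, is matching the two notions of ``totally bounded'': the total boundedness appearing in \rcor{closed-compact} is computed with respect to the transferred (subspace) cover structure on $S$, whereas \rcor{tb-metric} is phrased in terms of the metric structure. One must verify that the transferred cover structure on $S \subseteq \mathbb{R}^n$ agrees with the cover structure induced by the restricted metric, after which \rcor{tb-metric} legitimately describes subspace total boundedness. This is routine, since both structures are generated by the restrictions of metric balls to $S$, but it is the step that justifies passing between the categorical and metric formulations.
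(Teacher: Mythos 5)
Your proposal is correct and follows essentially the same route as the paper, which simply cites \rcor{tb-metric} and \rcor{closed-compact}; your additional remarks on the completeness of $\mathbb{R}^n$ and on matching the subspace cover structure with the restricted metric are reasonable elaborations of details the paper leaves implicit.
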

\begin{proof}
This follows from \rcor{tb-metric} and \rcor{closed-compact}.
\end{proof}

\section{Limits}
\label{sec:limits}

Limits of functions are closely related to the lifting problem for cover spaces.
In this section, we show how various types of limits can be expressed in terms of liftings.
Generally, a dense cover map $X \to \widetilde{X}$ represents a form of limit.
The data of a limit in a precover space $Y$ is given by a function $f : X \to Y$.
We say that the limit of $f$ exists if this function extends to a cover map $\widetilde{f} : \widetilde{X} \to Y$.
The limit itself is then given by the image of $\widetilde{f}$ at some point in $\widetilde{X}$.

To demonstrate, let us consider sequential limits.
First, we define a uniform space structure on $\mathbb{N}$.
A cover $C$ of $\mathbb{N}$ is uniform if there exist $N \in \mathbb{N}$ and $U \in C$ such that $U$ contains all $n \geq N$.
It is straightforward to verify that this indeed defines a uniform space structure on $\mathbb{N}$.

Cauchy filters in this space correspond to what are known as extended natural numbers.
An \emph{extended natural number} is a sequence $x$ of binary digits, i.e., elements of $\{ 0, 1 \}$, such that $x_n = 1$ for at most one $n \in \mathbb{N}$.
The set of extended natural numbers is denoted by $\mathbb{N}_\infty$.
There exists a function $\mathbb{N} \amalg \{ \infty \} \to \mathbb{N}_\infty$, which maps $n \in \mathbb{N}$ to the sequence $x$ with $x_n = 1$, and maps $\infty$ to the sequence consisting entirely of zeros.
Classically, this function is bijective, but constructively, it is only injective.

\begin{prop}
There is a bijection between the set of regular Cauchy filters on $\mathbb{N}$ and the set of extended natural numbers.
\end{prop}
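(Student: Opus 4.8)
The claim asks for a bijection between regular Cauchy filters on $\mathbb{N}$ (with the uniform structure where $C$ is uniform iff some $U \in C$ contains a tail $\{ n \mid n \geq N \}$) and the set $\mathbb{N}_\infty$ of extended natural numbers. The plan is to construct the map in the direction that is easiest to analyze, namely from regular Cauchy filters to $\mathbb{N}_\infty$, and then exhibit an explicit inverse. By \rprop{cauchy-filter}, a proper filter $F$ on $\mathbb{N}$ is Cauchy if and only if it meets every uniform cover; unwinding the definition of the uniform structure, this means that for every $N$, either $F$ contains the tail $\{ n \mid n \geq N \}$ or $F$ contains some set contained in $\{ 0, \ldots, N-1 \}$ (in fact a singleton, after intersecting with the discrete-looking part of a cover). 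So intuitively a Cauchy filter either ``concentrates at a finite point $n$'' or ``runs off to infinity'' by containing every tail.

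**Constructing the map to $\mathbb{N}_\infty$.**
First I would define, for a regular Cauchy filter $F$, a binary sequence $x^F$ by setting $x^F_n = 1$ iff the singleton $\{ n \} \in F$. I must check this lands in $\mathbb{N}_\infty$, i.e.\ that at most one $n$ has $\{ n \} \in F$: this is immediate since $F$ is a proper filter closed under finite intersection, so $\{ n \}, \{ m \} \in F$ with $n \neq m$ would force $\varnothing \in F$. The sequence $x^F$ encodes exactly the information of whether $F$ is a finite-point filter (principal at some $n$) or the ``filter at infinity'' (all zeros). The main conceptual work is to verify the cover-space-theoretic content: I would first compute the rather-below relation, showing that in this uniform space $\{ n \} \rb \{ n \}$ for each $n$ (a singleton is isolated, so its complement together with $\{n\}$ is a uniform cover) and that each tail is rather below itself. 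Using regularity of $F$ and \rlem{cauchy-filter-rb}, I would argue that a regular Cauchy filter not containing any singleton must contain every tail $\{ n \mid n \geq N \}$; conversely if $\{ n \} \in F$ then $F = \{ U \mid n \in U \}$ is the principal ultrafilter at $n$, which is visibly regular and Cauchy.

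**The inverse and the bijection.**
In the other direction, given $x \in \mathbb{N}_\infty$, I would define a filter $F_x$: if $x_n = 1$ for some (necessarily unique) $n$, let $F_x$ be the principal filter $\{ U \mid n \in U \}$; otherwise let $F_x = \{ U \mid \exists N, \{ m \mid m \geq N \} \subseteq U \}$, the filter generated by the tails. The delicate point here is that the definition must be given \emph{uniformly} in $x$ without case-splitting on the undecidable question ``does $x$ have a $1$?''. I would instead define $F_x$ directly by a single formula, for instance $F_x = \{ U \subseteq \mathbb{N} \mid \exists N,\ \forall m \geq N,\ (x_m = 1 \implies m \in U) \text{ and } (\forall k < N, x_k = 1 \implies k \in U) \}$, or more cleanly $U \in F_x$ iff $\exists N$ such that $U$ contains every $m \geq N$ together with the unique digit position if it is below $N$ --- I would phrase this as: $U \in F_x$ iff there exists $N$ with $\{ m \mid m \geq N \} \cup \{ m \mid x_m = 1 \} \subseteq U$. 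This is manifestly a filter, meets every uniform cover (so is Cauchy), and I would check it is regular and that $U \in F_x$ reduces to the intended description once the location of the digit is known.

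**Main obstacle.**
The step I expect to be genuinely delicate is proving that the two constructions are mutually inverse \emph{constructively}, i.e.\ that $F_{x^F} = F$ for every regular Cauchy filter $F$. Showing $x^{F_x} = x$ is routine from the definitions. For $F_{x^F} = F$ the subtlety is exactly the point flagged in the paper's remark that $\mathbb{N} \amalg \{\infty\} \to \mathbb{N}_\infty$ is only injective constructively: I cannot decide whether $F$ is principal or the filter at infinity, so I must prove $F_{x^F} \subseteq F$ and $F \subseteq F_{x^F}$ without that case distinction. The inclusion $F \subseteq F_{x^F}$ will follow by taking an arbitrary $U \in F$, applying the Cauchy condition to a suitable uniform cover (a tail together with singletons) to extract an $N$, and using regularity via \rlem{cauchy-filter-rb} to push the conclusion into $F_{x^F}$; the reverse inclusion uses \rprop{regular-minimal} after checking $F_{x^F}$ is a regular Cauchy subfilter. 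Getting these filter inclusions to go through with only the data of the binary sequence $x^F$ --- rather than a decided answer about $F$'s type --- is where the real care is required.
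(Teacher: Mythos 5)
Your overall strategy --- send $F$ to the binary sequence recording which singletons it contains, and build an explicit inverse filter from a sequence --- is the same as the paper's, and your plan for the round trip (show the candidate inverse is a Cauchy subfilter and invoke \rprop{regular-minimal}) is essentially how the paper closes the argument. But there are two genuine gaps. First, defining $x^F_n = 1$ iff $\{n\} \in F$ only produces an element of $\mathbb{N}_\infty$ (a sequence of binary \emph{digits}) if the condition $\{n\} \in F$ is decidable; you only verify that at most one digit equals $1$. Establishing this decidability is the main content of the paper's proof: by induction on $n$, using that for each $N$ the Cauchy condition forces either $\{k\} \in F$ for some $k < N$ or $[N,\infty) \in F$, one decides $\{n\} \in F$ in every case. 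Without this step the forward map is not constructively defined, and this is precisely where the ``only injective, not bijective'' phenomenon you cite would otherwise bite.

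Second, both of your explicit formulas for the inverse are incorrect. The first, $U \in F_x$ iff $\exists N\, \bigl(\forall m \geq N,\ x_m = 1 \implies m \in U\bigr) \wedge \bigl(\forall k < N,\ x_k = 1 \implies k \in U\bigr)$, collapses to $\forall m\, (x_m = 1 \implies m \in U)$ (the $\exists N$ is vacuous); for the all-zero sequence this is the full power set, which contains $\varnothing$ and is not a proper filter. The second, $U \in F_x$ iff $\exists N$ with $[N,\infty) \cup \{ m \mid x_m = 1 \} \subseteq U$, forces every element of $F_x$ to contain a tail; when $x_n = 1$ this filter fails to meet the uniform cover $\{ [n+1,\infty) \} \cup \{ \{m\} \mid m \in \mathbb{N} \}$, so it is not Cauchy and is not the principal filter at $n$. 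The correct uniform definition is a disjunction, as in the paper: $U \in f(x)$ iff $(\exists n \in U,\ x_n = 1)$ or $(\exists N,\ [N,\infty) \subseteq U$ and $\forall n < N,\ x_n = 0)$; the guard $\forall n < N,\ x_n = 0$ on the second disjunct is what makes $f(x)$ a proper Cauchy filter for every $x \in \mathbb{N}_\infty$ without case-splitting on whether $x$ has a digit.
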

\begin{proof}
A proper filter $F$ on $\mathbb{N}$ is Cauchy if and only if, for every $N \in \mathbb{N}$, either there exists $n < N$ such that $\{ n \} \in F$ or $[N,\infty) \in F$.
Let us show that the membership $\{ n \} \in F$ is decidable.
We proceed by induction on $n$.

If $\{ k \} \in F$ for some $k < n$, then $\{ n \} \notin F$ since $F$ is proper.
Otherwise, either $\{ n \} \in F$ or $[N + 1, \infty) \in F$.
The latter case implies $\{ n \} \notin F$.
Hence, we can decide whether $\{ n \} \in F$.

Now, we define a function $s$ from the set of Cauchy filters on $\mathbb{N}$ to $\mathbb{N}_\infty$ as follows:
\[ 
s(F)_n = 
\begin{cases} 
1 & \text{if } \{ n \} \in F, \\
0 & \text{otherwise}.
\end{cases}
\]
Since $\{ n \} \in F$ for at most one $n$, the sequence $s(F)$ is indeed an extended natural number.

Conversely, we define a function $f$ from $\mathbb{N}_\infty$ to the set of Cauchy filters on $\mathbb{N}$ by:
\[ f(x) = \{ U \mid (\exists n \in U, x_n = 1) \lor (\exists N \in \mathbb{N}, [N,\infty) \subseteq U, \forall n < N, x_n = 0) \}. \]
It is straightforward to verify that $f(x)$ is indeed a Cauchy filter.

Finally, we observe that $s(f(x)) = x$ and $f(s(F)) \subseteq F$, which implies $f(s(F))$ is equivalent to $F$.
Also, it is clear that $s(F) = s(G)$ whenever $F \subseteq G$, which implies that $s$ respects the equivalence of Cauchy filters.
Thus, we get a bijection between $\mathbb{N}_\infty$ and the equivalence classes of Cauchy filters.
Since regular Cauchy filters are unique representatives of their equivalence classes, we obtain the desired bijection.
\end{proof}

We say that a sequence $f : \mathbb{N} \to X$ in a precover space $X$ is \emph{Cauchy} if, for every Cauchy cover $C$ of $X$, there exist $N \in \mathbb{N}$ and $U \in C$ such that, for every $n \geq N$, $f_n \in U$.
If $X$ is a metric space, this definition is equivalent to the usual notion of a Cauchy sequence.
Thus, this approach generalizes Cauchy sequences to arbitrary precover spaces.

Moreover, $f : \mathbb{N} \to X$ is a Cauchy sequence if and only if it is a cover map with respect to the uniform structure on $\mathbb{N}$ that we defined earlier.
If $X$ is a complete cover space, then \rthm{dense-lift} implies that $f : \mathbb{N} \to X$ is a Cauchy sequence if and only if it extends to a cover map $\widetilde{f} : \mathbb{N}_\infty \to X$.
In this case, the limit of $f$ is defined as $\widetilde{f}(\infty)$.

This concept can be generalized to arbitrary directed sets, leading to a more general notion of limits \cite{limits}.
A \emph{directed set} $I$ is a preordered set in which every finite subset has an upper bound.
For an element $N \in I$, we write $I_{\geq N}$ to denote the set $\{ n \in I \mid n \geq N \}$.
There is a precover space structure on $I$ where a cover $C$ is Cauchy if and only if there exist $U \in C$ and $N \in I$ such that $I_{\geq N} \subseteq U$.
It is straightforward to verify that this definition satisfies \axref{CT}, \axref{CE}, and \axref{CG}, though it may not satisfy \axref{CR}.
In general, the cover space structure on $I$ is defined as the reflection of this precover space structure.

\begin{defn}
Let $I$ be a directed set and $X$ a cover space.
We say that a function $f : I \to X$ \emph{converges} if it is a cover map.
\end{defn}

A \emph{limit} of a function $f : I \to X$ is a point $x \in X$ such that, for every neighborhood $U$ of $x$, there exists $N \in I$ such that $I_{\geq N} \subseteq f^{-1}(U)$.
If a function has a limit, then it converges.
Conversely, if $X$ is complete and $f$ converges, then it has a limit.
Indeed, the completion $I_\infty$ of $I$ contains a point $\infty$ corresponding to the Cauchy filter $\{ U \mid \exists N \in I, I_{\geq N} \subseteq U \}$.
If $X$ is complete and $f$ converges, then $f$ extends to a function from the completion of $I$ to $X$, and the limit of $f$ is given by the image of $\infty$.

The following proposition provides an explicit characterization of convergence:

\begin{prop}[conv-char]
Let $I$ be a directed set and $X$ a cover space.
Then a function $f : I \to X$ converges if and only if, for every Cauchy cover $C$ of $X$, there exist $U \in C$ and $N \in I$ such that $I_{\geq N} \subseteq f^{-1}(U)$.
\end{prop}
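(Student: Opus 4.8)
The plan is to observe that the stated condition is exactly the assertion that $f$ is a cover map out of the \emph{unreflected} precover structure on $I$, and then to read off the equivalence from the universal property of the reflection established in \rprop{regular-reflective}. Write $I_0$ for the set $I$ equipped with the naive precover structure whose Cauchy covers are those $C$ admitting $U \in C$ and $N \in I$ with $I_{\geq N} \subseteq U$; by the definition preceding the statement, the cover space $I$ appearing in the notion of convergence is the reflection $R(I_0)$. Unwinding definitions, the right-hand condition says precisely that for every Cauchy cover $C$ of $X$ the pullback $\{ f^{-1}(U) \mid U \in C \}$ is Cauchy in $I_0$, that is, that $f : I_0 \to X$ is a cover map of precover spaces.

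For the ``only if'' direction I would use that the reflection unit $I_0 \to R(I_0)$ is the identity cover map, so $\mathcal{C}_{R(I_0)} \subseteq \mathcal{C}_{I_0}$. Hence if $f$ converges, meaning $f : R(I_0) \to X$ is a cover map, then every pullback cover $\{ f^{-1}(U) \mid U \in C \}$ lies in $\mathcal{C}_{R(I_0)}$ and therefore in $\mathcal{C}_{I_0}$, which yields the required $U$ and $N$. For the ``if'' direction I would appeal directly to the universal property: the condition furnishes a cover map $f : I_0 \to X$ of precover spaces, and since $X$ is a cover space, \rprop{regular-reflective} guarantees that $f$ factors through the reflection as a cover map $R(I_0) \to X$, i.e. that $f$ converges.

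I do not expect a genuine obstacle here, since both implications are immediate consequences of how $R$ is built. The only point requiring care is the bookkeeping that ``converges'' is phrased with respect to the reflected structure $R(I_0)$ while the explicit criterion is phrased with respect to the unreflected structure $I_0$; the reflection's universal property is exactly what bridges the two, precisely because the codomain $X$ is already a cover space.
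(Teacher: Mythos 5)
Your proposal is correct and follows essentially the same route as the paper: the paper's proof likewise reduces convergence to $f$ being a cover map out of the unreflected precover structure on $I$ (using that $X$ is a cover space, i.e.\ the universal property of the reflection from \rprop{regular-reflective}) and then unpacks the definition of that structure. You have merely made explicit the two inclusions $\mathcal{C}_{R(I_0)} \subseteq \mathcal{C}_{I_0}$ and the factorization through $R(I_0)$ that the paper leaves implicit.
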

\begin{proof}
Since $X$ is a cover space, $f$ converges if and only if, for every Cauchy cover $C$ of $X$, the family $\{ f^{-1}(U) \mid U \in C \}$ is a Cauchy cover in the precover space structure on $I$.
Unpacking the definitions, we obtain the required condition.
\end{proof}

If $X$ is a uniform space, we can simplify the condition in the previous proposition:

\begin{prop}
Let $I$ be a directed set and $X$ a uniform space.
Then a function $f : I \to X$ converges if and only if, for every uniform cover $C$ of $X$, there exist $U \in C$ and $N \in I$ such that $I_{\geq N} \subseteq f^{-1}(U)$.
\end{prop}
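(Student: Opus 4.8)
The plan is to derive this from \rprop{conv-char} by showing that the uniform covers of $X$ suffice to detect convergence, even though in general one must check the condition for \emph{all} Cauchy covers of the underlying cover space. The forward direction is immediate: if $f$ converges, then by \rprop{conv-char} the required condition holds for every Cauchy cover $C$, and in particular for every uniform cover, since a uniform space $X$ gives rise to the cover space $(X,\overline{\mathcal{U}_X})$ in which every uniform cover is Cauchy. So the substance is entirely in the converse.

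For the converse, suppose the condition holds for every uniform cover. I want to upgrade this to the condition for every Cauchy cover $C \in \overline{\mathcal{U}_X}$, so that \rprop{conv-char} applies. The natural route is an induction on the generation of $\overline{\mathcal{U}_X}$ from the subbase of uniform covers, exactly paralleling the style of proofs like \rlem{closure-filter} and \rlem{subbase-regular}. Concretely, I would show that the property ``there exist $U \in C$ and $N \in I$ with $I_{\geq N} \subseteq f^{-1}(U)$'' is preserved under the closure operations \axref{CT}, \axref{CE}, and \axref{CG}. The cases \axref{CT} and \axref{CE} are routine (for \axref{CE}, a refinement only enlarges the sets, so the witness $U$ is replaced by a superset). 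The key case is \axref{CG}: given $C$ with witness $U \in C$, $N$, and given covers $D_U$ each with witnesses, I use directedness of $I$ to combine the two indices $N$ and $N'$ into a common upper bound $N''$, so that $I_{\geq N''} \subseteq f^{-1}(U) \cap f^{-1}(V) = f^{-1}(U \cap V)$ for the appropriate $U \cap V$ in the combined cover.

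Alternatively, and perhaps more cleanly, I would appeal directly to \rprop{cauchy-filter} together with the observation that the convergence condition is exactly the statement that a certain filter is Cauchy. Define the filter $T = \{ U \subseteq X \mid \exists N \in I,\ I_{\geq N} \subseteq f^{-1}(U) \}$ on $X$; this is the pushforward $f(\mathcal{T})$ of the tail filter $\mathcal{T} = \{ A \subseteq I \mid \exists N \in I,\ I_{\geq N} \subseteq A \}$ on the directed set $I$. The condition in \rprop{conv-char} says precisely that $T$ intersects every Cauchy cover of $X$, i.e.\ $T$ satisfies the Cauchy condition, while the condition in the present proposition says $T$ intersects every uniform cover. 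Since the uniform covers form a subbase for $X$ (indeed a base, by the example following the definition of regular bases), \rprop{cauchy-filter} tells us these two conditions are equivalent, provided $T$ is proper. Properness of $T$ holds because each $I_{\geq N}$ is inhabited ($I$ being directed and hence nonempty), so every set in $T$ is inhabited; thus $T$ is a proper filter and the two notions of Cauchy coincide.

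The main obstacle is simply ensuring that \rprop{cauchy-filter} is applicable, which requires checking that $T$ is a genuine proper filter rather than merely an arbitrary collection of sets: upward closure is obvious, closure under finite intersection again uses directedness of $I$ to merge indices, and properness uses that $I_{\geq N}$ is inhabited. Once $T$ is confirmed to be a proper filter and the uniform covers are recognized as a subbase, \rprop{cauchy-filter} immediately yields the equivalence of the Cauchy condition and the uniform-cover condition, and combining this with \rprop{conv-char} completes the proof. I would write the argument via this filter reformulation rather than the raw induction, since it reuses existing machinery and makes the role of directedness transparent.
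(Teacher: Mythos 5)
Your proposal is correct, and your preferred route (defining the tail filter $T = \{ U \mid \exists N \in I,\ I_{\geq N} \subseteq f^{-1}(U) \}$, checking it is a proper filter, and invoking \rprop{cauchy-filter} with the uniform covers as subbase, combined with \rprop{conv-char}) is exactly the argument the paper gives. The only difference is cosmetic: the paper does not spell out the verification that $T$ is a proper filter, which you rightly note uses directedness of $I$.
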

\begin{proof}
If $f$ converges, then the required condition holds by the previous proposition.
Conversely, assume that this condition holds, and let us prove that $f$ converges.
Define the filter $F = \{ U \mid \exists N \in I, \forall n \geq N, f(n) \in U \}$.
By assumption, this proper filter intersects with every uniform cover of $X$.
By \rprop{cauchy-filter}, $F$ also intersects with every Cauchy cover of $X$.
Hence, $f$ is a cover map, as required.
\end{proof}

This result gives the usual definition of convergence for metric spaces:

\begin{cor}
Let $I$ be a directed set and $X$ a metric space.
Then a function $f : I \to X$ converges if and only if, for every $\varepsilon > 0$, there exists $N \in I$ such that $d(f(n),f(N)) < \varepsilon$ for all $n \geq N$.
\end{cor}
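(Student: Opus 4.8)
The plan is to derive this directly from the preceding proposition, which characterizes convergence for uniform spaces, by translating its condition on uniform covers into the stated metric condition. A metric space is in particular a uniform space, so the proposition applies, and the only fact I need about its uniform structure is that a cover $C$ of $X$ is uniform precisely when there is some $\varepsilon > 0$ such that every ball $B_\varepsilon(x)$ is contained in a member of $C$; equivalently, the ball cover $\{ B_\varepsilon(x) \mid x \in X \}$ refines $C$. I will prove the two directions of the equivalence separately, each time matching a uniform cover against an $\varepsilon$-ball.

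For the direction that a convergent $f$ satisfies the metric condition, I would fix $\varepsilon > 0$ and apply the preceding proposition to the uniform cover $\{ B_{\varepsilon/2}(x) \mid x \in X \}$. This yields a center $x_0 \in X$ and an index $N \in I$ with $f(n) \in B_{\varepsilon/2}(x_0)$ for all $n \geq N$. Since $N \in I_{\geq N}$, the point $f(N)$ also lies in $B_{\varepsilon/2}(x_0)$, so the triangle inequality gives $d(f(n),f(N)) \leq d(f(n),x_0) + d(x_0,f(N)) < \varepsilon$ for every $n \geq N$, which is exactly the required condition.

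For the converse, I would take an arbitrary uniform cover $C$ of $X$ and produce the data demanded by the preceding proposition. Choose $\varepsilon > 0$ so that every $\varepsilon$-ball is contained in some member of $C$. Applying the metric condition to this $\varepsilon$ gives $N \in I$ with $d(f(n),f(N)) < \varepsilon$, that is $f(n) \in B_\varepsilon(f(N))$, for all $n \geq N$. Picking $U \in C$ with $B_\varepsilon(f(N)) \subseteq U$ yields $I_{\geq N} \subseteq f^{-1}(U)$. Thus the hypothesis of the preceding proposition holds for every uniform cover, so $f$ converges.

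I do not expect a serious obstacle here, since the content is the standard translation between $\varepsilon$-balls and uniform covers. The only point needing mild care is the forward direction, where the radius must be halved so that the triangle inequality delivers a bound of $\varepsilon$ between two points of the same $(\varepsilon/2)$-ball; correspondingly, in the converse one must use that a uniform cover is exactly one refined by some ball cover, so that the single ball $B_\varepsilon(f(N))$ can be absorbed into a member of $C$.
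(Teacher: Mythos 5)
Your proposal is correct and is exactly the argument the paper intends: the corollary is stated without proof as an immediate unfolding of the preceding proposition, using the fact that the uniform covers of a metric space are those refined by some ball cover $\{ B_\varepsilon(x) \mid x \in X \}$. Your halving of the radius in the forward direction and the absorption of $B_\varepsilon(f(N))$ into a member of the cover in the converse are precisely the two translations needed, and both steps are constructively unproblematic.
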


Now, we discuss limits of sequences of functions.
It is well-known that the pointwise limit of a sequence of continuous maps may not be continuous.
The same issue arises for cover maps.
This limitation becomes clear when expressed in terms of the lifting property.

Indeed, a sequence of functions $X \to Y$ can be represented by a single function $f : \mathbb{N} \times X \to Y$.
Even if $f(n,-)$ is a cover map for all $n \in \mathbb{N}$ and $f(-,x)$ is a cover map for all $x \in X$, this does not imply that $f$ is a cover map.
In other words, if $f$ represents a pointwise convergent sequence of cover maps, it does not guarantee that $f$ itself is a cover map.

Thus, it is natural to require that the corresponding function $\mathbb{N} \times X \to Y$ is a cover map.
More generally, for an arbitrary directed set $I$, we might consider cover maps of the form $I \times X \to Y$. 
As before, if $Y$ is a complete cover space, then $f : I \times X \to Y$ is a cover map if and only if it extends to a map $\widetilde{f} : I_\infty \times X \to Y$.
In this case, the limit of $f$ is defined as $\widetilde{f}(\infty, -)$.

It will be convenient to have an explicit description of Cauchy covers of $I \times X$ for the following discussion.
The following lemma provides such a description:

\begin{lem}[dir-prod-char]
Let $I$ be a directed set equipped with the precover space structure, and let $X$ be a cover space.
Then a set $C$ is a Cauchy cover of $I \times X$ if and only if the following conditions hold:
\begin{enumerate}
\item \label{it:dir-char} The set $\{ U' \mid \exists N \in I, \exists U \in C, I_{\geq N} \times U' \subseteq U \}$ is a Cauchy cover of $X$.
\item \label{it:dir-cover} For every $n \in I$, the set $\{ U' \mid \exists U \in C, \forall x \in U', (n,x) \in U \}$ is a Cauchy cover of $X$.
\end{enumerate}
\end{lem}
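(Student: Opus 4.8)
The plan is to reduce both directions to a single structural fact: a cover of $I \times X$ is Cauchy precisely when it is refined by a \emph{generalized rectangular} cover, i.e.\ one of the form $\{ V \times U' \mid V \in C_I,\ U' \in D_V \}$, where $C_I$ is a Cauchy cover of $I$ and each $D_V$ is a Cauchy cover of $X$. Recall from the proof of \rlem{prod-embedding} that the precover structure on $I \times X$ is generated by the covers $\{ V \times X \mid V \in C_I \}$ and $\{ I \times U' \mid U' \in D \}$ for Cauchy covers $C_I$ of $I$ and $D$ of $X$. These generators are themselves generalized rectangular (taking $D_V = \{ X \}$, respectively $C_I = \{ I \}$), the trivial cover $\{ I \times X \}$ is as well, and refinement is transitive; so to prove the structural fact it suffices, arguing by induction on the generation of $\overline{\mathcal{B}}$, to check that ``being refined by a generalized rectangular cover'' is stable under \axref{CG}. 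I expect this \axref{CG}-closure to be the main obstacle: given generalized rectangular refinements of a cover $C_0$ and of each $D_e$ (for $e \in C_0$), one must assemble a single generalized rectangular cover refining $\{ e \cap e' \mid e \in C_0,\ e' \in D_e \}$, which requires combining the various Cauchy covers of $I$ and of $X$ through their own closure under \axref{CG}, together with some careful index bookkeeping.

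Granting the structural fact, the \textbf{forward} direction is immediate. Suppose $C$ is Cauchy and fix a generalized rectangular refinement $\{ V \times U' \mid V \in C_I,\ U' \in D_V \}$. Since $C_I$ is a Cauchy cover of $I$, it contains some $V^{*}$ with $I_{\geq N_0} \subseteq V^{*}$; for each $U' \in D_{V^{*}}$ the rectangle $V^{*} \times U'$ lies in some $U \in C$, so $I_{\geq N_0} \times U' \subseteq U$, and hence $U'$ belongs to the collection in condition~\eqref{it:dir-char}. As $D_{V^{*}}$ is a Cauchy cover of $X$ contained in that collection, \axref{CE} gives condition~\eqref{it:dir-char}. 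For condition~\eqref{it:dir-cover}, fix $n \in I$ and choose $V_n \in C_I$ with $n \in V_n$; then for each $U' \in D_{V_n}$ we have $\{ n \} \times U' \subseteq V_n \times U' \subseteq U$ for some $U \in C$, so $D_{V_n}$ is contained in the collection of condition~\eqref{it:dir-cover}, which is therefore Cauchy by \axref{CE}.

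For the \textbf{converse} I would argue directly rather than through the structural fact. Assume both conditions, and write $D_X$ for the Cauchy cover of $X$ in condition~\eqref{it:dir-char}, choosing for each $U' \in D_X$ an index $N_{U'}$ and a set $U_{U'} \in C$ with $I_{\geq N_{U'}} \times U' \subseteq U_{U'}$; write $E_n$ for the Cauchy cover of $X$ in condition~\eqref{it:dir-cover}. Starting from the Cauchy cover $\{ I \times U' \mid U' \in D_X \}$, I apply \axref{CG} with, for the piece $I \times U'$, the Cauchy cover $\{ V \times X \mid V \in \{ I_{\geq N_{U'}} \} \cup \{ \{ n \} \mid n \in I \} \}$ arising from the tail Cauchy cover of $I$; this yields the Cauchy cover $\{ V \times U' \mid U' \in D_X,\ V \in \{ I_{\geq N_{U'}} \} \cup \{ \{ n \} \mid n \in I \} \}$. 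A second application of \axref{CG} refines each remaining slice $\{ n \} \times U'$ by the Cauchy cover $\{ I \times U'' \mid U'' \in E_n \}$, while leaving the tail pieces $I_{\geq N_{U'}} \times U' \subseteq U_{U'}$ untouched via the trivial cover \axref{CT}. The resulting Cauchy cover consists of pieces $I_{\geq N_{U'}} \times U' \subseteq U_{U'} \in C$ and pieces $\{ n \} \times (U' \cap U'') \subseteq \{ n \} \times U'' \subseteq U$ for the witnessing $U \in C$, so it refines $C$; by \axref{CE}, $C$ is Cauchy.
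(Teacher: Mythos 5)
Your converse direction is essentially sound, and it is in fact a streamlined version of the paper's own construction: two applications of \axref{CG} starting from $\{ I \times U' \mid U' \in D_X \}$, first splitting $I$ into the tail $I_{\geq N_{U'}}$ and singletons, then refining each singleton slice using condition~(2), exactly as the paper does with its covers $D$, $E_V$, $F$, $G_S$. The one caveat is your ``choosing for each $U' \in D_X$ an index $N_{U'}$ and a set $U_{U'} \in C$'': in the constructive setting of the paper such a choice function is not available, but this is a presentational issue, repaired by indexing the intermediate covers by witnessing tuples (with the existentials written into the defining formulas), which is precisely how the paper phrases $E_V$ and $G_S$.

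The forward direction, however, has a genuine gap. You reduce it to the ``structural fact'' that every Cauchy cover of $I \times X$ is refined by a generalized rectangular cover $\{ V \times U' \mid V \in C_I, U' \in D_V \}$, and you explicitly defer the key step, namely closure of this class under \axref{CG}. That step cannot be carried out, because the structural fact is false. Take $I = \mathbb{N}$, $X = \mathbb{R}$, and
\[ C = \{ I_{\geq \lceil \abs{q} \rceil} \times B_1(q) \mid q \in \mathbb{Q} \} \cup \{ \{ n \} \times B_1(q) \mid n \in \mathbb{N}, q \in \mathbb{Q} \}. \]
This $C$ satisfies conditions~(1) and~(2), hence is Cauchy by the converse direction. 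But suppose $\{ V \times W \mid V \in C_I, W \in D_V \}$ refined $C$. Then $C_I$ would contain some $V^*$ with $I_{\geq N_0} \subseteq V^*$; for every inhabited $W \in D_{V^*}$ the rectangle $V^* \times W$ meets more than one first coordinate, so it can only lie in a piece of the form $I_{\geq \lceil \abs{q} \rceil} \times B_1(q)$, which forces $\lceil \abs{q} \rceil \leq N_0$ and hence $W \subseteq B_1(q) \subseteq (-N_0 - 1, N_0 + 1)$; then $D_{V^*}$ cannot cover $\mathbb{R}$. So ``refined by a generalized rectangular cover'' is not a \axref{CG}-closed predicate, and your induction breaks at exactly the step you flagged. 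The paper's forward direction inducts on a different predicate: it verifies that the class of covers satisfying conditions~(1) and~(2) is itself closed under \axref{CT}, \axref{CE}, and \axref{CG} and contains the generating rectangular covers, hence contains every Cauchy cover of $I \times X$. Your argument can be repaired by switching to that predicate; the derivation of~(1) and~(2) that you give for genuinely rectangular covers then serves as the base case.
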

\begin{proof}

We begin by proving that covers satisfying these two conditions form a precover space structure on $I \times X$.
Axioms \axref{CT} and \axref{CE} are easily verified, so we focus on checking axiom \axref{CG}.
Let $C$ be a set, and let $\{ D_U \}_{U \in C}$ be a collection of sets such that $C$ and $D_U$ satisfy conditions \eqref{it:dir-char} and \eqref{it:dir-cover}.
We need to show that $E = \{ U \cap V \mid U \in C, V \in D_U \}$ also satisfies these conditions.

\textbf{Condition \eqref{it:dir-char}:}
Define
\begin{align*}
C' & = \{ U' \mid \exists N \in I, \exists U \in C, I_{\geq N} \times U' \subseteq U \}, \\
D'_{U'} & = \{ V' \mid \exists N \in I, \exists U \in C, I_{\geq N} \times U' \subseteq U, \exists V \in D_U, I_{\geq N} \times V' \subseteq V \}.
\end{align*}
By assumption, $C'$ is a Cauchy cover of $X$.
Clearly, $D'_{U'}$ is also a Cauchy cover of $X$ for every $U' \in C'$.
Since $\{ U' \cap V' \mid U' \in C', V' \in D'_{U'} \}$ is a subset of $\{ W' \mid \exists N \in I, \exists W \in E, I_{\geq N} \times W' \subseteq W \}$, it follows that the latter set is a Cauchy cover of $X$.
Thus, \eqref{it:dir-char} holds for $E$.

\textbf{Condition \eqref{it:dir-cover}:}
Let $n \in I$. Define
\begin{align*}
C' & = \{ U' \mid \exists U \in C, \forall x \in U', (n,x) \in U \}, \\
D'_{U'} & = \{ V' \mid \exists U \in C, \exists V \in D_U, (\forall x \in U', (n,x) \in U), (\forall x \in V', (n,x) \in V) \}.
\end{align*}
Clearly, $C'$ and $D'_{U'}$ are Cauchy covers of $X$.
Since $\{ U' \cap V' \mid U' \in C', V' \in D'_{U'} \}$ is a subset of $\{ W' \mid \exists W \in E, \forall x \in W', (n,x) \in W \}$, the latter set is a Cauchy cover of $X$.
Thus, \eqref{it:dir-cover} holds for $E$.

To prove that every Cauchy cover of $I \times X$ satisfies conditions \eqref{it:dir-char} and \eqref{it:dir-cover}, it suffices to consider basic Cauchy covers,
i.e., covers of the form $\{ \pi_1^{-1}(U) \mid U \in C \}$ and $\{ \pi_2^{-1}(V) \mid V \in D \}$, where $C$ is a Cauchy cover of $I$ and $D$ is a Cauchy cover of $X$.

Covers of the form $\{ \pi_1^{-1}(U) \mid U \in C \}$ satisfy the conditions because both of the following covers are refined by $\{ \top \}$:
\begin{align*}
& \{ U' \mid \exists N \in I, \exists U \in C, I_{\geq N} \times U' \subseteq \pi_1^{-1}(U) \}, \\
& \{ U' \mid \exists U \in C, \forall x \in U', (n,x) \in \pi_1^{-1}(U) \}.
\end{align*}
Similarly, covers of the form $\{ \pi_2^{-1}(V) \mid V \in D \}$ satisfy the conditions because both of the following covers are refined by $D$:
\begin{align*}
& \{ V' \mid \exists N \in I, \exists V \in D, I_{\geq N} \times V' \subseteq \pi_2^{-1}(V) \}, \\
& \{ V' \mid \exists V \in D, \forall x \in V', (n,x) \in \pi_2^{-1}(V) \}.
\end{align*}

Finally, we prove that every set $C$ satisfying conditions \eqref{it:dir-char} and \eqref{it:dir-cover} is a Cauchy cover of $I \times X$.
Define
\begin{align*}
C' & = \{ U' \mid \exists N \in I, \exists U \in C, I_{\geq N} \times U' \subseteq U \}, \\
D & = \{ \pi_2^{-1}(U') \mid U' \in C' \}, \\
E_V & = \{ W \mid \exists U' \subseteq X, \exists N \in I, \exists U \in C, I_{\geq N} \times U' \subseteq U, V = \pi_2^{-1}(U'), \\
& \qquad \qquad (W \subseteq \pi_1^{-1}(I_{\geq N})) \lor (\exists W_1 \in C, W \subseteq W_1) \}.
\end{align*}

Since $\{ V \cap W \mid V \in D, W \in E_V \}$ refines $C$, and $D$ is a Cauchy cover of $I \times X$, it suffices to check that $E_V$ is a Cauchy cover of $I \times X$ for every $V \in D$.
Let $U' \in C'$, $N \in I$, and $U \in C$ such that $V = \pi_2^{-1}(U')$ and $I_{\geq N} \times U' \subseteq U$.
Define
\begin{align*}
F' & = \{ I_{\geq N} \} \cup \{ \{ n \} \mid n \in I \}, \\
F & = \{ \pi_1^{-1}(S') \mid S' \in F' \}, \\
G_S & = \{ T \mid \exists S' \subseteq I, S = \pi_1^{-1}(S'), (S' = I_{\geq N}) \lor \\
& \qquad \qquad (\exists n \in I, S' = \{ n \}, \exists U_1' \subseteq X, \exists U_1 \in C, \{ n \} \times U_1' \subseteq U_1, T = \pi_2^{-1}(U_1')) \}.
\end{align*}

Since $\{ S \cap T \mid S \in F, T \in G_S \} \subseteq E_V$ and $F$ is a Cauchy cover of $I \times X$,
it suffices to verify that $G_S$ is a Cauchy cover of $I \times X$ for every $S \in F$.
Let $S' \in F'$ such that $S = \pi_1^{-1}(S')$.
Then either $S' = I_{\geq N}$ or $S' = \{ n \}$ for some $n \in I$.
In the former case, $G_S$ is Cauchy since it is refined by $\{ \top \}$.
In the latter case, $G_S$ is Cauchy since it is refined by the Cauchy cover $\{ \pi_2^{-1}(U'') \mid U'' \in C'' \}$,
where $C''$ is the Cauchy cover $\{ U'' \mid \exists U \in C, \forall x \in U'', (n,x) \in U \}$.
This completes the proof.
\end{proof}

The following proposition provides a characterization for the convergence of functions of the form $I \times X \to Y$:

\begin{prop}[func-conv-char]
Let $I$ be a directed set, and let $X$ and $Y$ be cover spaces.
A function $f : I \times X \to Y$ is a cover map if and only if the following conditions hold:
\begin{enumerate}
\item For every $n \in I$, the map $f(n, -) : X \to Y$ is a cover map.
\item For every Cauchy cover $D$ of $Y$, the set
        \[ \{ U \mid \exists N \in I, \exists V \in D, I_{\geq N} \times U \subseteq f^{-1}(V) \} \]
        is a Cauchy cover of $X$.
\end{enumerate}
\end{prop}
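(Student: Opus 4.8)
The plan is to deduce the proposition directly from the characterisation of Cauchy covers of $I \times X$ given in \rlem{dir-prod-char}. By definition, $f : I \times X \to Y$ is a cover map if and only if, for every Cauchy cover $D$ of $Y$, the preimage family $C_D = \{ f^{-1}(V) \mid V \in D \}$ is a Cauchy cover of $I \times X$. So I would fix an arbitrary Cauchy cover $D$ of $Y$ and run $C_D$ through the two conditions of \rlem{dir-prod-char}, checking that they translate into conditions (2) and (1) of the present statement respectively.

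For the first condition of \rlem{dir-prod-char}, I would substitute $U = f^{-1}(V)$ into the set $\{ U' \mid \exists N \in I, \exists U \in C_D, I_{\geq N} \times U' \subseteq U \}$. Since the elements $U$ of $C_D$ are exactly the sets $f^{-1}(V)$ with $V \in D$, this set becomes $\{ U' \mid \exists N \in I, \exists V \in D, I_{\geq N} \times U' \subseteq f^{-1}(V) \}$, which is verbatim the set appearing in condition (2). Thus the first condition of the lemma, applied to $C_D$ for all Cauchy covers $D$, is precisely condition (2).

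For the second condition of \rlem{dir-prod-char}, I would fix $n \in I$ and rewrite the membership $(n,x) \in f^{-1}(V)$ as $x \in f(n,-)^{-1}(V)$, so that the clause ``$\forall x \in U', (n,x) \in f^{-1}(V)$'' becomes ``$U' \subseteq f(n,-)^{-1}(V)$''. The set in the lemma's second condition then reads $\{ U' \mid \exists V \in D, U' \subseteq f(n,-)^{-1}(V) \}$. This family and the direct preimage family $\{ f(n,-)^{-1}(V) \mid V \in D \}$ refine each other: every member of the former is contained in some $f(n,-)^{-1}(V)$, and conversely each $f(n,-)^{-1}(V)$ is itself a member of the former. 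Hence by \axref{CE} one is a Cauchy cover of $X$ if and only if the other is. Therefore the lemma's second condition, required for all $n$ and all $D$, is equivalent to the statement that $\{ f(n,-)^{-1}(V) \mid V \in D \}$ is Cauchy for every $n$ and every $D$, i.e. that each $f(n,-)$ is a cover map, which is condition (1).

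I expect the only genuinely delicate point to be the interchange of quantifiers in the last step: condition (1) asks that each slice $f(n,-)$ be a cover map, which unwinds to ``for every $n$ and every Cauchy cover $D$ of $Y$, the pullback of $D$ is Cauchy'', and I must make sure this matches the ``for every $D$, for every $n$'' reading produced by applying \rlem{dir-prod-char} to each $C_D$. The accompanying equivalence of the two forms of the pullback family via \axref{CE} is routine, as is the verification for the first condition; once the definition of cover map is unfolded, the whole argument is essentially a dictionary between the lemma's two conditions and the proposition's two conditions.
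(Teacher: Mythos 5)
Your proposal is essentially the paper's own proof: both reduce the statement to \rlem{dir-prod-char} applied to the preimage family $\{ f^{-1}(V) \mid V \in D \}$ and translate the lemma's two conditions into the proposition's two conditions exactly as you describe, including the mutual-refinement argument via \axref{CE} for the slices $f(n,-)$. The one point you omit is that \rlem{dir-prod-char} is stated for the \emph{precover} space structure on $I$, whereas the proposition concerns the cover space structure on $I$ (defined as the reflection of that precover structure); the paper bridges this in a single sentence by observing that the reflector preserves products, so that $f$ is a cover map for one structure if and only if it is for the other. With that one-line justification added, your argument coincides with the paper's.
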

\begin{proof}
Since the reflector preserves products, the function $f : I \times X \to Y$ is a cover map for the cover space structure on $I$ if and only if it is a cover map for the precover space structure on $I$.
Thus, $f$ is a cover map if and only if, for every Cauchy cover $D$ on $Y$, the collection $\{ f^{-1}(V) \mid V \in D \}$ satisfies conditions \eqref{it:dir-char} and \eqref{it:dir-cover} of \rlem{dir-prod-char}.
These conditions are equivalent to the ones stated in the proposition, completing the proof.
\end{proof}

If $Y$ is a uniform space, we can improve the characterization as follows:

\begin{prop}
Let $I$ be a directed set, $X$ a cover space, and $Y$ a uniform space.
Then $f : I \times X \to Y$ is a cover map if and only if the following conditions hold:
\begin{enumerate}
\item For every $n \in I$, the map $f(n, -) : X \to Y$ is a cover map.
\item For every uniform cover $D$ of $Y$, the set
        \[ \{ U \mid \exists N \in I, \forall x \in U, \exists V \in D, \forall n \geq N, f(n, x) \in V \} \]
        is a Cauchy cover of $X$.
\end{enumerate}
\end{prop}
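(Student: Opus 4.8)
The plan is to deduce this from \rprop{func-conv-char} by reducing its condition (2) from arbitrary Cauchy covers of $Y$ to uniform covers. Throughout, abbreviate the set appearing in \rprop{func-conv-char} as $S^{o}_D = \{ U \mid \exists N \in I, \exists V \in D, I_{\geq N} \times U \subseteq f^{-1}(V) \}$, and the set appearing in the present statement as $S^{n}_D = \{ U \mid \exists N \in I, \forall x \in U, \exists V \in D, \forall n \geq N, f(n,x) \in V \}$. First I would record a reduction. Since $f^{-1}$ commutes with finite intersections and preserves refinement, the collection of covers $D$ of $Y$ for which $\{ f^{-1}(V) \mid V \in D \}$ is Cauchy on $I \times X$ is closed under \axref{CT}, \axref{CE}, and \axref{CG}; as the uniform covers form a base of $Y$ with $\mathcal{C}_Y = \overline{\mathcal{U}_Y}$, it therefore suffices to test $f$ on uniform covers. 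Combined with \rlem{dir-prod-char}, and noting that its condition \eqref{it:dir-cover} for $\{ f^{-1}(V) \mid V \in D \}$ is exactly the assertion that each $f(n,-)$ is a cover map while its condition \eqref{it:dir-char} is the statement that $S^{o}_D$ is Cauchy, this yields: assuming (1), the map $f$ is a cover map if and only if $S^{o}_D$ is a Cauchy cover of $X$ for every uniform cover $D$.

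The easy direction is then immediate. If $f$ is a cover map, then (1) holds (as in \rprop{func-conv-char}) and $S^{o}_D$ is Cauchy for every uniform $D$ by the reduction above; since ``$\exists V\,\forall x$'' implies ``$\forall x\,\exists V$'', we have $S^{o}_D \subseteq S^{n}_D$, so $S^{o}_D$ refines $S^{n}_D$ and $S^{n}_D$ is Cauchy by \axref{CE}. This establishes conditions (1) and (2).

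For the converse I would assume (1) and (2) and, using the reduction, prove that $S^{o}_D$ is Cauchy for each uniform $D$. Fix such a $D$ and apply \axref{UU} to obtain a uniform refinement $E$: for every $W \in E$ there is $V_W \in D$ with $W' \subseteq V_W$ whenever $W' \in E$ meets $W$. By (2), the set $S^{n}_E$ is a Cauchy cover of $X$. For each $U \in S^{n}_E$ put $D_U = \{ f(N,-)^{-1}(W) \mid W \in E,\ N \text{ witnesses } U \in S^{n}_E \}$; since each $f(N,-)$ is a cover map by (1) and $E$ is Cauchy, any single witness already exhibits inside $D_U$ a Cauchy cover, so $D_U$ is Cauchy by \axref{CE}. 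By \axref{CG} the cover $\{ U \cap Z \mid U \in S^{n}_E,\ Z \in D_U \}$ is Cauchy, and I claim it refines $S^{o}_D$. A typical member is $U \cap f(N,-)^{-1}(W)$ with $N$ a witness for $U$; for $x$ in this set and the $W_x \in E$ supplied by the witness, we have $f(N,x) \in W \cap W_x$, so $W$ meets $W_x$, whence $W_x \subseteq V_W$ and thus $f(n,x) \in W_x \subseteq V_W$ for all $n \geq N$, with $V_W \in D$ independent of $x$. This is exactly $I_{\geq N} \times (U \cap f(N,-)^{-1}(W)) \subseteq f^{-1}(V_W)$, so the piece lies in $S^{o}_D$.

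The main obstacle is precisely the reversal of quantifiers between $S^{n}_D$, where the target $V$ may depend on the point $x$, and $S^{o}_D$, where one $V$ must work uniformly on the whole piece. The device that overcomes it is to first pass to the refinement $E$ of \axref{UU} and then to cut each $U \in S^{n}_E$ along the cover map $f(N,-)$, so that on each piece $f(N,-)$ lands in a single $W \in E$; this forces all the pointwise sets $W_x$ to meet $W$ and hence to be absorbed into the common $V_W$. A secondary point to treat with care is the constructive extraction of the witness $N$: rather than choosing one, I let $D_U$ range over \emph{all} valid witnesses simultaneously, which keeps the family $\{ D_U \}$ defined by honest comprehension while remaining Cauchy.
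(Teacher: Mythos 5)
Your proposal is correct and follows essentially the same route as the paper's proof: both directions reduce to uniform covers, and the converse uses \axref{UU} to pass to a refinement $E$, applies condition (2) to $E$, cuts each resulting piece along $f(N,-)^{-1}$ of the sets of $E$, and uses the refinement property to absorb the pointwise witnesses into a single target set, assembling everything with \axref{CG}. The only differences are presentational (your explicit handling of the witness $N$ by comprehension over all witnesses, and spelling out the reduction to uniform covers, both of which the paper treats implicitly).
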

\begin{proof}
The ``only if'' direction follows immediately from \rprop{func-conv-char}.
To prove the converse, we show that, for every uniform cover $E$ of $Y$, there exists a Cauchy cover $C$ of $X$ such that, for every $U \in C$,
there exist $N \in I$ and a Cauchy cover $D$ of $X$ with the property that, for every $V \in D$, there exists $W \in E$ satisfying
$I_{\geq N} \times (U \cap V) \subseteq f^{-1}(W)$.

By \axref{UU}, there exists a uniform cover $E'$ of $Y$ such that, for every $W' \in E'$, there exists $W \in E$ such that, for all $W'' \in E'$, we know $\overlap{W''}{W'} \implies W'' \subseteq W$.
By assumption, the set
\[ C = \{ U \mid \exists N \in I, \forall x \in U, \exists W' \in E', \forall n \geq N, f(n, x) \in W' \} \]
is a Cauchy cover of $X$.
Let $U \in C$.
Then there exists $N \in I$ such that
\[ \forall x \in U, \exists W' \in E', \forall n \geq N, f(n, x) \in W'. \]
Define the set
\[ D = \{ f(N, -)^{-1}(W') \mid W' \in E' \}, \]
which is a Cauchy cover by assumption.

We now show that, for every $W' \in E'$, there exists $W \in E$ such that
\[ I_{\geq N} \times (U \cap f(N, -)^{-1}(W')) \subseteq f^{-1}(W). \]
Let $W \in E$ satisfy the condition that, for every $W'' \in E'$, we have
$\overlap{W''}{W'} \implies W'' \subseteq W$.
Let $n \in I$ with $n \geq N$ and $x \in U$ such that $f(N, x) \in W'$. Then there exists $W'' \in E'$ such that, for every $n \geq N$, $f(n, x) \in W''$.
Since $f(N, x) \in W'' \cap W'$, it follows that $W'' \subseteq W$. Thus, $f(n, x) \in W$, completing the proof of this intermediate step.

Finally, we prove that $f : I \times X \to Y$ is a cover map.
It suffices to show that, for every uniform cover $E$ of $Y$, the set $\{ f^{-1}(W) \mid W \in E \}$ is a Cauchy cover of $I \times X$.
By Lemma~\ref{lem:dir-prod-char}, we need to show that the set
\[
C' = \{ U \mid \exists N \in I, \exists W \in E, I_{\geq N} \times U \subseteq f^{-1}(W) \}
\]
is a Cauchy cover of $X$.

Let $C$ be a Cauchy cover of $X$ such that, for every $U \in C$, there exist $N \in I$ and a Cauchy cover $D$ of $X$ satisfying
\[
\forall V \in D, \exists W \in E, I_{\geq N} \times (U \cap V) \subseteq f^{-1}(W).
\]
Then, for every $U \in C$, the set
\[
D_U = \{ V \mid \exists N \in I, \exists W \in E, I_{\geq N} \times (U \cap V) \subseteq f^{-1}(W) \}
\]
is a Cauchy cover. This completes the proof since $C'$ is refined by the Cauchy cover
$\{ U \cap V \mid U \in C, V \in D_U \}$.
\end{proof}

\begin{cor}[func-conv-metric-char]
Let $I$ be a directed set, $X$ a cover space, and $Y$ a metric space.
Then $f : I \times X \to Y$ is a cover map if and only if the following conditions hold:
\begin{enumerate}
\item For every $n \in I$, the map $f(n, -) : X \to Y$ is a cover map.
\item For every $\varepsilon > 0$, the set
        \[ \{ U \mid \exists N \in I, \forall x \in U, \forall n \geq N, d_Y(f(n, x), f(N, x)) < \varepsilon \} \]
        is a Cauchy cover of $X$.
\end{enumerate}
\end{cor}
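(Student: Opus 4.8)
The plan is to obtain this corollary directly from the preceding proposition, which characterizes cover maps $f : I \times X \to Y$ into a uniform space $Y$. Condition~(1) of the two statements is literally identical, so the only work is to show that, when $Y$ is a metric space, condition~(2) of the proposition (quantified over uniform covers $D$ of $Y$) is equivalent to condition~(2) of the corollary (quantified over $\varepsilon > 0$). The bridge between the two is the description of the uniform covers of a metric space in terms of $\varepsilon$-balls, together with the triangle inequality. I first record the transfer principle I will use repeatedly: if $\mathcal{A}$ and $\mathcal{B}$ are collections of subsets of $X$ with $\mathcal{A} \subseteq \mathcal{B}$ and $\mathcal{A}$ is a Cauchy cover, then $\mathcal{A}$ refines $\mathcal{B}$ (each member of $\mathcal{A}$ being a member, hence a subset of itself, of $\mathcal{B}$), so $\mathcal{B}$ is a Cauchy cover by \axref{CE}.

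For the direction from the proposition to the corollary, fix $\varepsilon > 0$ and apply condition~(2) of the proposition to the uniform cover $D = \{ B_{\varepsilon/2}(y) \mid y \in Y \}$. If $U$ lies in the resulting Cauchy cover, with witness $N$, then for each $x \in U$ there is a single ball $B_{\varepsilon/2}(y)$ containing all $f(n,x)$ with $n \geq N$; in particular it contains both $f(n,x)$ and $f(N,x)$, so the triangle inequality yields $d_Y(f(n,x),f(N,x)) < \varepsilon$. Hence $U$ belongs to the set appearing in condition~(2) of the corollary, and the transfer principle makes that set a Cauchy cover.

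For the converse, let $D$ be a uniform cover of $Y$ and choose $\varepsilon > 0$ so that the $\varepsilon$-ball cover $\{ B_\varepsilon(y) \mid y \in Y \}$ refines $D$, i.e.\ every $B_\varepsilon(y)$ is contained in some member of $D$. Apply condition~(2) of the corollary for this $\varepsilon$. If $U$ lies in the resulting Cauchy cover with witness $N$, then for each $x \in U$ every $f(n,x)$ with $n \geq N$ lies in $B_\varepsilon(f(N,x))$, and this ball sits inside some $V \in D$; thus a single $V \in D$ contains all $f(n,x)$ with $n \geq N$, so $U$ belongs to the set in condition~(2) of the proposition, which is therefore a Cauchy cover by the transfer principle. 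The step demanding the most care is precisely this one: extracting from the pointwise metric-closeness statement a \emph{single} member of $D$ valid for all $n \geq N$, which is exactly where the Lebesgue-number-type property of a uniform ball cover of a metric space is used.
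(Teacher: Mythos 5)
Your proposal is correct and matches the paper's intent: the corollary is stated without proof precisely because it is the specialization of the preceding proposition to the metric uniformity, and your two reductions (the $\varepsilon/2$-ball cover with the triangle inequality in one direction, the Lebesgue-number property of uniform covers of a metric space in the other, combined with closure of Cauchy covers under \axref{CE}) are exactly the intended argument. No gaps.
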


If $(a_n)$ is a sequence of real numbers, we say that the series $\sum\limits_{n=0}^\infty a_n$ \emph{converges} if the sequence of partial sums, $s_k = \sum\limits_{n=0}^k a_n$, converges.
If the power series $\sum\limits_{n=0}^\infty a_n x^n$ converges for all $x$, then standard techniques imply that it converges uniformly on bounded subsets.
By \rcor{func-conv-metric-char}, it follows that the function $x \mapsto \sum\limits_{n=0}^\infty a_n x^n$ is a cover map in this case.
This result can be used to define various cover maps such as the exponential function, sine, cosine, and other similar functions.

Constructing these maps as morphisms of locales, however, may be somewhat challenging (see, for example, \cite{vickers-exp}).
Since cover maps correspond to localic maps, the framework of cover spaces provides an alternative approach for the construction of such maps.
This approach is much closer to the usual classical presentation.

Finally, we discuss the limits of functions of the form $f : \mathbb{R}_* \to \mathbb{R}$.
As usual, we say that $y$ is a \emph{limit} of $f$ at $0$ if, for every $\varepsilon > 0$,
there exists $\delta > 0$ such that, for every $x \in \mathbb{R}_*$, if $\abs{x} < \delta$, then $\abs{f(x) - y} < \varepsilon$.
The relationship between this notion and a lifting property is given by the following proposition:

\begin{prop}
Let $f : \mathbb{R}_* \to \mathbb{R}$ be a cover map.
Then $f$ has a limit at $0$ if and only if $f$ extends to a cover map $\widetilde{f} : \mathbb{R} \to \mathbb{R}$.
The limit is then equal to $\widetilde{f}(0)$.
\end{prop}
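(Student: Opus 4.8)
The plan is to prove the two implications separately, reducing the harder one to the extension theorem \rthm{dense-lift}. Throughout, write $\mathbb{R}_*^\flat$ for the set $\mathbb{R}_*$ equipped with the \emph{subspace} (metric) cover structure inherited from $\mathbb{R}$, as opposed to the completed structure on $\mathbb{R}_*$ used in this section (the one for which $(-)^{-1}$ is a cover map).

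For the ``if'' direction, suppose $f$ extends to a cover map $\widetilde f : \mathbb{R} \to \mathbb{R}$. Cover maps are Cauchy, hence continuous by \rprop{cauchy-continuous}, so $\widetilde f$ is continuous at $0$. Unwinding continuity at $0$ in the metric of $\mathbb{R}$ gives: for every $\varepsilon > 0$ there is $\delta > 0$ with $\abs{x} < \delta \implies \abs{\widetilde f(x) - \widetilde f(0)} < \varepsilon$. Restricting to $x \in \mathbb{R}_*$, where $\widetilde f(x) = f(x)$, shows that $\widetilde f(0)$ is a limit of $f$ at $0$. This is immediate and also shows, since $\mathbb{R}$ is Hausdorff and limits are unique, that in the other direction the produced extension must satisfy $\widetilde f(0) = y$.

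For the ``only if'' direction, let $y$ be a limit of $f$ at $0$. As recalled before the inverse-function construction, the inclusion $\iota : \mathbb{R}_*^\flat \to \mathbb{R}$ is a dense embedding whose completion is $\mathbb{R}$, and $\mathbb{R}$ is complete (\rprop{real-completion}). Hence, once I show that $f$ is a cover map \emph{out of $\mathbb{R}_*^\flat$}, \rthm{dense-lift} will produce a unique cover map $\widetilde f : \mathbb{R} \to \mathbb{R}$ with $\widetilde f \circ \iota = f$, i.e. extending $f$, and the ``if'' computation identifies its value at $0$ with $y$. So everything reduces to the key claim: for every Cauchy cover $D$ of $\mathbb{R}$, the cover $C = \{ f^{-1}(V) \mid V \in D \}$ is Cauchy for $\mathbb{R}_*^\flat$. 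Since $f$ is a cover map for the completed structure, $C$ is Cauchy there; this, together with the limit hypothesis, is what I will leverage. First, because $D$ is Cauchy and $y^\wedge$ is a Cauchy filter, some $V_0 \in D$ is a neighborhood of $y$, so $B_{\varepsilon_0}(y) \subseteq V_0$ for some $\varepsilon_0 > 0$; the limit hypothesis then yields $\delta_0 > 0$ with $B_{\delta_0}(0) \cap \mathbb{R}_* \subseteq f^{-1}(V_0)$. I then decompose $\mathbb{R}_*$ by the two-element cover $\mathcal{A} = \{ A_0, A_1 \}$, where $A_0 = (-\delta_0,\delta_0) \cap \mathbb{R}_*$ and $A_1 = (-\infty,-\delta_0/2) \cup (\delta_0/2,\infty)$: this is a uniform cover of $\mathbb{R}$ (it has a positive Lebesgue number), hence Cauchy in $\mathbb{R}_*^\flat$. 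Over $A_0$ the single element $f^{-1}(V_0) \in C$ already covers everything, so I take the trivial cover there. Over $A_1$, which is bounded away from $0$, the extra generating cover $\{ (-\infty,-\varepsilon)\cup(\varepsilon,\infty) \mid \varepsilon > 0 \}$ of the completed structure restricts to a cover containing $A_1$ itself and so contributes nothing; consequently the structure $A_1$ inherits from the completed $\mathbb{R}_*$ coincides with the one it inherits from $\mathbb{R}_*^\flat$. Thus the restriction of $C$ to $A_1$, being Cauchy for the completed subspace structure, is Cauchy for the metric one, hence refined by $\{ A_1 \cap V \mid V \in D_1 \}$ for some Cauchy cover $D_1$ of $\mathbb{R}_*^\flat$. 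Applying \axref{CG} in $\mathbb{R}_*^\flat$ to $\mathcal{A}$, with the trivial cover over $A_0$ and $D_1$ over $A_1$, produces a Cauchy cover refining $C$, so $C$ is Cauchy by \axref{CE}, proving the claim.

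The hard part will be exactly this last step: transferring the cover from the completed structure on $\mathbb{R}_*$ (where $f$ is assumed to be a cover map) to the subspace structure demanded by \rthm{dense-lift}. The obstruction is genuinely constructive, since one cannot split a real into ``zero'' and ``nonzero'', so the argument must proceed through the uniform decomposition $\{A_0,A_1\}$ rather than by cases — using the limit hypothesis to tame $C$ near $0$ and the fact that the completed structure's extra cover trivializes away from $0$. I expect the only delicate bookkeeping to be verifying that the two subspace structures on $A_1$ agree and that $D_1$ can be produced from the transferred-structure definition, both of which are routine once the decomposition is in place.
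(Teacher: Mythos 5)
Your proof is correct and follows essentially the same route as the paper's: both reduce the ``only if'' direction, via the dense embedding of the metric-structured $\mathbb{R}_*$ into $\mathbb{R}$ and \rthm{dense-lift}, to showing that $f$ is a cover map for the transferred (subspace) structure, using the limit hypothesis to control covers near $0$ and the coincidence of the two structures on a set bounded away from $0$ to control them elsewhere. The only difference is packaging: the paper verifies the condition on the pullbacks of uniform covers and refines them explicitly by $\{ U \cap B_{\delta/3}(x) \cap \mathbb{R}_* \mid U \in C, x \in \mathbb{Q} \}$ with a case split on $\abs{x}$, whereas you handle an arbitrary Cauchy cover through the two-element Cauchy cover $\{A_0,A_1\}$ and \axref{CG} --- the same decomposition in slightly more abstract form.
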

\begin{proof}
Assume that $f$ extends to $\widetilde{f}$.
By \rlem{cover-map-rb}, for every $\varepsilon > 0$, the set $\widetilde{f}^{-1}(B_\varepsilon(\widetilde{f}(0)))$ is a neighborhood of $0$.
It follows that $\widetilde{f}(0)$ is the limit of $f$ at $0$.

Now assume that $L$ is the limit of $f$ at $0$.
Let $\mathbb{R}'_*$ denote the set of invertible real numbers, equipped with the transferred cover space structure.
Since $\mathbb{R}'_*$ embeds densely into $\mathbb{R}$, every cover map $\mathbb{R}'_* \to \mathbb{R}$ uniquely extends to a map $\mathbb{R} \to \mathbb{R}$.
Thus, it suffices to show that $f : \mathbb{R}_* \to \mathbb{R}$ is a cover map when regarded as a map $\mathbb{R}'_* \to \mathbb{R}$.
That is, we need to prove that, for every $\varepsilon > 0$, the family $\{ f^{-1}(B_\varepsilon(x)) \mid x \in \mathbb{Q} \}$ is a Cauchy cover of $\mathbb{R}'_*$.

For every $\varepsilon > 0$, there exists $\delta > 0$ such that $B_\delta(0) \cap \mathbb{R}_* \subseteq f^{-1}(B_\varepsilon(L))$.
Now, define $V = \left(-\infty, -\frac{\delta}{3}\right) \cup \left(\frac{\delta}{3}, \infty\right)$, equipped with the transferred cover space structure.
It does not matter whether this structure is transferred from $\mathbb{R}_*$ or $\mathbb{R}$, as the result is the same.
Since the composite $V \to \mathbb{R}_* \overset{f}{\to} \mathbb{R}$ is a cover map,
there exists a Cauchy cover $C$ of $\mathbb{R}$ such that $\{ U \cap V \mid U \in C \}$ refines $\{ f^{-1}(B_\varepsilon(y)) \mid y \in \mathbb{Q} \}$.

Since $\{ U \cap B_{\delta/3}(x) \cap \mathbb{R}_* \mid U \in C, x \in \mathbb{Q} \}$ is a Cauchy cover of $\mathbb{R}'_*$,
it is enough to show that it refines $\{ f^{-1}(B_\varepsilon(x)) \mid x \in \mathbb{Q} \}$.
Let $U \in C$ and $x \in \mathbb{Q}$.
Consider two cases:
\begin{itemize}
\item If $\abs{x} \geq \frac{2}{3} \delta$, then $U \cap B_{\delta/3}(x) \cap \mathbb{R}_* \subseteq U \cap V$,
so $U \cap B_{\delta/3}(x) \cap \mathbb{R}_* \subseteq f^{-1}(B_\varepsilon(y))$ for some $y \in \mathbb{Q}$.
\item If $\abs{x} \leq \frac{2}{3} \delta$, then $U \cap B_{\delta/3}(x) \cap \mathbb{R}_* \subseteq B_\delta(0) \cap \mathbb{R}_* \subseteq f^{-1}(B_\varepsilon(L))$.
\end{itemize}
This completes the proof.
\end{proof}

\bibliographystyle{amsplain}
\bibliography{ref}

\providecommand{\bysame}{\leavevmode\hbox to3em{\hrulefill}\thinspace}
\providecommand{\MR}{\relax\ifhmode\unskip\space\fi MR }
\providecommand{\MRhref}[2]{%
  \href{http://www.ams.org/mathscinet-getitem?mr=#1}{#2}
}
\providecommand{\href}[2]{#2}
\begin{thebibliography}{10}

\bibitem{joy-cats}
J.~Adamek, H.~Herrlich, and G.E. Strecker, \emph{Abstract and concrete
  categories: The joy of cats}, Dover books on advanced mathematics, Dover
  Publications, 2009.

\bibitem{limits}
A.F. Beardon, \emph{Limits: A new approach to real analysis}, Undergraduate
  Texts in Mathematics, Springer New York, 1997.

\bibitem{bishop}
Errett Bishop, \emph{Foundations of constructive analysis}, Mcgraw-Hill, New
  York, NY, USA, 1967.

\bibitem{stone-spaces}
P.T. Johnstone, \emph{Stone spaces}, Cambridge Studies in Advanced Mathematics,
  Cambridge University Press, 1982.

\bibitem{cauchy-spaces}
E.~Lowen-Colebunders, \emph{Function classes of cauchy continuous maps},
  Monographs and textbooks in pure and applied mathematics, M. Dekker, 1989.

\bibitem{vickers-exp}
Ming Ng and Steven Vickers, \emph{Point-free construction of real
  exponentiation}, Logical Methods in Computer Science \textbf{Volume 18, Issue
  3} (2022).

\bibitem{palmgren-cont}
Erik Palmgren, \emph{Continuity on the real line and in formal spaces}, From
  sets and types to topology and analysis - Towards practicable foundations for
  constructive mathematics (Laura Crosilla and Peter~M. Schuster, eds.), Oxford
  logic guides, vol.~48, Oxford University Press, 2005.

\bibitem{palmgren-metric-locales}
\bysame, \emph{A constructive and functorial embedding of locally compact
  metric spaces into locales}, Topology and its Applications \textbf{154}
  (2007), no.~9, 1854--1880.

\bibitem{localic-completion}
Steven Vickers, \emph{Localic completion of generalized metric spaces {I}},
  Theory and Applications of Categories \textbf{14} (2005), 328--356 (eng).

\bibitem{vickers-tychonoff}
\bysame, \emph{Some constructive roads to tychonoff}, From Sets and Types to
  Topology and Analysis: Towards practicable foundations for constructive
  mathematics, Oxford University Press, 2005.

\bibitem{vickers-compact}
\bysame, \emph{Compactness in locales and in formal topology}, Annals of Pure
  and Applied Logic \textbf{137} (2006), no.~1, 413--438.

\end{thebibliography}

\end{document}